\theoremstyle{plain}
\newtheorem{theorem}{Theorem}[section]
\newtheorem{corollary}[theorem]{Corollary}
\newtheorem{problem}[theorem]{Problem}
\newtheorem{prop}[theorem]{Proposition}
\newtheorem{lemma}[theorem]{Lemma}
\theoremstyle{definition}
\newtheorem{definition}[theorem]{Definition}
\newtheorem{example}[theorem]{Example} 
\theoremstyle{plain}
\newcommand{\N}{\mathbb{N}}
\newcommand{\p}{\mathbb{P}}
\newcommand{\expt}{\mathbb{E}}
\newcommand{\floor}[1]{{\left\lfloor #1 \right\rfloor}}
\newcommand{\sset}{\subset}
\newcommand{\la}{\lambda}
\newcommand{\al}{\alpha}
\newcommand{\Om}{\Omega}
\newcommand{\mathand}{\;\text{and}\;}
\newcommand{\mathfor}{\;\text{for}\;}
\newcommand{\be}{\beta}
\newcommand{\Ga}{\Gamma}
\newcommand{\ep}{\epsilon}
\newcommand{\sig}{\sigma}
\newcommand{\scrE}{\mathcal{E}}
\newcommand{\Zd}{\mathds{Z}^4_\uparrow}
\newcommand{\scrA}{\mathcal{A}}
\newcommand{\scrD}{\mathcal{D}}
\newcommand{\scrG}{\mathcal{G}}
\newcommand{\scrP}{\mathcal{P}}
\newcommand{\scrQ}{\mathcal{Q}}
\newcommand{\scrV}{\mathcal{V}}
\newcommand{\scrL}{\mathcal{L}}
\newcommand{\scrH}{\mathcal{H}}
\newcommand{\scrS}{\mathcal{S}}
\newcommand{\scrN}{\mathcal{N}}
\newcommand{\scrF}{\mathcal{F}}
\newcommand{\scrX}{\mathcal{X}}
\newcommand{\supp}{\text{supp}}
\newcommand{\Z}{\mathds{Z}}
\newcommand{\R}{\mathds{R}}
\newcommand{\eqd}{\stackrel{d}{=}}
\newcommand{\cvgd}{\stackrel{d}{\to}}
\newcommand{\X}{\times}
\newcommand{\symdif}{\triangle}
\newcommand{\id}{\text{id}}
\newcommand{\smin}{\setminus}
\newcommand{\lf}{\left}
\newcommand{\rg}{\right}
\newcommand{\bu}{\mathbf{u}}
\newcommand{\bv}{\mathbf{v}}
\newcommand{\bw}{\mathbf{w}}
\newcommand{\bx}{\mathbf{x}}
\newcommand{\by}{\mathbf{y}}
\begin{document}
	
	\begin{frontmatter}
		
		\title{Hidden invariance of last passage percolation and directed polymers}
		\author{Duncan Dauvergne}
		\runtitle{Hidden invariance of LPP}

\begin{abstract} 
	\qquad Last passage percolation and directed polymer models on $\Z^2$ are invariant under translation and certain reflections. When these models have an integrable structure coming from either the RSK correspondence or the geometric RSK correspondence (e.g. geometric last passage percolation or the log-gamma polymer), we show that these basic invariances can be combined with a decoupling property to yield a rich new set of symmetries. Among other results, we prove shift and rearrangement invariance statements for last passage times, geodesic locations, disjointness probabilities, polymer partition functions, and quenched polymer measures. We also use our framework to find `scrambled' versions of the classical RSK correspondence, and to find an RSK correspondence for moon polyominoes. The results extend to limiting models, including the KPZ equation and the Airy sheet.
\end{abstract}

\begin{keyword}[class=MSC2010]
	\kwd[Primary ]{60K35}
\kwd[; secondary ]{05A15, 05A19, 05E10}
\end{keyword}

\begin{keyword}
\kwd{RSK correspondence}
\kwd{geometric RSK correspondence}
\kwd{last passage percolation}
\kwd{directed polymers}
\kwd{KPZ}
\kwd{Robinson-Schensted correspondence}
\kwd{Airy sheet}
\kwd{moon polyominoes}
\end{keyword}

\end{frontmatter}

\section{Introduction}
\label{S:intro}
Consider an array of random variables $M = \{M_x: x \in \Z^2\}$. For an up-right path $\pi$ in $\Z^2$ (henceforth simply a path), the weight of $\pi$ in the environment $M$ is given by
$$
M(\pi) = \sum_{x \in \pi} M_x.
$$
For two points $x, y \in \Z^2$, we say that $x \nearrow y$ if $x_1 \le y_1, x_2 \le y_2$. Let 
$$
\Zd = \{u = (u^-; u^+) \in \Z^2 \X \Z^2 : u^- \nearrow u^+ \}.
$$
For any $u \in \Zd$, we say that $\pi$ is a $u$-path if $\pi$ goes from $u^-$ to $u^+$. Define the last passage value
$$
Z_M(u) = \max_\pi M(\pi),
$$
where the $\max$ is taken over all $u$-paths $\pi$. For $\bu = (u_1, \dots, u_k) \in (\Zd)^k$, we also define the multi-point last passage value
$$
Z_M(\bu) = \max_\pi \sum_{i=1}^k M(\pi_i),
$$ 
where the max is taken over all $k$-tuples of disjoint paths $\pi = (\pi_1, \dots, \pi_k)$, where each $\pi_i$ is a $u_i$-path. Note that this is only defined for vectors $\bu$ for which a set of disjoint paths exists. Letting $\scrE$ denote this set of vectors, we can think of $Z_M$ as a function from $\scrE$ to $\R$.

Last passage percolation is one of the most tractable models in the KPZ (Kardar-Parisi-Zhang) universality class for random growth. Over the past twenty-five years, the KPZ universality class has been the focus of intense and fruitful research. For a gentle introduction to this area, see \citep{romik2015surprising}. Surveys and reviews focussing on more recent developments include \citep{corwin2012kardar,  quastel2011introduction, borodin2014integrable, borodin2016lectures, zygouras2018some}.

Any i.i.d. last passage percolation model is invariant under translation and certain reflections:
\begin{enumerate}[nosep, label=(\Roman*)]
	\item For $c \in \Z^2$, define $(T_cM)_x = M_{x + c}$. Then $Z_M \eqd Z_{T_c M}$.
	\item Define $(R_1 M)_{(x, y)} = M_{(y, x)}$. Then $Z_M \eqd Z_{R_1 M}$.
	\item Define $(R_2 M)_{(x, y)} = M_{(-x, -y)}$. Then $Z_M \eqd Z_{R_2 M}$.
\end{enumerate}
For general weights, we shouldn't necessarily expect that there are distributional symmetries beyond the ones listed above.

On the other hand, for a related integrable model known as Brownian last passage percolation, Borodin, Gorin, and Wheeler \citep{borodin2019shift} recently proved a new and remarkable shift invariance. Translated to the context of last passage percolation on $\Z^2$, this would read as follows.
\begin{itemize}
	\label{I:iv}
	\item [(IV)] Let $u_1, \dots, u_k, v \in \Zd$, let $c \in \Z^2$, and set $v' = v + (c, c)$. Suppose that for any $i$, every $u_i$-path intersects every $v$-path and every $v'$-path. Then
	$$
	(Z_M(u_1), \dots, Z_M(u_k), Z_M(v)) \eqd (Z_M(u_1), \dots, Z_M(u_k), Z_M(v')).
	$$
\end{itemize}
See Figure \ref{fig:simple} for an example of this invariance in the most basic $k=1$ case.

This potential invariance is much more nontrivial than (I)-(III) above, and its proof for Brownian last passage percolation in \citep{borodin2019shift} is very indirect. The proof proceeds by interpreting Brownian last passage percolation as a certain degeneration of the inhomogeneous coloured stochastic six vertex model, and then proving the analogue of (IV) for that model. Moreover, the proof in the context of the inhomogeneous coloured stochastic six vertex model is itself difficult, relying on the Yang-Baxter integrability of that model, a delicate induction argument, and Lagrange interpolation. The full power and complexity of the coloured stochastic six vertex model is necessary in their proof; the same proof would fail for any degenerations. Since it is not known whether the coloured stochastic six vertex model degenerates to any of the lattice last passage percolation models discussed above, it is not clear how to apply their proof and result in this setting.

\begin{figure}
	\centering
	\includegraphics[width=0.4\textwidth]{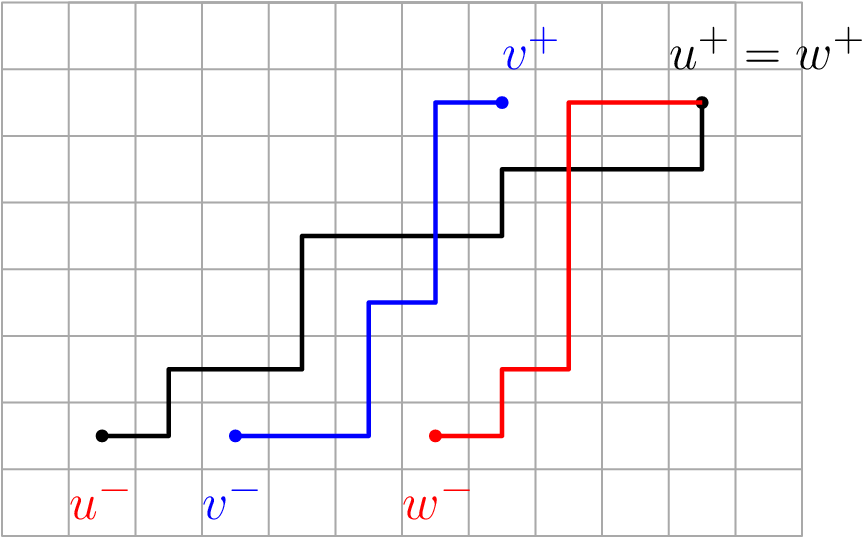}
	\caption{When $M$ is an environment of geometric or exponential random variables, we have $(Z_M(u), Z_M(v)) \eqd (Z_M(u), Z_M(w))$.}
	\label{fig:simple}
\end{figure}
This paper is devoted to understanding invariances such as (IV) in the context of lattice last passage percolation. We will also address directed polymer models.  Unsurprisingly, (IV) does not hold for general weight distributions (see Example \ref{E:nonintegrable}). This suggests that we should restrict our focus to integrable models, and that integrability should play a fundamental role.

Last passage percolation is integrable when the weight distribution is either exponential or geometric. When this is the case, the classical Robinson-Schensted-Knuth correspondence (RSK) connects certain processes of multi-point last passage values with pairs of random Young tableaux via Greene's theorem \citep{greene1974extension}. This allows for explicit formulas for last passage value probabilities, and yields the following remarkable independence structure for these models.

For a point $u = (u^-; u^+) = (u^-_1, u^-_2; u^+_1, u^+_2) \in \Zd$, define the vector
\begin{align*}
u^k &= \big((u^-; u^+ - (k - 1, 0)), (u^- + (1, 0); u^+ - (k - 2, 0)), \dots, (u^- + (k-1, 0); u^+) \big),
\end{align*}
and let 
$$
D(u) = \{u^k : k \in \{1, \dots, \min(u^+_1 - u^-_1 + 1, u^+_2 - u^-_2 + 1)\} \}.
$$
We think of $Z_M|_{D(u)}$ as the collection of all multi-point last passage values going `diagonally' between the lower left and top right corners of $u$  (here the notation $Z_M|_{X}$ is the restriction of the function $Z_M$ to the set $X$). For any sets $V = \{v_1, \dots, v_n\} \sset \Zd$ where each $v_i = (u^-; u^+_1, v^+_{i, 2})$ for some $v^+_{i, 2} \in [u^-_2, u^+_2]$ and $W = \{w_1, \dots, w_k\}$ where each $w_i = (u^-; w^+_{i, 1}, u^+_2)$ for some $w^+_{i, 1} \in [u^-_1, u^+_1]$, the last passage values $Z_M|_V$ and $Z_M|_W$ are conditionally independent given $Z_M|_{D(u)}$ . In words, this says that  the collection of last passage values going from $u^-$ to the right edge of $u$ and the collection of last passage values going from from $u^-$ to the top edge of $u$ are conditionally independent given the collection of multi-point last passage values that cross $u$ diagonally.

 In the case when the weights are geometric, this amounts to the fact that when the RSK correspondence is applied to an $m \times n$ matrix of independent geometric variables $M_{i, j}$ on $\{0, 1, \dots\}$ with parameter $\al_i \be_j$ for vectors $\al \in \R^m, \be \in \R^n$, then the joint law of the two output Young tableaux is proportional to a product of Schur functions 
 \begin{equation}
 \label{E:Schur}
s_\la(\al) s_\la(\be).
 \end{equation} 
 See \cite{sagan2013symmetric} for general background about the combinatorics of RSK and Schur functions, \cite{borodin2016lectures} for an introduction to the application of RSK and Schur functions in probability theory, or Section 3 of \cite{o2003conditioned} for a direct derivation of the above fact.
 Here $\la$ is the shared shape of the Young tableaux; it corresponds to our $Z_M|_{D(u)}$. The conditional independence comes from the product structure.

This conditional independence extends to sets of vectors $V, W$ where each $\bv = (v_1, \dots, v_k) \in V$ has entries $v_i$ of the form $(u^-_1, v^-_{i,2} ; u^+_1, v^+_{i, 2})$ with $[v^-_{i,2}, v^+_{i, 2}] \sset [u^-_{2}, u^+_{2}]$ and each $\bw \in W$ has entries $w_i$ of the form $(w^-_{i,1}, u^-_2 ; w^+_{i,1}, u^+_2)$ with $[w^-_{i,1}, w^+_{i, 1}] \sset [u^-_{1}, u^+_{1}]$. That is, the collections of multi-point last passage values going from left to right across $u$ and from bottom to top across $u$ are conditionally independent given the collection of multi-point last passage values that cross $u$ diagonally.

In the context of Brownian last passage percolation, this is a consequence of a recent result of the author, Ortmann, and Vir\'ag \citep{DOV} regarding preservation of certain last passage values under RSK. It turns out that highly similar results had appeared independently in two much older papers. Biane, Bougerol, and O'Connell \cite{biane2005littelmann} proved this in the case of equal start and end points (see their Lemma 4.8) and Noumi and Yamada \citep{noumi2002tropical} studied closely related phenomena in the discrete and positive-temperature settings. 
 Using the ideas from \citep{noumi2002tropical}, we provide another proof of this preservation result in Theorem \ref{T:encoding-lpp}.

This extended conditional independence implies the existence of certain Markov chains of multi-point last passage values, giving the following strategy for proving that $(X, Y)$ and $(X, Z)$ are equal in distribution, where $X, Y,$ and $Z$ are vectors of (potentially multi-point) last passage values.
\begin{enumerate}
	\item Find Markov chains $(X_0, \dots, X_n)$ and $(Y_0, \dots, Y_n)$ with $(X_0, X_n) = (X, Y)$ and $(Y_0, Y_n) = (X, Z)$ (by using the conditional independence property coming from RSK and the independence of different entries in $M$).
	\item Check that the Markov chains have the same transition probabilities (e.g. by using the basic invariances (I)-(III)).
\end{enumerate}

We give an example of how this strategy yields the symmetry in Figure \ref{fig:simple}. The proof we present here is essentially complete, up to showing the conditional indepedendence statements that will follow from RSK (see Theorem \ref{T:encoding-lpp} for details).

\begin{example}
	\label{Ex:simple}
	Let $M$ be an environment of exponential or geometric random variables. Let $u, v, w \in \Zd$ be such that 
	\begin{itemize}[nosep]
		\item $w = v + (c, 0; c, 0)$ for some $c \in \N$,
		\item $u^-_2 = v^-_2$ and $u^+_2 = v^+_2$,
		\item  $[v_1^-, v_1^+] \cup [w_1^-, w_1^+] \sset [u^-_1, u^+_1]$.
	\end{itemize}
	Then $(Z_M(u), Z_M(v)) \eqd (Z_M(u); Z_M(w))$.
\end{example} 

\begin{proof}[Proof sketch]
	Let $(v^-; w^+) = [v^-_1, w^+_1] \X [v^-_2, w^+_2] = [v^-_1, w^+_1] \X [u^-_2, u^+_2]$ denote the box whose southwest and northeast corners are $v^-$ and $w^+$, respectively.
	The last passage value $Z_M(u)$ is a function of the random variables $\{M_x : x \notin (v^-; w^+)\}$ and the last passage values going from left-to-right across $(v^-; w^+)$:
	$$
	Z_M|_X, \qquad \text{ where } \qquad X = \{(v_1^-, x; w^+_1, y) : x \le y \in [u^-_2, u^+_2] \}.
	$$ 
	Moreover, since $Z_M(v), Z_M(w)$ are both last passage values going from bottom-to-top across $(v^-; w^+)$, by the independence properties coming from RSK, $Z_M|_X$ and $(Z_M(v), Z_M(w))$ are conditionally independent given $Z_M|_{D(v^-; w^+)}$. Therefore
	$$(Z_M(u), Z_M|_{D(v^-; w^+)}, Z_M(v)) \qquad \mathand \qquad (Z_M(u), Z_M|_{D(v^-; w^+)}, Z_M(w))
	$$
	are Markov chains.
	To check that these Markov chains have the same transition probabilities, we just need to show that
	\begin{equation}
	\label{E:tocheck}
	(Z_M|_{D(v^-; w^+)}, Z_M(v)) \eqd (Z_M|_{D(v^-; w^+)}, Z_M(w)).
	\end{equation}
	To see this, observe that under the $180$-degree rotation of the plane that switches the two corners of $(v^-; w^+)$, that $(v, D(v^-; w^+)) \mapsto (w, D(v^-; w^+))$. Last passage percolation in any i.i.d.\ environment is invariant under this rotation (it is a product of the reflections $R_1$ and $R_2$ and a translation), yielding \eqref{E:tocheck}.
\end{proof}

A slight extension of the above argument quickly gives the symmetry (IV). Moreover, exploring these ideas further yields a whole selection of interesting invariance statements for geometric and exponential last passage percolation. 

This proof framework also works in the context of inhomogeneous geometric and exponential last passage percolation, and for the inhomogeneous log-gamma polymer. For the log-gamma polymer, the geometric RSK correspondence (see \citep{kirillov2001introduction, noumi2002tropical}) replaces RSK as the key combinatorial input. The analogue of the preservation result from \cite{DOV} in the geometric RSK context is a recent result of Corwin \citep{corwin2020invariance}. Our proof of  preservation also works for geometric RSK.

The invariance statements we prove pass through to the large collection of models that arise as limits of geometric and exponential last passage percolation and the log-gamma polymer: taseps from different initial conditions coupled with the same noise; Brownian last passage percolation; Poisson last passage percolation in the plane and the polynuclear growth model; Poisson last passage percolation across lines; the directed landscape, the Airy sheet, and the KPZ fixed point; the O'Connell-Yor Brownian polymer; and the continuum directed random polymer, the KPZ equation, and the multiplicative stochastic heat equation. 
Nonetheless, there are still many integrable models where our results do not apply (see Section \ref{S:conclude} for some discussion about such models).

The invariance statements for geometric last passage percolation also give rise to new RSK-like bijections and yield other combinatorial consequences.

\subsection{Some notation and definitions}
\label{SS:notation}

\begin{figure}[htb]
	\centering
	\begin{subfigure}[h]{0.4\textwidth}
		\includegraphics[width=\textwidth]{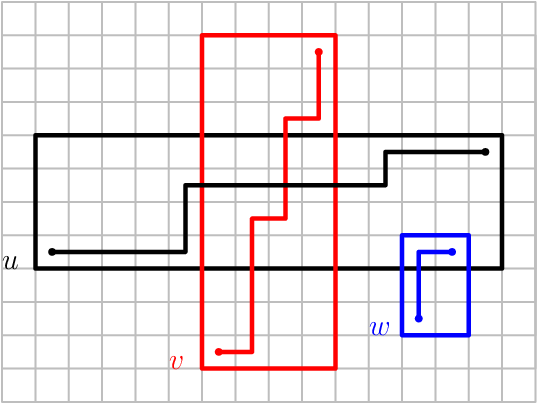}
		\caption{}
	\end{subfigure}
	\qquad
	\qquad
	\begin{subfigure}[h]{0.4\textwidth}
		\includegraphics[width=\textwidth]{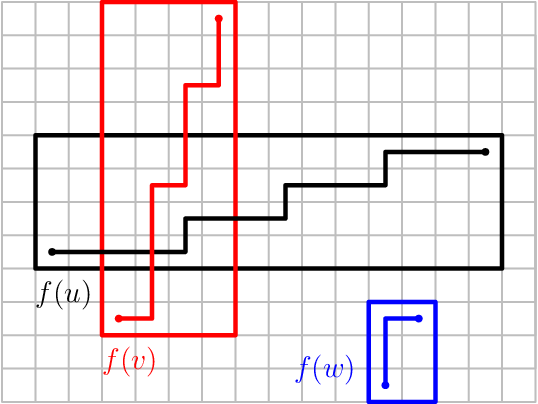}
		\caption{}
	\end{subfigure}
	\caption{In (a), three points $u, v, w \in \Zd$ are represented both as boxes in the plane and by sample paths from $u^-$ to $u^+$, $v^-$ to $v^+$ and $w^-$ to $w^+$. The pair $(u, v)$ crosses horizontally and the pair $(v, u)$ crosses vertically. The boxes $v$ and $w$ are disjoint. The map $f:\{u, v, w\} \to \{f(u), f(v), f(w)\}$ preserves horizontal and vertical crossings, but does not preserve disjointness, since $f(u)$ and $f(w)$ are disjoint but $u$ and $w$ are not disjoint.}
	\label{fig:notation}
\end{figure}

We will identify every point 
$$
u = (u^-, u^+) = (u^-_1, u^-_2; u^+_1, u^+_2) \in \Zd
$$
with the box $[u^-_1, u^-_2] \X [u^+_1, u^+_2]$. This identification allows us to apply standard set operations (e.g. intersection, unions, set difference) to points in $\Zd$. In the figures in the next section, we will also use boxes and paths to help represent distributional equalities for last passage times. 

For points $x, y \in \Z^2$ we say that $x \searrow y$ if $x_1 \le y_1, x_2 \ge y_2$. We say that an ordered pair of boxes $(u,v)$ \textbf{crosses horizontally} if $u^- \searrow v^-$ and $v^+ \searrow u^+$. Visually, a pair $(u, v)$ crosses horizontally if paths between the corners of $u$ cross the box $v$ from left to right. Similarly, an ordered pair of boxes $(u,v)$ \textbf{crosses vertically} if $v^- \searrow u^-$ and $u^+ \searrow v^+$. Visually, a pair $(u, v)$ crosses vertically if paths between the corners of $u$ cross the box $v$ from bottom to top. We say a pair of boxes crosses if the pair crosses either horizontally or vertically.
When thinking about these definitions and their relationships, we encourage the reader to keep in mind two things:
\begin{itemize}
	\item A pair $(u, v)$ crosses horizontally if and only if $(v, u)$ crosses vertically.
	\item If pairs $(u, v)$ and $(v, w)$ cross horizontally, then so does the pair $(u, w)$.
\end{itemize} 
 We say that a pair of boxes $(u, v)$ is disjoint if $u \cap v = \emptyset$.
A bijection $f$ between subsets of $\Zd$ \textbf{preserves horizontal crossings} if
$$
(u, v) \text{ crosses horizontally } \quad \iff \quad (f(u), f(v)) \text{ crosses horizontally}.
$$
Note that any bijection that preserves horizontal crossings also preserves vertical crossings; because of this we will not consider the latter concept. Similarly $f$ \textbf{preserves disjointness} if 
$$
u \cap v = \emptyset \quad \iff \quad f(u) \cap f(v) = \emptyset.
$$. 
See Figure \ref{fig:notation} for an illustration of these definitions. 

Finally, we extend the translation maps $T_c$ and the reflections $R_1, R_2$ to $\Zd$ in the most natural way: 
$$
T_c u = u + (c, c), \quad R_1 u = (R_1 u^-; R_1 u^+), \quad R_2 u = (R_2 u^+, R_2 u^-).
$$
\subsection{Invariance of last passage values}
\label{SS:symmetries-lpp}
We are now ready to present invariance theorems for exponential and geometric last passage percolation. As the theorems themselves are quite general, we encourage the reader to consider them alongside the simpler examples presented in the figures.

Our first theorem was conjectured for a variety of models by Borodin, Gorin, and Wheeler \citep{borodin2019shift} (see Conjecture 1.6 from that paper and the related discussion).

\begin{figure}[htb]
	\centering
	\begin{subfigure}[h]{0.45\textwidth}
		\includegraphics[width=\textwidth]{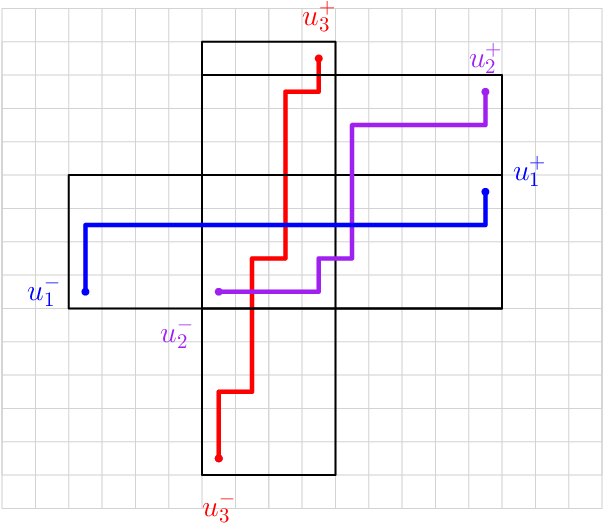}
		\caption{}
	\end{subfigure}
	\qquad
	\begin{subfigure}[h]{0.45\textwidth}
		\includegraphics[width=\textwidth]{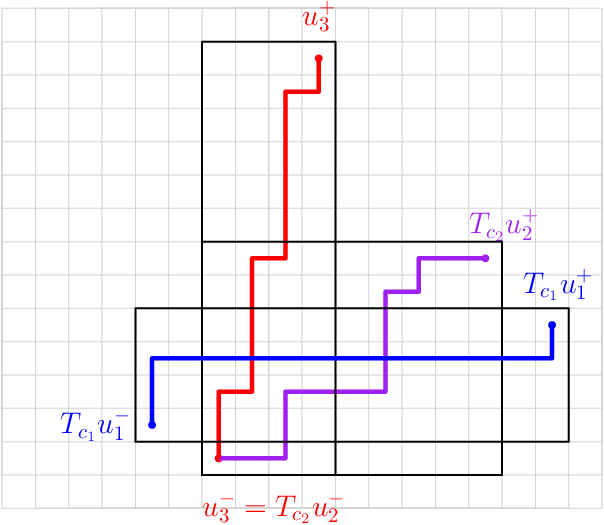}
		\caption{}
	\end{subfigure}
	\caption{An example of Theorem \ref{T:conj-bgw}.
		 The distribution of $(Z_M(u_1), Z_M(u_2), Z_M(u_3))$ in (a) is the same as the distribution of $(Z_M(T_{c_1} u_1), Z_M(T_{c_2} u_2), Z_M(u_3))$ in (b).  Using the language of Theorem \ref{T:conj-bgw}, we can take $U_i = \{u_i\}$ for $i = 1, 2, 3$.}
	\label{fig:tower-sym}
\end{figure}

\begin{theorem}
	\label{T:conj-bgw}
	Let $M$ be an environment of i.i.d. exponential or geometric random variables. Let $U_1, \dots, U_k$ be subsets of $\Zd$ such that for any $i \in \{1, \dots, k-1\}$ and any $u_i \in U_i$ and $u_{i+1} \in U_{i+1}$, the pair $(u_i, u_{i+1})$ crosses horizontally.
	
	Let $c_1, \dots, c_k \in \Z^2$, and let $f:\bigcup U_i \to \bigcup T_{c_i} U_i$ be the function with $f|_{U_i} = T_{c_i}$ for all $i$. That is, $f$ translates each of the sets of boxes $U_i$ by a different amount. Then as long as $f$ preserves horizontal crossings, the joint distribution of last passage values in $\bigcup U_i$ is also preserved by $f$. More precisely, we have
	\begin{equation}
	\label{E:orig-f}
	Z_M|_{\bigcup U_i} \eqd Z_M|_{\bigcup T_{c_i} U_i} \circ f.
	\end{equation}
\end{theorem}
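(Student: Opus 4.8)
The plan is to prove \eqref{E:orig-f} by induction on $k$, using the Markov chain strategy outlined in the introduction and illustrated in Example \ref{Ex:simple}. The base case $k=1$ is just invariance (I), so suppose the result holds for $k-1$ sets. The key reduction is to the case where $f$ differs from the identity only on the ``middle'' block $U_j$ for a single $j$, and moreover where the shift $c_j$ is an elementary horizontal or vertical move (since any translation decomposes into such elementary moves, and a composition of symmetries is a symmetry). So I would first argue that it suffices to prove the following statement: fix $j$, let $c \in \Z^2$ be such that the map $f$ which equals $T_c$ on $U_j$ and the identity elsewhere preserves horizontal crossings, and show $Z_M|_{\bigcup U_i} \eqd Z_M|_{f(\bigcup U_i)} \circ f$. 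Iterating this over $j$ and over the decomposition of each $c_j$ into elementary steps gives the full theorem, using that $f$ preserving horizontal crossings for the composite map forces each intermediate map to preserve crossings as well (this monotonicity needs a short check).

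The heart of the argument is the single-shift case, and here is where I invoke the conditional independence coming from RSK (Theorem \ref{T:encoding-lpp}). Choose a box $B = (p^-; p^+)$ large enough to contain $U_j \cup T_c U_j$ but positioned so that: (a) every $u_j \in U_j$ and every $f(u_j) = T_c u_j$ crosses $B$ from bottom-to-top (if $c$ is a horizontal shift) — that is, $U_j$ and $f(U_j)$ consist of boxes whose paths cross $B$ vertically; and (b) every $u_{j-1} \in U_{j-1}$ and $u_{j+1} \in U_{j+1}$ has the property that the relevant last passage values $Z_M(u_{j\pm1})$ are measurable with respect to the environment outside $B$ together with the left-to-right crossing last passage values $Z_M|_X$ of $B$, where $X$ is the family of multi-point boxes crossing $B$ horizontally. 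This is exactly the decoupling used in Example \ref{Ex:simple}: the horizontal crossing hypothesis on consecutive blocks is what guarantees that $B$ can be chosen to separate the $U_j$-values from everything else in the required way. Then Theorem \ref{T:encoding-lpp} gives that $Z_M|_X$ (hence everything measurable w.r.t.\ it and the outside environment, including $Z_M|_{U_i}$ for $i \ne j$) and $(Z_M|_{U_j})$ are conditionally independent given $Z_M|_{D(B)}$. Consequently
\[
\big(Z_M|_{\bigcup_{i\ne j} U_i},\; Z_M|_{D(B)},\; Z_M|_{U_j}\big)
\qquad\text{and}\qquad
\big(Z_M|_{\bigcup_{i\ne j} U_i},\; Z_M|_{D(B)},\; Z_M|_{f(U_j)}\big)
\]
are both Markov chains, and since the first two coordinates are untouched by $f$, to match transition probabilities it suffices to show the two-coordinate identity
\begin{equation*}
\big(Z_M|_{D(B)},\; Z_M|_{U_j}\big) \;\eqd\; \big(Z_M|_{D(B)},\; Z_M|_{f(U_j)}\big).
\end{equation*}

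To prove this last identity I would use the $180$-degree rotation $\rho = T_{c'} \circ R_1 \circ R_2$ of $B$ that swaps its two corners, exactly as in Example \ref{Ex:simple}: it fixes the diagonal family $D(B)$ setwise and, with $B$ chosen symmetrically about the axis of the shift, carries $U_j$ to $f(U_j)$ (here one checks that a vertical box crossing $B$ and its image under $T_c$ are exchanged by $\rho$ precisely when $B$ is centered appropriately, which is where the ``$f$ preserves horizontal crossings'' hypothesis is consumed — it is what guarantees the shifted boxes still fit inside such a symmetric $B$ and still cross it vertically). Since last passage percolation in an i.i.d.\ environment is invariant under $\rho$ by (I)--(III), the identity follows. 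I expect the main obstacle to be the bookkeeping in choosing $B$ and verifying the two measurability/decoupling claims (a) and (b) in full generality — in particular handling the case where $U_j$ contains multi-point boxes and where consecutive blocks $U_{j-1}, U_{j+1}$ interact with $B$ — rather than any conceptual difficulty; the probabilistic core is the short Markov-chain argument above combined with Theorem \ref{T:encoding-lpp}.
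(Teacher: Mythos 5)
Your high-level strategy --- set up a Markov chain via the RSK decoupling and match transition kernels with the basic symmetries (I)--(III) --- is indeed the paper's announced strategy, and your argument essentially handles the case where each $U_i$ is a single box (this is Example \ref{Ex:simple} and property (IV)). But for general subsets $U_i \sset \Zd$ the two crucial steps fail. First, the box $B$ you require need not exist: the hypotheses impose no constraints within a block, so $U_j$ may contain boxes with different vertical extents, and then no box $B \supset \sqcup U_j \cup \sqcup T_cU_j$ is crossed vertically by every box of $U_j$; consequently $Z_M|_{U_j}$ is not a function of the vertical-crossing data of $B$, and the conditional independence given $Z_M|_{D(B)}$ coming from Theorem \ref{T:explicit}/\ref{T:encoding-lpp} says nothing about it. (Also, blocks $U_i$ with $i>j$ relate to $B$ through vertical rather than horizontal crossings, so your measurability claim (b) already needs the two-sided formulation of Definition \ref{D:crossing-triple}.) Second, and more fundamentally, even when every box of $U_j$ does cross $B$ vertically, the $180$-degree rotation $\rho$ about the centre of $B$ sends a box $u$ to its point reflection; $\rho(u)=T_cu$ forces the centre of $u$ to sit at the centre of $B$ minus $c/2$, which cannot hold simultaneously for two boxes of $U_j$ with distinct centres (a point reflection reverses relative positions, a translation preserves them). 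So a single application of (I)--(III) cannot match the kernels of $(Z_M|_{D(B)}, Z_M|_{U_j})$ and $(Z_M|_{D(B)}, Z_M|_{T_cU_j})$ with the required coordinate matching. This is exactly the difficulty the paper's machinery is built to overcome: connecting boxes constructed one per crossing component (Lemmas \ref{L:shift}, \ref{L:shift-part-ii}), basic reflections applied box-by-box on pairwise disjoint components (Lemmas \ref{L:in-F-triv}, \ref{L:G-moves}), glued together by the factoring property through Markov triples and quadruples (Proposition \ref{P:1-step-markov}, Corollary \ref{C:2-step-Markov}), yielding Proposition \ref{P:slides} and then the tower case Proposition \ref{P:lie-in-F}(1), from which Theorem \ref{T:conj-bgw} follows via Theorem \ref{T:iid-trans}.

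Your preliminary reduction is also not justified as stated: the fact that the composite $f$ preserves horizontal crossings does not imply that the intermediate maps (moving one block at a time) preserve them. For instance, with $U_1=\{(0,0;10,1)\}$, $U_2=\{(5,-5;6,5)\}$ and $c_1=c_2=(100,0)$, the map $f$ is a global translation, yet shifting $U_1$ alone destroys the crossing; in general one must argue that some order and path of crossing-preserving elementary moves exists. The paper's Proposition \ref{P:lie-in-F}(1) repairs this with a different decomposition: it reduces to relative shifts, interpolates along a lattice path inside the rectangle spanned by $0$ and $c_1$ (using that $U_1 \X U_2,\ T_{c_1}U_1 \X U_2 \sset \scrH$ forces $T_xU_1 \X U_2 \sset \scrH$ for every $x$ in that rectangle), and for $k\ge 3$ runs an induction that moves adjacent blocks together. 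This part of your outline is repairable; the rotation/decoupling step above is a genuine gap, not bookkeeping.
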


When imagining the scenarios in which Theorem \ref{T:conj-bgw} applies, think of the sets $U_i$ as consisting of boxes which progressively get thinner and taller as $i$ increases.
See Figure \ref{fig:tower-sym} for a simple example of Theorem \ref{T:conj-bgw}.

Property (IV) is a special case of Theorem \ref{T:conj-bgw}. Theorem \ref{T:conj-bgw} is in turn a consequence of our most general theorem for i.i.d.\ geometric and exponential last passage percolation, Theorem \ref{T:iid-trans}. That theorem shows that \eqref{E:orig-f} holds for all $f$ in a certain class of bijections $\scrF$ between subsets of $\Zd$ (with $\bigcup U_i$ and $\bigcup T_{c_i} U_i$ replaced by the domain and codomain of $f$). 
We give two more examples of the sort of invariances that fall out of Theorem \ref{T:iid-trans}. See Figure \ref{fig:perm} and Figure \ref{fig:shift} for illustrations of these theorems.

\begin{theorem}
	\label{T:example}
	Let $M$ be an environment of i.i.d. exponential or geometric random variables. Let $U = U_1 \cup U_2 \dots \cup U_k, V = V_1 \cup \dots \cup V_m \sset \Zd$ be such that 
	\begin{itemize}
		\item Every pair of boxes $(u, v) \in U \X V$ crosses horizontally.
		\item For boxes $u \in U_j$ and $u' \in U_i$ with $i \ne j$, we have $u \cap u' = \emptyset$. Visually, we can imagine the $U_i$ as forming separate disjoint `components' of $U$. 
		\item Similarly, for boxes $v \in V_j$ and $v' \in V_i$ with $i \ne j$, we have $v \cap v' = \emptyset$.
	\end{itemize}
	Let $f:U \cup V \to \bigcup T_{c_i} U_i \cup \bigcup T_{d_j} V_j$ be a function with $f|_{U_i} = T_{c_i}$ and $f|_{V_j} = T_{d_j}$ for some $c_i, d_j \in \Z^2$, and suppose that $f$ preserves horizontal crossings and disjointness. That is, $f$ can be any function that separately translates each of the components of $U$ and each of the components of $V$ in such a way that $f$ preserves the horizontal crossing structure between $U$ and $V$, and the disjointness of different components of these sets.
	Then the distribution of last passage values is also preserved by $f$:
	$$
	Z_M|_{U \cup V} \eqd Z_M|_{\bigcup T_{c_i} U_i \cup \bigcup T_{d_j} V_j} \circ f.
	$$
\end{theorem}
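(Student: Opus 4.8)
\textbf{Proof proposal for Theorem \ref{T:example}.} The plan is to reduce this statement to the already-announced master theorem, Theorem \ref{T:iid-trans}, by exhibiting $f$ as an element of the bijection class $\scrF$. Concretely, one first checks that $f$ is well-defined and injective: since $f$ acts as a fixed translation $T_{c_i}$ on each component $U_i$ (and $T_{d_j}$ on each $V_j$), and since distinct components are disjoint and $f$ is assumed to preserve disjointness, no collisions can occur between the images of different components; within a single component $f$ is a translation, hence injective. So $f$ is a bijection onto its image. It then remains to verify that $f$ preserves horizontal crossings and disjointness — but this is exactly part of the hypothesis — so the only real content is that any such $f$ lies in the class $\scrF$ for which \eqref{E:orig-f} is proved.

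The heart of the argument is therefore the same Markov chain strategy sketched after Example \ref{Ex:simple}: build finite chains $(X_0, \dots, X_n)$ and $(Y_0, \dots, Y_n)$ of (multi-point) last passage values interpolating between $Z_M|_{U \cup V}$ and $Z_M|_{\mathrm{img}(f)} \circ f$, using the conditional-independence structure from RSK (Theorem \ref{T:encoding-lpp}) for the Markov property and the basic invariances (I)--(III) to match transition kernels step by step. Each elementary step should be a single-component translation of one of the $U_i$ or $V_j$ by a small amount; one performs such a step by isolating a box (or strip) $(a^-; b^+)$ containing the component to be moved, observing that the last passage values \emph{outside} the moving component are measurable with respect to the environment off that box together with the crossing last passage values across it, and that these are conditionally independent of the values attached to the moving component given the diagonal values $Z_M|_{D(a^-;b^+)}$. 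A $180^{\circ}$ rotation (a composition of $R_1$, $R_2$ and a translation) then fixes the diagonal data while shifting the moving component, giving equality of transition probabilities. Decomposing the full map $f$ into finitely many such elementary moves — using that $f$ preserves the crossing structure at every intermediate stage, which is what lets us keep re-applying the conditional independence — completes the interpolation.

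The main obstacle, and the reason the statement is nontrivial, is \emph{path ordering}: when we move one component $U_i$ past the strip containing another component, or past $V$, we must guarantee that at no intermediate step do two boxes that were forced to cross horizontally fail to do so, or two disjoint components collide — otherwise the relevant conditional independence breaks and the chain is not Markov with the right kernel. The hypothesis that $f$ preserves horizontal crossings and disjointness guarantees this at the \emph{endpoints}, but one must choose the decomposition of $f$ into elementary moves so that these properties are maintained \emph{throughout}; handling the components in an order compatible with the crossing partial order (thinnest/tallest boxes last, as the remark after Theorem \ref{T:conj-bgw} suggests) is the key bookkeeping. All of this combinatorial content is exactly what is packaged into the class $\scrF$ and Theorem \ref{T:iid-trans}, so the actual proof here is short: verify $f \in \scrF$ and invoke that theorem.
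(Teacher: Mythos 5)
Your top-level reduction is exactly the paper's: Theorem \ref{T:example} is proved by showing that $f$ lies in the class $\scrF$ (this is Proposition \ref{P:lie-in-F}(2), ``box permutations'') and then invoking Theorem \ref{T:iid-trans}. However, your sketch of \emph{how} to verify membership in $\scrF$ has a real gap. You propose to decompose $f$ into elementary moves, each of which translates a single component ``by a small amount,'' and to insist that disjointness and the crossing structure are maintained at every intermediate stage. This cannot work when $f$ permutes the left-to-right order of the components of $V$ (or the bottom-to-top order of the components of $U$), which the theorem explicitly allows and which is its main content beyond Theorem \ref{T:conj-bgw} — see Figure \ref{fig:perm}, where $v_1, v_2, v_3$ are reordered. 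To swap two components $V_i, V_{i+1}$ that both cross every box of $U$ horizontally, any chain of small translations must at some step either make them overlap (breaking disjointness) or move one of them out of the vertical strip where it crosses the $U$'s (breaking the crossing hypothesis). So the interpolation you describe simply does not exist for such $f$, and no choice of ordering of the components fixes this.

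The paper resolves this with a genuinely different elementary move: the ``column transposition'' of Proposition \ref{P:basic-perm} (building on Lemma \ref{L:G-moves}). There, one does \emph{not} interpolate through intermediate positions; instead one factors through a Markov quadruple $(U, \tilde V, V, W)$ in which the connecting boxes $\tilde V$ are chosen large enough to contain both the old and the new positions of the transposed components (constructed via Lemma \ref{L:shift}), so that the RSK decoupling across $\tilde V$ makes the swap a single conditional-independence step; the order reversal itself is supplied by composing with a $180^\circ$ rotation of the enclosing box. The actual proof of Proposition \ref{P:lie-in-F}(2) then splits into a special case where the crossing-interval data is unchanged (handled by composing unit slides, essentially as you describe) and a general case where the intervals are translated and reordered (handled by these transpositions). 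Your proposal covers only the first half.
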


When imagining the kind of scenarios to which Theorem \ref{T:example} applies, think of the boxes in $U$ as being short and wide, and the boxes in $V$ as being thin and tall. We can imagine that these two sets of boxes cross each other in a  kind of `lattice structure'. Last passage values are preserved under any rearrangement of the components of $U$ and $V$ that preserves this lattice structure. See Figure \ref{fig:perm} for a simple example.

\begin{figure}[htb]
	\centering
	\begin{subfigure}[h]{0.4\textwidth}
		\includegraphics[width=\textwidth]{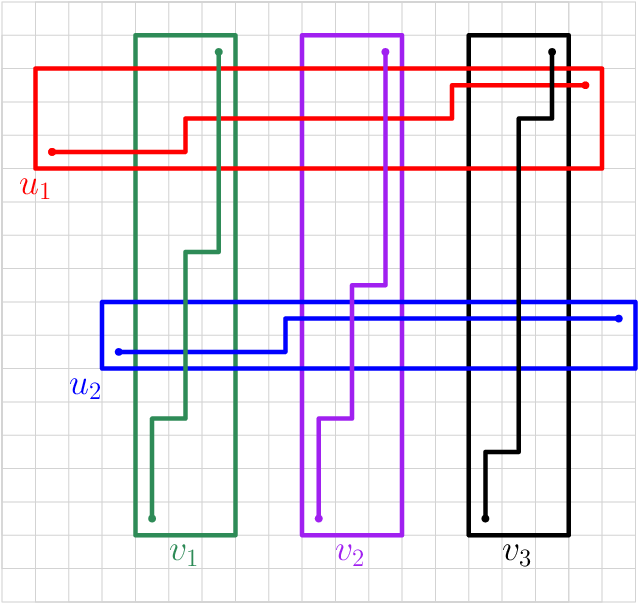}
		\caption{}
	\end{subfigure}
	\qquad
	\qquad
	\begin{subfigure}[h]{0.4\textwidth}
		\includegraphics[width=\textwidth]{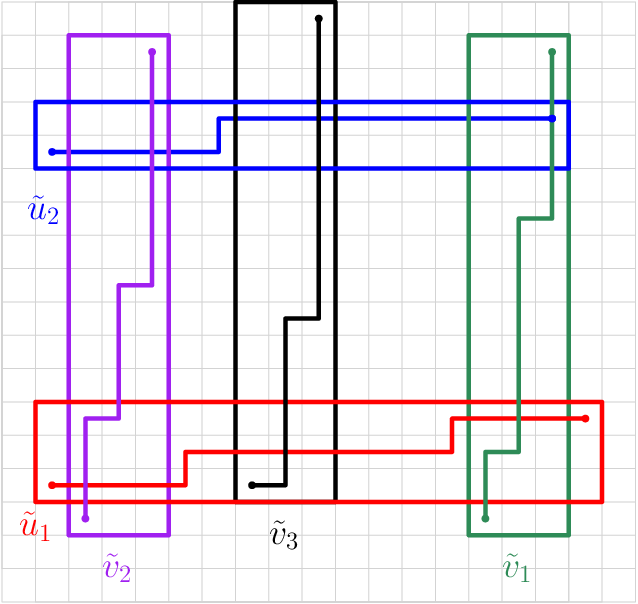}
		\caption{}
	\end{subfigure}
	\caption{An example of Theorem \ref{T:example}. In the language of that theorem, we take $U_i = \{u_i\}, V_i = \{v_i\}$ and $f(u_i) = \tilde u_i, f(v_i) = \tilde v_i$ for all $i$. At the level of individual boxes, $f$ is a translation. Moreover, $f$ preserves the disjointness of pairs $u_i, u_j$ and $v_i, v_j$ for $i \ne j$, and the fact that every pair $(u_i, v_j)$ crosses horizontally. Therefore $f$ preserves last passage values: $(Z_M(u_1), Z_M(u_2), Z_M(v_1), Z_M(v_2), Z_M(v_3)) \eqd (Z_M(\tilde u_1), Z_M(\tilde u_2), Z_M(\tilde v_1), Z_M(\tilde v_2), Z_M(\tilde v_3))$.}
	\label{fig:perm}
\end{figure}

\begin{theorem}
	\label{T:puzzle-pieces}
	Let $M$ be an environment of i.i.d.\ exponential or geometric random variables. Let $U, W \sset \Zd$. Suppose that we can partition $U = U_1 \cup U_2$ and $W = W_1 \cup W_2$ such that 
	\begin{itemize}
		\item Every pair of boxes $(u, w) \in U_1 \X W_1$ crosses horizontally.
		\item Every pair of boxes $(u, w) \in U_2 \X W_2$ crosses vertically.
		\item Every pair of boxes $(u, w)$ in either $U_1 \X W_2$ or $U_2 \X W_1$ is disjoint.
	\end{itemize}
 Now let $c \in \{(\pm 1, 0), (0, \pm 1)\}$, and define $f:U \cup W \to T_c U \cup W$ by $f|_U = T_c, f|_W = \id$. Suppose that $f$ preserves horizontal crossings and disjointness. Then
	$$
	Z_M|_{U \cup W} \eqd Z_M|_{T_cU \cup W} \circ f.
	$$
\end{theorem}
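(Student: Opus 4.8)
The plan is to follow the Markov-chain strategy outlined after Example~\ref{Ex:simple}, but now with a box whose horizontal and vertical crossing roles are \emph{mixed}: since $U_1\times W_1$ crosses horizontally and $U_2\times W_2$ crosses vertically, the boxes in $U$ play the role of the `diagonal' object being shifted, while $W$ plays the role of the collection of last passage values living on two opposite faces of a large enclosing box. Concretely, let $B$ be the smallest box of $\Zd$ containing every box in $U\cup T_cU$; by the crossing hypotheses, every box of $W_1$ spans $B$ from left to right and every box of $W_2$ spans $B$ from bottom to top, while every box of $U$ lies in the interior in the appropriate sense. First I would verify, using the extended conditional-independence statement coming from RSK (the version quoted in the introduction, valid for multi-point values $Z_M|_V$, $Z_M|_W$ that cross $B$ left-to-right resp. bottom-to-top, made rigorous via Theorem~\ref{T:encoding-lpp}), that $Z_M|_{W_1\cup W_2}$ splits as conditionally independent pieces given $Z_M|_{D(B)}$, and moreover that each value $Z_M(u)$ for $u\in U$ is a deterministic function of $Z_M|_{D(B)}$ together with the environment \emph{outside} $B$. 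This is the step where the disjointness hypotheses on $U_1\times W_2$ and $U_2\times W_1$ are used: they guarantee that a $u$-path for $u\in U_1$ never needs to interact with the $W_2$-boundary data, and symmetrically, so the dependence really does factor through $D(B)$ and the exterior noise alone.

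With this in hand, the two vectors
$$
\big(Z_M|_{D(B)},\, Z_M|_{U\cup W}\big)
\qquad\text{and}\qquad
\big(Z_M|_{D(B)},\, Z_M|_{T_cU\cup W}\circ f\big)
$$
are each obtainable as the two endpoints of a two-step Markov chain: the first step samples $Z_M|_{D(B)}$, and the second step samples $Z_M|_{U\cup W}$ (resp. its $f$-image) conditionally, with the conditional law factoring as (exterior-noise part producing the $U$-values) $\times$ ($W$-values given $D(B)$). Since $f$ acts as the identity on $W$ and as the translation $T_c$ on $U$, and since $c\in\{(\pm1,0),(0,\pm1)\}$ is a single lattice step that by hypothesis keeps $f$ inside the class of maps preserving horizontal crossings and disjointness, the only thing left to check is that the two Markov chains have \emph{equal transition kernels} — equivalently, that
$$
\big(Z_M|_{D(B)},\, Z_M|_{U\cup W}\big)\ \eqd\ \big(Z_M|_{D(B)},\, Z_M|_{T_cU\cup W}\circ f\big).
$$

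To obtain this last distributional identity I would combine two ingredients. For the $W$-component the equality is immediate, since $f|_W=\id$ and $D(B)$ itself is unchanged (shifting $U$ by one step does not move the enclosing box $B$ in the relevant direction, because $B$ was defined to contain $T_cU$ already). For the $U$-component, the point is that $Z_M(u)$ and $Z_M(T_cu)$ are both functions of the exterior environment and $D(B)$, and a single-step translate of a box sitting inside $B$, \emph{conditioned on the same diagonal data}, has the same law as the original — this is where one invokes the basic invariances (I)--(III) applied to the sub-environment, exactly as in the rotation argument of Example~\ref{Ex:simple}, now localized to the region governing $U$. The compatibility of the single step $c$ with the crossing/disjointness structure (the hypothesis that $f$ preserves horizontal crossings and disjointness) is precisely what makes this local move legitimate. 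I expect the main obstacle to be bookkeeping: carefully identifying which multi-point last passage values constitute $D(B)$, checking that the conditional-independence statement from RSK applies verbatim to the mixed horizontal/vertical configuration here (one may need to rotate part of the picture so that $W_2$-crossings become horizontal crossings before quoting the RSK result), and making sure the ``$f$ preserves disjointness'' hypothesis is exactly strong enough to decouple the two boundary families. Once the decoupling is set up correctly, the rest is the now-standard two-step Markov chain argument, and in fact this theorem should follow as a special case of the general Theorem~\ref{T:iid-trans} once that machinery is available; the proof I have sketched is the self-contained route.
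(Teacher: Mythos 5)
Your proposal has a genuine gap, and it sits at the heart of the argument. You claim that (i) each $Z_M(u)$, $u\in U$, is a deterministic function of $Z_M|_{D(B)}$ and the environment outside the enclosing box $B$, and (ii) the $W$-values are conditionally independent pieces given $Z_M|_{D(B)}$. Neither holds. The decoupling supplied by RSK (Theorem \ref{T:explicit}, made usable through Propositions \ref{P:poly-comp} and \ref{P:measurable-fn}) expresses partition functions of boxes that \emph{cross} a conditioning box in terms of exterior noise plus the crossing data; it says nothing about boxes \emph{contained} in $B$, and a box strictly inside $B$ is simply not measurable with respect to $Z_M|_{D(B)}$ together with the exterior environment (already for $B$ a $2\times2$ square and $u$ a single cell). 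The $W$-boxes do not help either: the hypothesis that $(u,w)\in U_1\X W_1$ crosses horizontally means the $W_1$-boxes are tall relative to the $U_1$-boxes (they are crossed by $u$-paths), and since nothing relates $W_1$ to $U_2$ beyond disjointness, a $W_1$-box need not cross $B$ at all, so the conditional independence given $Z_M|_{D(B)}$ does not apply to $Z_M|_W$. Finally, even granting some conditioning structure, your last step asserts that conditionally on the diagonal data the law of the $U$-values is invariant under the unit shift $T_c$; but unlike Example \ref{Ex:simple}, where a $180$-degree rotation fixes the conditioning set $D(v^-;w^+)$ while exchanging $v$ and $w$, there is no symmetry among (I)--(III) that fixes $D(B)$ and maps $U$ to $T_cU$. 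That invariance is precisely what the theorem asserts, so the argument is circular at this point.

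The paper's route is structurally different and avoids a single global box. Theorem \ref{T:puzzle-pieces} is the special case of Proposition \ref{P:slides}: via Lemmas \ref{L:shift} and \ref{L:shift-part-ii} one builds a family $V=V_1\cup V_2$ of small conditioning boxes localized at the crossings (essentially fattened intersections $u\cap w$, with $V_1$ handling the horizontal crossings $U_1\X W_1$ and $V_2$ the vertical crossings $U_2\X W_2$), together with the shifted family $V'=(V_1+(0,c))\cup(V_2+(c,0))$, so that $(U,V,V',W)$ and $(T_cU,T_cV,V',W)$ are Markov quadruples. The map $f$ extended to $V\cup V'$ is then a translation on $U\cup V$, the identity on $V'\cup W$, and a disjoint union of basic reflections (Lemma \ref{L:in-F-triv}) on $V\cup V'$; Corollary \ref{C:2-step-markov'} and Theorem \ref{T:iid-trans} conclude. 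If you want to salvage your sketch, you should replace the single box $B$ and its diagonal data by such a localized family of crossing boxes, and replace the unverified shift-invariance of the conditional kernel by the one-step reflection identity between a box and its translate.
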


Theorem \ref{T:puzzle-pieces} is again best illustrated through a simple example, see Figure \ref{fig:shift}. One way of imagining Theorem \ref{T:puzzle-pieces} and the example above is that the sets of $U$- and $W$-boxes are two rigid interlocking puzzle pieces. We can slide these two pieces around in the plane without changing the joint distribution of last passage values, so long as we do not break the crossing or disjointness structure. 

\begin{figure}[tb]
	\centering
	\begin{subfigure}[h]{0.4\textwidth}
		\includegraphics[width=\textwidth]{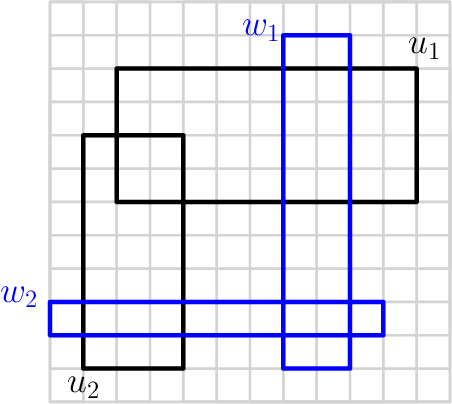}
		\caption{}
	\end{subfigure}
	\qquad
	\qquad
	\begin{subfigure}[h]{0.4\textwidth}
		\includegraphics[width=\textwidth]{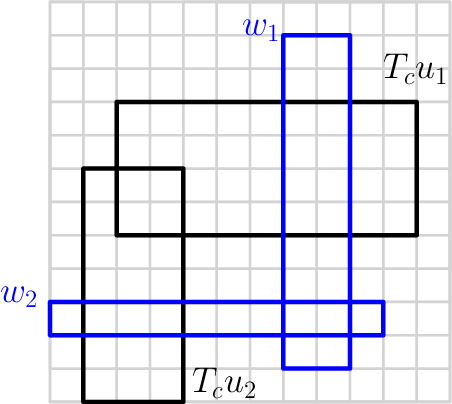}
		\caption{}
	\end{subfigure}
	
	\caption{An example of Theorem \ref{T:puzzle-pieces}. In that theorem, we take $U_i = \{u_i\}, W_i = \{w_i\}$ and $c = (0, - 1)$. From the picture, we can easily check the crossing and disjointness conditions of that theorem, and so $(Z_M(u_1), Z_M(u_2), Z_M(w_1), Z_M(w_2)) \eqd (Z_M(T_c u_1), Z_M(T_c u_2), Z_M(w_1), Z_M(w_2))$.}
	\label{fig:shift}
\end{figure}

As mentioned previously, the proof framework also works for integrable inhomogeneous last passage models. Inhomogenous integrable last passage models are indexed by two biinfinite sequences $\al, \be$. The corresponding environment $M_{\al, \be}$ consists either of independent geometric random variables where $M_{(i, j)}$ has parameter $\al_i \be_j$ (i.e. $\p(M_{(i, j)} = n) = (1- \al_i \be_j) (\al_i \be_j)^n$ for $n = 0, 1, \dots$), or of exponential random variables $M_{(i, j)}$ with mean $(\al_i + \be_j)^{-1}$. The analogue of Theorem \ref{T:conj-bgw} in this case is the following.

\begin{theorem}
	\label{T:inhom-ex}
	Let $f:\bigcup U_i \to \bigcup T_{c_i} U_i$ be a function satisfying the conditions of Theorem \ref{T:conj-bgw}, and let $\al, \be$ be biinfinite sequences indexing an environment of exponential or geometric random variables $M_{\al, \be}$. Then there exist rearrangements $\al', \be'$ of $\al, \be$ such that
	$$
	Z_{M_{\al, \be}}|_{\bigcup U_i} \eqd Z_{M_{\al', \be'}}|_{T_{c_i} U_i} \circ f.
	$$
\end{theorem}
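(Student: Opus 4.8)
The plan is to adapt the Markov-chain strategy outlined in the introduction, replacing the rotation-and-reflection step with a bookkeeping argument tracking how the inhomogeneity parameters are permuted. Recall that for i.i.d. weights the proof of Theorem \ref{T:conj-bgw} proceeds by decomposing $f$ into a sequence of ``elementary'' moves — each one a single-step translation $T_{(\pm 1,0)}$ or $T_{(0,\pm 1)}$ of one block $U_i$ past its neighbours — and checking that each move preserves the joint law of last passage values by building two Markov chains with matching transition kernels, using the RSK conditional independence of Theorem \ref{T:encoding-lpp} together with the basic invariances (I)--(III). In the inhomogeneous setting the conditional independence still holds: the joint law of the two RSK tableaux on a box is proportional to $s_\la(\al_{\mathrm{box}}) s_\la(\be_{\mathrm{box}})$ where $\al_{\mathrm{box}}, \be_{\mathrm{box}}$ are the restrictions of $\al,\be$ to the relevant coordinate ranges, so the product structure — hence the conditional independence given $Z_M|_{D(u)}$ — is unchanged. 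What fails is step (2): a single-step horizontal translation of a box $u$ into $u'=u+(1,0;1,0)$ is implemented on the environment by a $180^\circ$ rotation of the intervening strip, but this rotation \emph{also} reverses and relabels the $\al$-parameters sitting in that strip, so the two Markov chains now live over \emph{different} environments $M_{\al,\be}$ and $M_{\al'',\be}$ with $\al''$ obtained from $\al$ by a local transposition of a block of coordinates.

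The key steps, in order, are as follows. First, establish the elementary inhomogeneous move: if $v' = v+(1,0;1,0)$ and $(u_i,v)$ crosses horizontally for all $i$ in the appropriate relative position, then there is a transposition-type rearrangement $\al \mapsto \al''$ (swapping the block of $\al$-coordinates $[v_1^-, v_1^+]$ with the single coordinate $v_1^+ + 1$, all other coordinates fixed — the $\be$-sequence untouched, since only horizontal moves are involved; vertical moves swap $\be$-blocks symmetrically) such that
$$
(Z_{M_{\al,\be}}(u_1),\dots,Z_{M_{\al,\be}}(u_k),Z_{M_{\al,\be}}(v)) \eqd (Z_{M_{\al'',\be}}(u_1),\dots,Z_{M_{\al'',\be}}(u_k),Z_{M_{\al'',\be}}(v')).
$$
This is the analogue of Example \ref{Ex:simple} and its proof is identical in structure: build the Markov chains through $Z_M|_{D(v^-;w^+)}$ using inhomogeneous RSK conditional independence, and verify the final transition step $(Z_M|_{D}, Z_M(v)) \eqd (Z_{M''}|_D, Z_{M''}(v'))$ by applying the $180^\circ$ rotation, which now simultaneously sends the environment $M_{\al,\be}$ to $M_{\al'',\be}$ and $(v, D(v^-;v'^+)) \mapsto (v', D(v^-;v'^+))$, since the rotation is still a measure-preserving map of the \emph{inhomogeneous} model once the parameters are correspondingly permuted. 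Second, decompose the given $f$ (exactly as in the i.i.d. proof of Theorem \ref{T:conj-bgw}, which this excerpt says follows from Theorem \ref{T:iid-trans}) into a finite composition of elementary single-step moves, each preserving horizontal crossings throughout. Third, compose: each elementary move contributes one rearrangement of $\al$ or $\be$; since compositions of rearrangements are rearrangements, after applying all moves we arrive at environments $M_{\al',\be'}$ with $\al',\be'$ rearrangements of $\al,\be$, and the chain of distributional equalities yields $Z_{M_{\al,\be}}|_{\bigcup U_i} \eqd Z_{M_{\al',\be'}}|_{\bigcup T_{c_i}U_i} \circ f$.

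Two points require care. One is that the rearrangements produced by the elementary moves must genuinely be permutations of the biinfinite sequences (not, say, insertions that change multiplicities): a single-step translation of a block $[a,b]$ past coordinate $b+1$ cyclically permutes the parameters $\al_a,\dots,\al_{b+1}$, which is a bona fide rearrangement, so this is fine, but one must track that intermediate moves do not corrupt the crossing hypotheses needed for subsequent moves — this is handled exactly as in the i.i.d. case, since the crossing structure depends only on the geometry of the boxes and not on the parameters. The second, and the step I expect to be the main obstacle, is verifying that the inhomogeneous RSK conditional independence (Theorem \ref{T:encoding-lpp} / the geometric-RSK preservation underlying it) holds with the parameter-restriction bookkeeping exactly as needed for the Markov-chain construction on a sub-box whose coordinate ranges are strict sub-intervals of the ambient ranges — i.e. that restricting attention to a box $(v^-;w^+)$ and conditioning on its diagonal last passage values behaves correctly when the weights in that box carry their own slice of the $\al,\be$ parameters. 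Once that is in hand, the rest is a routine induction mirroring the i.i.d. argument, with the parameters carried along as passive labels that get permuted one transposition at a time.
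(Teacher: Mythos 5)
Your proposal is correct and takes essentially the same route as the paper's proof (Lemmas \ref{L:no-dependence}, \ref{L:one-shift}, \ref{L:permute-ab} and Theorem \ref{T:towers}): decompose $f$ into elementary one-step shifts exactly as in the i.i.d.\ tower case, verify each shift via the Markov-quadruple/decoupling argument with the reflection step now inducing a cyclic permutation of a block of parameters, and compose the resulting rearrangements. The one detail to tighten is that the $180^\circ$ reflection also reverses the $\beta$-parameters in the affected strip, so $\beta$ is not literally ``untouched'' by a horizontal move; as in the paper's Lemma \ref{L:no-dependence}, this reversal is undone by the permutation-invariance of Theorem \ref{T:decoupled-family}, after which your bookkeeping goes through.
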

The precise restriction on the types of allowable rearrangements $\al', \be'$ in Theorem \ref{T:inhom-ex} is a generalization of the following well-known fact. For a box $u$ and two environments of independent geometric or exponential random variables indexed by $(\al, \be)$ and $(\al', \be)$, we have
$
	Z_{M_{\al, \be}}(u) \eqd Z_{M_{\al', \be'}}(u)
$
if and only if
$$
(\al'_{u^-_1}, \dots, \al'_{u^+_1}) = \sig (\al_{u^-_1}, \dots, \al_{u^+_1}), \quad \mathand \quad (\beta'_{u^-_2}, \dots, \beta'_{u^+_2}) = \tau (\beta_{u^-_2}, \dots, \beta_{u^+_2})
$$
for some permutations $\sig$ and $\tau$. In the case of geometric last passage percolation, this is again a consequence of \eqref{E:Schur}, and the fact that Schur functions are symmetric.
For $(\al', \be')$ to be an allowable rearrangement in Theorem \ref{T:inhom-ex} we require this sort of condition to hold for many boxes. More precisely, we require the following.
\begin{itemize}
	\item For each $i$, let $I(i)^-_1$ be the smallest interval
	containing $\{u^-_1 : u \in U_i\}$. Similarly define $I(i)^-_2, I(i)^+_1,$ and $I(i)^+_2$. Then for any $i$, and any $a \in I(i)^-_1, b \in I(i)^+_1, c \in I(i)^-_2, d \in I(i)^+_2$, there exist permutations $\sig$ and $\tau$ such that
	$$
	(\al'_{a + c_{i, 1}}, \dots, \al'_{b + c_{i, 1}}) = \sig (\al_a, \dots, \al_b), \quad \mathand \quad (\beta'_{c + c_{i, 2}}, \dots, \beta'_{d + c_{i, 2}}) = \tau (\beta_{c}, \dots, \beta_{d}).
	$$
	\end{itemize}

Finally, in both the inhomogeneous and homogeneous cases the last passage invariance statements extend naturally to certain collections of multi-point last passage values. See Section \ref{S:main} for precise statements.

\subsection{Invariance of related objects}
\label{SS:symmetries}
The invariance statements from Section \ref{SS:symmetries-lpp} are strong enough to yield interesting symmetries of last passage percolation with initial conditions, last passage path locations, disjointness probabilities, and other objects. Here we give a few sample results to illustrate this (note that stronger statements are easily possible, if desired). Most of these results follow straightforwardly from Theorem \ref{T:iid-trans} and its special cases.

For an environment $M$, sets $D_1, D_2 \sset \Z^2$ with $D_1\X D_2 \cap \Zd \ne \emptyset$, and functions $f:D_1 \to \R, g:D_2 \to \R$, define \textbf{last passage value with boundary conditions} $f$ and $g$ by
$$
Z_M(f, g) = \max_{(u^-; u^+) \in (D_1\X D_2) \cap \Zd} \; \max_{\pi:u^- \to u^+} \big( f(u^-) + M(\pi) +g(u^+) \big).
$$
Theorem \ref{T:conj-bgw} yields the following corollary.

\begin{corollary} [Initial conditions]
	\label{C:conj-bgw-init} 
	Let $M$ be an environment of i.i.d. exponential or geometric random variables. Let $U, V, W \sset \Zd$ and $c \in \Z^2$ and suppose that 
	\begin{itemize}
		\item Every pair of boxes $(u, v)$ in either $U \X V$ or $V \X W$ crosses horizontally.
		\item The same is true after $V$ is shifted by $c$. That is, every pair of boxes $(u, v)$ in either $U \X T_cV$ or $T_cV \X W$ crosses horizontally.
	\end{itemize}
	Now let $V^- = \{v^- : v \in V\}$ and $V^+ = \{v^+ : v \in V\}$ be the set of bottom left corners and top right corners of boxes in $V$. Let $f:V^- \to \R$ and $g:V^+ \to \R$ be any boundary conditions.
	Then the joint distribution of the last passage values for $U$ and $W$, and the last passage value with boundary conditions $f$ to $g$, is unchanged under the shift of $V^-$ and $V^+$ by $c$. More precisely,
	$$
	(Z_M|_{U \cup W}, Z_M(f, g)) \eqd (Z_M|_{U \cup W}, Z_M(f \circ T_{-c}, g \circ T_{-c})).
	$$
\end{corollary}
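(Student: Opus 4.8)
The plan is to express the last passage value with boundary conditions as a fixed deterministic function of finitely many ordinary (single-point) last passage values, and then invoke Theorem~\ref{T:conj-bgw}. We may assume $c \neq (0,0)$, as otherwise there is nothing to prove. With $V^- = \{v^- : v \in V\}$ and $V^+ = \{v^+ : v \in V\}$, set
$$
\widetilde V \;=\; (V^- \X V^+) \cap \Zd \;=\; \{(v^-; w^+) : v, w \in V,\ v^- \nearrow w^+\} \;\sset\; \Zd,
$$
which contains $V$ and is therefore nonempty. Unwinding the definition,
$$
Z_M(f, g) \;=\; \max_{\tilde v \in \widetilde V} \big( f(\tilde v^-) + Z_M(\tilde v) + g(\tilde v^+) \big) \;=:\; \Psi\big(Z_M|_{\widetilde V}\big),
$$
where $\Psi$ is a fixed function of the vector $Z_M|_{\widetilde V}$ (depending on $f,g$). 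Since the domains of $f\circ T_{-c}$ and $g\circ T_{-c}$ are $V^-+c = (T_c V)^-$ and $V^++c = (T_c V)^+$, the identical computation gives $Z_M(f\circ T_{-c}, g\circ T_{-c}) = \Psi\big(Z_M|_{T_c\widetilde V}\circ T_c\big)$ with the \emph{same} $\Psi$. As $Z_M|_U$ and $Z_M|_W$ are coordinate projections, the corollary will follow by applying the deterministic map that retains the $U$- and $W$-coordinates and applies $\Psi$ to the $\widetilde V$-coordinates, once we establish
$$
Z_M|_{U \cup \widetilde V \cup W} \;\eqd\; Z_M|_{U \cup T_c\widetilde V \cup W} \circ \Phi, \qquad \text{where } \Phi = \id \text{ on } U\cup W \text{ and } \Phi = T_c \text{ on } \widetilde V.
$$

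I would obtain this displayed equality from Theorem~\ref{T:conj-bgw} applied with $k=3$, $U_1 = U$, $U_2 = \widetilde V$, $U_3 = W$, and shifts $c_1 = c_3 = (0,0)$, $c_2 = c$, so that the associated map is exactly $\Phi$. The consecutive-crossing hypothesis is immediate: if $\tilde v = (v^-;w^+)\in\widetilde V$ and $u\in U$, then $(u,v)$ and $(u,w)$ cross horizontally (both lie in $U\X V$), so $u^-\searrow v^- = \tilde v^-$ and $\tilde v^+ = w^+ \searrow u^+$, i.e.\ $(u,\tilde v)$ crosses horizontally; the same argument with $V\X W$ shows $(\tilde v,w')$ crosses horizontally for all $\tilde v\in\widetilde V$ and $w'\in W$.

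The step I expect to be the main obstacle is verifying that $\Phi$ preserves horizontal crossings on its \emph{entire} domain and is a well-defined bijection, rather than just that consecutive blocks cross correctly. Pairs with both boxes in $U\cup W$ are fixed by $\Phi$; pairs with both boxes in $\widetilde V$ are translated by the common shift $T_c$; in both cases crossings are preserved trivially. For the mixed pairs, the computation above shows $(u,\tilde v)$ \emph{always} crosses horizontally, and rerunning it with the shifted hypothesis (pairs in $U\X T_c V$ cross horizontally) shows $(u,T_c\tilde v)$ always does as well; likewise $(\tilde v, w')$ and $(T_c\tilde v, w')$ always cross horizontally. For the reverse pairs, note that $u^-\searrow\tilde v^-$ and $\tilde v^+\searrow u^+$ always hold, so $(\tilde v,u)$ can cross horizontally only if $\tilde v = u$ as boxes; but $\tilde v = (v^-;w^+) = u$, fed into the shifted hypothesis for $(u,v)$ and for $(u,w)$, forces both $u^-\searrow u^-+c$ and $u^++c\searrow u^+$, hence $c = (0,0)$, a contradiction — so $\tilde v\neq u$, and the same reasoning gives $T_c\tilde v\neq u$; thus neither $(\tilde v,u)$ nor $(\Phi\tilde v,\Phi u)$ crosses horizontally. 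The pairs involving $W$ are handled identically, with $V\X W$ and $T_c V\X W$ in place of $U\X V$ and $U\X T_c V$. These same ``$c\neq(0,0)$ forces distinctness'' computations show that $\widetilde V$ and $T_c\widetilde V$ are each disjoint from $U\cup W$, which is precisely what makes $\Phi$ a well-defined bijection. With the hypotheses of Theorem~\ref{T:conj-bgw} verified, the displayed equality holds and the corollary follows. The only genuinely delicate part is this bookkeeping around coincidences of boxes; everything else is a direct application of shift invariance together with the observation that a last passage value with boundary conditions supported on the corners of $V$ sees only the last passage values between those corners.
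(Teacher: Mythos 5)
Your proposal is correct and follows essentially the same route as the paper: both define the auxiliary set $\widetilde V = (V^-\X V^+)\cap\Zd$ (the paper's $V'$), observe that $Z_M(f,g)$ is a deterministic function of $Z_M|_{\widetilde V}$, check that $U,\widetilde V,W$ form a tower of horizontally crossing sets before and after the shift, and conclude by shift invariance (the paper cites Proposition \ref{P:lie-in-F}(1) with Theorem \ref{T:iid-trans}, while you cite Theorem \ref{T:conj-bgw}, which is the packaged form of the same result). Your extra bookkeeping verifying that $\Phi$ preserves horizontal crossings globally and is a well-defined bijection is correct and fills in details the paper's one-line proof leaves implicit.
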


Next, we state a corollary of Theorem \ref{T:conj-bgw} that concerns the geometry of last passage geodesics. For $u  \in \Zd$, we say that a $u$-path $\pi$ is a \textbf{$u$-geodesic} if $M(\pi) = Z_M(u)$. A standard path-crossing argument shows that there exists a unique $u$-geodesic $\pi_M(u) \in \scrP_M(u)$ such that for any $\pi' \in \scrP_M(u)$ and any $v \in \pi'$, there exists a $v' \in \pi$ such that $v \searrow v'$. The path $\pi_M(u)$ is called the \textbf{leftmost $u$-geodesic}.

\begin{figure}[tb]
	\centering
	\begin{subfigure}[h]{0.4\textwidth}
		\includegraphics[width=\textwidth]{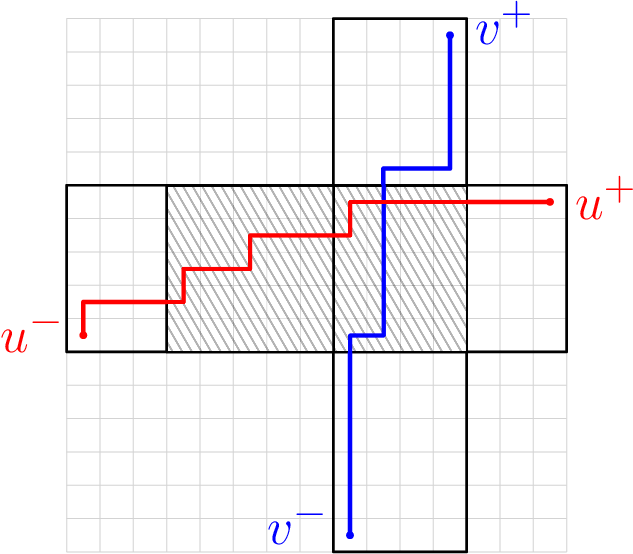}
		\caption{}
	\end{subfigure}
	\qquad
	\qquad
	\begin{subfigure}[h]{0.4\textwidth}
		\includegraphics[width=\textwidth]{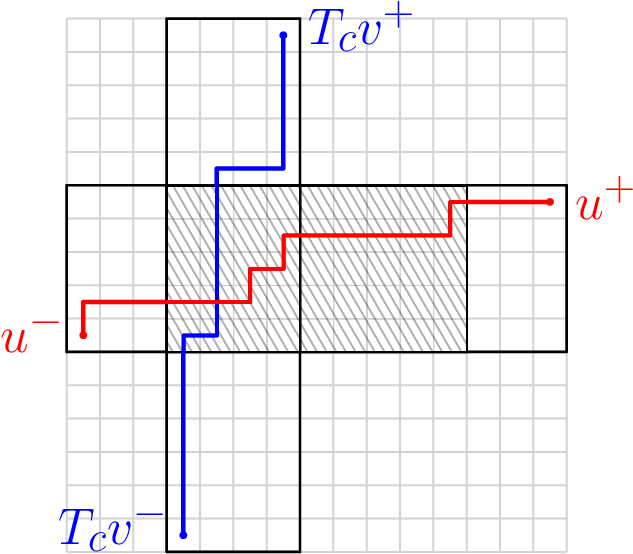}
		\caption{}
	\end{subfigure}
	\caption{An example of Corollary \ref{C:geometry-cor}. To apply that theorem, we set $U = \{u\}$ and $V = \{v\}$. The intersection $u \cap v$ can be taken as the box $x$, and the box $w$ can be taken as the smallest box containing both $u \cap v$ and $u \cap T_c v$; this is the shaded region.
		The joint distribution of the portion of the paths $\pi_M(u)$ and $\pi_M(v)$ that lie outside of the shaded region $w$ in (a) is the same as the joint distribution of the portion of the paths $\pi_M(u)$ and $\pi_M(T_c v)$ that lie outside of the shaded region in (b).}	
	\label{fig:geod}
\end{figure}

\begin{corollary}[Geodesic structure]
	\label{C:geometry-cor}
	Let $M$ be an environment of i.i.d.\ exponential or geometric random variables. Let $x, w \in \Zd, U, V \sset \Zd$, and $c \in \Z^2$ be such that 
	\begin{itemize}
		\item For every box $u \in U$, the pair $(u, w)$ crosses horizontally.
		\item The pairs $(w, x)$ and $(w, T_c x)$ cross horizontally.
		\item For every box $v \in V$, the pair $(x, v)$ crosses horizontally.
	\end{itemize} Then we have
	\begin{equation}
	\label{E:piMu}
	[(\pi_M(u) \smin w : u \in U), (\pi_M(v) \smin x : v \in V)] \eqd [(\pi_M(u) \smin w : u \in U), (\pi_M(T_cv) \smin T_c x : v \in V)].
	\end{equation}
\end{corollary}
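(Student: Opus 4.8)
The plan is to reduce the geodesic statement \eqref{E:piMu} to a distributional identity for multi-point last passage values, and then obtain that identity from the invariance machinery (Theorem~\ref{T:iid-trans}), or equivalently by running the two-step Markov-chain argument illustrated in Example~\ref{Ex:simple}.

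\emph{Step 1 (geodesic segments are local functionals).} Fix a box $w$ and $u\in\Zd$ with $(u,w)$ crossing horizontally. Since $u^-_2\ge w^-_2$ and $u^+_2\le w^+_2$, every $u$-path first enters $w$ through its left edge $L_w=\{w^-_1\}\X[w^-_2,w^+_2]$ and last leaves it through its right edge $R_w=\{w^+_1\}\X[w^-_2,w^+_2]$; splitting a $u$-path at these two points and maximising the three resulting pieces separately shows that $Z_M(u)$, the optimal entry/exit pair, and hence the leftmost $u$-geodesic restricted to the complement of $w$ are deterministic functionals of $\big(M|_{\Z^2\smin w},\, Z_M|_{\scrH(w)}\big)$, where $\scrH(w)=\{(p;q):p\in L_w,\,q\in R_w,\,p\nearrow q\}$ is the set of horizontal crossings of $w$. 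In fact $\pi_M(u)\smin w=\Phi_{u,w}\big(M|_{\Z^2\smin w},\, Z_M|_{\scrH(w)}\big)$ for a fixed translation-equivariant function $\Phi_{u,w}$ that reads the environment only in the part of $u$ lying strictly to the left or strictly to the right of the column range $[w^-_1,w^+_1]$. Exchanging the two coordinates, if $(x,v)$ crosses horizontally then $\pi_M(v)\smin x=\Psi_{v,x}\big(M|_{\Z^2\smin x},\, Z_M|_{\scrV(x)}\big)$ for a translation-equivariant $\Psi_{v,x}$ reading the environment only above or below the row range $[x^-_2,x^+_2]$, where $\scrV(x)$ is the set of vertical crossings of $x$.

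\emph{Step 2 (the last passage identity).} Writing raw weights as last passage values via $M_y=Z_M((y;y))$, Step~1 says that each side of \eqref{E:piMu} is one fixed functional applied to $Z_M$ restricted to a box-collection $\scrB$---the $U$-boxes and the singletons in the regions read by the $\Phi_{u,w}$, the horizontal crossings $\scrH(w)$, the vertical crossings $\scrV(x)$, and the singletons in the regions read by the $\Psi_{v,x}$---respectively to $Z_M$ restricted to $f(\scrB)$ precomposed with $f$, where $f$ is the identity on all $U$- and $w$-flavoured boxes and equals $T_c$ on all $x$- and $v$-flavoured boxes. This $f$ is well defined because the regions read by the $\Phi$'s (columns outside $[w^-_1,w^+_1]$) and by the $\Psi$'s (columns inside $[x^-_1,x^+_1]\sset[w^-_1,w^+_1]$, using $[v^-_1,v^+_1]\sset[x^-_1,x^+_1]$) are disjoint. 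One then checks that $f$ preserves horizontal crossings and disjointness, using the three hypotheses of the corollary together with transitivity of horizontal crossings---the key point being that $(w,x)$ and $(w,T_cx)$ both cross horizontally, so the $x$-layer and the shifted $x$-layer sit across $w$ in the same way. Theorem~\ref{T:iid-trans} then yields $Z_M|_{\scrB}\eqd Z_M|_{f(\scrB)}\circ f$, and applying the glued functional built from the $\Phi_{u,w}$ and $\Psi_{v,x}$---using translation equivariance of the $\Psi_{v,x}$ to recognise the shifted side as $(\pi_M(T_cv)\smin T_cx:v\in V)$---gives \eqref{E:piMu}. Equivalently, one runs the two-step strategy directly: the RSK conditional independence of Section~\ref{S:intro}, applied to the box $w\cap x$, shows that conditionally on $Z_M|_{D(w\cap x)}$ and $M|_{\Z^2\smin(w\cap x)}$ the horizontal crossings of $w$ and the vertical crossings of $x$ are independent, so the $x$-side data may be resampled---and in particular shifted by $c$, using that $(w,x)$ and $(w,T_cx)$ both cross horizontally---without disturbing the $w$-side.

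\emph{Main obstacle.} The substantive point is this bookkeeping around the overlap $w\cap x$, which is genuinely nonempty (since $(w,x)$ crossing horizontally forces $x$ to be narrower and taller than $w$): the environment outside $w$ and the environment outside $x$ share the plane outside $w\cap x$, so the $x$-layer of the data is not independent of the $w$-layer and cannot be translated naively---the hypothesis that both $(w,x)$ and $(w,T_cx)$ cross horizontally is exactly what guarantees that shifting the $x$-layer by $c$ leaves invariant the conditioning $\sigma$-field $\sigma\big(Z_M|_{D(w\cap x)},\,M|_{\Z^2\smin(w\cap x)}\big)$ across which the relevant conditional independence holds. The remaining ingredients---the path decomposition and the consistency of the leftmost tie-breaking rule under restriction in Step~1, and the verification of the crossing hypotheses for $f$---are routine.
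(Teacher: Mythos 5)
Your Step 1 is exactly the paper's reduction: by Proposition \ref{P:poly-comp} (together with Proposition \ref{P:measurable-fn}), each $\pi_M(u)\smin w$ is a function of $M|_{\Z^2\smin w}$ and the horizontal crossing values of $w$, and each $\pi_M(v)\smin x$ is a function of $M|_{\Z^2\smin x}$ and the vertical crossing values of $x$; the paper then applies Theorem \ref{T:iid-trans} to the map that fixes $G(\sqcup U\smin w)\cup L_H(w)$ and translates $G(\sqcup V\smin x)\cup L_V(x)$ by $c$, just as you propose. The gap is in how you license that application. Theorem \ref{T:iid-trans} requires $f\in\scrF$, and checking that $f$ ``preserves horizontal crossings and disjointness'' is not enough: these are necessary conditions (Lemmas \ref{L:N-preservation} and \ref{L:F-restrictions}), but Example \ref{E:not-in-F} exhibits a three-box, crossing- and disjointness-preserving, piecewise-translation map that does not preserve the law of last passage values, so the decisive step is unjustified as written. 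What is needed, and what the paper supplies, is membership in $\scrF$ via the constructive results: since $(w,x),(w,T_cx)\in\scrH$, one has $L_H(w)\X L_V(x)\sset\scrH$ and $L_H(w)\X L_V(T_cx)\sset\scrH$, so the restriction of $f$ to $L_H(w)\cup L_V(x)$ is a tower map (Proposition \ref{P:lie-in-F}(1)); the singleton groups $G(\sqcup U\smin w)$ and $G(\sqcup V\smin x)$ are disjoint from $w\cup x$, from $w\cup T_cx$, and from each other, so the pieces glue by disjoint movements (Lemma \ref{L:in-F-triv}(2)). Incidentally, putting the $U$-boxes themselves into your collection $\scrB$ is both unnecessary (Step 1 already determines $\pi_M(u)\smin w$ without $Z_M(u)$) and inconvenient, since a box $u\in U$ and a box of $L_H(w)$ need not cross or be disjoint, which puts that enlarged map outside the easy reach of these constructive lemmas.

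Your fallback ``direct'' argument also does not work as stated: you assert that the conditioning $\sigma$-field $\sigma\big(Z_M|_{D(w\cap x)},\,M|_{\Z^2\smin(w\cap x)}\big)$ is left invariant when the $x$-layer is shifted by $c$, but it is not, since $w\cap T_cx$ is a translate of $w\cap x$ rather than the same box, so the two Markov chains you would compare do not share a common middle coordinate. The correct implementation of the two-step strategy either conditions on the diagonal data of a single box containing both positions (as in Example \ref{Ex:simple}) or decomposes the shift into unit slides and uses Markov quadruples (Corollary \ref{C:2-step-Markov}, Proposition \ref{P:slides}, Proposition \ref{P:lie-in-F}(1)); this is precisely the content packaged in the statement $f\in\scrF$. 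With the $\scrF$-membership argument supplied, the rest of your proposal (translation equivariance of the functionals and the crossing bookkeeping) goes through and coincides with the paper's proof of \eqref{E:piMu}.
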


For the equality in distribution in Corollary \ref{C:geometry-cor}, $\pi_M(u) \smin w$ is well-defined when we think of both $\pi_M(u), w$ as subsets of $\Z^2$. Corollary \ref{C:geometry-cor} also implies stationarity properties for branch points and coalescence points for pairs of leftmost geodesics in $U$ and $V$. See Figure \ref{fig:geod} for an illustration of this corollary.


We can also use Theorem \ref{T:iid-trans} to prove stationarity results about disjointness events and paths restricted to stay in particular regions. We refer the reader to Corollaries \ref{C:restricted-polymer} and \ref{C:disjointness} for precise sample statements.

\subsection{Some combinatorial consequences}
\label{SS:combinatorial}
We can use Theorem \ref{T:iid-trans} to obtain new versions of the RSK bijection and other interesting combinatorial consequences. To state these consequences, we first recall some definitions.

A \textbf{partition} is a weakly decreasing sequence $\la = (\la_1, \la_2, \dots \la_k)$ of positive integers. The size of the partition is $|\la| = \sum_{i=1}^k \la_i$. We will also identify (possibly finite) weakly decreasing sequences of the form $(\la_1, \dots, \la_k, 0, 0,\dots)$ with the partition $\la$. To any partition $\la$, the \textbf{Young diagram} associated to $\la$ is the set of squares $\{(i, j) \in \Z^2 : 1 \le i \le \la_j \}$, see Figure \ref{fig:poly}. By associating a set to every partition, we have a naturally defined notion of containment, $\la \sset \mu$, and we can define the difference $\la \smin \mu$ as a subset of $\Z^2$. For two partitions $\mu \sset \la$, we say that $\la \smin \mu$ is a \textbf{horizontal strip} if for all $i$, it contains at most one square in the column $\{i\} \X \Z$.

A \textbf{semistandard Young tableau} of shape $\la$ is a sequence of partitions $(\emptyset = \la_0, \la_1, \dots, \la_k = \la)$ where for all $i$, $\la_{i-1} \sset \la_i$ and $\la_i \smin \la_{i-1}$ is a horizontal strip. Equivalently, a semistandard Young tableau of shape $\la$ is a filling of the corresponding Young diagram with positive integers such that the entries are strictly increasing along columns and weakly increasing along rows. 

For a point $u \in \Zd$ and $k \le \min(u^+_1 - u^-_1 + 1, u^+_2 - u^-_2 + 1)$ define
$$
Z^{\Delta k}_M(u) = Z_M(u^k) - Z_M(u^{k-1}),
$$
where $Z_M(u^0) := 0$. The quantities $Z^{\Delta k}_M(u)$ are nonincreasing in $k$ when $M$ has nonnegative entries. In particular, 
$$
Z^\Delta_M(u) :=(Z^{\Delta 1}_M(u), Z^{\Delta 2}_M(u), \dots)
$$
is a partition whenever $M$ has nonnegative integer entries. By Greene's theorem \citep{greene1974extension}, the RSK bijection relates matrices $M$ with nonnegative entries to pairs of semistandard Young tableaux by recording certain partition sequences of the form $Z^\Delta_M(u)$. Our framework yields other bijections of this form.

Let $\scrX_{n, m}$ be the set of $n \X m$ matrices $M$ with nonnegative integer entries. We think of a matrix $M \in \scrX_{n, m}$ as a weight environment  on the box $\{1, \dots, n\} \X \{1, \dots, m\}$ so that we can study last passage percolation using the weights given by $M$. Let $\Om_{n, \la}$ be the space of semi-standard Young tableaux of length $n$ and shape $\la$: $\emptyset = \la_0 \sset \la_1 \sset \dots \sset \la_n = \la$. Now consider sequences of intervals 
$$
I = I_1 \sset \dots \sset I_n = \{1, \dots, n\} \quad \mathand \quad J = J_1 \sset \dots \sset J_n = \{1, \dots, m\}
$$
where $|J_i| = i, |I_i| = i$. Let $\bar I_i = I_i \X [1, m], \bar J_i = [1, n] \X J_i \in \Zd$. 
Define maps $\Phi_I, \Psi_J$ mapping $\scrX_{n, m}$ to the space of semistandard Young tableaux by
\begin{align*}
\Phi_I(M) &= (\emptyset, Z^\Delta_M(\bar I_1), Z^\Delta_M(\bar I_2), \dots, Z^\Delta_M(\bar I_n)) \quad \mathand \\
\quad \Psi_J(M) &= (\emptyset, Z^\Delta_M(\bar J_1), Z^\Delta_M(\bar J_2), \dots, Z^\Delta_M(\bar J_n)).
\end{align*}

\begin{theorem}[The scrambled RSK bijection]
	\label{T:rsk-scramble}
	Fix $n, m \in \N$. Then for any sequences $I, J$ as above, the map 
	$$
	M \mapsto (\Phi_I(M), \Psi_J(M))
	$$
	is a bijection from $\scrX_{n, m} \to \bigcup_\la \Om_{n, \la} \X \Om_{m, \la}$, where the union is over all partitions $\la$. Moreover, for all $i \in [1, n], j \in [1, m]$, we have
	$$
	\sum_{\ell=1}^m M_{i, \ell} = |\Phi_I(M)_{\sig(i)} \smin \Phi_I(M)_{\sig(i) - 1} |  \quad \mathand \quad \sum_{\ell=1}^n M_{\ell, j} = |\Psi_J(M)_{\tau(j)} \smin \Psi_J(M)_{\tau(j) - 1}|,
	$$
	where $\sig(i), \tau(j)$ are the smallest indices such that $i \in I_{\sig(i)}, j \in J_{\tau(j)}$. 
\end{theorem}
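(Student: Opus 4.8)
The plan is to derive both the bijectivity and the ``Moreover'' identity from the homogeneous last passage invariance of Section~\ref{SS:symmetries-lpp} --- in its multi-point form, i.e.\ Theorem~\ref{T:iid-trans} and its multi-point extension (Section~\ref{S:main}) --- together with the classical bijectivity of ordinary RSK. The guiding observation is that the scrambled map $M\mapsto(\Phi_I(M),\Psi_J(M))$ is assembled from exactly the same multi-point last passage data as ordinary RSK --- the partitions $Z^\Delta_M$ of the nested boxes $\bar I_i$ and $\bar J_j$, which by Greene's theorem \citep{greene1974extension} are the two output tableaux of RSK when $I,J$ are the standard flags $\bar I^{\mathrm{std}}_i=[1,i]\X[1,m]$ and $\bar J^{\mathrm{std}}_j=[1,n]\X[1,j]$ --- but read off along a different chain of boxes. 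The invariance will say that this reordering does not change the joint law of the data when the environment is i.i.d.

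First I would record the structural facts needed for the statement to make sense: that $\Phi_I(M)$ and $\Psi_J(M)$ really are semistandard Young tableaux, and that they share the common shape $Z^\Delta_M([1,n]\X[1,m])$, so that the map lands in $\bigcup_\la\Om_{n,\la}\X\Om_{m,\la}$. The first of these follows from the RSK encoding (Theorem~\ref{T:encoding-lpp}, or Greene's theorem): each step $\bar I_{i-1}\subset\bar I_i$ adjoins a single row to the box, at the top or at the bottom, and the corresponding increment $Z^\Delta_M(\bar I_i)\smin Z^\Delta_M(\bar I_{i-1})$ is the horizontal strip by which the RSK shape grows under a row insertion (the bottom case is reduced to the top case by a reflection $R_1$ or $R_2$). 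Next I would prove the ``Moreover'' identity. Since for the maximal value $k=\min(u^+_1-u^-_1+1,u^+_2-u^-_2+1)$ the disjoint paths making up $u^k$ are forced to be the straight lines filling $u$, the telescoping sum gives $\lvert Z^\Delta_M(u)\rvert=\sum_{x\in u}M_x$; hence the size of the strip added at step $i$ of $\Phi_I(M)$ is $\lvert Z^\Delta_M(\bar I_i)\rvert-\lvert Z^\Delta_M(\bar I_{i-1})\rvert=\sum_{\ell=1}^m M_{p_i,\ell}$, where $p_i$ is the unique index in $I_i\smin I_{i-1}$, and $\sig(i)$ is precisely the step at which the index $i$ is adjoined. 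In particular the content of $\Phi_I(M)$ is a fixed permutation (depending only on $I$) of the row-sum vector of $M$, and similarly the content of $\Psi_J(M)$ is a fixed permutation of its column-sum vector.

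The heart of the argument is a probabilistic transfer. Take $M$ to be i.i.d.\ geometric with parameter $q\in(0,1)$ on $\{1,\dots,n\}\X\{1,\dots,m\}$, so that $\p(M=A)=(1-q)^{nm}q^{\lvert A\rvert}$ has full support on $\scrX_{n,m}$. List the distinct boxes among $\{\bar J_j\}\cup\{\bar I_i\}$ in the order $\bar J_1,\dots,\bar J_{m-1},\bar I_n,\bar I_{n-1},\dots,\bar I_1$ (note $\bar J_m=\bar I_n=[1,n]\X[1,m]$), and let $f$ translate each $\bar I_i$ to $\bar I^{\mathrm{std}}_i$ and each $\bar J_j$ to $\bar J^{\mathrm{std}}_j$. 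A direct check --- using that $(\bar I_i,\bar I_{i'})$ crosses horizontally iff $i'\le i$, that $(\bar J_j,\bar J_{j'})$ crosses horizontally iff $j\le j'$, and that $(\bar J_j,\bar I_i)$ always crosses horizontally, together with the analogous facts for the standard flags --- shows that consecutive boxes in this list cross horizontally, that $f$ preserves horizontal crossings, and that $f$ sends $(\bar I_i)^k\mapsto(\bar I^{\mathrm{std}}_i)^k$ and $(\bar J_j)^k\mapsto(\bar J^{\mathrm{std}}_j)^k$. Applying the multi-point form of Theorem~\ref{T:iid-trans} to the diagonal families $\bigcup_i D(\bar I_i)\cup\bigcup_j D(\bar J_j)$ then gives
$$
(\Phi_I(M),\Psi_J(M))\ \eqd\ (\Phi_{I^{\mathrm{std}}}(M),\Psi_{J^{\mathrm{std}}}(M))=\mathrm{RSK}(M).
$$
Since ordinary RSK is a bijection $\scrX_{n,m}\to\bigcup_\la\Om_{n,\la}\X\Om_{m,\la}$ and $M$ has full support on $\scrX_{n,m}$, the right-hand side --- hence $(\Phi_I(M),\Psi_J(M))$ --- has full support on $\bigcup_\la\Om_{n,\la}\X\Om_{m,\la}$, so $M\mapsto(\Phi_I(M),\Psi_J(M))$ is surjective. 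Injectivity follows by counting on content fibers: the map carries the finite set $\scrX_{e,g}\subseteq\scrX_{n,m}$ of matrices with row-sum vector $e$ and column-sum vector $g$ into the finite set of tableau pairs whose contents are the prescribed permutations of $e$ and $g$, it is onto that set by the surjectivity just proved together with the content identity of the previous paragraph, and the two sets have equal cardinality because ordinary RSK restricts to a bijection on content fibers and the number of semistandard tableaux of a fixed shape with given content is invariant under permuting the content. A surjection between finite sets of equal cardinality is a bijection, and taking the union over all $(e,g)$ proves the theorem.

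The hardest part is the verification underlying the displayed equality in distribution: one must arrange the nested boxes $\bar I_i,\bar J_j$ into a chain meeting the hypotheses of the multi-point extension of Theorem~\ref{T:iid-trans}, check that translating to the standard flags preserves horizontal crossings, and confirm that $\bigcup_i D(\bar I_i)\cup\bigcup_j D(\bar J_j)$ is an admissible family for the multi-point statement. Everything else --- semistandardness of the outputs, the content identity, and the step from ``same law'' to ``bijection'' via counting on content fibers --- is routine once the classical facts about RSK and Kostka numbers are brought in.
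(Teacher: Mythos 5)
Your proof is correct and follows essentially the same route as the paper's: the paper deduces Theorem \ref{T:rsk-scramble} as the special case of Theorem \ref{T:rsk-moon-poly} for the box exhaustion $(\emptyset, \bar J_1,\dots,\bar J_m,\bar I_{n-1},\dots,\bar I_1,\emptyset)$, whose proof likewise combines the tower invariance of Theorem \ref{T:iid-trans} for i.i.d.\ geometric weights (giving equality in law with classical RSK on the standard flags) with a finite counting argument. The only difference is cosmetic: you count on content fibers via the ``Moreover'' identity and symmetry of Kostka numbers, while the paper conditions on the total weight $\sum_{s} M_s$ and applies Lemma \ref{L:prob-bij}.
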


The case when $I_i = [1, i]$ and $J_i = [1, j]$ in Theorem \ref{T:rsk-scramble} is the usual RSK bijection. The functions $\Phi_I(M)$ and $\Psi_J(M)$ are the two Young tableaux, and $Z_M^\Delta(\{1, \dots, n\} \X \{1, \dots, m\})$ is their common shape.
If we set $I_i = [n- i + 1, n]$ or $J_i = [m-i + 1, m]$, then the resulting maps $\Phi_I$ and $\Phi_J$ are compositions of the usual RSK bijection with a Sch\"utzenberger involution. Setting $I_i = [1, i]$ and $J_i = [m-i + 1, m]$ yields the Burge (or Hillman-Grassl) correspondence \cite{burge1974four} (see Krattenthaler \cite{krattenthaler2005growth} for the connection between this correspondence and increasing subsequences).
If we set $n = m$ and restrict to the set of permutation matrices, then we recover scrambled versions of the Robinson-Schensted bijection between permutations and pairs of standard Young tableaux. 

The scrambled RSK bijections have previously been described in different language (and with entirely orthogonal proofs) by Garver, Patrias, and Thomas \cite{garver}. In that paper, the authors developed bijections between sets of quiver representations and reverse plane partitions. Section 6 of \cite{garver} shows how these bijections for particular type $A$ Dynkin quivers yield the RSK and Burge correspondences. More generally, there is an exact correspondence between scrambled RSK bijections on an $n \X m$ box and these bijections for Dynkin quivers of type $A_{n + m - 1}$ with miniscule vertex $n$. Very roughly, the direction of each arrow in the Dynkin quiver encodes whether each of the intervals $I_i, J_i$ should grow to the left or to the right.

The classical RSK bijection has a generalization from  rectangular arrays of nonnegative integers to fillings of Young diagrams with nonnegative integers, see \citep{krattenthaler2005growth}. Our framework allows us to generalize the RSK bijection to an even larger class of shapes, known as moon polyominoes.
A \textbf{moon polyomino} is a finite subset $S \sset \Z^2$ with the following properties (see Figure \ref{fig:poly}):
\begin{itemize}
	\item (Convexity) For every $i \in \Z$, let $S^i = \{ x \in \Z: (x, i) \in S\}$ and let $S_i =  \{ x \in \Z: (i, x) \in S\}$. Then for all $i \in \Z$, the sets $S^i$ and $S_i$ are (possibly empty) intervals. 
	\item (Intersection-free) For all $i, j \in \Z$, either $S^i \sset S^j$ or $S^j \sset S^i$. Similarly, either $S_i \sset S_j$ or $S_j \sset S_i$.
\end{itemize}

\begin{figure}[tb]
	\centering
	\begin{subfigure}[h]{0.15\textwidth}
		\includegraphics[width=\textwidth]{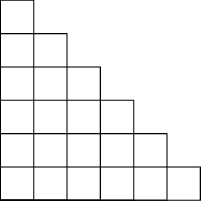}
		\caption{}
	\end{subfigure}
	\qquad \qquad 
	\begin{subfigure}[h]{0.15\textwidth}
		\includegraphics[width=\textwidth]{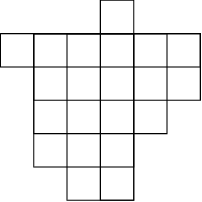}
		\caption{}
	\end{subfigure}
	\caption{The Young diagram of a partition and a moon polyomino with the same row lengths.}
	\label{fig:poly}
\end{figure}

To set up the RSK bijection for moon polyominoes, we will use the following property (see Lemma \ref{L:moon}). For any moon polymino $S$, there is a sequence $(u_0 = \emptyset, u_1, \dots, u_k, u_{k+1} = \emptyset) \sset \Zd$ such that for all $i$, $u_{i+1}$ is obtained from $u_i$ by either adding a row or subtracting a column, and
$$
\bigcup_{i=1}^k u_i = S.
$$
We call the sequence $(u_0, u_1, \dots, u_k, u_{k+1})$ a \textbf{box exhaustion} of $S$. We let $\scrX_S$ be the set of functions $M:S \to \Z_{\ge 0}$.

\begin{theorem}
	\label{T:rsk-moon-poly}
	Let $S$ be any moon polyomino, and let $U = (u_0, u_1, \dots, u_{k+1})$ be a box exhaustion of $S$. 
	For $M \in \scrX_S$, define the map
	$$
	\Phi_U(M) = (\emptyset, Z^\Delta_M(u_1), \dots, Z^\Delta_M(u_k), \emptyset).
	$$
	Then $\Phi_U$ is a bijection between $\scrX_S$ and the set of partition sequences $(\emptyset = \la_0, \la_1, \dots, \la_k, \emptyset = \la_{k+1})$ such that $\la_i \sset \la_{i+1}$ and $\la_{i+1} \smin \la_i$ is a horizontal strip whenever $u_i \sset u_{i+1}$, and $\la_i \supset \la_{i+1}$ and $\la_{i} \smin \la_{i+1}$ is a horizontal strip whenever $u_i \supset u_{i+1}$. Moreover, for every $i = 0, \dots, k$,
	$$
	|\Phi_U(M)_i \;\symdif \;\Phi_U(M)_{i+1}| = \sum_{x \in u_i \symdif u_{i+1}} M_x.
	$$ 
	In the above display, we use the notation $\symdif$ to denote the symmetric difference of both partitions (thought of as subsets of $\Z^2$) and of boxes.
\end{theorem}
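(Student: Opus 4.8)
The plan is to separate the statement into a \emph{local} part — that $\Phi_U$ lands in the claimed set of partition sequences and satisfies the symmetric‑difference identity — and a \emph{global} part — that $\Phi_U$ is a bijection onto that set. The local part is elementary and handled one step of the box exhaustion at a time; the global part is the real content and is where the RSK input of Theorem~\ref{T:encoding-lpp} is used.

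\emph{Local part.} Fix a step $u_i \sset u_{i+1}$ of the exhaustion with $u_{i+1}\smin u_i$ a single row $R$. By Greene's theorem \citep{greene1974extension}, $Z^\Delta_M(u_i)$ and $Z^\Delta_M(u_{i+1})$ are the common shapes of the pair of output tableaux when RSK is applied to $M|_{u_i}$ and $M|_{u_{i+1}}$ respectively. Since $M|_{u_{i+1}}$ is $M|_{u_i}$ with the row $R$ appended, its insertion tableau is obtained from that of $M|_{u_i}$ by row‑inserting the weakly increasing word encoded by $M|_R$; by the Pieri rule for RSK insertion the shape therefore grows by a horizontal strip of size equal to the length of that word, i.e. $\sum_{x\in R}M_x = \sum_{x\in u_i\symdif u_{i+1}}M_x$. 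A step $u_i\supset u_{i+1}$ with $u_i\smin u_{i+1}$ a single column is identical after reflecting the environment across the diagonal (transpose symmetry of RSK). Running over all $i$ gives both that $\Phi_U(M)$ lies in the target set and the displayed identity. (Alternatively, both facts are already contained in the preservation statement of Theorem~\ref{T:encoding-lpp}.)

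\emph{Global part.} I would prove bijectivity by induction on the length $k$ of the box exhaustion, constructing the inverse. The subtlety is genuine non‑locality: a single horizontal strip $\Phi_U(M)_i\symdif\Phi_U(M)_{i+1}$ does \emph{not} determine the corresponding row or column of $M$ (this fails already for a $2$‑cell row). The device that resolves this is Theorem~\ref{T:encoding-lpp}, which provides bijective ``growth‑diagram'' local rules: given the diagonal shapes of three boxes forming a unit cell of the exhaustion (two consecutive ones and a third differing by the opposite elementary operation) together with the RSK insertion/recording data, the fourth shape — and the relevant tableau — is determined bijectively, compatibly with reverse RSK insertion. Using these rules one may (i) commute an adjacent ``add a row''/``subtract a column'' pair, changing the exhaustion but keeping $\bigcup u_i=S$ and exchanging $\Phi_U$ for a conjugate map by an explicit bijection on the target sets; and (ii) peel the first (or last) box: if $u_1\to u_2$ is a column subtraction then $(\emptyset,u_2,\dots,u_{k+1})$ is a box exhaustion of a smaller moon polyomino and the single extra shape $\Phi_U(M)_1$ recovers the difference; if it is a row addition one first applies a commutation to create such a step. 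The base cases, $S$ a single row or a single column, are immediate, since there each shape is a single part equal to a partial sum of $M$ and the weights are recovered as successive differences. A cleaner alternative, if it can be pushed through, is to embed $S$ in its bounding rectangle, extend $M$ by zeros, and deduce the statement from the rectangular case (classical RSK, or the scrambled RSK bijection of Theorem~\ref{T:rsk-scramble}) by checking that the vanishing weights outside $S$ uniquely force all intermediate shapes along a rectangular refinement of $U$.

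\emph{Main obstacle.} The hard part is exactly the global step: because $\Phi_U$ retains only the diagonal shapes $Z^\Delta_M(u_i)$ and not the full pair of insertion and recording tableaux, invertibility cannot be checked step by step, and one must verify that the local rules governing row‑addition and column‑deletion remain mutually consistent under the arbitrary interleavings permitted by a moon‑polyomino box exhaustion — this consistency is precisely the combinatorial substance of the Noumi--Yamada/RSK preservation result invoked as Theorem~\ref{T:encoding-lpp}. A secondary technical point is to set up Lemma~\ref{L:moon} (existence and structure of the box exhaustion) and to confirm that the convexity and intersection‑free hypotheses on $S$ are exactly what guarantee that each such exhaustion is legitimate and that every commutation/peeling above stays within the class of moon polyominoes.
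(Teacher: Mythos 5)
Your local part is fine: the horizontal-strip and size statements do follow from Greene's theorem plus the standard row-insertion facts (together with the $R_1,R_2$ symmetries of the Greene invariants to handle rows/columns added or removed on either side), and this is consistent with what the theorem asserts. The genuine gap is in the global part, which is where all the content lies. Your induction rests on two unproved claims: (i) that Theorem \ref{T:encoding-lpp} supplies bijective ``growth-diagram local rules'' letting you commute an adjacent row-addition/column-deletion pair and transport the target set of shape sequences by an explicit bijection, and (ii) that one can peel boxes off the exhaustion compatibly. Theorem \ref{T:encoding-lpp} does not say this: it is a preservation statement for the vertically-crossing multi-point values of a single box under one application of (geometric) RSK, not a local-rule or commutation lemma, and the consistency you yourself flag as the ``main obstacle'' is exactly what a proof along your lines would have to establish (essentially redoing the Fomin/Rubey-type combinatorics for moon polyominoes). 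The fallback of zero-padding to the bounding rectangle is also not worked out: while padding by entire zero rows or columns of a box does preserve $Z^\Delta$, the boxes appearing in a rectangular (scrambled RSK) exhaustion of the bounding rectangle contain other nonzero cells of $S$, so their shapes are not the $Z^\Delta_M(u_i)$ recorded by $\Phi_U$, and you have not shown how to recover one data set from the other.

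For comparison, the paper avoids this entirely by a probabilistic transfer: with $M$ i.i.d.\ geometric, the tower invariance (Proposition \ref{P:lie-in-F}(1) via Theorem \ref{T:iid-trans}) shows that $\Phi_U(M|_S)$ has the same law as $\Phi_{U'}$ applied to the exhaustion with every box re-anchored at $(1,1)$, whose union is a Young diagram, where bijectivity is classical (Krattenthaler); a counting argument with full support (Lemma \ref{L:prob-bij}) then transfers bijectivity back to $\Phi_U$ on each sublevel set $\scrX_S(c)$. So your route is genuinely different and could in principle be completed, but as written the bijectivity step is asserted rather than proved, and the attribution of the needed local rules to Theorem \ref{T:encoding-lpp} does not hold up.
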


Theorem \ref{T:rsk-scramble} is a special case of Theorem \ref{T:rsk-moon-poly}. Each pair $(I, J)$ in that theorem represents a different box exhaustion $U$ for the $n \X m$ rectangle.

Theorem \ref{T:rsk-moon-poly} implies interesting enumerative consequences about fillings of moon polyominoes. In particular, it gives a bijection between nonnegative integer fillings of moon polyominoes with the same set of row lengths. This bijection preserves all last passage values that can be read off from a component of $\Phi_U(M)$. A bijection preserving a certain subset of these last passage values was previously discovered by Rubey (see \citep{rubey2011increasing}, Theorem 5.3), building on work of Jonsson \cite{jonsson2005generalized} and Krattenthaler \citep{krattenthaler2005growth}. It would be interesting to compare the purely combinatorial methods of that paper to the more probabilistic methods used here. Note that moon polyominoes have also been studied under the name L-convex polyominoes (e.g. see \citep{castiglione2003reconstruction}).

\subsection{Directed polymers}
\label{SS:directed-limit}
The framework used to prove invariance results of last passage percolation also works in the positive temperature setting of directed polymers, where the operations $(\max, +)$ are replaced by $(+, \X)$. 

Let $M = \{M_x : x \in \Z^2\}$ be an array of nonnegative random variables, and for $u \in \Zd$, define the \textbf{polymer partition function} 
$$
Z_M(u) = \sum_\pi \prod_{x \in \pi} M_x,
$$
where the sum is over all $u$-paths $\pi$. Just as in the last passage case, we can also define multi-point partition functions. When $M$ consists of i.i.d inverse-gamma random variables, then this is the log-gamma polymer first introduced by Sepp\"al\"ainen \citep{seppalainen2012scaling}. Throughout the paper, we use the same notation for last passage values and polymer partition functions as the two cases will be treated together. All our main theorems hold for the log-gamma polymer.

\begin{theorem}
	\label{T:polymers}
	Theorems \ref{T:conj-bgw}, \ref{T:example}, \ref{T:puzzle-pieces}, and Corollary \ref{C:conj-bgw-init} hold when $M$ is an environment of i.i.d. inverse-gamma random variables and $Z_M$ is the polymer partition function. Theorem \ref{T:inhom-ex} holds when $M$ is an environment of independent random variables where each $M_{(i, j)}$ has inverse-gamma distribution with parameter $\al_i + \be_j$ and $\al_i + \be_j > 0$ for all $i, j$. That is,
	$$
	\p(M_{(i, j)} \in dx) = \frac{1}{\Ga(\al_i + \be_j)} x^{-(1 + \al_i + \be_j)}e^{-1/x} dx
	$$
	for $x \in (0, \infty)$.
\end{theorem}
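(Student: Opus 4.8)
The plan is to observe that the proofs of Theorems \ref{T:conj-bgw}, \ref{T:example}, \ref{T:puzzle-pieces} and Corollary \ref{C:conj-bgw-init} use nothing specific to the $(\max,+)$ algebra: they rest on three structural inputs, each of which has an exact $(+,\X)$ analogue. These are (a) the basic distributional symmetries (I)--(III), which hold verbatim when $Z_M$ is a polymer partition function and $M$ is i.i.d.\ (translation and the two reflections merely permute the summands defining $Z_M(\bu)$); (b) the independence of the entries $\{M_x\}$ over disjoint regions, used to splice environments together; and (c) the conditional independence property: for a box $u$, the multi-point partition functions crossing $u$ horizontally and those crossing $u$ vertically are conditionally independent given the family $Z_M|_{D(u)}$ of partition functions crossing $u$ diagonally. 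It therefore suffices to establish (c) in the polymer setting and then re-run, line by line, the Markov-chain arguments behind Theorem \ref{T:iid-trans} and its corollaries.

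For (c) I would use the geometric (tropical) RSK correspondence in place of RSK. Applied to a matrix of independent inverse-gamma variables with parameters $\al_i+\be_j$, geometric RSK produces a pair of triangular arrays whose joint law is the Whittaker measure, proportional to $\ps_\la(\al)\,\ps_\la(\be)$, where $\ps_\la$ is the $\mathfrak{gl}$-Whittaker function and $\la$ is the common ``shape'' — the positive-temperature analogue of the Schur-measure identity \eqref{E:Schur}, due to O'Connell and to Corwin--O'Connell--Sepp\"al\"ainen--Zygouras. The shape $\la$ is exactly $Z^\Delta_M(u)$, i.e.\ it records $Z_M|_{D(u)}$, and, by the geometric RSK analogue of the preservation result used in the LPP case — Corwin's invariance theorem \citep{corwin2020invariance}, or equivalently the geometric RSK version of Theorem \ref{T:encoding-lpp} built from the Noumi--Yamada construction \citep{noumi2002tropical} — the multi-point partition functions crossing $u$ horizontally (resp.\ vertically) are measurable functions of the first (resp.\ second) output array. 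Since these two arrays are conditionally independent given $\la$ under the product form of the Whittaker measure, (c) follows in exactly the same generality as in the LPP case, for arbitrary collections of left-to-right and bottom-to-top multi-point values.

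Granting (c), the proofs transfer with no change. For each of Theorems \ref{T:conj-bgw}, \ref{T:example}, \ref{T:puzzle-pieces} and Corollary \ref{C:conj-bgw-init}, the argument decomposes the desired equality $Z_M|_{\mathrm{dom}(f)} \eqd Z_M|_{\mathrm{cod}(f)} \circ f$ along a chain of boxes, builds two Markov chains of multi-point partition functions whose one-step transitions are supplied by (c) together with the independence of fresh entries from (b), and checks that corresponding transition kernels agree using (a) — concretely a $180^\circ$ rotation, exactly as in the proof sketch of Example \ref{Ex:simple}. None of these steps sees the semiring. For the inhomogeneous statement, Theorem \ref{T:inhom-ex}, the only additional point is the rule governing the rearranged parameters $\al',\be'$: the single-box fact that $Z_{M_{\al,\be}}(u) \eqd Z_{M_{\al',\be'}}(u)$ iff $\al',\be'$ permute $\al,\be$ on the relevant index windows now follows from the symmetry of $\ps_\la$ in its arguments (the positive-temperature analogue of the symmetry of Schur functions), so the class of allowable rearrangements is described by precisely the same condition stated after Theorem \ref{T:inhom-ex}.

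The main obstacle is item (c): one must pin down exactly which multi-point partition functions are functions of which output array of geometric RSK, and that $Z_M|_{D(u)}$ is exactly the recorded shape — this is the content of Corwin's preservation result, or of the geometric RSK version of Theorem \ref{T:encoding-lpp}. Once it is in hand, the remainder is the purely formal observation that the Markov-chain machinery of Theorems \ref{T:iid-trans}--\ref{T:puzzle-pieces} was written to depend only on (a)--(c). A minor secondary point worth noting is that in the polymer setting $Z_M(\bu)$ is a finite positive sum for every $\bu \in \scrE$, so the restriction maps $Z_M|_X$ and their domains are unambiguous and no finiteness or positivity caveats are needed beyond requiring $M$ to have positive entries, which inverse-gamma variables do.
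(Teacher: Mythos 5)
Your proposal is correct and matches the paper's approach: the paper develops the entire framework algebra-agnostically (Definition \ref{D:polymer} covers both semirings, Theorem \ref{T:explicit}(iii) and Theorem \ref{T:decoupled-family} supply the decoupling and Whittaker-symmetry inputs for the inverse-gamma environment, and Theorem \ref{T:encoding-lpp} is proved for the $(+,\times)$ algebra directly via Lindstr\"om--Gessel--Viennot before tropicalizing), so Theorem \ref{T:polymers} follows from Theorem \ref{T:iid-trans}, Propositions \ref{P:slides} and \ref{P:lie-in-F}, and Theorem \ref{T:towers} exactly as in the last passage case. Your items (a)--(c) are precisely the inputs the paper isolates, and your treatment of the inhomogeneous case via the symmetry of Whittaker functions is the paper's Theorem \ref{T:decoupled-family}.
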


Next, we write down an analogue of Corollary \ref{C:geometry-cor} in the polymer case.
The analogue of the last passage geodesic in the directed polymer setting is the \textbf{quenched polymer measure} $Q_M(u)$. For $u \in \Zd$, this is a random probability measure on $u$-paths, where
$$
Q_M(u)(\pi) = \frac{1}{Z_M(u)} \prod_{v \in \pi} M_v.
$$
For this corollary, for $S \sset \Z^2$ we let $(P_S)_* Q_M(u)$ denote the pushforward of the measure $Q_M(u)$ under the projection $\pi \mapsto \pi \cap S$. 

\begin{corollary}[Quenched measure invariance]
	\label{C:quenched-cor}
	Let $M$ be an environment of i.i.d.\ inverse-gamma random variables, and let $Q_M$ be the corresponding quenched polymer measure. Let $x, w \in \Zd; \;U, V \sset \Zd;$ and $c \in \Z^2$ satisfy the same conditions as in Corollary \ref{C:geometry-cor}. Then
	\begin{align*}
	&\big[((P_{u \smin w})_* Q_M(u) : u \in U), ((P_{v \smin x})_* Q_M(v) : v \in V)\big] \\
	\eqd &\big[((P_{u \smin w})_* Q_M(u) : u \in U), ((P_{T_c v \smin T_c x})_* Q_M(v) : v \in V)\big].
	\end{align*}
\end{corollary}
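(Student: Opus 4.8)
\emph{Proof plan.} The hypotheses on $x,w,U,V,c$ are exactly those of Corollary~\ref{C:geometry-cor}, so the plan is to run the proof of that corollary in the positive-temperature setting: replace the leftmost geodesic $\pi_M(u)$ by the quenched polymer measure $Q_M(u)$, and replace the RSK preservation input used there (Theorem~\ref{T:encoding-lpp}) by its geometric-RSK counterpart, Corwin's result~\cite{corwin2020invariance}, which is what powers Theorem~\ref{T:polymers}. The only genuinely new ingredient is a locality statement for the pushed-forward quenched measure, playing the role that locality of $\pi_M(u)\smin w$ plays in the proof of Corollary~\ref{C:geometry-cor}.

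First I would establish the \textbf{locality} statement. Fix $u\in U$. Since $(u,w)$ crosses horizontally, every $u$-path enters the box $w$ through its left edge and leaves through its right edge; for an exterior portion $\rho\subseteq u\smin w$ of such a path, write $a(\rho)$ and $b(\rho)$ for the induced entry and exit points, and let $Z_w(a,b)$ denote the polymer partition function over paths from $a$ to $b$ (which automatically lie in $w$). Decomposing a $u$-path into its exterior portion and the portion joining $a$ to $b$ inside $w$, and summing over the latter, gives
$$
(P_{u\smin w})_*Q_M(u)(\rho)=\frac{\big(\prod_{y\in\rho}M_y\big)\,Z_w\big(a(\rho),b(\rho)\big)}{\sum_{\rho'}\big(\prod_{y\in\rho'}M_y\big)\,Z_w\big(a(\rho'),b(\rho')\big)},
$$
with the usual convention absorbing the entry and exit columns into the $Z_w$ factors. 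Hence $(P_{u\smin w})_*Q_M(u)$ is a deterministic function of $M$ restricted to the complement of the open box $w$ together with the collection $\scrH_w$ of multi-point polymer partition functions of tuples of disjoint paths crossing $w$ horizontally. The identical computation shows, for each $v\in V$, that $(P_{v\smin x})_*Q_M(v)$ is a deterministic function of $M$ restricted to the complement of the open box $x$ and of the analogous collection $\scrH_x$; and since the construction commutes with translations, the corresponding pushed-forward quenched measures attached to the translated boxes $T_cv$ are that same function applied to the $c$-translate of the pair $(M|_{\Z^2\smin x},\scrH_x)$.

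Next I would reduce to a distributional identity. Write $\omega_w=(M|_{\Z^2\smin w},\scrH_w)$ and $\omega_x=(M|_{\Z^2\smin x},\scrH_x)$. By the locality step, the left-hand side of the claimed identity is a fixed function of $(\omega_w,\omega_x)$, while the right-hand side is the same function (applied in a translation-covariant way) of $(\omega_w,\omega_x')$, where $\omega_x'$ is the corresponding object attached to the box $T_cx$. So it suffices to prove that $(\omega_w,\omega_x)$ has the same law as $(\omega_w,\omega_x')$; that is, that the joint law of the exterior weights and horizontal-crossing partition functions of $w$ together with those of $x$ is unchanged when the $x$-block is shifted by $c$. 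This is precisely a shift invariance of the type proved in the paper's main theorem (Theorem~\ref{T:iid-trans}) and its polymer counterpart Theorem~\ref{T:polymers}: apply it to the bijection $f$ that is the identity on the index set of the $w$-block and $T_c$ on the index set of the $x$-block. The four hypotheses that $(u,w)$, $(x,v)$, $(w,x)$ and $(w,T_cx)$ cross horizontally are exactly the crossing and disjointness conditions (in the style of Theorem~\ref{T:conj-bgw}) that make $f$ admissible, so Theorem~\ref{T:polymers} supplies this equality for the log-gamma polymer. Feeding it back into the fixed function from the locality step proves the corollary.

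The hard part will be the bookkeeping underlying the locality and reduction steps: pinning down precisely which weights are declared "outside $w$" (resp.\ "outside $x$") versus absorbed into $\scrH_w$ (resp.\ $\scrH_x$), and keeping this consistent on the overlap $w\cap x$ (which is nonempty since $(w,x)$ crosses horizontally), so that the pair $(\omega_w,\omega_x)$ genuinely decomposes the underlying randomness into compatibly-used pieces. This is the same bookkeeping already carried out for Corollary~\ref{C:geometry-cor}, made to work at positive temperature by the geometric-RSK preservation and decoupling of Corwin~\cite{corwin2020invariance} (the log-gamma analogue of Theorem~\ref{T:encoding-lpp}); I expect no new ideas beyond those are needed.
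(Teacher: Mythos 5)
Your proposal matches the paper's proof in essentially every respect: the paper proves Corollaries \ref{C:geometry-cor}, \ref{C:quenched-cor}, and \ref{C:restricted-polymer} together by exactly your two steps — first using Proposition \ref{P:poly-comp} to write the pushed-forward quenched measures as a fixed function of the exterior weights (packaged as single-point boxes $G(\sqcup U\smin w)$, $G(\sqcup V\smin x)$) together with the crossing partition functions $Z_M|_{L_H(w)}$ and $Z_M|_{L_V(x)}$, and then applying Theorem \ref{T:iid-trans} (via Proposition \ref{P:lie-in-F}(1) and Lemma \ref{L:in-F-triv}(2)) to the bijection that is the identity on the $w$-block and $T_c$ on the $x$-block. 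The only cosmetic difference is that your "$\scrH_x$" should really consist of \emph{vertical} crossings of $x$ (since $(x,v)\in\scrH$ means $v$-paths cross $x$ vertically), which is what the paper's $L_V(x)$ encodes.
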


The correspondences in Section \ref{SS:combinatorial} also have analogues in the polymer setting which we do not explore in this paper. Just as with the geometric RSK correspondence, we expect that these correspondences are not only bijective, but also volume-preserving in log-log variables (see \cite{o2014geometric}). For the geometric analogue of the Burge correspondence, this volume-preservation property was recently shown by Bisi, O'Connell, and Zygouras \cite{bisi2020geometric}. Bisi et al. \cite{bisi2020geometric} also obtained a distributional identity between the partition function of the symmetric and persymmetric log-gamma polymer which is closely related to some of the invariance statements we study here. (Note that our invariance statements do not imply this identity since they do not apply to symmetric or persymmetric polymer environments).

\subsection{Limiting models}
\label{SS:limits}

There are many models that fall out through bijections or limits of geometric and exponential last passage percolation and the log-gamma polymer. 

For example, exponential last passage percolation can be coupled to multiple taseps evolving with the same noise in a standard way. Brownian last passage percolation, planar Poisson last passage percolation and Poisson lines last passage percolation are all limits of geometric last passage percolation via straightforward limiting procedures (see \citep{dauvergne2019uniform}, Section 6 for descriptions). 

The Airy sheet, the directed landscape, and the KPZ fixed point are (conjecturally universal) scaling limits. The Airy sheet and the directed landscape have been proven to arise as scaling limits of Brownian last passage percolation  \citep{DOV} and the KPZ fixed point has been proven to arise as the scaling limit of tasep  \citep{matetski2016kpz}. 

The O'Connell-Yor Brownian polymer (see \citep{o2001brownian}) arises as the limit of the log-gamma polymer via the same procedure that takes geometric last passage percolation to Brownian last passage percolation (see \citep{borodin2019shift}, Section 7.3 for a description). The continuum directed random polymer/multiplicative stochastic heat equation/KPZ equation arises in a certain limit of the log-gamma polymer. See \citep{alberts2014intermediate, alberts2014continuum} for background and convergence of single-point partition functions and \citep{o2016multi, corwin2017intermediate} for a description and convergence of certain multi-point partition functions.

Our results also apply to all of these models. Appropriate statements can be found by examining how certain invariances are transformed under limiting procedures or certain mappings. 

We only include two of the more striking results here for illustrative purposes. For these corollaries, $\scrS:\R^2 \to \R$ is the Airy sheet, a random continuous function that arises as a scaling limit of the last passage percolation $Z_M(\cdot, 1; \cdot, n)$ as $n \to \infty$. See Section \ref{SS:lim-proof} for a more precise setup. For this next result, for sets $S, T \sset \R^2$, we say that $S \searrow T$ if $s_1 \le t_1, s_2 \ge t_2$ for all $s \in S, t \in T$. 
We also define translations $T_r(x, y) = (x + r, y+r)$.

\begin{corollary}
	\label{C:airy-sheet}
	Let $S_1, \dots, S_k$ be Borel measurable subsets of $\R^2$. Let $r_1, \dots, r_k \in \R,$ and let  $f:\bigcup S_i \to \bigcup T_{r_i} S_i$ be the map translating each $S_i$ to $T_{r_i} S_i$. Suppose that for all $i =1, \dots, k-1$, we have
	$
	S_i \searrow S_{i+1}$ and  $T_{r_i} S_i \searrow T_{r_{i+1}} S_{i+1}.
	$
Then as random continuous functions,
	$$
	\scrS|_{\bigcup S_i} \eqd \scrS|_{\bigcup T_{r_i} S_i} \circ f.
	$$
\end{corollary}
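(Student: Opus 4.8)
The plan is to obtain the Airy sheet statement as a scaling limit of Theorem~\ref{T:conj-bgw} (equivalently, property (IV) and its multi-set extension). First I would recall the precise coupling under which the Airy sheet arises: there is a sequence of rescaling maps sending lattice points $(\lfloor \cdot \rfloor, 1; \lfloor \cdot \rfloor, n)$ to points in $\R^2$ such that the rescaled last passage values $Z_M$ in an environment of i.i.d.\ exponential (or geometric) weights converge in distribution, uniformly on compacts, to $\scrS$. The key point is that, at the lattice level, for two points $x,y\in\R^2$ with $x\searrow y$, the discretizations $u(x),u(y)\in\Zd$ form a pair $(u(x),u(y))$ that crosses horizontally: the condition $s_1\le t_1,\ s_2\ge t_2$ is exactly the continuum shadow of ``$u^-\searrow v^-$ and $v^+\searrow u^+$'' once we remember that in the Airy sheet parametrization the first coordinate plays the role of a (scaled) starting location on the bottom edge and the second coordinate the role of an ending location on the top edge, with time direction $1\to n$ fixed. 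Thus the hypothesis $S_i\searrow S_{i+1}$ translates, for any finite collection of sample points $x^{(i)}\in S_i$, into the statement that the associated boxes satisfy the horizontal-crossing hypothesis of Theorem~\ref{T:conj-bgw} for all $n$ large.

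Next I would handle the translations. The continuum shift $T_r(x,y)=(x+r,y+r)$ lifts, under the rescaling, to a lattice shift of the form $T_{c}$ with $c=c(n,r)\in\Z^2$ asymptotically proportional to $r$ in the appropriate scaling; applying $T_{r_i}$ to $S_i$ corresponds to applying $T_{c_i(n)}$ to each discretized box in $U_i$. The second hypothesis, $T_{r_i}S_i\searrow T_{r_{i+1}}S_{i+1}$, guarantees exactly that the shifted boxes still satisfy the horizontal-crossing condition, i.e.\ that the map $f$ built from the $T_{c_i(n)}$ preserves horizontal crossings, which is the remaining hypothesis of Theorem~\ref{T:conj-bgw}. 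Therefore, for each fixed finite set of sample points and each large $n$, Theorem~\ref{T:conj-bgw} gives the exact distributional identity
$$
\big(Z_M(u(x^{(1)})),\dots,Z_M(u(x^{(k)}))\big)\eqd \big(Z_M(T_{c_1(n)}u(x^{(1)})),\dots,Z_M(T_{c_k(n)}u(x^{(k)}))\big).
$$
Passing to the scaling limit $n\to\infty$, the left side converges to $(\scrS(x^{(1)}),\dots,\scrS(x^{(k)}))$ and the right side to $(\scrS(T_{r_1}x^{(1)}),\dots,\scrS(T_{r_k}x^{(k)}))$, yielding equality of all finite-dimensional distributions of $\scrS|_{\bigcup S_i}$ and $\scrS|_{\bigcup T_{r_i}S_i}\circ f$. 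Since both sides are random continuous functions on a fixed domain, equality of finite-dimensional distributions together with continuity upgrades to equality in distribution as random elements of $C(\bigcup S_i)$ (using that the $S_i$ are, or may be exhausted by, compact sets, and a standard tightness/separability argument).

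The main obstacle is the limit transfer itself: one must verify that the discretization $u(\cdot)$ can be chosen so that (i) the horizontal-crossing hypotheses hold simultaneously for all sufficiently large $n$ whenever the continuum hypotheses $S_i\searrow S_{i+1}$ hold (there is a subtlety at boundary cases where $s_1=t_1$ or $s_2=t_2$, which may force a case distinction or a small perturbation argument), and (ii) the joint convergence $Z_M\to\scrS$ is strong enough — uniform on compacts, jointly over all the relevant points and their translates — to pass the distributional identity to the limit. For (ii) I would invoke the convergence results cited in Section~\ref{SS:limits} (e.g.\ the construction of the Airy sheet as a limit of Brownian, hence geometric, last passage percolation in \citep{DOV} together with the description in \citep{dauvergne2019uniform}), and note that because Theorem~\ref{T:conj-bgw} is an \emph{exact} identity valid for every $n$, no rate of convergence is needed — only convergence in distribution of the finite-dimensional marginals, which these references supply. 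The Borel-measurability (rather than compactness) of the $S_i$ is harmless: one restricts to a countable dense subset to identify finite-dimensional laws and uses continuity of $\scrS$ to conclude.
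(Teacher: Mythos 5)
Your overall strategy --- an exact invariance at the prelimit level transferred to the limit through finite-dimensional distributions --- is the same as the paper's, and your translation of the hypothesis $S_i\searrow S_{i+1}$ into the horizontal-crossing condition for the discretized boxes is correct. The gap is in the limit transfer itself: you invoke ``rescaled last passage values $Z_M$ in an environment of i.i.d.\ exponential (or geometric) weights converge \ldots{} to $\scrS$,'' and later justify this as ``the construction of the Airy sheet as a limit of Brownian, hence geometric, last passage percolation in \citep{DOV}.'' That ``hence'' is not available. At the time of this paper, \citep{DOV} constructs the Airy sheet only as the scaling limit of \emph{Brownian} last passage percolation, and \citep{dauvergne2019uniform} supplies convergence of lattice models only to the Airy line ensemble (one endpoint fixed), not to the two-parameter sheet. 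Deducing ``geometric LPP $\to$ Airy sheet'' from ``geometric LPP $\to$ Brownian LPP'' and ``Brownian LPP $\to$ Airy sheet'' is an interchange of two limits that requires uniformity you do not have; this is exactly the reference the paper states it cannot make, and is why it says Brownian LPP is introduced ``to have access to the Airy sheet.''

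The paper's proof repairs this by splitting the limit in two. First, Theorem \ref{T:Brown-symmetries}(1) takes the Donsker limit of geometric LPP on a strip of \emph{fixed} height $n$; since Theorem \ref{T:iid-trans} gives an exact identity at every lattice scale, the limit yields an exact invariance for Brownian LPP $B[(x,1;y,n)]$ at every fixed $n$ (here a genuine prelimit condition, $b_k\le c_k$, must be imposed so that the boxes are well defined). Second, only the single limit $n\to\infty$ of Theorem \ref{T:airy-limit} remains, which \citep{DOV} does supply, and the extra condition evaporates in the KPZ scaling. Your argument would become correct if you inserted this intermediate Brownian step (or, alternatively, cited a direct lattice-to-Airy-sheet convergence theorem, which postdates this paper). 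The remaining points you raise --- reduction of Borel sets to countable dense subsets, upgrading finite-dimensional equality to equality of continuous processes --- are handled the same way in the paper and are fine.
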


This next corollary only actually requires the distributional equality (IV) for Brownian last passage percolation (which had appeared in \citep{borodin2019shift}). We include it anyways for its aesthetic appeal.

\begin{corollary}
	\label{C:airy-processes}
	Let $g:\R \to \R$ be any nonincreasing function. Let 
	$$
	\Ga(g) = \{(x, y) \in \R^2: \lim_{t \to x^-} g(t) \ge y \ge \lim_{t \to x^+} g(t) \}
	$$
	be the graph of $g$, connected by vertical lines at all discontinuities. Let $m = (m_1, m_2): \R \to \R^2$ be the unique function satisfying
	\begin{itemize}[nosep]
		\item $m(\R) = \Ga(g)$,
		\item $m_1(0) = m_2(0)$ and $m_1(t) > m_2(t)$ for $t > 0$,
		\item For $x, y \in \R$, we have $||m(x) - m(y)||_1 = |x - y|$. Here $||u||_1$ is the $L^1$ norm on $\R^2$.
	\end{itemize}
	Then $\scrS \circ m:\R\to \R$ has the same distribution as $\scrS(0, \cdot): \R \to \R$. That is, $\scrS \circ m$ is a parabolically shifted Airy process. 
\end{corollary}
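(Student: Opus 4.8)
The plan is to reduce the statement to the equality of all finite-dimensional marginals of $\scrS \circ m$ and $\scrS(0,\cdot)$, and to deduce each such marginal equality from a single application of Corollary \ref{C:airy-sheet} with the blocks $S_i$ taken to be singletons.

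The first step is purely geometric: unpacking the definition of $m$. Since $g$ is nonincreasing, $\Ga(g)$ is a nonincreasing curve (if $p, q \in \Ga(g)$ with $p_1 < q_1$ then $p_2 \ge q_2$), and $m$ is an injective $1$-Lipschitz parametrization of it by $L^1$ arc length. I would first verify, from the normalization $m_1(0) = m_2(0)$ and $m_1(t) > m_2(t)$ for $t > 0$, that $m_1$ is nondecreasing and $m_2$ is nonincreasing; then, for $t > s$, $|t-s| = \|m(t)-m(s)\|_1 = (m_1(t)-m_1(s)) + (m_2(s)-m_2(t))$, which rearranges to $m_1(t) - m_2(t) = t + (m_1(0)-m_2(0)) = t$. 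The consequence I want is the identity $m(t) = (m_1(t), m_1(t)-t) = T_{m_1(t)}(0,-t)$: the point $m(t)$ of $\Ga(g)$ is the diagonal translate of the point $(0,-t)$ of the vertical line $\{0\}\times\R$ by the nondecreasing amount $m_1(t)$. This is exactly the structure Corollary \ref{C:airy-sheet} is designed to exploit.

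Next, for any $t_1 < t_2 < \dots < t_N$ I would apply Corollary \ref{C:airy-sheet} with $S_i = \{(0,-t_i)\}$ and $r_i = m_1(t_i)$, so that $T_{r_i}S_i = \{m(t_i)\}$. The hypotheses are automatic: $S_i \searrow S_{i+1}$ since $-t_i \ge -t_{i+1}$, and $T_{r_i}S_i \searrow T_{r_{i+1}}S_{i+1}$ since $m_1$ is nondecreasing and $m_2$ is nonincreasing. Reading the conclusion on the finite set $\{(0,-t_i)\}_i$, where "random continuous function" just means "random vector", gives $(\scrS(0,-t_1),\dots,\scrS(0,-t_N)) \eqd (\scrS(m(t_1)),\dots,\scrS(m(t_N)))$. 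Since the parabolically shifted Airy process is reversible, i.e. $\scrS(0,-\cdot) \eqd \scrS(0,\cdot)$ as processes, the left side has the law of $(\scrS(0,t_1),\dots,\scrS(0,t_N))$; as the $t_i$ were arbitrary, $\scrS \circ m$ and $\scrS(0,\cdot)$ have the same finite-dimensional distributions. Both are a.s. continuous ($\scrS$ is continuous and $m$ is $1$-Lipschitz), so they agree in law as random continuous functions, and the identification of $\scrS(0,\cdot)$ as a parabolic Airy process is exactly the input recorded in Section \ref{SS:lim-proof}.

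I do not anticipate a real obstacle. The single genuine idea is the observation in the second paragraph — that $L^1$ arc-length parametrizations of two nonincreasing curves differ pointwise by diagonal translations whose magnitude is monotone along the curve — which dovetails precisely with the diagonal-shift maps and the $\searrow$-hypotheses of Corollary \ref{C:airy-sheet}, and moreover lets one get away with degenerate single-point blocks, so that no approximation of $g$ by step functions is ever needed. What remains are minor checks: that $\Ga(g)$ meets the diagonal $\{x=y\}$ in exactly one point (so that $m$ is well-defined and unique), the elementary monotonicity and Lipschitz properties of $m$ used above, the passage from finite-dimensional marginals to random continuous functions via continuity of $\scrS$, and the harmless fact that singletons are admissible inputs in Corollary \ref{C:airy-sheet}.
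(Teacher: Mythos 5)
Your argument is correct, but it takes a different (and self-contained) route from the paper's. The paper proves Corollary \ref{C:airy-processes} by first establishing a separate prelimiting statement for Brownian last passage percolation, Theorem \ref{T:Brown-symmetries}(2), which matches $\tilde B$ on the vertical half-line $\{0\}\X[0,\infty)$ with $\tilde B$ along the half-graph $\Ga^*(g)=\Ga(g)\cap\{x\le y\}$ via $m^*(0,t,i)=(m(-t),i)$, and then passes to the Airy limit, noting that the restriction $x\le y$ disappears; this prelimit multi-line version is also what feeds Theorem \ref{T:airy-symm}. You instead work entirely at the level of the limit, deducing the corollary from Corollary \ref{C:airy-sheet} with singleton blocks, using the key geometric identity $m(t)=T_{m_1(t)}(0,-t)$ with $m_1$ nondecreasing; this is the same observation that underlies the paper's definition of $m^*$, but your packaging avoids redoing the limit transition and the half-line/half-graph bookkeeping. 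One point deserves emphasis: both routes naturally produce the matching $(0,-t)\leftrightarrow m(t)$, i.e. $\scrS\circ m \eqd \scrS(0,-\cdot)$, since a diagonal translate preserves $y-x$ and $m_2(t)-m_1(t)=-t$; the statement as written, $\scrS\circ m\eqd\scrS(0,\cdot)$, therefore additionally uses reversibility of the parabolic Airy process, which you invoke explicitly and the paper's terse proof glosses over in exactly the same way. This is not a genuine gap: reversibility is classical, and within the paper's own framework it follows by passing the basic reflection invariances (II)--(III) of the prelimiting models to the limit (giving $\scrS(x,y)\eqd\scrS(y,x)$ and $\scrS(x,y)\eqd\scrS(-y,-x)$ jointly, hence $\scrS(0,\cdot)\eqd\scrS(0,-\cdot)$), but it would be worth recording this one-line justification rather than a bare citation. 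Your remaining checks (uniqueness of the diagonal crossing of $\Ga(g)$, monotonicity of $m_1,m_2$, disjointness of the singleton blocks so that $f$ is well defined, and the passage from finite-dimensional marginals to continuous processes) are all as routine as you indicate.
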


\begin{figure}[tb]
	\centering
	\begin{subfigure}[h]{0.4\textwidth}
		\includegraphics[width=\textwidth]{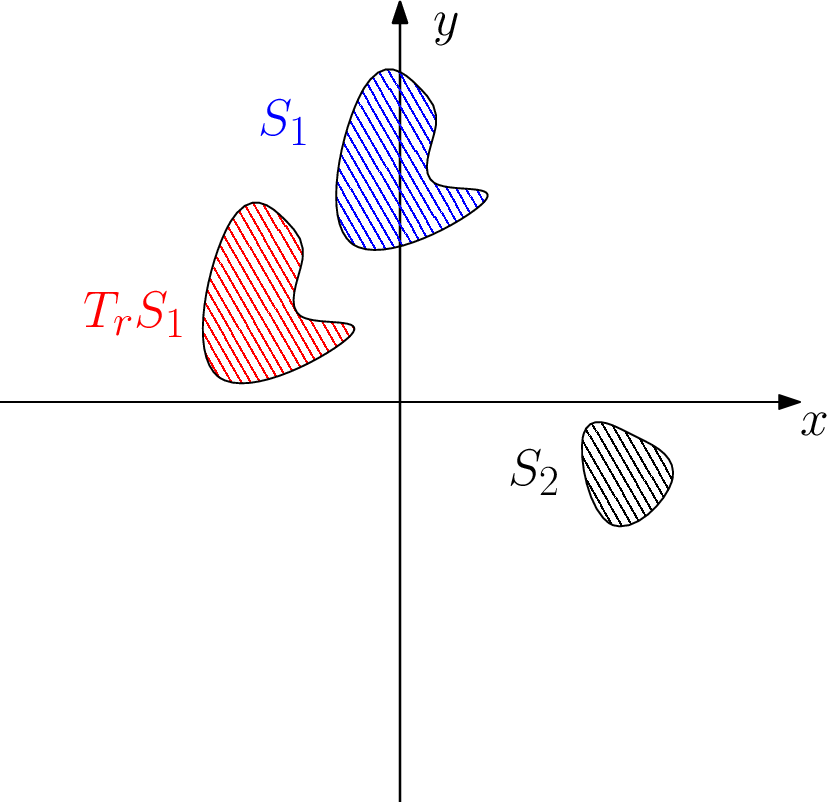}
		\caption{}
	\end{subfigure}
	\qquad
	\qquad
	\begin{subfigure}[h]{0.4\textwidth}
		\includegraphics[width=\textwidth]{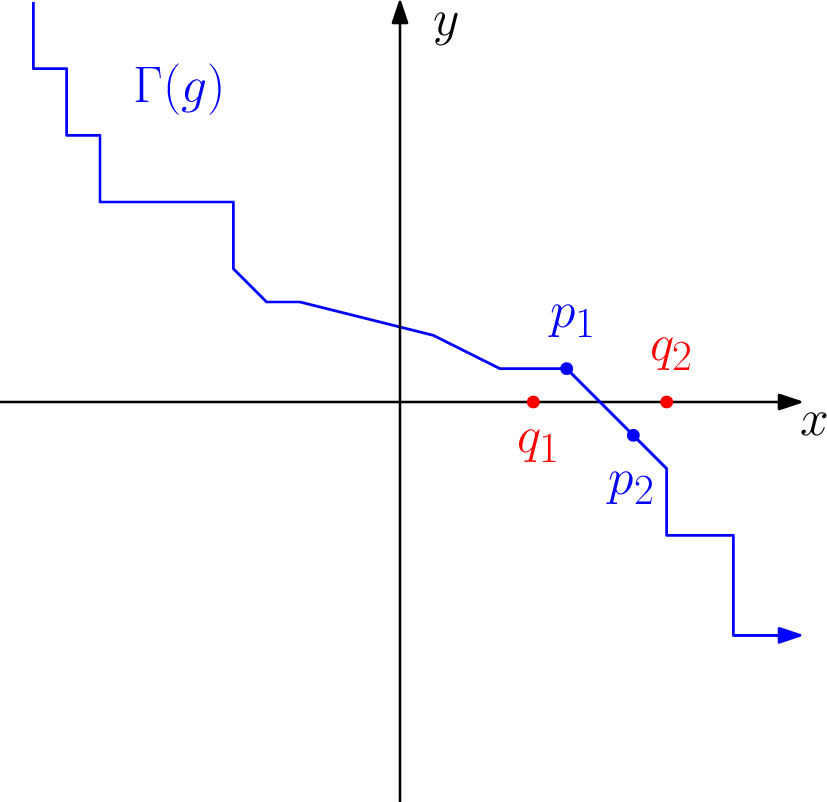}
		\caption{}
	\end{subfigure}
	
	\caption{Corollary \ref{C:airy-sheet} is illustrated in (a). The joint distribution of the Airy sheet $\scrS$ on $S_1$ and $S_2$ is the same as the joint distribution of $\scrS$ on $T_r S_1$ and $S_2$. Corollary \ref{C:airy-processes} is illustrated in (b). After an appropriate parametrization $m$ of $\Ga(g)$, the process $\scrS \circ m$ has the same distribution as the two axial parabolic Airy processes $\scrS(0, \cdot)$ and $\scrS(\cdot, 0)$. The correct parametrization is one that preserves $L^1$-distance. For example, this implies that $(\scrS(p_1), \scrS(p_2)) \eqd (\scrS(q_1), \scrS(q_2))$.}
	\label{fig:airy}
\end{figure}

See Figure \ref{fig:airy} for examples of Corollaries \ref{C:airy-sheet} and \ref{C:airy-processes}.
The proof of Corollary \ref{C:airy-processes} (via taking a limit from geometric/exponential last passage percolation) also yields new results about certain last passage processes converging to the Airy process. These results extend to new convergence results for the entire Airy line ensemble. See Theorem \ref{T:airy-symm} and surrounding discussion for details.

\subsection{Related work}

Shortly after the first version of this paper was posted, Galashin \cite{galashin2020symmetries} found a new flip symmetry of the coloured stochastic six vertex model. This flip symmetry leads to a large number of other symmetries for the stochastic six vertex model and its degenerations. In the context of models that are degenerations or limits of both the stochastic six vertex model and the log-gamma polymer (e.g. the KPZ equation), this flip symmetry can be used to give alternate proofs of some of our results. For example, the aformentioned Conjecture 1.6 in \cite{borodin2019shift} can be deduced from both Theorem 1.9 in \cite{galashin2020symmetries} and our Theorem \ref{T:conj-bgw}. Galashin's flip symmetry can be viewed as a special, but fundamental, case of Theorem \ref{T:conj-bgw}. 

Interestingly, our proofs and the proofs in \cite{galashin2020symmetries} are quite different. Galashin's methods are mostly algebraic, relying on a new connection between the stochastic six vertex model and the Hecke algebra of the symmetric group.

\subsection{Organization of the paper}

In Section \ref{S:prelim}, we give all the necessary background on the main models that fall into framework of this paper. In that section we isolate the specific model properties coming from the RSK and geometric RSK correspondences that allow for hidden invariance. Note that while these correspondences play a prominent background role in the paper, we never need to describe them explicitly. 

Section \ref{S:framework} proves basic conditional independence statements that will allow us to implement the proof strategy outlined at the start of the introduction. In Section \ref{S:main}, we use the building blocks of Section \ref{S:framework} to prove our main theorems.
Section \ref{SS:consequences} contains proofs of the consequences in Section \ref{SS:symmetries}, \ref{SS:combinatorial}, \ref{SS:directed-limit}, and \ref{SS:limits}. Section \ref{S:conclude} contains concluding remarks and a short appendix gives examples of situations where invariance fails.

\section{Decoupled polymer models}
\label{S:prelim}

In this section, we introduce notation and recall the basic properties of geometric and exponential last passage percolation and the log-gamma polymer that we will need moving forward.

\subsection{Notation and Definitions}
For a collection of points $\bu = (u_1, \dots, u_k) \in \Zd$, let 
$
\scrD(\bu)
$
be the set of all $k$-tuples $\pi = (\pi_1, \dots, \pi_k)$ of disjoint paths, where each $\pi_i$ is a $u_i$-path. For $U \sset \Zd$, let 
$$
\scrE(U) = \lf\{\bu \in \bigcup_{k \in \N} U^k : \scrD(\bu) \ne \emptyset \rg\},
$$
and let $\scrE := \scrE(\Zd)$. We refer to a point $\bu \in \scrE$ as an endpoint.
The following definition unifies the notions of last passage percolation and directed polymers.

\begin{definition}
	\label{D:polymer}
	Let $(R, \oplus, \otimes)$ be either the algebra of addition and multiplication $(+, \X)$ over the positive real numbers $R = \R_{> 0}$, or else the max-plus algebra $(\max, +)$ over $R = \R$.
	Let $M = \{M_x : x \in \Z^2 \}$ be a collection of (possibly random) vertex weights in $R$. For a $k$-tuple of disjoint paths $\pi$, define its \textbf{weight}
	$$
	M(\pi) = \bigotimes_{x \in \pi} M_x.
	$$
	Define the \textbf{partition function} $Z_M:\scrE \to R$ by
	$$
	Z_M(\bu) = \bigoplus_{\pi \in \scrD(\bu)} M(\pi).
	$$
	We refer to $M$ as an \textbf{environment}, and $Z_M$ as the \textbf{polymer model} defined by $M$ (the choice of the algebra $(R, \oplus, \otimes)$ is implicit here). 
\end{definition}

The general proof framework in the bulk of the paper does not require any distinction between the two algebras. Regardless of which algebra we are working in, we will use $1$ and $0$ to denote the multiplicative $(\otimes)$ and additive $(\oplus)$, identities. 

Certain collections of partition functions will be especially relevant to us. As in the introduction, for $u \in \Zd$, define
\begin{align}
\label{E:uk}
u^k &= \big((u^-; u^+ - (k - 1, 0)), (u^- + (1, 0); u^+ - (k - 2, 0)), \dots, (u^- + (k-1, 0); u^+) \big).
\end{align}
The partition function $Z_M(u^k)$ should be thought of as the partition function for $k$ disjoint $u$-paths. It is defined whenever $k \le \min(u^+_1 - u^-_1 + 1, u^+_2 - u^-_2 + 1)$. 
We need to shift the start and end points of the paths in order to prevent overlap. Note that shifting the start points vertically instead of horizontally will result in the same partition function. In other words, 
\begin{equation}
\label{E:d-hat-d}
Z_M(u^k) = Z_M(\hat u^k),
\end{equation}
where
\begin{align*}
\hat u ^k = \big((u^-; u^+ - (0, k - 1)), (u^- + (0, 1);  u^+ - (0, k - 2)), \dots, (u^- + (0, k - 1), u^+) \big).
\end{align*} 
The correspondence \eqref{E:d-hat-d} extends as follows. For $\bu = (u_1^{k_1}, \dots, u_\ell^{k_\ell}) \in \scrE$ and any vector $\bv = (v_1, \dots, v_\ell) \in \scrE$ where $v_i \in \{u_i^{k_i}, \hat u_i^{k_i}\}$ for all $i$, we have
\begin{equation}
\label{E:d-hat-d'}
Z_M(\bu) = Z_M(\bv).
\end{equation}
We introduce special notation for certain sets of endpoints. See Figure \ref{fig:DVV} and the text below for explanations of these definitions.
\begin{itemize}[nosep]
	\item $D(u) := \{u^k : k \in \{1, \dots, \min(u_{2, 1} - u_{1, 1} + 1, u_{2, 2} - u_{1, 2} + 1)\}\}$,
	\item $\hat D(u) := \{\hat u^k : k \in \{1, \dots, \min(u_{2, 1} - u_{1, 1} + 1, u_{2, 2} - u_{1, 2} + 1)\}\}$
	\item $H(u) := \bigcup \{ D(u^-, u^+ - (0, i)) : i \in \{0, \dots, u^+_2 - u^-_2\}\}$.
	\item $V(u) := \bigcup \{ D(u^-, u^+ - (i, 0)) : i \in \{0, \dots, u^+_1 - u^-_1\}\}.$
	\item $\bar H(u) := \scrE \{v \in \Zd : v \sset u, v_1^- = u^-_1, v_1^+ = u^1_+\}.$
	\item $\bar V(u) := \scrE \{v \in \Zd : v \sset u,  v_2^- = u^-_2, v_2^+ = u^2_+\}.$
\end{itemize}

\begin{figure}[tb]
	\centering
	\includegraphics[width=0.4\textwidth]{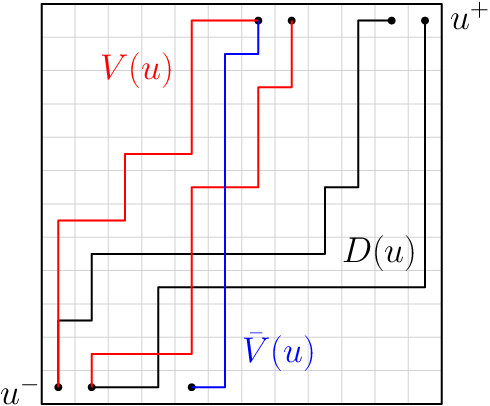}
	\caption{An illustration of the kind of paths allowed in the definitions of $D(u), V(u)$, and $\bar V(u)$. Paths corresponding to endpoints in $D(u)$ move between clusters of points in two corners of $u$, paths in $V(u)$ move from a cluster of points at $u^-$ to a cluster of points on the top edge of $u$, and paths in $\bar V(u)$ move vertically across $u$.}
	\label{fig:DVV}
\end{figure}

The sets $D(u)$ and $\hat D(u)$ should be thought of as the set of endpoints whose paths cross $u$ diagonally. Even though $Z_M|_{D(u)}$ and $Z_M|_{\hat D(u)}$ carry the same information, we keep both notations to help clarify when \eqref{E:d-hat-d} is used. The sets $H(u)$ and $V(u)$ should be thought of as endpoints whose paths cross $u$ horizontally and vertically, respectively, starting at the point $u^-$. The functions $Z_M|_{H(u)}$ and $Z_M|_{V(u)}$ can be interpreted as the two output tableaux coming from the RSK/geometric RSK correspondences applied to the environment $M$ on the box $u$. The sets $\bar H(u)$ and $\bar V(u)$ comprise all endpoints contained in the box $u$ whose paths move horizontally and vertically across $u$. 

Moving forward, we will also use the following notation for box crossings and disjointness:
\begin{equation*}
\begin{split}
\scrH &= \{(u, v) \in \Zd \X \Zd: (u, v) \text{ crosses horizontally}\}, \\
\quad \scrV &= \{(u, v) \in \Zd \X \Zd: (u, v) \text{ crosses vertically}\}, \\ \quad \mathand \quad
\scrN &= \{(u, v) \in \Zd \X \Zd : u \cap v = \emptyset \}.
\end{split}
\end{equation*}

\subsection{Decoupled models}
We now state the main theorem we will use regarding integrable last passage and polymers models.

\begin{theorem}
	\label{T:explicit}
	Let $M, Z_M$ be one of the following three environments (with associated polymer model):
	\begin{enumerate}[label=(\roman*)]
		\item Fix biinfinite sequences $\al, \be$ of positive numbers with $\al_i\be_j < 1$ for all $i, j$. Let $M$ be an array of independent geometric random variables where
		$
		\p(M_{(i, j)} = n) = (1 - \al_i \be_j)(\al_i \be_j)^n,
		$
		for $n =0, 1, \dots,$ and define $Z_M$ with the $(\max, +)$-algebra.
		\item Fix biinfinite sequences $\al, \be$ of real numbers with $\al_i + \be_j > 0$ for all $i, j$. Let $M$ be an array of independent exponential random variables with $\expt M_{(i, j)} = (\al_i + \be_j)^{-1}$,
		and define $Z_M$ with the $(\max, +)$-algebra.
		\item Fix two biinfinite sequences $\al, \be$ of real numbers with $\al_i + \be_j > 0$ for all $i, j$. Let $M$ be an array of independent inverse-gamma random variables with parameter $\al_i + \be_j$.
	\end{enumerate}
	Then for every box $u \in \Zd$, the vectors $Z_M|_{H(u)}$ and $Z_M|_{V(u)}$ are conditionally independent given $Z_M|_{D(u)}$.
\end{theorem}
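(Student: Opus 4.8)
The plan is to recognize cases (i)--(iii) as precisely the environments for which the box $u$ carries the integrable structure of the RSK correspondence (for (i) and (ii)) or the geometric RSK correspondence (for (iii)), and to read off the conditional independence from the product form of the RSK/gRSK image of the environment measure, which in case (i) is the familiar product of Schur functions \eqref{E:Schur}. So I would organize the proof in three steps: (1) identify $Z_M|_{D(u)}$, $Z_M|_{H(u)}$, $Z_M|_{V(u)}$ with, respectively, the common shape and the two output tableaux (or Gelfand--Tsetlin patterns) of the correspondence; (2) show that, once the shape is fixed, the pushforward of the law of $M|_u$ splits as a product of a function of the first tableau and a function of the second; (3) translate the resulting conditional independence of tableaux back into the statement about last passage values.

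For Step 1 I would write $u$ as an $n \times m$ rectangle and apply RSK (resp.\ geometric RSK) to $M|_u$, producing a pair $(P, Q)$ with a common shape: a partition $\la$ in cases (i), (ii), and a point of $\R^{\min(n,m)}$ in case (iii). By Greene's theorem \citep{greene1974extension} (resp.\ its birational analogue), $Z_M(u^k)$ is the sum of the first $k$ parts of $\la$, so $Z_M|_{D(u)}$ and $\la$ determine each other. Taking diagonal last passage values across the rectangle obtained from $u$ by deleting its top $i$ rows gives the shape of RSK applied to the bottom $m - i$ rows; letting $i$ run over $\{0, \dots, m-1\}$ these nested shapes, with successive differences horizontal strips, constitute exactly one of the two output tableaux, so $Z_M|_{H(u)}$ and (say) $Q$ determine each other; deleting columns from the right instead identifies $Z_M|_{V(u)}$ with $P$. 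This encoding is precisely the preservation statement recorded in Theorem \ref{T:encoding-lpp} (in the geometric case it is also the content of \citep{corwin2020invariance}), and I would cite it rather than reprove it here. It thus suffices to show that $P$ and $Q$ are conditionally independent given the common shape.

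For Step 2, in case (i) the weight $\p(M|_u = m_0)$ is a constant depending only on $\al, \be$ times $\prod_i \al_i^{r_i} \prod_j \be_j^{c_j}$, with $r_i, c_j$ the row and column sums of $m_0$; RSK sends $(r_i)$ and $(c_j)$ to the contents of $Q$ and of $P$, so the law of $(P, Q)$ is proportional to $\al^{\mathrm{cont}(P)} \be^{\mathrm{cont}(Q)}$ on $\{\sh P = \sh Q\}$ and is zero off that set, which is exactly \eqref{E:Schur} after summing over a fixed shape. In case (ii) the density of $M|_u$ is a constant times $\exp(-\sum_i \al_i r_i - \sum_j \be_j c_j)$ and the piecewise-linear $(\max,+)$-RSK map is volume preserving and still carries the row/column sums to the contents of the two output patterns (alternatively one can obtain case (ii) as a scaling limit of case (i), using continuity of the explicit conditional laws). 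In case (iii), in logarithmic coordinates $y_{ij} = \log M_{ij}$ the density of $M|_u$ is a constant times $\exp\big(-\sum_{ij} (\al_i + \be_j) \log M_{ij} - \sum_{ij} M_{ij}^{-1}\big)$; geometric RSK is volume preserving in these coordinates, the first sum splits as a function of $\mathrm{cont}(P)$ plus a function of $\mathrm{cont}(Q)$, and the energy $\sum_{ij} M_{ij}^{-1}$ is transported to a function of the common shape alone \citep{o2014geometric, corwin2020invariance}. In all three cases the law of $(P, Q)$, restricted to the product set $\{\sh P = \sh Q = \la\}$, factorizes as a function of $P$ times a function of $Q$ times a function of $\la$.

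Step 3 is then immediate: conditioning on the common shape makes the conditional law of $(P, Q)$ a product measure, so $P$ and $Q$ are conditionally independent given the shape, and transporting through the bijections of Step 1 yields that $Z_M|_{H(u)}$ and $Z_M|_{V(u)}$ are conditionally independent given $Z_M|_{D(u)}$. I expect the real obstacle to be Step 1 --- making precise how the three distinguished families of (multi-point) last passage values encode the shape and the two tableaux --- which is exactly what Theorem \ref{T:encoding-lpp} is for; the other nontrivial ingredient, used only in case (iii), is the volume-preservation and energy-transport property of geometric RSK, which I would simply quote from \citep{o2014geometric, corwin2020invariance}.
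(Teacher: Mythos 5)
Your proposal is correct, but note that the paper does not actually prove Theorem \ref{T:explicit}: it records the statement as well known and cites Johansson \citep{johansson2003discrete, johansson2000shape} and O'Connell \cite{o2003conditioned} for cases (i)--(ii), and Corwin--O'Connell--Sepp\"al\"ainen--Zygouras \citep{o2014geometric, corwin2014tropical} for case (iii), accompanied by a one-paragraph explanation that is precisely your Steps 1--3 (RSK/geometric RSK is a bijection onto pairs of tableaux with a shared shape, and the product form of the pushforward measure gives the decoupling). So your sketch is the standard argument underlying the paper's citations rather than an alternative route. One small inaccuracy in case (iii): under geometric RSK the energy $\sum_{i,j} M_{i,j}^{-1}$ is not transported to a function of the common shape alone --- it decomposes as a sum of a functional of each of the two output patterns (see \citep{o2014geometric}) --- but since such a decomposition still factorizes the joint density on $\{\sh P = \sh Q = \la\}$ into a function of $P$ times a function of $Q$, your conclusion is unaffected.
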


Theorem \ref{T:explicit} is well-known. It can be interpreting as saying that the three models have a particular `decoupling' property.
 In the first two cases, it follows from the connection between these models and the RSK bijection. The key fact coming from RSK is that it is a bijection from matrices onto all pairs of Young tableaux with a shared shape; the fact that the tableaux only need to have a shared shape yields the decoupling property. The result in these cases was first observed by Johansson (see \citep{johansson2003discrete} and \citep{johansson2000shape}). See also Section 3 in O'Connell \cite{o2003conditioned} for a derivation of these properties that more closely uses the language of our paper. In the third case, the geometric RSK correspondence plays the role that RSK plays in the first two cases. Here the result was shown by Corwin, O'Connell, Sepp\"al\"ainen, and Zygouras (see Corollary 3.3. in \citep{o2014geometric} and also equations (1.2) and (1.3) in \citep{corwin2014tropical}). 

In the context of Theorem \ref{T:explicit}, if we permute elements of the sequences $\al, \be$, then certain last passage statistics are known to be preserved. This is summarized by the following theorem. For biinfinite sequences $\al, \be$ and $u \in \Zd$, define the vectors
$$
\al_u = (\al_i : (i, j) \in u \text{ for some } j) \quad \mathand \quad \hat \be_u = (\be_j : (i, j) \in u \text{ for some } i).
$$
\begin{theorem}
	\label{T:decoupled-family}
	Let $M_{\al, \be}, Z_{M_{\al, \be}}, M_{\al', \be'}, Z_{M'_{\al', \be'}}$ be two polymer models of the same type (i), (ii), or (iii) in Theorem \ref{T:explicit}, indexed by different sequences $(\al, \be)$ and $(\al', \be')$.  Then for any box $u$, we have 
	$$
	Z_{M_{\al, \be}}|_{H(u)} \eqd Z_{M_{\al', \be'}}|_{H(u)}
	$$
	whenever $\hat \be_u = \hat \be_u'$ and $\al_u$ is a permutation of $\al_u'$.
	That is, the joint distribution of all last passage statistics from $u^-$ to the \textbf{right} side of $u$ remains unchanged under permutations of the column parameters $\al_i$. Similarly, we have 
	$$
	Z_{M_{\al, \be}}|_{V(u)} \eqd Z_{M_{\al', \be'}}|_{V(u)}
	$$
	whenever $\al_u = \al_u'$ and $\hat \be_u$ is a permutation of $\hat \be_u'$.
	That is, the joint distribution of all last passage statistics from $u^-$ to the \textbf{top} side of $u$ remains unchanged under permutations of the row parameters $\be_i$.
\end{theorem}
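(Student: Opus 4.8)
The plan is to deduce everything from the explicit description of the law of the output tableaux of the (geometric) RSK correspondence, of which Theorem \ref{T:explicit} is the conditional-independence shadow. Since $H(u)$ and $V(u)$ only involve endpoints contained in $u$, the partition functions $Z_M|_{H(u)}$ and $Z_M|_{V(u)}$ depend only on the weights $\{M_x : x \in u\}$, so we may restrict attention to the finite array $M|_u$ and to the finite parameter vectors $\al_u, \hat\be_u$. By Greene's theorem --- in the multi-point form underlying Theorem \ref{T:encoding-lpp}, and its positive-temperature (geometric RSK) analogue in case (iii) --- the map $M|_u \mapsto (Z_M|_{H(u)}, Z_M|_{V(u)})$ is a bijective re-encoding of the pair of output tableaux $(P,Q)$ of RSK applied to $M|_u$, the shared shape being encoded by $Z_M|_{D(u)}$; unwinding \eqref{E:uk} and the definitions of $H(u), V(u)$ shows that $Z_M|_{H(u)}$ encodes the tableau whose content is recorded along the vertical ($\be$-indexed) direction and $Z_M|_{V(u)}$ the one recorded along the horizontal ($\al$-indexed) direction.

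In the geometric case (i), I would then invoke the refinement of the Schur identity \eqref{E:Schur}: writing $\la = \mathrm{sh}(P_0) = \mathrm{sh}(Q_0)$ and letting $\mathrm{type}(\cdot)$ denote the content vector of a tableau,
\[
\p\big((P,Q) = (P_0,Q_0)\big) = \Big(\prod_{i,j}(1 - \al_i\be_j)\Big)\,\hat\be_u^{\,\mathrm{type}(P_0)}\,\al_u^{\,\mathrm{type}(Q_0)}\,\indic[\mathrm{sh}(P_0) = \mathrm{sh}(Q_0)],
\]
the product being over $i$ in the horizontal extent and $j$ in the vertical extent of $u$. Summing out $Q_0$ over tableaux of shape $\la$ and using $\sum_{\mathrm{sh}(Q_0) = \la}\al_u^{\,\mathrm{type}(Q_0)} = s_\la(\al_u)$ shows the marginal law of $P$ (hence of $Z_M|_{H(u)}$) is proportional to $s_\la(\al_u)\,\hat\be_u^{\,\mathrm{type}(P_0)}$; since $s_\la$ and the normalizer $\prod_{i,j}(1-\al_i\be_j)$ are symmetric under permuting the $\al_i$, the law of $Z_M|_{H(u)}$ is unchanged when $\al_u$ is replaced by a permutation of itself and $\hat\be_u$ is held fixed. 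This is the first claim; the second follows by the mirror computation, or by applying the reflection $R_1$, which exchanges the roles of $H$ and $V$ and of $\al$ and $\be$.

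For the exponential case (ii), I would obtain the statement by the standard scaling limit taking geometric last passage percolation to exponential last passage percolation (e.g.\ $\al_i\be_j = 1 - \varepsilon(\al_i + \be_j) + o(\varepsilon)$): $Z_M|_{H(u)}$ is a finite vector, the required equality in distribution holds before the limit by case (i), and equality in distribution passes to weak limits. For the inverse-gamma case (iii), RSK is replaced by geometric RSK and Schur functions by class-one $\mathfrak{gl}$-Whittaker functions; the analogue of the displayed formula --- the explicit density of the image of the inverse-gamma measure under geometric RSK (\citep{o2014geometric}, Corollary 3.3, and \citep{corwin2014tropical}, (1.2)--(1.3)) --- factors, after integrating out the coordinates of the $\al$-flavoured output array at fixed shape, into a Whittaker function $\psi_t(\al_u)$ times a factor depending only on $\hat\be_u$ and the remaining data, with normalizer $\prod_{i,j}\Gamma(\al_i+\be_j)^{-1}$. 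As $\psi_t(\al_u)$ and this normalizer are symmetric in $\al_u$, the same marginalization argument applies.

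The main obstacle is bookkeeping rather than any real difficulty: one must carefully match the ``output tableaux'' language (and the direction along which each tableau's content is recorded) to the intrinsic definitions of $H(u)$, $V(u)$, $D(u)$, $u^k$ used here, and in case (iii) handle the continuous analogue of a tableau's content and the precise form of the geometric-RSK image density. Everything else --- symmetry of Schur/Whittaker functions and of the normalizing constants under permutations of $\al_u$ alone --- is immediate. (An alternative avoiding the pair-of-tableaux laws: the adjacent transposition $\al_i \leftrightarrow \al_{i+1}$ is realized by an explicit involution on environments --- a Bender--Knuth / Noumi--Yamada local move --- preserving $Z_M|_{H(u)}$ and pushing the law of $M_{\al,\be}$ to that of the transposed environment; this is essentially the content of Theorem \ref{T:encoding-lpp}, but the route above is shorter given what is already available.)
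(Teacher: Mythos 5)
Your proposal is correct and follows essentially the same route as the paper, which likewise derives the geometric case from the Schur-measure form of the RSK output law \eqref{E:Schur} and the symmetry of Schur functions, obtains the exponential case by the standard geometric-to-exponential limit, and handles the log-gamma case via the Whittaker-function analogue from \citep{corwin2014tropical}. You merely spell out the marginalization over the second tableau that the paper leaves implicit.
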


Theorem \ref{T:decoupled-family} follows from symmetries of the RSK and geometric RSK correspondences. Again, it is well known and was observed in the first two cases in \citep{johansson2003discrete} and in the third case in \citep{corwin2014tropical}. In the case of geometric last passage percolation, this follows from \eqref{E:Schur} above and the fact that Schur functions are symmetric in their variables. The exponential last passage percolation statement follows by taking the standard geometric-to-exponential limit. For the case of the log-gamma polymer, this follows from the formula (1.2) in \cite{corwin2014tropical} and the fact that the $GL(N, \R)$-Whittaker functions $\Phi_\theta$ used in that formula are symmetric in permutations of the index vector $\theta$ (e.g. see Section 2 of \cite{o2014geometric} for the definition of these functions and their basic properties). 

 In both cases, the fact that underlies Theorem \ref{T:decoupled-family} is that the law of $Z_{M_{\al, \be}}|_{H(u)}$ is given by a product of functions $s_\cdot(\al) s_\cdot(\be)$ whose values depend only on the order of the vectors $\al$ and $\be$. In the case of geometric last passage percolation, these are normalized Schur symmetric functions, e.g. see equation (1.1) in \cite{corwin2014tropical}, and in the case of the log-gamma polymer these are essentially Whittaker functions, see equation (1.2) in \cite{corwin2014tropical}.

\section{Stronger decoupling statements}
\label{S:framework}
In this section, we prove extensions of the conditional independence property of Theorem \ref{T:explicit}. These stronger conditional independence results will allow us to implement the proof strategy outlined at the beginning of Section \ref{S:intro}. We start with two deterministic facts about polymer models.

\begin{prop}
	\label{P:poly-comp}
	Let $\bu = (u_1, \dots, u_k) \in \scrE$, and suppose that for some $v \in \Zd$, that $(u_i, v) \in \scrH \cup \scrN$ for all $i$. Then $Z_M(\bu)$ is a function of the random variables $\{M_x : x \in (u_1 \cup \dots \cup u_k) \smin v \}$ and the function $Z_M|_{\bar H(v)}$. Similarly, if $(u_i, v) \in \scrV \cup \scrN$ for all $i$, then $Z_M(\bu)$ is a function of $\{M_x : x \in (u_1 \cup \dots \cup u_k) \smin v \}$ and $Z_M|_{\bar V(v)}$.
\end{prop}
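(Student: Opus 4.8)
The plan is to establish the first assertion (the horizontal/$\scrH \cup \scrN$ case); the vertical case follows by applying the reflection $R_1$, which swaps $\scrH \leftrightarrow \scrV$, $\bar H \leftrightarrow \bar V$, and leaves $\scrN$ invariant. Write $B = u_1 \cup \dots \cup u_k$, thought of as a subset of $\Z^2$. I want to show that the optimal (or, in the polymer case, the full weighted sum over) $k$-tuples of disjoint paths $\pi = (\pi_1, \dots, \pi_k)$ realizing $Z_M(\bu)$ can be reconstructed from the weights $\{M_x : x \in B \smin v\}$ together with the data $Z_M|_{\bar H(v)}$.

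The key structural observation is this: if $(u_i, v) \in \scrH$, then any $u_i$-path $\pi_i$ must cross the box $v$ from its left edge to its right edge, so $\pi_i \cap v$ is a path whose endpoints lie on the left and right edges of $v$; in the notation of the paper, the endpoint of $\pi_i \cap v$ is an element of $\bar H(v)$ (recall $\bar H(v) = \scrE\{w \in \Zd : w \sset v, w^-_1 = v^-_1, w^+_1 = v^+_1\}$). If $(u_i, v) \in \scrN$, then $\pi_i$ does not meet $v$ at all. So for each fixed choice of how $\pi$ decomposes — i.e. fixing, for each $i$ with $(u_i,v)\in\scrH$, the entry and exit heights of $\pi_i$ on the edges of $v$, and hence an endpoint vector $\bw \in \bar H(v)$ — the total weight splits multiplicatively (additively, in max-plus) into a contribution from the portions of the paths lying outside $v$, which depends only on $\{M_x : x \in B \smin v\}$, and a contribution from the portions inside $v$. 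The crucial point is that the disjoint paths remain disjoint inside $v$ and outside $v$ separately, and conversely disjoint path-pieces inside $v$ with prescribed boundary endpoints can be glued to disjoint path-pieces outside $v$ with matching endpoints to form a globally disjoint $k$-tuple (this uses that the $\pi_i$ enter and leave $v$ in a consistent left-to-right order, which is forced by the common horizontal crossing structure — here I should be a little careful, and this is the main obstacle, see below). Hence maximizing/summing over the inside pieces with fixed boundary data gives exactly a value read off from $Z_M|_{\bar H(v)}$, and then one optimizes/sums over the boundary data and over the outside pieces, a computation involving only $\{M_x : x \in B\smin v\}$ and $Z_M|_{\bar H(v)}$.

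Concretely, the proof is: (1) show every admissible disjoint $k$-tuple $\pi \in \scrD(\bu)$ restricts to a disjoint tuple of path-pieces inside $v$ with an endpoint vector in $\bar H(v)$, and to a family of path-pieces outside $v$; (2) show the weight factorizes as $M(\pi) = M(\pi|_{\text{outside } v}) \otimes M(\pi|_{\text{inside } v})$; (3) show the gluing map (outside pieces with boundary data $\bw$) $\times$ (inside disjoint pieces realizing endpoint $\bw$) $\to \scrD(\bu)$ is a bijection onto the relevant subset, so that $Z_M(\bu) = \bigoplus_{\bw} \big[ (\text{outside contribution with boundary }\bw) \otimes Z_M(\bw) \big]$, where the outside contribution is manifestly a function of $\{M_x : x \in B \smin v\}$. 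This exhibits $Z_M(\bu)$ as a function of the two stated pieces of data.

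The step I expect to be the main obstacle is (3), the disjointness-preserving gluing — i.e. verifying that disjoint path-pieces inside $v$ with a given boundary endpoint vector $\bw \in \bar H(v)$ can always be extended to globally disjoint $u_i$-paths, given that the outside structure is compatible. The subtlety is matching the cyclic/linear order in which the various $\pi_i$ cross $v$: one must check that the horizontal-crossing hypothesis $(u_i, v) \in \scrH$ for all $i$ (and the fact that a valid global $\bu$-configuration exists, i.e. $\bu \in \scrE$) forces the entry points on the left edge of $v$, the exit points on the right edge, and the interior pieces to all be consistently ordered, so that no forced crossing is introduced when gluing. I would handle this with the standard planar path-crossing / ordering argument: disjoint up-right paths in the plane are linearly ordered (one lies weakly to the lower-right of another), and this order is preserved under restriction to and extension from the box $v$; a surgery argument (swapping tails of two paths that cross) shows any inconsistency can be removed without changing the multiset of weights collected, or simply that the constrained optimum is attained at an ordered configuration. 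Once ordering is pinned down, the factorization and bijection in (2)–(3) are routine, and the functional dependence claimed in the proposition follows.
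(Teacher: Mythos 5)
Your proposal matches the paper's proof essentially exactly: decompose each disjoint tuple into the portion inside $v$ and the portion outside, note that the outside portion determines an endpoint vector in $\bar H(v)$ for the inside portion, and write $Z_M(\bu) = \bigoplus_{\pi_v^c} M(\pi_v^c) \otimes Z_M(\bw(\pi_v^c))$. The gluing step you flag as the main obstacle is in fact automatic and needs no ordering or surgery argument: the inside pieces lie in $v$ and the outside pieces lie in $v^c$, so disjointness of the concatenated tuple follows immediately from disjointness of the inside pieces among themselves and of the outside pieces among themselves.
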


The functions in Proposition \ref{P:poly-comp} are explicit, see \eqref{E:poly-comp} below.

\begin{proof}
	We only handle the case when $(u_i, v) \in \scrH \cup \scrN$, as the other case is symmetric. For any collection of disjoint $u_i$-paths $\pi_1, \dots, \pi_k$, let $\pi_v = \lf(\bigcup_i \pi_i \rg) \cap v$ and $\pi_v^c = \lf(\bigcup_i \pi_i \rg) \smin v$. Since $(u_i, v) \in \scrH \cup \scrN$ for all $i$, there exists a unique endpoint $(w_1, \dots, w_\ell)$ depending only on $\pi_v^c$ with $w_i \sset v, (w_i, v) \in \scrH$ such that $\pi_v$ is a union of disjoint $w_i$-paths. Call this endpoint $\bw(\pi_v^c)$. Any $\tau \in \scrD(\bw(\pi^c_v))$ can be concatenated with $\pi_v^c$ to give a $k$-tuple $\sig \in \scrD(\bu)$. Putting this together, we get that
	\begin{equation}
	\label{E:poly-comp}
	Z_M(\bu) = \bigoplus_{\pi^v_c} M(\pi^v_c) \otimes Z_M(\bw(\pi_v^c)) , \qquad \text{ where } \quad M(\pi^v_c) = \bigotimes_{x \in \pi^v_c} M(x).
	\end{equation}
	Here the product is over all possible choices of $\pi^v_c$. Finally, $M(\pi^v_c)$ is always a function of $\{M_x : x \in (u_1 \cup \dots \cup u_k) \smin v \}$ and $Z_M(\bw(\pi_v^c))$ is a function of $Z_M|_{\bar H(v)}$.
\end{proof}

The next fact explains the suggestive notation $H(u), \bar H(u), V(u), \bar V(u)$ introduced in Section \ref{S:prelim}.

\begin{prop}
	\label{P:measurable-fn} Let $M, Z_M$ be any environment and associated polymer model.
Then for any box $u \in \Zd$, we can express $Z_M|_{\bar H(u)}$ as a function of 
$Z_M|_{H(u)}$ and $Z_M|_{\bar V(u)}$ as a function of 
$Z_M|_{V(u)}$.
\end{prop}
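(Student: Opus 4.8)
\emph{Proof plan.} The plan is to recognize $Z_M|_{H(u)}$ as (one of) the output tableaux of the RSK correspondence (in the $(\max,+)$ cases) or of the geometric RSK correspondence (in the $(+,\X)$ case) applied to $M$ restricted to the box $u$, to recognize $Z_M|_{\bar H(u)}$ as the collection of all horizontal passage values across $u$, and then to invoke the classical fact that the latter is determined by the former. Since the reflection $R_1$ carries any polymer model to one of the same type while exchanging $H(u)\leftrightarrow V(u)$ and $\bar H(u)\leftrightarrow\bar V(u)$, it is enough to express $Z_M|_{\bar H(u)}$ through $Z_M|_{H(u)}$. Write $u=[a_1,b_1]\X[a_2,b_2]$. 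Using \eqref{E:d-hat-d'} to replace the horizontal shifts in the staircase endpoints of $H(u)$ by vertical ones, and then the disjoint-paths form of Greene's theorem in the $(\max,+)$ cases (respectively its geometric-RSK counterpart in the $(+,\X)$ case, exactly as invoked for Theorem~\ref{T:explicit}), one sees that $Z_M|_{H(u)}$ is precisely the data of the chain of shapes $\la^{(j)}:=Z^\Delta_M(a_1,a_2;b_1,j)$ for $a_2\le j\le b_2$ (with $\la^{(a_2-1)}:=\emptyset$) --- that is, a single output tableau. The same two ingredients identify $Z_M|_{\bar H(u)}$ as a function of the analogous shape data of \emph{all} horizontal sub-bands of $u$, so the proposition reduces to showing that this band data is itself a function of the chain $(\la^{(j)})_j$.

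For that reduction I would use the standard compatibility of (geometric) RSK with concatenation of words. For a band consisting of the rows $[c^-,c^+]$, the cells created over the insertion steps $c^-,\dots,c^+$ form a skew tableau of shape $\la^{(c^+)}/\la^{(c^--1)}$ whose entries are read off from the chain, and its jeu-de-taquin rectification has the same shape as the (geometric) RSK output of the band itself; since $Z_M(v^k)$ for $v=(a_1,c^-;b_1,c^+)$ is the corresponding partial sum of that shape, it depends only on $\la^{(c^--1)},\dots,\la^{(c^+)}$. This handles every single-box endpoint of $\bar H(u)$. For a multi-box endpoint $\bv=(v_1,\dots,v_\ell)\in\bar H(u)$ I would again apply \eqref{E:d-hat-d'}: after placing the bands $v_i$ in their canonical staircase position, $\bv$ splits into runs each of which is of the form $w^k$ for a single band $w$, with distinct runs contributing independently, so $Z_M(\bv)$ is once more a function of band data, hence of the chain.

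The mathematical content here is entirely classical --- RSK and Greene's theorem in the zero-temperature cases, geometric RSK in the positive-temperature case, the same inputs as for Theorem~\ref{T:explicit} and Theorem~\ref{T:encoding-lpp} --- so the main obstacle is organizational rather than conceptual: one must carefully match the indexing in the definitions of $H(u)$ and $\bar H(u)$ to the tableau picture while tracking the shifts coming from \eqref{E:d-hat-d'}, and in particular dispose of the multi-box endpoints of $\bar H(u)$ uniformly, since a general $\bv\in\bar H(u)$ need not be equivalent to a single band. If one is willing to cite the relevant (geometric) RSK statements as black boxes, in the spirit of the proof of Theorem~\ref{T:explicit}, the proposition follows immediately.
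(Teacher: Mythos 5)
Your overall strategy is genuinely different from the paper's: the paper does not argue via jeu de taquin at all, but instead proves Theorem \ref{T:encoding-lpp}, which exhibits an explicit environment $\Phi(M)$ on a staircase region $T_u$ (built from the quantities $Z_M(u_i^j)$, hence a function of $Z_M|_{V(u)}$ alone) with $Z_M(\bu)=Z_{\Phi(M)}(P(\bu))$ for \emph{every} $\bu\in\bar V(u)$; the identity is proved in the $(+,\X)$ algebra by the Lindstr\"om--Gessel--Viennot lemma together with the observation that the staircase minors determine the whole matrix $L$, and is then transferred to $(\max,+)$ by the tropicalization principle. Proposition \ref{P:measurable-fn} is then immediate. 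Your plactic/jeu-de-taquin route is plausible in spirit for the zero-temperature, integer-weight case, but as written it has a genuine gap at the multi-box endpoints, and that case cannot be waved away: Proposition \ref{P:poly-comp} feeds exactly such endpoints into the argument of Proposition \ref{P:1-step-markov}.

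Concretely, a general $\bv=(v_1,\dots,v_\ell)\in\bar H(u)$ consists of full-width bands whose lower and upper rows are strictly increasing, but which may overlap with \emph{arbitrary} offsets; for instance $v_1=(u_1^-,1;u_1^+,3)$ and $v_2=(u_1^-,2;u_1^+,7)$ admit disjoint paths yet the pair is not of the form $w^k$ or $\hat w^k$ for any single band $w$, nor does it split into independent runs of that form. So the reduction "each run is $w^k$ for a single band" fails, and your argument only covers the single-box elements of $\bar H(u)$ plus the special staircase tuples. Two further points need attention even in the single-band case: the proposition is a deterministic statement about \emph{arbitrary} real-valued (resp. positive) environments, so classical RSK on nonnegative integer matrices does not apply directly and one needs either the tropical-polynomial-identity argument or a continuity argument; and in the $(+,\X)$ case the "jeu-de-taquin rectification of the skew recording data has the same shape as the geometric RSK output of the band" is not an off-the-shelf classical fact --- the compatibility of geometric RSK with restriction to sub-bands is essentially the content of the Noumi--Yamada results that the paper repackages as Theorem \ref{T:encoding-lpp}. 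To repair your proof you would either have to prove a multi-path, all-endpoints version of the concatenation compatibility (which is in effect Theorem \ref{T:encoding-lpp}), or simply route the argument through that theorem as the paper does.
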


Proposition \ref{P:measurable-fn} follows from a stronger statement which constructs $Z_M|_{\bar H(u)}$ and $Z_M|_{\bar V(u)}$ explicitly from 
$Z_M|_{H(u)}$ and $Z_M|_{V(u)}$. The explicit construction is not necessary for the remainder of the paper.

For $u = (u^-, u^+) \in \Zd$, define the set
$$
T_u = \{(i, j) \in u : i + j \ge u_1^- + u_2^+ \}.
$$
Let $P:[u^-_1, u^+_1] \X \{u^-_2\} \to T_u$ be the map taking a point $(i, j)$ to the point $(i, j') \in T_u$ that minimizes the distances $|j - j'|$. We can think of $P$ as projecting the bottom boundary of $u$ onto the bottom boundary of $T_u$. We will also use the notation $P(\bv)$ for the map induced by $P$ on a point 
$
\bv \in \bar V(u).
$
Finally, for $S \sset \Z^2$, let $F_{R}(S)$ be the set of functions  $M:S \to R$ (in other words, the set of $R$-valued environments defined on $S$).

\begin{theorem}
	\label{T:encoding-lpp}
	For every $u \in \Zd$, there is a unique map $\Phi:F_{R}(u) \to F_{R}(T_u)$ satisfying
	\begin{equation}
	\label{E:ZM}
	Z_M(\bv) = Z_{\Phi(M)}(P(\bv))
	\end{equation}
	for all $\bv \in \bar V(u)$. This map is given by
	$$
	\Phi(M)(i, u_2^+ - j) = \begin{cases}
	\frac{Z_M(u_i^{j+1})}{Z_M(u_{i}^{j})}, \qquad &j + u^-_1 = i \\
	\frac{Z_M(u_i^{j+1})  Z_M(u_{i-1}^{j})}{Z_M(u_{i-1}^{j+1}) Z_M(u_{i}^{j})}, \qquad &j + u^-_1 < i 
	\end{cases}
	$$
	where $u_i := (u^-;(i, u_2^+))$ and $u_i^j$ is the $j$-tuple defined in \eqref{E:uk}. We use the convention that $Z_M(u_i^0) = 1$.
\end{theorem}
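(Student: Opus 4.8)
\emph{Overview of the plan.} I would split the statement into its two halves and prove them in opposite orders from how they are phrased: first that \emph{any} map obeying \eqref{E:ZM} must be the one written down (uniqueness), and then that the map written down actually satisfies \eqref{E:ZM} (existence), the latter by induction on the number of columns of $u$, realizing $\Phi$ as a composition of ``column insertion'' moves in the spirit of \citep{noumi2002tropical}.

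\emph{Well-definedness and uniqueness.} One first checks the formula really defines an element of $F_R(T_u)$: the map $(i,j)\mapsto(i,u_2^+-j)$ is a bijection from the index set $\{(i,j):u_1^-\le i\le u_1^+,\ 0\le j\le\min(i-u_1^-,u_2^+-u_2^-)\}$ onto $T_u$; on this set the alternatives $j+u_1^-=i$ and $j+u_1^-<i$ partition the cases of the formula; and each $Z_M(u_i^k)$ is a well-defined element of $R$ which in case (iii) is strictly positive (this is where positivity of partition functions enters), so the indicated ratios make sense. For uniqueness, note that all the endpoints $u_i^k$ with $u_i=(u^-;(i,u_2^+))$ lie in $\bar V(u)$, so \eqref{E:ZM} forces $Z_{\Phi(M)}(P(u_i^k))=Z_M(u_i^k)$ for every $i,k$; it therefore suffices to show $N\mapsto\big(Z_N(P(u_i^k))\big)_{i,k}$ is injective on $F_R(T_u)$. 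This is a triangular recovery, read column by column in $i$: the box $P(u_i^1)=[u_1^-,i]\X\{u_2^+\}$ has height one, so $Z_N(P(u_i^1))$ together with $N$ on columns $<i$ determines the cell $(i,u_2^+)$; and $Z_N(P(u_i^{k+1}))$ together with the already-recovered data determines the cell $(i,u_2^+-k)$ (a disjoint tuple achieving $Z_N(P(u_i^{k+1}))$ must occupy that cell, everything else lying in columns $<i$). Hence $N=\Phi(M)$ is forced.

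\emph{Existence, set-up.} For $u_1^-\le i\le u_1^+$ write $u_i=(u^-;(i,u_2^+))$, let $\Phi_i,P_i,T_{u_i}$ be the versions of $\Phi,P,T_u$ attached to $u_i$, and put $\tau_{i,k}=Z_M(u_i^k)$, $\tau_{i,0}=1$. From the formula, $\Phi_i(M)$ restricted to $T_{u_{i-1}}$ coincides with $\Phi_{i-1}(M)$ (the cells in columns $\le i-1$ depend only on columns $\le i-1$ of $M$); $T_{u_i}$ is obtained from $T_{u_{i-1}}$ by appending its $i$-th column $\{i\}\X[\max(u_2^-,u_1^-+u_2^+-i),u_2^+]$; and $P_i$ agrees with $P_{i-1}$ on columns $\le i-1$. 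I would prove by induction on $i$ the statement $S(i)$: \emph{$Z_M(\bu)=Z_{\Phi_i(M)}(P_i(\bu))$ for every $\bu\in\bar V(u_i)$}. The base case $i=u_1^-$ is immediate, since then $u_i$ is a single column, $\bar V(u_i)=\{(u_i)\}$, $T_{u_i}$ is one cell, and both sides equal $\tau_{u_1^-,1}=\bigotimes_{t=u_2^-}^{u_2^+}M_{(u_1^-,t)}$.

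\emph{The inductive step and the main obstacle.} The passage from $S(i-1)$ to $S(i)$ is a ``column insertion'' lemma: appending to $(\Phi_{i-1}(M),T_{u_{i-1}})$ the new column of $\Phi_i(M)$ given by the formula preserves all vertical passage values, matching passages across $u_{i-1}$-with-the-$i$-th-column-of-$u$ with passages across $T_{u_{i-1}}$-with-the-$i$-th-column-of-$T_u$. Concretely, for $\bu\in\bar V(u_i)$ one decomposes each disjoint-path tuple into its part in columns $\le i-1$ and its part in column $i$, does the same for $P_i(\bu)$ on the $\Phi_i(M)$ side, uses $S(i-1)$ on the columns $\le i-1$, and must check that the explicit new-column weights reroute the column-$i$ portions weight-preservingly. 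In the $(\max,+)$ algebra this is a path-surgery computation — essentially Greene's theorem applied locally, using that the new weights are built from the multi-path values $\tau_{i,\bullet},\tau_{i-1,\bullet}$, themselves maxima over nested disjoint paths; in the $(+,\X)$ algebra it is the birational identity behind geometric-RSK column insertion, to be imported from or re-derived following \citep{noumi2002tropical}. I expect this lemma to be the crux: the real work is the bookkeeping forced by the staircase geometry of $T_u$, and above all carrying the argument through uniformly in the abstract algebra $(R,\oplus,\otimes)$ so that the last-passage and directed-polymer cases are handled simultaneously.
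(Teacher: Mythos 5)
Your uniqueness argument is essentially the paper's: both rest on the observation that $\scrD(P(u_i^j))$ contains exactly one $j$-tuple of paths, so the equations $Z_{\Phi(M)}(P(u_i^j)) = Z_M(u_i^j)$ recover $\Phi(M)$ cell by cell in a triangular fashion. That half is fine. The problem is the existence half. You reduce it to a ``column insertion lemma'' — that appending the new column of $\Phi_i(M)$ preserves all vertical passage values — and then explicitly decline to prove it, calling it ``the crux'' and deferring to a path-surgery computation in the $(\max,+)$ case and a birational identity ``to be imported from or re-derived following'' Noumi--Yamada in the $(+,\X)$ case. That lemma is not bookkeeping; it is the entire content of the theorem. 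A proposal that isolates the hard step and leaves it unproved has a genuine gap. (The route itself is viable — it is essentially the Noumi--Yamada column-insertion proof, which the paper notes is one of several existing proofs — but as written nothing in your inductive step is actually established.)

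The paper's own existence argument sidesteps the insertion lemma entirely and is worth internalizing. Working first in the $(+,\X)$ algebra, it forms the matrices $L_{i,j} = Z_M(i,1;j,m)$ and $\tilde L_{i,j} = Z_{\Phi(M)}(P(i,1);j,m)$, so that by the Lindstr\"om--Gessel--Viennot lemma every multi-point value in $\bar V(u)$ on either side is a minor determinant, and the whole identity \eqref{E:ZM} collapses to $L = \tilde L$. The corner minors $\det L^{1,\dots,j}_{i-j+1,\dots,i}$ and $\det \tilde L^{1,\dots,j}_{i-j+1,\dots,i}$ agree because both equal $Z_M(u_i^j)$ — the left by LGV, the right by the unique-path-tuple observation together with the telescoping definition of $\Phi$. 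One then shows these corner minors determine the full matrix by cofactor expansion in lexicographic order, using positivity of the entries to guarantee $Z_M(u_{i-1}^{j-1}) \ne 0$ so that each $L_{i,j}$ can be solved for. Finally the $(\max,+)$ case is obtained for free: the claimed identity is subtraction-free in $\oplus, \otimes, \otimes^{-1}$, so validity over $(+,\X)$ implies validity over $(\max,+)$ (Proposition 1.9 of Noumi--Yamada). This last point also repairs the tension in your plan between wanting a ``uniform'' argument and sketching two separate ones: you only ever need to prove the identity once, in the algebra where determinants exist.
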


In the above theorem statement, we use the standard shorthand notation $ab = a \otimes b$ and $a/b = a \otimes b^{-1}$, where $b^{-1}$ is the $\otimes$-inverse of $b$. The environment $\Phi(M)$ is essentially the recording tableau in either the RSK or geometric RSK correspondence. See Figure \ref{fig:gRSK} for an example of this map. See also equation \eqref{E:u-string} below for an explicit description of $u^j_i$ when $u = (1,1; n, m)$.

As discussed in the introduction, a continuous-time version of Theorem \ref{T:encoding-lpp} for last passage percolation was proven in \citep{DOV}. This proof was adapted to the polymer setting by Corwin \citep{corwin2020invariance}. The paper \cite{corwin2020invariance} also contains an alternate proof of Theorem \ref{T:encoding-lpp} due to Matveev, as well as a third proof that is obtained by combining Theorem 1.7 and the results of Section 2.3 from Noumi and Yamada \citep{noumi2002tropical}. We provide yet another proof of Theorem \ref{T:encoding-lpp} here based on combining different ideas from \cite{noumi2002tropical}; namely those that go into the proof of Theorem 1.7 and are later used in their Section 3.2.

\begin{figure}[tb]
	\centering
	\includegraphics[width=0.5\textwidth]{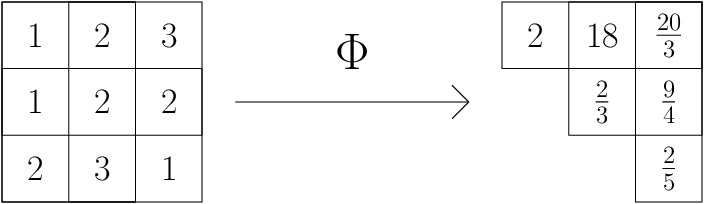}
	\caption{An example of the map $\Phi$ from Theorem \ref{T:encoding-lpp} on a $3 \X 3$ box for the $(+, \X)$-algebra.}
	\label{fig:gRSK}
\end{figure}

\begin{proof}
	To simplify notation in the proof, we assume that $u = (1, 1; n, m)$ for some $n, m \in \N$.
	First, we establish that the function $\Phi$ in the theorem is the unique function that could satisfy $Z_M(\bv) = Z_{\Phi(M)}(P(\bv))$ for all $\bv \in \bar V(u)$. The key observation here is that for any $j$-tuple of the form
	\begin{equation}
	\label{E:u-string}
	u_i^j  = \big((1, 1; i - j + 1, m), (2, 1; i - j + 2, m), \dots, (j, 1; i, m)\big),
	\end{equation}
	where $j \le \min(i, m)$, there is exactly one $j$-tuple of paths in $\scrD(P(u_i^j ))$. Therefore we have a tractable set of $\# T_u$ equations for $\Phi(M)$ given by $Z_M(u_i^j ) = Z_{\Phi(M)}(P(u_i^j ))$. Noting that the equation $Z_M(u_i^j ) = Z_{\Phi(M)}(P(u_i^j ))$ only involves entries $\Phi(M)_x$ with $1 \le x_1 \le i$ and $m- j +1 \le x_2 \le m$, it is straightforward to see that these equations determine $\Phi(M)$.
	
	We now first complete the proof in the case where $(R, \otimes, \oplus)$ is the algebra of multiplication and addition over $\R_{> 0}$. Construct two $n \X n$ matrices $L$ and $\tilde L$ where
	$$
	L_{j, i} = Z_M(j, 1; i, m), \qquad \tilde L_{j, i} = Z_{\Phi(M)}(P(j, 1); i, m)
	$$ 
	where $j \le i$, and $L_{j, i} = \tilde L_{j, i} = 0$ otherwise. By the Lindstr\"om-Gessel-Viennot lemma (\citep{lindstrom1973vector, gessel1985binomial}, see also \citep{karlin1959coincidence}), if $\bu = ((u_i, 1; v_i, m))_{i = 1, \dots, k}$, then 
	$$
	Z_M(\bu) = \det L^{u_1, \dots, u_k}_{v_1, \dots, v_k} , \qquad \mathand \qquad Z_{\Phi(M)}(P(\bu)) = \det \tilde L^{u_1, \dots, u_k}_{v_1, \dots, v_k},
	$$
	where $L^J_I$ denotes the matrix minor coming from rows with indices in $J$ and columns with indices in $I$. This allows us to recognize two things. First, to prove Theorem \ref{T:encoding-lpp}, we just need to show that $L = \tilde L$. Second, for any $i \in [1, n], j \in [1, m \wedge i]$, we have that
	\begin{equation}
		\label{E:dets}
		\det L^{1, \dots, j}_{i - j +1, \dots, i} = Z_M(u_i^j) = Z_{\Phi(M)}(P(u_i^j )) = \det \tilde L^{1, \dots, j}_{i - j +1, \dots, i}.
	\end{equation}
	Finally, the minor determinants in \eqref{E:dets} determine the matrix $L$ (and $\tilde L$). We verify this by inducting on the entries $L_{j, i}$ with row index $j$ and column index $i$, using the partial order $(j, i) \preceq (j', i')$ whenever $j \le j', i \le i'$. 
	
Observe that by setting $j=1$ in \eqref{E:dets} we have that $L_{1, i} = \tilde L_{1, i}$. Also, $L_{j, i} = \tilde L_{j, i} = 0$ for all $j > i$. These two points together can be seen as the base case.

Now consider an arbitrary entry $L_{j, i}$ for some $j \le i$ and suppose that $L_{j', i'} = \tilde L_{j',i'}$ for all $(j', i') \prec (j, i)$. First, if $j \le m + 1$ then by cofactor expansion we have
\begin{equation}
	\label{E:L-eqn}
	\det L^{1, \dots, j}_{i - j +1, \dots, i} = L_{j, i} Z_M(u_{i-1}^{j-1}) + f,
\end{equation}
where the term $f$ only depends on $\{L_{j', i'} : (j', i') \prec (j, i)\}$. Since the entries of $M$ were all positive, $Z_M(u_{i-1}^{j-1}) \ne 0$, so we can rearrange \eqref{E:L-eqn} to get a formula for $L_{j, i}$ in terms of $\{L_{j', i'} : (j', i') \prec (j, i)\}$. As the same formula applies for $\tilde L$, we get that $L_{j, i} = \tilde L_{j, i}$.

Now suppose $i \ge j > m + 1$. We claim that 
$$
\det L^{1, \dots, j}_{1, \dots, j - m, i - m + 1, \dots, i} = \det L^{1, \dots, m}_{i - m + 1, \dots, i}.
$$
Indeed, by \eqref{E:dets} it suffices to show that the left-hand side above equals $Z_M(u^m_i)$. To see this, observe that there is a single $m$-tuple of paths in $\mathcal D(u^m_i)$, which covers all vertices in the box $(1, 1; i, m)$. Therefore 
$$
Z_M(u^m_i) = \prod_{x \in (1, 1; i, m)} M_x.
$$
On the other hand, there is also just a single $j$-tuple of disjoint paths for
$$
\bu := ((1, 1; 1, m), \dots, (j-m, 1; j- m, m), (j- m + 1, 1; i - m + 1, m), \dots, (j, 1; i, m)),
$$
which also covers all vertices in the box $(1, 1; i, m)$. Therefore $Z_M(u^m_i) = Z_M(\bu)$, which equals $\det L^{1, \dots, j}_{1, \dots, j - m, i - m + 1, \dots, i}$ by the Lindstr\"om-Gessel-Viennot lemma. This gives the desired equality. From here on the proof is similar to the case when $j \le m + 1$. By cofactor expansion we can write
$$
\det L^{1, \dots, m}_{i - m + 1, \dots, i} = \det L^{1, \dots, j}_{1, \dots, j - m, i - m + 1, \dots, i} = L_{j,i} \det L^{1, \dots, j-1}_{1, \dots, j - m, i - m + 1, \dots, i-1} + f,
$$
where the term $f$ only depends on $\{L_{j', i'} : (j', i') \prec (j, i)\}$. The Lindstr\"om-Gessel-Viennot lemma and the positivity of all entries in $M$ implies that the determinant $\det L^{1, \dots, j-1}_{1, \dots, j - m, i - m + 1, \dots, i-1}$ is non-zero, allowing us to rearrange the equality above to write $L_{j, i}$ in terms of $f$, $\det L^{1, \dots, j-1}_{1, \dots, j - m, i - m + 1, \dots, i-1}$, and $\det L^{1, \dots, m}_{i - m + 1, \dots, i}$. By identical reasoning, we have the same expression with $L$ replaced by $\tilde L$ everywhere. Finally, by the inductive hypothesis and \eqref{E:dets}, $f$, $\det L^{1, \dots, j-1}_{1, \dots, j - m, i - m + 1, \dots, i-1}$, and $\det L^{1, \dots, m}_{i - m + 1, \dots, i}$ remain unchanged if we replace $L$ with $\tilde L$. Therefore $L_{j, i} = \tilde L_{j, i}$, as desired.

	If $(R, \otimes, \oplus)$ is the max-plus algebra, which lacks an inverse for the operation $\max$, then the Lindstrom-Gessel-Viennot lemma no longer applies. However, since the
	definitions of $\Phi$ and $Z_M$ only involve the operations $\otimes, \oplus,$ and $\otimes^{-1}$, the theorem is really just claiming an equality of two functions $f_1$ and $f_2$ built from those operations with finitely many variables in $R$. Such an equality is true in the $(\max, +)$-algebra if it is true in the $(+, \X)$-algebra (see Proposition 1.9 in \citep{noumi2002tropical}).
\end{proof}

Proposition \ref{P:measurable-fn} falls out as an immediate corollary of Theorem \ref{T:encoding-lpp} for the vertical endpoint set $\bar V(u)$, by noting that the map $\Phi$ in Theorem \ref{T:encoding-lpp} only depends on $Z_M|_{V(u)}$. The claim for the horizontal set $\bar H(u)$ follows by symmetry.

By using the observations of Propositions \ref{P:poly-comp} and \ref{P:measurable-fn}, we can extend the decoupling property from Theorem \ref{T:explicit} to more general partition functions. To state these extensions, we introduce the following notation. 
Let
$$
\sqcup S := \bigcup_{s \in S} s
$$ denote the union (as a subset of $\Z^2$) of all boxes in $S$. We use the $\sqcup$ notation for the middle union above to distinguish the operation from the usual union of sets. Finally, for any function $F:\Zd \to \Zd$ (e.g. $D, \hat D, H, V, \bar H, \bar V)$, let
$$
F(S) = \bigcup_{s \in S} F(s).
$$

\begin{definition}
	\label{D:crossing-triple}
	Let $E, F, G \sset \Zd$. Then $(E, F, G)$ is a \textbf{Markov triple} if there are partitions $E = E_1 \cup E_2, F = F_1 \cup F_2, G = G_1 \cup G_2$ such that the following conditions hold:
	\begin{enumerate}[nosep, label=(\roman*)]
		\item $\sqcup E \cap \sqcup G \sset \sqcup F$.
		\item For $i \ne j$, $E_i \X F_j, F_i \X G_j, G_j \X E_i \sset \scrN$ and $f_1 \cap f_2 = \emptyset$ for any distinct boxes $f_1, f_2 \in \scrF$.
		\item $E_1 \X F_1$ and $F_1 \X G_1$ are contained in $\scrH \cup \scrN$. Similarly, $E_2 \X G_2 \cup G_2 \X F_2 \sset \scrV \cup \scrN$.
	\end{enumerate}
\end{definition}
Finally, for $U \sset \Zd$, define the extended endpoint set
\begin{equation*}
\scrE_D(U) = \{\bu \in \scrE: \bu = (u_1^{k_1}, \dots, u_m^{k_m}) \in \scrE, \text{ where } u_i \in U, k_i, m \in \N\}.
\end{equation*}
The set $\scrE_D(U)$ contains all endpoints consisting of tuples of points in the diagonal set $D(U)$.
\begin{prop}
	\label{P:1-step-markov}
	Let $(E, F, G)$ be a Markov triple and let $M, Z_M$ be as in (i), (ii), or (iii) of Theorem \ref{T:explicit}, i.e. $Z_M$ is either last passage percolation with geometric or exponential weights or the log-gamma polymer.
	 
	Then $Z_M|_{\scrE_D(E)}$ and $Z_M|_{\scrE_D(G)}$ are conditionally independent given $Z_M|_{\scrE_D(F)}$. In other words, $(Z_M|_{\scrE_D(E)}, Z_M|_{\scrE_D(F)}, Z_M|_{\scrE_D(G)})$ is a Markov chain.
\end{prop}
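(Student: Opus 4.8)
The plan is to cut along the boxes of $F$ --- which are pairwise disjoint by condition (ii) of Definition \ref{D:crossing-triple} --- so as to reduce the statement to the single-box decoupling of Theorem \ref{T:explicit} together with the independence of the environment over disjoint regions. Concretely, I would establish three facts: (a) $Z_M|_{\scrE_D(E)}$ is a deterministic function of the environment on $\sqcup E \smin \sqcup F$ and of $\{Z_M|_{H(f)} : f \in F_1\} \cup \{Z_M|_{V(f)} : f \in F_2\}$; (b) symmetrically, $Z_M|_{\scrE_D(G)}$ is a function of the environment on $\sqcup G \smin \sqcup F$ and of $\{Z_M|_{V(f)} : f \in F_1\} \cup \{Z_M|_{H(f)} : f \in F_2\}$; and (c) $\sigma(Z_M|_{\scrE_D(F)}) = \sigma(\{Z_M|_{D(f)} : f \in F\})$. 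Fact (c) is the easy one: any endpoint in $\scrE_D(F)$ must be built from sub-boxes of pairwise distinct (hence disjoint) boxes of $F$, since two blocks over the same box $f$ would share the start point $f^-$; its partition function therefore $\otimes$-factorizes over those boxes, and each factor is recorded by some $Z_M|_{D(f)}$.

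For facts (a) and (b) I would first replace each $\bu = (u_1^{k_1},\dots,u_m^{k_m}) \in \scrE_D(E)$ by the ``hat-mixed'' representative $\bv$ given by \eqref{E:d-hat-d'}, obtained by switching the block $u_i^{k_i}$ to $\hat u_i^{k_i}$ exactly when $u_i \in E_1$ (this requires checking $\bv \in \scrE$). Tracking the $\searrow$-relations defining $\scrH$ and $\scrV$, one sees that every component box of $\bv$ that lies in a box of $E_1$ crosses every box of $F_1$ horizontally or is disjoint from it, and is disjoint from every box of $F_2$ (as $E_1 \X F_2 \subseteq \scrN$); and every component lying in a box of $E_2$ crosses every box of $F_2$ vertically or is disjoint from it, and is disjoint from every box of $F_1$. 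Following the proof of Proposition \ref{P:poly-comp}, but decomposing the path tuples along all the disjoint boxes of $F$ simultaneously, one then expresses $Z_M(\bv)$ as a function of the environment on $\sqcup E \smin \sqcup F$ and of $\{Z_M|_{\bar H(f)} : f \in F_1\} \cup \{Z_M|_{\bar V(f)} : f \in F_2\}$; Proposition \ref{P:measurable-fn} rewrites the $\bar H, \bar V$ data in terms of the corresponding $H, V$ data, giving (a). Fact (b) is identical after exchanging the roles of the horizontal and vertical directions (a box of $G_1$ crosses a box of $F_1$ vertically because the pair $(F_1,G_1)$ crosses the other way, and similarly for $G_2$ and $F_2$).

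To assemble the conditional independence, set $X_0 = (M_x : x \in \sqcup E \smin \sqcup F)$, $X_4 = (M_x : x \in \sqcup G \smin \sqcup F)$, $X_2 = (Z_M|_{D(f)} : f \in F)$, and let $X_1, X_3$ be the $E$-side and $G$-side collections of $H/V$-data from (a), (b). The regions $\sqcup E \smin \sqcup F$, $\sqcup G \smin \sqcup F$, and $\sqcup F$ are pairwise disjoint --- the first two meet only inside $\sqcup F$, by condition (i) --- so the environments over them are independent; in particular $X_0$ is independent of $(X_1,X_2,X_3,X_4)$ and $X_4$ is independent of $(X_0,X_1,X_2,X_3)$. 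Moreover the triples $(Z_M|_{H(f)}, Z_M|_{V(f)}, Z_M|_{D(f)})$ are independent across $f \in F$, and within each one $Z_M|_{H(f)} \perp Z_M|_{V(f)} \mid Z_M|_{D(f)}$ by Theorem \ref{T:explicit}, so $X_1 \perp X_3 \mid X_2$. Together these say that $X_0 \to X_1 \to X_2 \to X_3 \to X_4$ is a Markov chain, hence $(X_0,X_1) \perp (X_3,X_4) \mid X_2$; since $Z_M|_{\scrE_D(E)}$ is $\sigma(X_0,X_1)$-measurable, $Z_M|_{\scrE_D(G)}$ is $\sigma(X_3,X_4)$-measurable, and $\sigma(X_2) = \sigma(Z_M|_{\scrE_D(F)})$ by (c), this is exactly the desired statement.

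The hard part is facts (a)/(b): the $\searrow$-bookkeeping needed to certify that the hat-mixed endpoints genuinely cross or avoid every box of $F$ (and remain in $\scrE$), and the adaptation of Proposition \ref{P:poly-comp} to a path decomposition along several disjoint boxes at once. Once this localization is in place, the final step is routine conditional-independence calculus.
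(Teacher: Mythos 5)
Your proposal is correct and follows essentially the same route as the paper: reduce each endpoint to its hat-mixed representative so that $E_1$-components cross $F_1$ horizontally and $E_2$-components cross $F_2$ vertically, localize via Proposition \ref{P:poly-comp} (extended to the disjoint boxes of $F$) and Proposition \ref{P:measurable-fn} to express the $E$-side through $Z_M|_{H(F_1)}, Z_M|_{V(F_2)}$ and the $G$-side through $Z_M|_{V(F_1)}, Z_M|_{H(F_2)}$, and then conclude from independence of the environment over the disjoint regions together with the per-box decoupling of Theorem \ref{T:explicit}. Your explicit five-variable chain $X_0 \to X_1 \to X_2 \to X_3 \to X_4$ and the identification $\sigma(Z_M|_{\scrE_D(F)}) = \sigma(Z_M|_{D(F)})$ are just a more spelled-out version of the paper's final step, so no substantive difference remains.
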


Note that since any pair of distinct boxes in $F$ are disjoint, that $Z_M|_{\scrE_D(F)}$ and $Z_M|_{D(F)}$ are functions of each other, so we can equivalently show $(Z_M|_{\scrE_D(E)}, Z_M|_{D(F)}, Z_M|_{\scrE_D(G)})$ is also a Markov chain.

\begin{proof} 
	Define
	$$
	E_{1, H} = \bigcup_{e \in E_1} \{u \sset e : (u, e) \in \scrH \}, \quad \mathand \quad  E_{2, V} = \bigcup_{e \in E_2} \{u \sset e : (u, e) \in \scrV \}.
	$$
	Observe that whenever 
	$$
	\bu =  (u_1^{k_1}, \dots, u_m^{k_m}) \in \scrE_D(E_1 \cup E_2),
	$$
	then for every $i$, either $u^{k_i}_i \in \scrE(E_{2, V})$ or $\hat u^{k_i}_i \in \scrE(E_{1, H})$. Therefore by the correspondence \eqref{E:d-hat-d}, $Z_M|_{\scrE_D(E)}$ is a function of $Z_M|_{\scrE(E_{1, H} \cup E_{2, V})}$.
	Moreover, by Definition \ref{D:crossing-triple}(ii, iii) and the construction of $E_{1, H}, E_{2, V}$, we have 
	$$
	(E_{1, H} \cup E_{2, V}) \X F_1 \sset \scrH \cup \scrN, \quad  (E_{1, H} \cup E_{2, V}) \X F_2 \sset \scrV \cup \scrN, \qquad \sqcup (E_{1, H} \cup E_{2, V}) = \sqcup E.
	$$
	Therefore by Proposition \ref{P:poly-comp}, $Z_M|_{\scrE(E_{1, H} \cup E_{2, V})}$ is a measurable function of 
	$$
	\{M_x : x \in \sqcup E \smin \sqcup F \}, \quad Z_M|_{\bar H(F_1)}, \quad \mathand \qquad Z_M|_{\bar V(F_2)}.
	$$
	Hence so is $Z_M|_{\scrE_D(E)}$. By Proposition \ref{P:measurable-fn}, this implies that $Z_M|_{\scrE_D(E)}$ is measurable function of  $\{M_x : x \in \sqcup E \smin \sqcup F \}, Z_M|_{H(F_1)}$, and $Z_M|_{V(F_2)}$.
	Similarly, $Z_M|_{\scrE_D(G)}$ is measurable function of  
	$\{M_x : x \in \sqcup G \smin \sqcup F \}, Z_M|_{V(F_1)}$, and $Z_M|_{H(F_2)}$. Therefore since $M$ has independent entries, and $\sqcup E \cap \sqcup G \sset \sqcup F$, to prove the proposition we just need to show that 
	$$
	(Z_M|_{H(F_1)},Z_M|_{V(F_2)}) \quad \mathand \quad (Z_M|_{V(F_1)},Z_M|_{H(F_2)}) 
	$$
	are conditionally independent given $Z_M|_{D(F)}$. This follows immediately from the decoupling property of $M$ and the fact that any pair of distinct boxes in $F$ are disjoint.
\end{proof}

By iterating Proposition \ref{P:1-step-markov}, we can get longer Markov chains. We note one such construction here that will be particularly useful to us. 

\begin{corollary}
	\label{C:2-step-Markov}
	Let $(E, F, G)$ and $(E, F', G)$ be two Markov triples that satisfy Definition \ref{D:crossing-triple} for partitions $E = E_1 \cup E_2, F = F_1 \cup F_2, F' = F_1' \cup F_2', G = G_1 \cup G_2$. Suppose that 
	\begin{itemize}[nosep]
		\item $F_1 \X F_1' \sset \scrH \cup \scrN$ and $F_2 \X F_2' \sset \scrV \cup \scrN$,
		and $F_i \X F_j' \sset \scrN$ for $i \ne j$,
		\item $\sqcup F \cap \sqcup G \sset \sqcup F'$ and $\sqcup E \cap \sqcup F' \sset \sqcup F$. 
	\end{itemize}
	Then $(E \cup F, F', G)$ and $(E, F, F' \cup G)$ are Markov triples and  
	\begin{equation}
	\label{E:4-chain}
	(Z_M|_{\scrE_D(E)}, Z_M|_{\scrE_D(F)}, Z_M|_{\scrE_D(F')}, Z_M|_{\scrE_D(G)})
	\end{equation}
	is a Markov chain.
\end{corollary}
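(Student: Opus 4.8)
The plan is to verify the two hypotheses of Proposition~\ref{P:1-step-markov} for each of the two claimed Markov triples $(E \cup F, F', G)$ and $(E, F, F' \cup G)$, and then to chain the resulting one-step Markov properties together. First I would check that $(E, F, F' \cup G)$ is a Markov triple using the partition $E = E_1 \cup E_2$, $F = F_1 \cup F_2$, and $(F' \cup G) = (F_1' \cup G_1) \cup (F_2' \cup G_2)$. Condition~(i) of Definition~\ref{D:crossing-triple} requires $\sqcup E \cap \sqcup(F' \cup G) \sset \sqcup F$. Since $\sqcup E \cap \sqcup F' \sset \sqcup F$ by the second bullet hypothesis and $\sqcup E \cap \sqcup G \sset \sqcup F$ because $(E,F,G)$ is already a Markov triple, condition~(i) follows from $\sqcup E \cap \sqcup (F' \cup G) = (\sqcup E \cap \sqcup F') \cup (\sqcup E \cap \sqcup G) \sset \sqcup F$. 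For conditions~(ii) and~(iii), I would combine the crossing/disjointness data already known for $(E,F,G)$ and $(E,F',G)$ with the new first-bullet hypotheses relating $F_i$ to $F_j'$, using the transitivity of horizontal crossings noted in the introduction (if $(u,v)$ and $(v,w)$ cross horizontally so does $(u,w)$) to deduce, e.g., that $E_1 \X (F_1' \cup G_1) \sset \scrH \cup \scrN$ from $E_1 \X F_1 \sset \scrH \cup \scrN$ and the structure of the two given triples. The symmetric computation establishes that $(E \cup F, F', G)$ is a Markov triple, now using the partition $(E \cup F) = (E_1 \cup F_1) \cup (E_2 \cup F_2)$ and checking condition~(i) via $\sqcup(E \cup F) \cap \sqcup G = (\sqcup E \cap \sqcup G) \cup (\sqcup F \cap \sqcup G) \sset \sqcup F'$.

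Once both triples are confirmed, Proposition~\ref{P:1-step-markov} gives that $(Z_M|_{\scrE_D(E)}, Z_M|_{\scrE_D(F)}, Z_M|_{\scrE_D(F' \cup G)})$ is a Markov chain and that $(Z_M|_{\scrE_D(E \cup F)}, Z_M|_{\scrE_D(F')}, Z_M|_{\scrE_D(G)})$ is a Markov chain. The first tells us that $Z_M|_{\scrE_D(E)}$ is conditionally independent of $Z_M|_{\scrE_D(F' \cup G)}$ given $Z_M|_{\scrE_D(F)}$; since $\scrE_D(F) \sset \scrE_D(E \cup F)$ and $\scrE_D(F') \cup \scrE_D(G) \sset \scrE_D(F' \cup G)$, in particular $Z_M|_{\scrE_D(E)}$ is conditionally independent of $(Z_M|_{\scrE_D(F')}, Z_M|_{\scrE_D(G)})$ given $(Z_M|_{\scrE_D(F)}, \ldots)$. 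The second tells us that $Z_M|_{\scrE_D(G)}$ is conditionally independent of $Z_M|_{\scrE_D(E \cup F)}$ given $Z_M|_{\scrE_D(F')}$. I would then assemble these into the four-term Markov property~\eqref{E:4-chain} by a standard conditional-independence argument: writing $A = Z_M|_{\scrE_D(E)}$, $B = Z_M|_{\scrE_D(F)}$, $C = Z_M|_{\scrE_D(F')}$, $D = Z_M|_{\scrE_D(G)}$, the claim is $A \perp (C,D) \mid B$ together with $D \perp (A,B) \mid C$, which is exactly the statement that $(A,B,C,D)$ is a Markov chain. The first of these follows directly from the $(E, F, F'\cup G)$ triple (noting $\scrE_D(F'\cup G)$ contains the information in $(C,D)$, and one must observe $\scrE_D(F) = \scrE_D(F)$ is the conditioning set — here the remark after Proposition~\ref{P:1-step-markov} that $Z_M|_{\scrE_D(F)}$ and $Z_M|_{D(F)}$ determine each other is convenient); the second follows from the $(E\cup F, F', G)$ triple.

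The main obstacle I anticipate is the bookkeeping in condition~(ii) and~(iii) of Definition~\ref{D:crossing-triple} for the enlarged partitions — specifically, making sure that the cross-terms between the ``old'' pieces and the ``new'' pieces land in $\scrH \cup \scrN$, $\scrV \cup \scrN$, or $\scrN$ as required. For instance, to see $(E \cup F)_1 \X G_1 = (E_1 \cup F_1) \X G_1 \sset \scrH \cup \scrN$ one needs both $E_1 \X G_1 \sset \scrH \cup \scrN$ (from the $(E,F,G)$ triple's condition~(iii), since $G_1 \X E_1 \sset \scrH \cup \scrN$ there — wait, one must be careful about orientation of the pair) and $F_1 \X G_1 \sset \scrH \cup \scrN$ (also from that triple). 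The disjointness cross-conditions like $(E_1 \cup F_1) \X (F_2' \cup G_2) \sset \scrN$ similarly decompose into four pieces, each of which is covered by one of the hypotheses or by the defining properties of the two given triples. None of this is deep, but it is the part most prone to error, so I would organize it as a short lemma-style verification, handling the horizontal pieces and vertical pieces symmetrically and invoking the transitivity of crossings wherever a genuinely new crossing relation (not directly hypothesized) needs to be deduced. The final chaining step is then purely formal, using only that conditioning sets have been arranged so that the information in $\scrE_D(F)$ and $\scrE_D(F')$ respectively serves as the Markov ``state'' at the two intermediate times.
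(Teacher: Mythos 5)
Your proposal is correct and follows essentially the same route as the paper: the paper's proof simply asserts that the Markov-triple conditions for $(E \cup F, F', G)$ and $(E, F, F' \cup G)$ are straightforward to check (with exactly the partitions you use) and then applies Proposition \ref{P:1-step-markov} twice, chaining the two conditional independences into the four-term Markov property just as you do. Your write-up merely fills in the bookkeeping that the paper leaves implicit.
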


\begin{proof}
	It is straightforward to check the conditions of Definition \ref{D:crossing-triple} for the triples $(E \cup F, F', G)$ and $(E, F, F' \cup G)$. The fact that \eqref{E:4-chain} is a Markov chain then follows from two applications of Proposition \ref{P:1-step-markov}.
\end{proof}

We will refer to a quadruple $(E, F, F', G)$ satisfying the conditions of Corollary \ref{C:2-step-Markov} as a \textbf{Markov quadruple}.

\section{Hidden invariance}
\label{S:main}

In this section, we use the decoupling statements of Section \ref{S:framework} to find new invariances of the models in Theorem \ref{T:explicit}.

\subsection{Homogeneous models}
\label{SS:homog}
We start with the homogeneous case. Throughout this subsection we assume that $Z_M$ is either the log-gamma polymer with i.i.d.\ weights, or else geometric or exponential last passage percolation with i.i.d.\ weights
(e.g. the cases when all $\al_i, \be_i$ are equal in Theorem \ref{T:explicit}).

\begin{definition}
	\label{D:scrF}
	Let $\scrF$ be the set of bijections between subsets of $\Zd$ defined by the following four rules: 
	\begin{enumerate}[nosep, label=(\roman*)]
		\item (Contains translations) For any $c \in \Z^2$, the translation $T_c:\Zd \to \Zd \in \scrF$.
		\item (Contains reflections) The reflections $R_1, R_2:\Zd \to \Zd$ are in $\scrF$.
		\item (Closed under restriction, composition, inverses, and finite exhaustion) If $f:S \to T, g:T \to Q$ are elements of $\scrF$, then $g \circ f:S \to Q$ is in $\scrF$. Also, if $f:S \to T \in \scrF$, then $f^{-1}:T \to S \in \scrF$ and $f|_{Q}:Q \to f(Q)$ is in $\scrF$ for any $Q \sset S$. Finally, if $f:S \to T$ is a bijection between infinite subsets of $\Zd$, then $f \in \scrF$ if $f|_{Q} \in \scrF$ for any finite subset $Q \sset S$. 
		\item (Factoring) Suppose that $(U, V, W), (U', V', W')$ are any Markov triples, and that $f:U \cup V \cup W \to U' \cup V' \cup W'$ is a function with $f(U) = U', f(V) = V', f(W) = W'$. Suppose also that $f|_{U \cup V}, f|_{V \cup W} \in \scrF$. Then $f \in \scrF$.
	\end{enumerate} 
\end{definition}

To state our main invariance theorem, we will extend a function $f:S \to T \in \scrF$ to a function $\bar f:\scrE_D(S) \to \scrE_D(T)$ by letting 
$$
f(u_1^{k_1}, \dots, u_\ell^{k_\ell}) = (f(u_1)^{k_1}, \dots, f(u_\ell)^{k_\ell})
$$
for any $u_1, \dots, u_\ell \in S$. As part of the proof of the next theorem, we will check that any such $\bar f$ is a well-defined bijection.

\begin{theorem}
	\label{T:iid-trans}
Let $M$ be an i.i.d.\ model from Theorem \ref{T:explicit}. Let $f:S \to T \in \scrF$, and let $\bar f$ be its extension to a function from $\scrE_D(S)$ to $\scrE_D(T)$ . Then
	$$
	Z_M|_{\scrE_D(S)} \eqd Z_M|_{\scrE_D(T)} \circ \bar f.
	$$
\end{theorem}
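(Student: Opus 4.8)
The proof will proceed by structural induction on the way $f$ is built up from the four rules defining $\scrF$ in Definition \ref{D:scrF}. First I would establish a preliminary lemma: for any $f:S\to T\in\scrF$, the extension $\bar f:\scrE_D(S)\to\scrE_D(T)$ is a well-defined bijection. The only subtlety here is that a single endpoint $\bu\in\scrE_D(S)$ may have several representations $(u_1^{k_1},\dots,u_\ell^{k_\ell})$; one must check that all of them are sent to the same endpoint of $\scrE_D(T)$. This reduces to showing that the rules (i)--(iv) each preserve the combinatorial relations (containment of boxes, the diagonal-shift relation \eqref{E:d-hat-d}, horizontal/vertical crossing) that determine when two tuples represent the same multi-point partition function; for translations and reflections this is immediate, and for the factoring rule (iv) it follows from the crossing/disjointness structure of Markov triples.

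\textbf{Main induction.} With $\bar f$ well-defined, I would verify $Z_M|_{\scrE_D(S)}\eqd Z_M|_{\scrE_D(T)}\circ\bar f$ for each generating rule. \emph{Base cases:} For a translation $T_c$, this is the translation invariance (I) of i.i.d.\ last passage / polymer models (the model is defined by i.i.d.\ weights, so translating the index set does not change the joint law of any finite collection of partition functions). For the reflections $R_1, R_2$, one uses invariances (II)--(III); here one must check that $R_1, R_2$ act correctly on the diagonal endpoint sets $D(u)$, $u^k$ (a reflection turns horizontal crossings into vertical ones, but since $D(u)$ is reflection-symmetric as a set of multi-point endpoints — using \eqref{E:d-hat-d'} to identify $u^k$ with $\hat u^k$ — the equality of laws goes through). \emph{Closure under composition/inverse/restriction:} trivial — distributional equality is transitive and symmetric, and restricting $f$ restricts the collection of partition functions on both sides. \emph{Finite exhaustion:} a distributional equality of two processes indexed by an infinite set follows from the equalities of all finite-dimensional marginals, which is exactly the hypothesis. \emph{Factoring (the crucial case):} Suppose $(U,V,W)$ and $(U',V',W')$ are Markov triples, $f(U)=U'$, $f(V)=V'$, $f(W)=W'$, and $f|_{U\cup V}, f|_{V\cup W}\in\scrF$. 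By Proposition \ref{P:1-step-markov},
$$
(Z_M|_{\scrE_D(U)},\,Z_M|_{\scrE_D(V)},\,Z_M|_{\scrE_D(W)})\quad\text{and}\quad(Z_M|_{\scrE_D(U')},\,Z_M|_{\scrE_D(V')},\,Z_M|_{\scrE_D(W')})
$$
are both Markov chains. By the inductive hypothesis applied to $f|_{U\cup V}$ and to $f|_{V\cup W}$, the first two coordinates match in law under $\bar f$, and so do the last two. In particular $Z_M|_{\scrE_D(V)}\eqd Z_M|_{\scrE_D(V')}\circ\bar f$, and the transition kernel from $\scrE_D(V)$-values to $\scrE_D(U)$-values (resp.\ from $\scrE_D(V')$ to $\scrE_D(U')$) is identified by the equality for $f|_{U\cup V}$, and similarly on the $W$ side. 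Since a Markov chain's law is determined by its initial (middle) distribution together with the two one-step transition kernels, the full triples agree in law under $\bar f$. This is the step I expect to be the main obstacle: one must be careful that ``the transition kernel is the same'' really follows from the inductive distributional equality — i.e.\ that the conditional law of $Z_M|_{\scrE_D(U)}$ given $Z_M|_{\scrE_D(V)}$ is a measurable functional of the joint law of the pair, which it is since the state spaces are standard Borel, and that $\bar f$ intertwines the conditioning variables correctly (it maps $\scrE_D(V)$ onto $\scrE_D(V')$ bijectively, so conditioning on the $V$-data on one side corresponds exactly to conditioning on the $V'$-data on the other).

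\textbf{Assembly.} Finally, since every $f\in\scrF$ is obtained from translations and reflections by finitely many applications of rules (iii) and (iv), the claim follows by induction on the number of such applications. Then Theorems \ref{T:conj-bgw}, \ref{T:example}, and \ref{T:puzzle-pieces} will be deduced in the sequel by exhibiting the relevant shift/rearrangement maps as members of $\scrF$ — typically by writing the map as a composition of a translation with a factoring through an explicit Markov triple or quadruple (Corollary \ref{C:2-step-Markov}), using the $180$-degree rotation trick from Example \ref{Ex:simple} at the base of each such factoring.
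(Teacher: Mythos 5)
Your proposal is correct and follows essentially the same route as the paper: first verify that $\bar f$ is a well-defined bijection (the paper does this via Lemma \ref{L:N-preservation}, which is the crossing/disjointness preservation you invoke), then show the class of maps satisfying the distributional identity contains translations and reflections (using \eqref{E:d-hat-d'} for the latter) and is closed under the remaining rules, with the factoring step handled exactly as you describe via Proposition \ref{P:1-step-markov} and the fact that a Markov triple's joint law is determined by the two pairwise laws. The only point worth making explicit, which the paper does note, is that $Z_M|_{\scrE_D(U\cup V\cup W)}$ (including mixed endpoints) is itself a function of the three restrictions $Z_M|_{\scrE_D(U)}$, $Z_M|_{\scrE_D(V)}$, $Z_M|_{\scrE_D(W)}$, so matching the law of the triple suffices.
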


To prove Theorem \ref{T:iid-trans}, we first state and prove a lemma about functions in $\scrF$.

\begin{lemma}
	\label{L:N-preservation}
	Any $f \in \scrF$ preserves $\scrN$ and $\scrH \cup \scrV$. That is, $(u, v) \in \scrN$ if and only if $(f(u), f(v)) \in \scrN$, and $(u, v) \in \scrH \cup \scrV$ if and only if $(f(u), f(v)) \in \scrH \cup \scrV$.
\end{lemma}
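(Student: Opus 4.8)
The plan is to prove Lemma \ref{L:N-preservation} by induction on the structure of $\scrF$, checking that each of the four generating rules in Definition \ref{D:scrF} preserves both properties $\scrN$ and $\scrH \cup \scrV$. The base cases are rules (i) and (ii): translations $T_c$ clearly preserve disjointness and the relations $\searrow$ on corners (since $T_c u = u + (c,c)$ shifts both corners of the box equally, and shifts $u^-, u^+$ equally, so $u^- \searrow v^-$ iff $T_c u^- \searrow T_c v^-$, and similarly $u \cap v = \emptyset$ iff $T_c u \cap T_c v = \emptyset$). For the reflections, $R_1$ swaps the two coordinate axes, which interchanges the conditions defining $\scrH$ and $\scrV$ (a horizontal crossing becomes a vertical crossing and vice versa), so $R_1$ preserves $\scrH \cup \scrV$ even though it does not preserve $\scrH$ alone; it also obviously preserves disjointness. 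The reflection $R_2$ is a $180^\circ$ rotation composed with the corner-swap $R_2 u = (R_2 u^+; R_2 u^-)$, and one checks directly that this preserves both $\scrN$ and $\scrH \cup \scrV$ (in fact it preserves $\scrH$ and $\scrV$ separately, since $u^- \searrow v^-$ iff $R_2 v^- \searrow R_2 u^-$ — negation reverses each coordinate inequality, and the corner swap restores the orientation).

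Next I would handle rule (iii), the closure operations. Preservation of a symmetric binary relation like $\scrN$ or $\scrH \cup \scrV$ passes immediately to compositions $g \circ f$, to inverses $f^{-1}$, and to restrictions $f|_Q$, since in each case the relevant ``$(u,v) \in \cdot \iff (\text{image}) \in \cdot$'' biconditionals chain together or restrict trivially. For the finite-exhaustion clause, if $f|_Q \in \scrF$ for every finite $Q \sset S$ and each such restriction preserves $\scrN$ and $\scrH \cup \scrV$ (by the inductive hypothesis), then $f$ itself does: given any $u, v \in S$, apply the hypothesis to $Q = \{u, v\}$. So rule (iii) is routine.

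The main obstacle is rule (iv), the factoring rule. Here we are given Markov triples $(U,V,W)$ and $(U',V',W')$ and a map $f$ with $f(U) = U'$, etc., such that $f|_{U \cup V}$ and $f|_{V \cup W}$ are already known (inductively) to preserve $\scrN$ and $\scrH \cup \scrV$; we must conclude $f$ preserves these relations on all of $U \cup V \cup W$. The only genuinely new pairs to check are $(u, w)$ with $u \in U$, $w \in W$ (and the symmetric case). The key structural input is Definition \ref{D:crossing-triple}: for a Markov triple with partitions $U = U_1 \cup U_2$, etc., condition (ii) forces $U_i \X W_j \sset \scrN$ for $i \ne j$, while condition (iii) forces $U_1 \X W_1 \sset \scrH \cup \scrN$ and $U_2 \X W_2 \sset \scrV \cup \scrN$. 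So the only way a pair $(u,w) \in U \X W$ can fail to be in $\scrN$ is if $u \in U_1, w \in W_1$ with $(u,w) \in \scrH$, or $u \in U_2, w \in W_2$ with $(u,w) \in \scrV$. I would argue that in the horizontal case, since $(U,V,W)$ is a Markov triple, $U_1 \X V_1 \sset \scrH \cup \scrN$ and $V_1 \X W_1 \sset \scrH \cup \scrN$, and $\sqcup U \cap \sqcup W \sset \sqcup V$; when $(u,w) \in \scrH$, $u$ and $w$ genuinely overlap in a horizontal band, so there must be some $v \in V_1$ with $(u,v) \in \scrH$ and $(v,w) \in \scrH$ (using that the overlap of the boxes lies in $\sqcup V$ and the crossing structure) — and then transitivity of horizontal crossings (the second bullet from Section \ref{SS:notation}) together with the known preservation by $f|_{U\cup V}$ and $f|_{V\cup W}$ transports the relation to $(f(u), f(w))$. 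The disjointness direction is similar: $(u,w) \in \scrN$ iff $u \cap w = \emptyset$, and since $\sqcup U \cap \sqcup W \sset \sqcup V$ (and the same for the primed triple), whether $u$ and $w$ are disjoint is detectable through the boxes of $V$ they each meet, which $f$ handles via its two restrictions. The delicate point is making the ``detect through $V$'' argument fully rigorous — locating the intermediate box $v \in V$ and verifying the crossing/disjointness relations propagate correctly — and this is where I expect to spend the bulk of the work; the rest is bookkeeping over the finitely many partition-class cases.
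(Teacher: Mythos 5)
Your plan is essentially the paper's proof: the paper likewise defines the subset of $\scrF$ consisting of maps preserving $\scrN$ and $\scrH \cup \scrV$, dispenses with Definition \ref{D:scrF}(i)--(iii) as routine, and concentrates on factoring, where the only new pairs are $(u,w) \in U \X W$ and the argument runs through an intermediate box $v \in V$: a crossing forces $u \cap w \ne \emptyset$, this intersection lies in $\sqcup V$ by Definition \ref{D:crossing-triple}(i), and conditions (ii)--(iii) force $(u,v)$ and $(v,w)$ to cross with a common type. The only routing difference is cosmetic: the paper works from the image side (it shows $(f(u),f(w)) \notin \scrN$ implies $(u,w) \notin \scrN$, gets the converse by running the same argument for $f^{-1}$, and then deduces preservation of $\scrH \cup \scrV$ from the fact that on $U \X W$ and $U' \X W'$ every pair lies in $\scrN \cup \scrH \cup \scrV$), whereas you work from the domain side and treat disjointness separately; you could replace your vaguer ``disjointness is detectable through the boxes of $V$'' paragraph by this complementation observation, since $\scrN$ is exactly the complement of $\scrH \cup \scrV$ among pairs in $U \X W$.

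One step in your sketch is not justified as written. After locating $v$ with $(u,v),(v,w) \in \scrH$, the inductive hypothesis only yields $(f(u),f(v)),(f(v),f(w)) \in \scrH \cup \scrV$ (the restrictions cannot preserve $\scrH$ alone, since $R_1 \in \scrF$ swaps $\scrH$ and $\scrV$), so ``transitivity of horizontal crossings'' cannot be applied directly in the image. What is needed is the analogue, for the primed Markov triple, of the paper's key observation: if $(u',v') \in U' \X V'$ and $(v',w') \in V' \X W'$ both cross, then by Definition \ref{D:crossing-triple}(ii) each crossing pair must lie in matching partition classes, and by (iii) the class determines the crossing type; since $f(v)$ belongs to exactly one class of $V'$, the two image crossings have the same type, and transitivity within $\scrH$ (or within $\scrV$) then gives that $(f(u),f(w))$ crosses. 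With this type-matching step inserted (and the reverse implication obtained symmetrically, e.g.\ by applying the argument to $f^{-1}$), your outline coincides with the paper's proof.
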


\begin{proof}
	Let $\scrF' \sset \scrF$ be the set of functions that preserve $\scrN$ and $\scrH \cup \scrV$. It is easy to see that $\scrF'$ contains translations, reflections, and is closed under restriction, composition, inverses, and finite exhaustion. It remains to show that $\scrF'$ is closed under factoring. Suppose that $f: U \cup V \cup W \to U' \cup V' \cup W'$ is such that 
	$$
	f(U) = U', \quad f(V) = V', \quad f(W) = W', \quad f|_{U \cup V}, f|_{V \cup W} \in \scrF'.
	$$
	The map $f$ preserves $\scrN$ and $\scrH \cup \scrV$ for all pairs in $(U \cup V)^2$ or in $(V \cup W)^2$ by the corresponding properties of the restrictions $f|_{U \cup V}$ and $f|_{V \cup W}$. Now suppose that $(u, w) \in U \X W$. We first check that
	\begin{equation}
	\label{E:fNN}
	\text{ if } (f(u), f(w)) \notin \scrN, \text{ then } (u, w) \notin \scrN.
	\end{equation}
	For this, observe that if $(U, V, W)$ is a Markov triple, then:
	\begin{enumerate}[nosep,label=(\Roman*)]
		\item If $(u, w) \in (U \X W) \cap \scrH$, there exists $v \in V$ such that $(u, v), (v, w) \in \scrH$. Similarly, if $(u, w) \in (U \X W) \cap \scrV$, there exists $v \in V$ such that $(u, v), (v, w) \in \scrV$.
		\item If $(u, v, w) \in U \X V \X W$ are such that $(u, v), (v, w)$ cross, then either $(u, v), (v, w)$ and $(u, w)$ are all in $\scrH$ or they are all in $\scrV$.
	\end{enumerate} 
	Now if $(f(u), f(w)) \notin \scrN$ then since $(f(u), f(w)) \in U' \X W'$, by (ii) and (iii) of Definition \ref{D:crossing-triple}, $(f(u), f(w)) \in \scrH \cup \scrV$. Therefore by (I) above there exists $v \in V'$ such that $(f(u), v), (v, f(w)) \in \scrH \cup \scrV$. Using that $f|_{U \cup V},f|_{V \cup W}$ preserve $\scrH \cup \scrV$, we have that
	$$
	(u, f^{-1}(v)), (f^{-1}(v), w) \in \scrH \cup \scrV.
	$$
	Key observation (II) then implies that $(u, w)$ crosses, proving \eqref{E:fNN}. The converse of \eqref{E:fNN} follows from the same argument applied to $f^{-1}$. Finally, since $f$ is a bijection and $U \X W, U' \X W' \sset \scrN \cup \scrH \cup \scrV$, we have that $(u, w) \in \scrH \cup \scrV$ if and only if $(f(u), f(w)) \in \scrH \cup \scrV$ for $(u, w) \in U \X W$. This completes the proof. 
\end{proof}

\begin{proof}[Proof of Theorem \ref{T:iid-trans}]
	Let $\scrF'$ be the set of bijections $f \in \scrF$ for which $\bar f$ is a well-defined bijection. We first check that $\scrF' = \scrF$ by showing that $\scrF'$ is closed under (i-iv) of Definition \ref{D:scrF}. The only nontrivial thing to check is that $\scrF'$ is closed under factoring. Let $f:S \to T$ with $S = U \cup V \cup W$ and $T = U' \cup V' \cup W'$ be such that $f(U) = U', f(V) = V', f(W) = W'$, and $f|_{U \cup V}, f|_{V \cup W} \in \scrF'$. 
	
	Suppose that $(\bu, \bv) \in \scrE_D(S)$ where $\bu = (u_1^{k_1}, \dots, u_\ell^{k_\ell}) \in \scrE_D(U)$ and $\bv = (v_1^{m_1}, \dots, v_n^{m_n}) \in \scrE_D(V \cup W)$. By Definition \ref{D:crossing-triple}, $U \X (V \cup W) \sset \scrN \cup \scrH \cup \scrV$. In particular, this means that the set of disjoint paths $\scrD((\bu, \bv))$ is empty unless $\{u_1, \dots, u_\ell\} \X \{v_1, \dots, v_n\} \sset \scrN.$ By Lemma \ref{L:N-preservation}, this in turn implies that 
	\begin{equation}
	\label{E:fuu}
	\{f(u_1), \dots, f(u_\ell)\} \X \{f(v_1), \dots, f(v_n)\} \sset \scrN.
	\end{equation}
	Moreover, $f(\bu) \in \scrE_D(U')$ and $f(\bv) \in \scrE_D(V' \cup W')$ since $\bar f$ is well-defined on the restrictions of $f$, and hence $(f(\bu), f(\bv)) \in \scrE_D(T)$ by \eqref{E:fuu}. The bijectivity of $\bar f$ then follows from the bijectivity of $f$.
	
	Now let $\scrG$ be the set of bijections $f$ for which the theorem holds. The set $\scrG$ contains translations by the homogeneity of $M$. It contains reflections again by the homogeneity of $M$ along with the general identity \eqref{E:d-hat-d'} between $u^k$ and $\hat u^k$. It is clear that $\scrG$ is closed under restriction, composition, inverses, and finite exhaustion. To check that $\scrG$ is closed under factoring, suppose $f$ is as in Definition \ref{D:scrF}(iv) with restrictions $f|_{U \cup V}, f|_{V \cup W} \in \scrG$. Note that $U \X V, U \X W, V \X W \sset \scrN \cup \scrH \cup \scrV$ as $(U, V, W)$ is a Markov triple. Therefore the restriction $Z_M|_{\scrE_D(U \cup V \cup W)}$ is a function of $Z_M|_{\scrE_D(U)}, Z_M|_{\scrE_D(V)},$ and $Z_M|_{\scrE_D(W)}$ (and similarly for $U', V', W'$). The joint distribution of $Z_M|_{\scrE_D(U)}, Z_M|_{\scrE_D(V)},$ and $Z_M|_{\scrE_D(W)}$ is preserved by $f$ by Proposition \ref{P:1-step-markov}.
\end{proof}

As written, Theorem \ref{T:iid-trans} is rather abstract and doesn't give a great deal of insight into the nature of the functions in $\scrF$. We spend the rest of this subsection exploring what types of functions $\scrF$ contains. We don't attempt to give a complete classification, but rather hope to give a good account of the kinds of functions $\scrF$ does and does not contain. Beyond those in Lemma \ref{L:N-preservation}, we first note a few more restrictions on the types of functions in $\scrF$.

\begin{lemma}
	\label{L:F-restrictions}
	Suppose that $f:S \to T \in \scrF$. Then 
	\begin{enumerate}[nosep, label=(\roman*)]
		\item For all $s \in S$, $f(s)$ can be obtained from $s$ by translation and the reflections $R_1$ and $R_2$.
		\item If $u, v \in S$ and $(u, v) \notin \scrH \cup \scrV \cup \scrN$, then the restriction $f|_{\{u, v\}}$ is a product of reflections and a translation. In other words, the pair $(u, v)$ is congruent to the pair $(f(u), f(v))$.
		\item Let $(s_1, \dots, s_k)$ be a $k$-tuple of distinct points in $S$ with $(s_i, s_{i+1}) \in \scrH \cup \scrV$ for all $i$. Let $(a_1, \dots, a_{k-1}) \in \{\scrH, \scrV\}^{k-1}$ be the sequence where $(s_i, s_{i+1}) \in a_i$ for all $i$, and $(b_1, \dots, b_{k-1})$ be the corresponding sequence defined from $(f(s_1), \dots, f(s_k))$. Then either $a_i = b_i$ for all $i$, or else $a_i \ne b_i$ for all $i$. 
	\end{enumerate}
\end{lemma}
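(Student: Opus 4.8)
The plan is to mimic the structure of the proof of Lemma \ref{L:N-preservation}: introduce the set $\scrF'\sset\scrF$ of functions satisfying all three conclusions (i), (ii), (iii), and verify that $\scrF'$ is closed under the four defining operations (i)--(iv) of Definition \ref{D:scrF}. Translations and reflections clearly satisfy (i), (ii), (iii) trivially, since they act as rigid motions (products of $R_1,R_2$ and a translation) on all of $\Zd$ simultaneously. Closure under restriction is immediate, since all three conditions are conditions on individual points or pairs/tuples of points, which survive passing to a subset; closure under finite exhaustion is likewise immediate because (i)--(iii) only ever reference finitely many points at a time. Closure under composition and inverses requires a small observation: the group generated by $T_c$, $R_1$, $R_2$ is closed under composition and inverses, and for (iii) one uses that $R_1$ swaps $\scrH$ with $\scrV$ while $R_2$ and translations preserve both, so a rigid motion acts on crossing-types either as the identity or as the swap --- and composing two such actions again gives the identity or the swap, matching the ``all equal or all different'' dichotomy.

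The real content is closure under factoring. Suppose $f:U\cup V\cup W\to U'\cup V'\cup W'$ with $f(U)=U'$, $f(V)=V'$, $f(W)=W'$, and $f|_{U\cup V}, f|_{V\cup W}\in\scrF'$. Conclusion (i) is immediate since every $s\in U\cup V\cup W$ lies in $U\cup V$ or in $V\cup W$. For (ii) and (iii), the only pairs/tuples not already covered by the two restrictions are those straddling $U$ and $W$. Here I would invoke Lemma \ref{L:N-preservation} (which we may now use, since it is proved earlier) together with key observations (I) and (II) from the proof of that lemma. For (ii): if $u\in U$, $w\in W$ with $(u,w)\notin\scrH\cup\scrV\cup\scrN$, then by Definition \ref{D:crossing-triple}(ii)--(iii) applied to the Markov triple $(U,V,W)$ we must have $(u,w)\in\scrH\cup\scrV\cup\scrN$ --- a contradiction. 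So there are \emph{no} such straddling pairs, and (ii) holds vacuously across the $U$--$W$ divide; a symmetric remark applies to $(U',V',W')$, so the statement is consistent.

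For (iii), the subtle case is a chain $(s_1,\dots,s_k)$ of crossing pairs that passes between the ``$U$ side'' and the ``$W$ side'' --- but since consecutive pairs must lie in $\scrH\cup\scrV$, and a $U$--$W$ pair lies in $\scrH\cup\scrV$ only via the factoring structure, I would break the chain at each point where it crosses the boundary between $U\cup V$ and $V\cup W$, apply (iii) to each sub-chain (all of whose consecutive pairs lie in the domain of one restriction), and then glue. Gluing works because at a boundary point $s_j\in V$ both restrictions $f|_{U\cup V}$ and $f|_{V\cup W}$ agree with $f$, so the crossing-type of $(s_{j-1},s_j)$ and $(s_j,s_{j+1})$ are governed consistently; the ``all equal or all different'' conclusion for the full chain then follows from the same conclusion on each piece, provided the global dichotomy is forced to be consistent across the break --- which it is, because key observation (II) shows that whenever $(s_{j-1},s_j)$ and $(s_j,s_{j+1})$ both cross, their types are linked (both $\scrH$ or both $\scrV$ cannot hold simultaneously with $(s_{j-1},s_{j+1})$ of the other type), and this same rigidity, applied to the images, forces the sign of the permutation of crossing-types to propagate. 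I expect the chain-gluing argument in (iii) to be the main obstacle: one must carefully track that the dichotomy ``flip everything or flip nothing'' cannot partially hold across a concatenation, which ultimately reduces to the fact that the action of a rigid motion on the two-element set $\{\scrH,\scrV\}$ is a well-defined homomorphism and hence cannot change the type of one consecutive pair without changing them all.
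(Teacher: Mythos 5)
Your overall skeleton is the same as the paper's: verify that the class of maps satisfying (i)--(iii) is closed under the four rules of Definition \ref{D:scrF}, with factoring as the only substantive case. Your treatment of (i) and (ii) under factoring is correct and matches the paper — every single point lies in the domain of one of the two restrictions, and any pair not in $\scrH \cup \scrV \cup \scrN$ must lie entirely in $U^2$ or entirely in $W^2$ (so is covered by a restriction), since the Markov triple conditions force $U \X V, V \X W, U \X W \sset \scrH \cup \scrV \cup \scrN$.

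Part (iii), however, has a genuine gap, and it is exactly where the content of the lemma lies. Your plan is to cut the chain at points of $V$ and glue, but a chain with $(s_i,s_{i+1}) \in \scrH \cup \scrV$ can jump directly between $U$ and $W$ with no intermediate $V$-point at all (e.g.\ $s_1 \in W$, $s_2 \in U$, $s_3 \in W$); for such chains your decomposition into sub-chains ``all of whose consecutive pairs lie in the domain of one restriction'' does not exist, so there is nothing to glue. Moreover your closing justification — that a rigid motion acts as a homomorphism on $\{\scrH,\scrV\}$ — does not apply to straddling pairs: $f$ is precisely \emph{not} known to act rigidly on a pair $(u,w) \in U \X W$, and the whole point is to control how it transforms the crossing type anyway. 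The paper's argument fills this in by first reducing to triples $(s_1,s_2,s_3)$ (the dichotomy for a chain is equivalent to the statement $a_{i-1}=b_{i-1} \iff a_i = b_i$ for each consecutive triple) and then using two structural facts: when the middle point lies in $U$ and the outer points in $V \cup W$, Definition \ref{D:crossing-triple}(ii),(iii) forces the two crossing types to \emph{alternate}, and since $f(U)=U'$, $f(V)=V'$, $f(W)=W'$ the image types alternate too, which yields the dichotomy with no reference to either restriction; the remaining configurations are handled by inserting an intermediate $v \in V$ via observation (I) of Lemma \ref{L:N-preservation} and chaining through the previous case. Your appeal to observation (II) does correctly handle the one configuration $U \X V \X W$, but without the alternation argument and the explicit insertion of $V$-points the remaining cases are unproved.
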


When $|S| \le 2$, it turns out that any function satisfying the conditions of Lemmas \ref{L:N-preservation} and \ref{L:F-restrictions} is in $\scrF$. When $|S| \ge 3$, this is no longer the case. See Example \ref{E:not-in-F} for a map $f:S \to T$ satisfying the conclusions of Lemmas \ref{L:N-preservation} and \ref{L:F-restrictions} with $|S| = |T| = 3$, but yet $Z_M|_{S} \ne Z_M|_{T} \circ f$ for any bijection $f$ when $M$ is geometric or exponential last passage percolation. 

\begin{proof}
	Each of the three properties above holds when $f$ is a translation or a reflection, and each of these properties is closed under restriction, composition, inverses, and finite exhaustion. We just need to check that each of the properties is closed under factoring for functions in $\scrF$. To this end, let $f:U \cup V \cup W \to U' \cup V' \cup W'$ be a function in $\scrF$ constructed as in Definition \ref{D:scrF}(iv), and suppose that the restrictions $f|_{U \cup V}, f|_{V \cup W}$ satisfy the conclusions of the lemma. Now, if 
	$$
	(u, v) \in (U \cup V \cup W)^2 \smin \scrH \cup \scrV \cup \scrN,
	$$
	then since $(U, V, W)$ is a Markov triple, either $(u, v) \in U^2$ or else $(u, v) \in W^2$. In particular, this implies that $f$ inherits property (ii) from $f|_{U \cup V}$ and $f|_{V \cup W}$. The function $f$ also has property (i) since property (ii) implies property (i).
	It is enough to check property (iii) for chains $(s_1, s_2, s_3)$ of length three. 
	
	\textbf{Case 1:} $s_1, s_3 \in V \cup W, s_2 \in U \cup V$. \qquad  If $s_2 \in V$, then the assertion follows from property (iii) of the restriction $f|_{V \cup W}$. If not, then the pair $((s_1, s_2), (s_2, s_3))$ is either in $\scrH \X \scrV$ or $\scrV \X \scrH$; the other cases are ruled out by Definition \ref{D:crossing-triple}(ii),(iii). The same assertion holds for $((f(s_1), f(s_2)), (f(s_1), f(s_2))$ since $f(U) = U', f(V) = V'$ and $f(W) = W'$. Hence the assertion holds in this case. This argument also covers the case when the roles of $W$ and $U$ are switched.
	
	\textbf{Case 2:} $s_1, s_2 \in V \cup W, s_3 \in U$ with $(s_2, s_3) \in \scrH$. \qquad All remaining cases are symmetric versions of this case. By observation (I) from Lemma \ref{L:N-preservation}, we can find $v \in V$ such that $(s_2, v), (v, s_3) \in \scrH$. Now, assertion (iii) holds for the triple $(s_1, s_2, v)$ since all points lie in $V \cup W$. It holds for $(v, s_2, s_3)$ by case 1 (applied to the case where the roles of $W$ and $U$ are switched). Therefore assertion (iii) also holds for $(s_1, s_2, s_3)$.
\end{proof}

We now give examples of the kind of functions that lie in $\scrF$. We build up starting with two nearly trivial examples.

\begin{lemma}
	\label{L:in-F-triv}
	\begin{enumerate}
		\item (Basic reflection) Let $u \in \Zd$ be such that $w = u + (0, k; 0,0), w' = u + (0, 0, 0, -k) \in \Zd$ for some $k \in \N$. Let $f(u) = u$ and $f(w) = w'$. Then $f:\{u, w\} \to \{u, w'\} \in \scrF$.
		\item (Disjoint movements) Let $\{U_i : i \in I\}, \{U_i' : i \in I\}$ be two collections of subsets of $\Zd$ indexed by a (finite or countable) index set $I$. Suppose that $U_i \X U_j, U_i' \X U_j' \sset \scrN$ for any $i \ne j$. Suppose also that for all $i$, there exists a function that $f_i: U_i \to U_i' \in \scrF$. Let $f:\bigcup U_i \to \bigcup U_i'$ be the function whose restriction to $U_i$ is $f_i$. Then $f \in \scrF$.
	\end{enumerate}
\end{lemma}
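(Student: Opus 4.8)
The plan is to verify that each of the two maps in Lemma \ref{L:in-F-triv} can be built from the generators and closure rules listed in Definition \ref{D:scrF}. Both parts are essentially exercises in recognizing the right Markov triple (or triples) and then applying rule (iv) (Factoring), together with rules (i)--(iii).

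For part (1), the map $f$ fixes $u$ and sends $w = u + (0,k;0,0)$ to $w' = u + (0,0;0,-k)$; note that both $w$ and $w'$ are congruent to $u$ via translations, so at the level of individual boxes $f$ is realized by translations. The key point is that $f$ is \emph{not} globally a translation or reflection, so we must produce it via factoring. I would take the Markov triple to be $(U,V,W)$ with, say, $U = \{u\}$ (or $\{w\}$), $V = \{u\}$, $W = \{w'\}$ — more precisely one chooses the triple so that $u$, $w$, $w'$ all sit along a common vertical line through $u^-$, making $(w,u)$ and $(u,w')$ each a crossing pair (one in $\scrH\cup\scrN$, one in $\scrV\cup\scrN$), exactly as in Definition \ref{D:crossing-triple}(iii). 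One then checks conditions (i), (ii), (iii) of Definition \ref{D:crossing-triple} for this triple (condition (i) is immediate since the relevant intersections are contained in the single middle box; condition (ii) is vacuous as the partition has empty second parts; condition (iii) reduces to the horizontal/vertical crossing statements, which hold by the hypothesis $w, w' \in \Zd$ and the placement). The restrictions of $f$ to the two-element sets $U \cup V$ and $V \cup W$ are each a translation (or reflection composed with translation) on a set of size at most two, hence lie in $\scrF$ by rules (i), (ii), (iii). Factoring (iv) then gives $f \in \scrF$. The only subtlety is bookkeeping: one must be careful to pick the partition of the three-box set so that Definition \ref{D:crossing-triple}(iii) is genuinely satisfied — I expect this to be the main (modest) obstacle, since the geometry of which box plays the role of $E_1$ versus $E_2$ must be chosen correctly.

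For part (2), the claim is that $\scrF$ is closed under taking disjoint unions of maps between mutually $\scrN$-separated families. The natural approach is induction on $|I|$ combined with factoring, but in fact it follows more directly: given the decomposition $\bigcup_i U_i$ with $U_i \X U_j \sset \scrN$ for $i \ne j$, one sets up a telescoping sequence of Markov triples. Concretely, for a finite index set $I = \{1, \dots, n\}$, write $f$ as the factoring of the triple $(U_1, \emptyset, \bigcup_{i \ge 2} U_i)$ — here $V = \emptyset$ is allowed (all conditions of Definition \ref{D:crossing-triple} hold trivially, since $\sqcup U_1 \cap \sqcup(\bigcup_{i\ge2}U_i) = \emptyset \sset \sqcup\emptyset$, and the cross conditions (ii), (iii) hold because everything is in $\scrN$). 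The restriction $f|_{U_1 \cup \emptyset} = f_1 \in \scrF$ by hypothesis, and $f|_{\emptyset \cup \bigcup_{i\ge2}U_i}$ is the disjoint union of $f_2, \dots, f_n$, which lies in $\scrF$ by the inductive hypothesis. Factoring then gives $f \in \scrF$. For a countably infinite index set $I$, one additionally invokes the finite-exhaustion clause of rule (iii): any finite subset $Q \sset \bigcup_i U_i$ meets only finitely many $U_i$, and the restriction $f|_Q$ is then a disjoint union of restrictions of finitely many $f_i$ — each of which is in $\scrF$ by rule (iii) applied to each $f_i$ — so $f|_Q \in \scrF$ by the finite case just handled, whence $f \in \scrF$.

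In both parts, the genuinely load-bearing step is checking the conditions of Definition \ref{D:crossing-triple} for the chosen triples; once that is done, the closure rules of Definition \ref{D:scrF} assemble the conclusion mechanically. I anticipate that part (1) requires slightly more care because the three boxes there are not pairwise disjoint (the whole point is that $(w,u)$ and $(u,w')$ cross), so one must exhibit the partition $E = E_1 \cup E_2$ etc. explicitly and confirm Definition \ref{D:crossing-triple}(iii) holds with the right orientation; part (2) is comparatively routine since every relevant pair lies in $\scrN$ and the $V$-component can be taken empty.
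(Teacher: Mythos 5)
Your part (2) is correct and is essentially the paper's own argument: induction over a finite index set via the factoring rule applied to a Markov triple with empty middle set (the paper splits off $U_k$ rather than $U_1$, which is immaterial), and then the finite-exhaustion clause of Definition \ref{D:scrF}(iii) for countable $I$; your verification that the triple conditions hold trivially because all cross-pairs lie in $\scrN$ is exactly what is needed.

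Part (1), however, contains a genuine gap. The map there is globally the restriction of a composition of the basic generators: the $180$-degree rotation about the centre of $u$, namely $T_{u^-+u^+}\circ R_2$, fixes the box $u$ and sends $w=u+(0,k;0,0)$ to $w'=u+(0,0;0,-k)$, so $f\in\scrF$ directly by rules (i)--(iii) of Definition \ref{D:scrF}. This is the paper's one-line proof (``a composition of a translation and two reflections''). Your assertion that $f$ ``is not globally a translation or reflection, so we must produce it via factoring'' is therefore false, as is the claim that $w$ and $w'$ are translates of $u$: they are translates of each other ($w'=T_{(0,-k)}w$), but both are strictly shorter than $u$. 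Moreover, the factoring you sketch cannot be repaired: the domain of $f$ is the two-element set $\{u,w\}$, so in any decomposition $U\cup V\cup W=\{u,w\}$ either $V=\emptyset$, which violates Definition \ref{D:crossing-triple}(i) because $u\cap w=w\neq\emptyset$ is not contained in $\sqcup\emptyset$, or else one of the required restrictions $f|_{U\cup V}$, $f|_{V\cup W}$ already contains both $u$ and $w$ and hence equals $f$ itself, making the appeal to rule (iv) circular. As written, your triple even places the codomain box $w'$ inside the domain triple and takes $U=V=\{u\}$, which is not a decomposition of the domain at all. The fix is simply to drop the factoring and observe that $f$ is the restriction of $T_{u^-+u^+}\circ R_2$.
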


\begin{proof}
	The map in (1) is a composition of a translation and two reflections. We prove (2) in the case when $|I|$ is finite by a straightforward induction. The $|I|=1$ case is evident. Now assume that the statement holds for collections of $j$ sets whenever $j \le k-1$, and consider $U_1, \dots, U_k, U'_1, \dots, U_k'$ satisfying the assumptions of (2) and the function $f$ above. Then $U = (U_1, \dots, U_{k-1}), V = \emptyset, W = U_k$ and $U' = (U'_1, \dots, U'_{k-1}), V' = \emptyset, W' = U'_k$ are two Markov triples and $f|_U, f|_W \in \scrF$ by the inductive hypothesis. Hence $f \in \scrF$. Extension to infinite index sets $I$ follows since $\scrF$ is closed under finite exhaustion.
\end{proof}

The next lemma already illustrates the kinds of nontrivial functions contained in $\scrF$.

\begin{lemma}
	\label{L:G-moves} (Basic transposition)
	Let $u \in \Zd$, and let $v, w, v', w' \sset u$ with $\{u\} \X \{v, w, v', w'\} \sset \scrH$ and $(v, w), (v', w') \in \scrN$. Suppose also that $v'$ is a translate of $v$ and $w'$ is a translate of $w$. Define $f:\{u, v, w\} \to \{u, v', w'\}$ by $f(u) = u, f(v) = v', f(w) = w'$. Then $f \in \scrF$.
\end{lemma}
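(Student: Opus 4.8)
The strategy is to exhibit $f$ as arising from the factoring rule (iv) of Definition \ref{D:scrF}, by splitting the three boxes $\{u, v, w\}$ as a (degenerate) Markov triple whose pieces are each handled by simpler rules already established. The natural candidate is $U = \{v\}$, $V = \{u\}$, $W = \{w\}$, with $U' = \{v'\}, V' = \{u\}, W' = \{w'\}$. We need to check that $(U, V, W)$ and $(U', V', W')$ are Markov triples, and that the restrictions $f|_{U \cup V}$ and $f|_{V \cup W}$ lie in $\scrF$. For the restrictions: $f|_{\{v, u\}}$ fixes $u$ and sends $v$ to the translate $v'$, with both $v, v' \sset u$ and $(u,v),(u,v') \in \scrH$; this is exactly the situation covered by Lemma \ref{L:G-moves} in the two-box case, which is easily seen to follow from composing a translation with nothing else once we note that fixing one box and translating a nested horizontally-crossing box sits inside $\scrF$ --- indeed the two-box analog of the present lemma should be reduced to Lemma \ref{L:in-F-triv} or to a direct application of translation closure after a reflection. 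Actually, the cleanest route for the two-box restrictions is: the pair $(u, v)$ and the pair $(u, v')$ are related by the composition of the identity on the plane followed by sliding, and since $v, v'$ are translates contained in $u$ with the same horizontal crossing relation, the map extends to a translation of the whole plane composed with a reflection only if the geometry demands it; here it does not, so $f|_{\{u,v\}}$ is a restriction of a translation composed with a reflection fixing $u$... I will instead invoke the basic two-box case directly as the base of the induction.

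More carefully, the plan is: (1) Observe that for two boxes $u \supset v$ with $(u, v) \in \scrH$ and $v'$ a translate of $v$ also contained in $u$ with $(u, v') \in \scrH$, the map $u \mapsto u, v \mapsto v'$ is in $\scrF$. This is the $k=1$, single-pair version; it follows because $(\{v\}, \{u\}, \emptyset)$ and $(\{v'\}, \{u\}, \emptyset)$ are trivially Markov triples (taking $E_1 = \{v\}, E_2 = \emptyset$, etc.), so the map is obtained by factoring from $f|_{\{v\}} \in \scrF$ (a translation, hence in $\scrF$) and $f|_{\{u\}} = \id \in \scrF$. (2) Check $(\{v\}, \{u\}, \{w\})$ is a Markov triple: set $E_1 = \{v\}, E_2 = \emptyset$; $F_1 = \{u\}, F_2 = \emptyset$; $G_1 = \{w\}, G_2 = \emptyset$. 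Condition (i): $v \cap w \sset u$ holds since $v, w \sset u$. Condition (ii) is vacuous since all the index-$2$ parts are empty and $F$ is a single box. Condition (iii): $E_1 \X F_1 = \{(v, u)\}$ and $F_1 \X G_1 = \{(u, w)\}$; we are given $(u, v), (u, w) \in \scrH$, hence $(v, u), (u, w) \in \scrV \cup \scrH$? We need $(v,u) \in \scrH \cup \scrN$ --- but $(u,v) \in \scrH$ means $(v,u) \in \scrV$, not $\scrH$. So this choice of partition fails condition (iii).

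This reveals that the main obstacle is orienting the Markov triple correctly: the hypothesis $\{u\} \X \{v, w, v', w'\} \sset \scrH$ says $u$-paths cross $v$ and $w$ horizontally, i.e.\ $(u, v), (u, w) \in \scrH$, so in the Markov-triple formalism we want $v, w$ on the "outside" with $u$ in the middle but with the crossings reading as $E_i \X F_i \in \scrH \cup \scrN$ --- which requires the pairs $(v, u)$ and $(u, w)$, i.e. it wants $(v, u) \in \scrH$. The fix is to use the $\scrV$-branch of Definition \ref{D:crossing-triple}(iii): put $E_2 = \{v\}, E_1 = \emptyset$, $F_2 = \{u\}, F_1 = \emptyset$, $G_2 = \{w\}, G_1 = \emptyset$; then we need $E_2 \X G_2 \cup G_2 \X F_2 \sset \scrV \cup \scrN$, i.e.\ $(v, w), (w, u) \in \scrV \cup \scrN$. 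We are given $(v, w) \in \scrN$ --- good --- and $(u, w) \in \scrH$ gives $(w, u) \in \scrV$ --- good. Condition (i) $v \cap w \sset u$ holds. So $(\{v\}, \{u\}, \{w\})$ with this $\scrV$-oriented partition is a Markov triple, and likewise $(\{v'\}, \{u\}, \{w'\})$ using $(v', w') \in \scrN$ and $(u, w') \in \scrH$. Then $f|_{\{v, u\}}$ ($v \mapsto v', u \mapsto u$) and $f|_{\{u, w\}}$ ($u \mapsto u, w \mapsto w'$) are each in $\scrF$ by step (1) (and its mirror image), so by factoring (Definition \ref{D:scrF}(iv)) we conclude $f \in \scrF$, completing the proof. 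The one routine check I would spell out is step (1) --- that a single nested, horizontally-crossing box may be translated within its container --- which follows from the same factoring argument with an empty third component, bottoming out at translations being in $\scrF$.

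Here is the proof.

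\begin{proof}
	First we record the following special case: if $a \in \Zd$ and $b, b' \sset a$ are such that $b'$ is a translate of $b$ and $(a, b), (a, b') \in \scrH$, then the map $g:\{a, b\} \to \{a, b'\}$ with $g(a) = a, g(b) = b'$ lies in $\scrF$. Indeed, take $E = \{b\}, F = \{a\}, G = \emptyset$ with the partition $E_2 = \{b\}, E_1 = \emptyset$, $F_2 = \{a\}, F_1 = \emptyset$, $G_1 = G_2 = \emptyset$, and similarly $E' = \{b'\}, F' = \{a\}, G' = \emptyset$. Conditions (i) and (ii) of Definition \ref{D:crossing-triple} hold trivially, and (iii) holds because $(b, a) \in \scrV$ (as $(a,b) \in \scrH$) so $E_2 \X G_2 \cup G_2 \X F_2 = \emptyset \cup \{(a, a)\}$... more precisely all relevant pairs lie in $\scrV \cup \scrN$. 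Thus $(E, F, G)$ and $(E', F', G')$ are Markov triples. The restriction of $g$ to $F$ is the identity and to $E$ is a translation, both in $\scrF$, so by Definition \ref{D:scrF}(iv), $g \in \scrF$. The analogous statement with $\scrH$ replaced by $\scrV$ holds by applying $R_1$.

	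Now return to the setup of the lemma. Consider the triple $E = \{v\}, F = \{u\}, G = \{w\}$ with partition $E_2 = \{v\}, E_1 = \emptyset$; $F_2 = \{u\}, F_1 = \emptyset$; $G_2 = \{w\}, G_1 = \emptyset$. We verify Definition \ref{D:crossing-triple}. Condition (i): $\sqcup E \cap \sqcup G = v \cap w \sset u = \sqcup F$ since $v, w \sset u$. Condition (ii): all the index-$1$ parts are empty, and $F$ consists of a single box, so there is nothing to check. Condition (iii): the first half is vacuous ($E_1 = F_1 = G_1 = \emptyset$), and for the second half we need $E_2 \X G_2 \cup G_2 \X F_2 \sset \scrV \cup \scrN$, i.e.\ $(v, w) \in \scrV \cup \scrN$ and $(w, u) \in \scrV \cup \scrN$. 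We are given $(v, w) \in \scrN$, and $(u, w) \in \scrH$ implies $(w, u) \in \scrV$. Hence $(E, F, G)$ is a Markov triple. Replacing $v, w$ by $v', w'$ and using $(v', w') \in \scrN$ together with $(u, w') \in \scrH$, the same argument shows $(E', F', G') = (\{v'\}, \{u\}, \{w'\})$ is a Markov triple.

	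The map $f$ sends $E \to E', F \to F', G \to G'$. Its restriction $f|_{\{v, u\}}$ fixes $u$ and sends $v$ to its translate $v'$, with $v, v' \sset u$ and $(u, v), (u, v') \in \scrH$; by the special case above, $f|_{\{v, u\}} \in \scrF$. Likewise $f|_{\{u, w\}}$ fixes $u$ and sends $w$ to its translate $w' \sset u$ with $(u, w), (u, w') \in \scrH$, so $f|_{\{u, w\}} \in \scrF$. By Definition \ref{D:scrF}(iv), $f \in \scrF$.
\end{proof}
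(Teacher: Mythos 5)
Your argument has a genuine gap at both of its key steps. First, the ``special case'' you use as a base (fixing $a$ and moving a nested box $b$ to a translate $b'$ with $(a,b),(a,b')\in\scrH$) is justified circularly. The factoring rule, Definition \ref{D:scrF}(iv), requires the restrictions of $f$ to the \emph{unions} $U\cup V$ and $V\cup W$ to lie in $\scrF$; taking $G=\emptyset$, the hypothesis $f|_{U\cup V}\in\scrF$ is precisely the map $g$ you are trying to prove is in $\scrF$, and knowing only that $f|_{\{a\}}$ and $f|_{\{b\}}$ separately lie in $\scrF$ gives nothing. This two-box statement is not a formality: it is essentially the shift invariance (IV)/Example \ref{Ex:simple}, i.e.\ the nontrivial content of the whole paper, so it cannot fall out of a factoring step with an empty third component.

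Second, and more seriously, $(\{v\},\{u\},\{w\})$ with the all-index-$2$ partition is not a usable Markov triple. You are reading clause (iii) of Definition \ref{D:crossing-triple} literally (``$E_2\X G_2\cup G_2\X F_2\sset\scrV\cup\scrN$''), but that displayed clause is a typo: the condition actually needed and used in the proof of Proposition \ref{P:1-step-markov} (and verified when Lemma \ref{L:shift} builds middle sets) is $E_2\X F_2\cup F_2\X G_2\sset\scrV\cup\scrN$, mirroring the index-$1$ clause. The whole point of the Markov-triple structure is that the $E$-side hooks into one of the two tableaux $Z_M|_{H(\cdot)},Z_M|_{V(\cdot)}$ of the middle boxes and the $G$-side into the \emph{other}, so that Theorem \ref{T:explicit} decouples them given the diagonal data. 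In your triple both $v$ and $w$ are crossed horizontally by $u$, hence both are full-height vertical strips of $u$, and both sides are functions of the \emph{same} tableau $Z_M|_{V(u)}$ (via Propositions \ref{P:poly-comp} and \ref{P:measurable-fn}); the conditional independence your factoring step needs is then simply false. Concretely, take $u=(1,1;2,1)$ with $v,w$ its two cells: $Z_M|_{\scrE_D(\{v\})}=M_{(1,1)}$ and $Z_M|_{\scrE_D(\{w\})}=M_{(2,1)}$ are not conditionally independent given $Z_M|_{\scrE_D(\{u\})}=M_{(1,1)}+M_{(2,1)}$. Under the corrected clause your triple indeed fails, because $(u,w)\in\scrH$ but $(u,w)\notin\scrV\cup\scrN$. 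This is exactly why the paper's proof cannot factor through $u$ itself: it first reduces (WLOG on the order of $v,w$, with the genuine transposition produced only at the end by composing with a translation and two reflections, Case 3) to an order-preserving horizontal move of $v$ alone, and inserts the auxiliary box $b_v$, the smallest box containing $v\cup v'$, which is crossed horizontally by $u$, vertically by $v$ and $v'$, and is disjoint from $w$; the triples $(\{u\},\{b_v\},\{v,w\})$ and $(\{u\},\{b_v\},\{v',w\})$ then have the required opposite-crossing structure, with the restrictions handled by Lemma \ref{L:in-F-triv}. Some construction of this kind, giving a middle set crossed in opposite directions by the two sides, is unavoidable, and it is missing from your proof.
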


Note that Lemma \ref{L:G-moves} also holds with $w$ and $w'$ omitted since $\scrF$ is closed under restrictions. 

\begin{proof}
	Without loss of generality, we may assume that  $v^-_1 < w^-_1$ .
	
	\textbf{Case 1:} $v' = T_{(k, 0)} v$ for some $k \in \Z, w' = w,$ and $v'^-_1 < w'^-_1$. Let $b_v$ be the smallest box containing both $v, v'$. In this case, both
	$$
	(U = \{u\}, V = \{b_v \}, W = \{v, w\}) \quad \mathand \quad (U, V, W' = \{v', w'\})
	$$
	are Markov triples. Now extend the map $f$ so that $f(b_v) = b_v$. Then $f|_{U \cup V} \in \scrF$ since it is simply the identity. Also $f|_{V \cup W} \in \scrF$ as it is of the form of Lemma \ref{L:in-F-triv}(2). To see this, observe that $\{v, v', b_v\} \X \{w \} \sset \scrN$ and that $f|_{\{v, b_v\}}$ satisfies Lemma \ref{L:in-F-triv} (i). Hence $f:U \cup V\cup W \to U \cup V \cup W' \in \scrF$ by factoring, and therefore so is $f|_{U \cup W}$. 
	
	\textbf{Case 2:} ${v'}^-_1 < {w'}^-_1$. If $v' = v$, then $f \in \scrF$ by a symmetric version of Case 1. Any other map with ${v'}^-_1 < {w'}^-_1$ can be obtained by composing functions where $v = v'$ and functions where $w = w'$.
	
	\textbf{Case 3:} ${v'}^-_1 > {w'}^-_1$. In this case, the function $f$ is a composition of a translation and two reflections with a function of the form of Case 2.
\end{proof}

We will show that richer sets of functions lie in $\scrF$ by using Corollary \ref{C:2-step-Markov}. Reinterpreted in terms of the set of functions $\scrF$, that corollary says the following.

\begin{corollary}
	\label{C:2-step-markov'}
	Let $(U, V, V', W)$ and $(\bar U, \bar V, \bar V', \bar W)$ be two Markov quadruples (i.e. they satisfy the assumptions of Corollary \ref{C:2-step-Markov}). Let 
	$$
	f: U \cup V \cup V' \cup W \to \bar U \cup \bar V \cup \bar V' \cup \bar W
	$$
	be a bijection mapping $U \mapsto \bar U, V \mapsto \bar V, V' \mapsto \bar V', W \mapsto \bar W$, and suppose that $f|_{U \cup V}, f|_{V \cup V'}, f|_{V' \cup W} \in \scrF$. Then $f \in \scrF$. 
\end{corollary}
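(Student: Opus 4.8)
The plan is to obtain this as the $\scrF$-level analogue of Corollary~\ref{C:2-step-Markov}: just as that corollary assembles a four-step Markov chain from two applications of Proposition~\ref{P:1-step-markov}, here $f$ will be shown to lie in $\scrF$ by two nested applications of the factoring rule, Definition~\ref{D:scrF}(iv).

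The one preliminary observation I would record is that Markov triples are stable under passing to sub-collections: if $(E,F,G)$ is a Markov triple with partitions $E=E_1\cup E_2,\ F=F_1\cup F_2,\ G=G_1\cup G_2$ and $G'\sset G$, then $(E,F,G')$ is again a Markov triple, with partition $G'_i=G_i\cap G'$, and likewise for shrinking $E$ or $F$ instead. This is immediate from Definition~\ref{D:crossing-triple}, since each of the conditions (i)--(iii) there only becomes easier when one of the three sets shrinks. Now by Corollary~\ref{C:2-step-Markov}, the Markov quadruples $(U,V,V',W)$ and $(\bar U,\bar V,\bar V',\bar W)$ give rise to Markov triples $(U,V,V'\cup W),\ (U\cup V,V',W)$ and $(\bar U,\bar V,\bar V'\cup\bar W),\ (\bar U\cup\bar V,\bar V',\bar W)$; restricting the third coordinate of $(U,V,V'\cup W)$ to $V'\sset V'\cup W$ (and similarly in the barred case) shows that $(U,V,V')$ and $(\bar U,\bar V,\bar V')$ are Markov triples as well.

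With these in hand I would proceed in two steps. First, apply factoring to the Markov triples $(U,V,V')$ and $(\bar U,\bar V,\bar V')$ with the map $f|_{U\cup V\cup V'}$: it sends $U\mapsto\bar U$, $V\mapsto\bar V$, $V'\mapsto\bar V'$, and its restrictions $f|_{U\cup V}$ and $f|_{V\cup V'}$ lie in $\scrF$ by hypothesis, so Definition~\ref{D:scrF}(iv) yields $f|_{U\cup V\cup V'}\in\scrF$. Second, apply factoring to the Markov triples $(U\cup V,V',W)$ and $(\bar U\cup\bar V,\bar V',\bar W)$ with the full map $f$: it sends $U\cup V\mapsto\bar U\cup\bar V$, $V'\mapsto\bar V'$, $W\mapsto\bar W$; the restriction $f|_{(U\cup V)\cup V'}=f|_{U\cup V\cup V'}$ is in $\scrF$ by the first step, and $f|_{V'\cup W}\in\scrF$ by hypothesis. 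Hence $f\in\scrF$, as claimed.

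Since the combinatorial work — checking that the grouped triples satisfy Definition~\ref{D:crossing-triple} — has already been carried out inside Corollary~\ref{C:2-step-Markov}, I do not expect a genuine analytic obstacle. The only points that require a little care are making sure the intermediate map $f|_{U\cup V\cup V'}$ is a legitimate input to the factoring rule (this is exactly what the sub-collection stability of Markov triples supplies, so one should not skip that observation), and correctly matching the images $f(U),f(V),f(V'),f(W)$ against the partitions of the two triples used in the two factoring steps.
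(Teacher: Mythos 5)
Your proof is correct and is essentially the paper's own argument: the paper disposes of this corollary in one line ("follows immediately from property (iv) of $\scrF$ and Corollary~\ref{C:2-step-Markov}"), and your two nested applications of the factoring rule, using the triples $(U,V,V')$ and $(U\cup V, V', W)$ supplied by the Markov quadruple structure, are exactly the details that line leaves implicit. The only slip is your offhand remark that a Markov triple stays a Markov triple when the \emph{middle} set $F$ is shrunk — that can break condition (i) of Definition~\ref{D:crossing-triple} — but you only ever shrink the third set, so nothing in the argument is affected.
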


Corollary \ref{C:2-step-markov'} follows immediately from property (iv) of $\scrF$ and Corollary \ref{C:2-step-Markov}. Our strategy for applying it will be as follows. Consider a map $f:U \cup W \to \bar U \cup \bar W$ mapping $U \mapsto \bar U, W \mapsto \bar W$. Suppose that we can find connecting sets $V, V'$ and $\bar V, \bar V'$ such that $(U, V, V', W)$ and $(\bar U, \bar V, \bar V', \bar W)$ are Markov quadruples, and that $f$ can be extended to map from $V$ to $\bar V$ and $V'$ to $\bar V'$. Then $f \in \scrF$ if $f|_{U \cup V}, f|_{V \cup V'}, f|_{V' \cup W} \in \scrF$. We will check that each of these restrictions lie in $\scrF$ via Lemma \ref{L:in-F-triv} and Lemma \ref{L:G-moves}.
To enact this strategy, we first make a few straightforward observations about the structure of crossing boxes.

\begin{lemma}
	\label{L:easy-observations}
	\begin{enumerate}[nosep, label=(\roman*)]
		\item Suppose that $u, w, u', w' \in \Zd$ are such that $(u, w), (u, w'), (u', w) \in \scrH$ and $(u, u'), (w, w') \notin \scrN$. Then $(u', w') \notin \scrN$.
		\item Suppose that $U \X W \sset \scrH$. Let $v$ be the smallest box containing $\sqcup W$. Then $U \X \{v\} \sset \scrH$. Also, if $U \X W \sset \scrN$ and $\sqcup W = v \in \Zd$, then $U \X \{v\} \sset \scrN$.
		\item If $(u, v) \in \scrH$ and $(v, w) \in \scrH$, then $(u, w) \in \scrH$.
	\end{enumerate}
\end{lemma}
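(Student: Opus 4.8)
The plan is to translate everything into one-dimensional statements about intervals. For a box $u = (u^-_1, u^-_2; u^+_1, u^+_2) \in \Zd$, write $I_1(u) = [u^-_1, u^+_1]$ and $I_2(u) = [u^-_2, u^+_2]$ for its two coordinate projections, so that $u = I_1(u) \X I_2(u)$. Unwinding the definition of $\searrow$, one checks that $(u,v) \in \scrH$ holds exactly when $I_1(v) \sset I_1(u)$ and $I_2(u) \sset I_2(v)$; and since $u \cap v = (I_1(u) \cap I_1(v)) \X (I_2(u) \cap I_2(v))$, the pair $(u,v) \notin \scrN$ holds exactly when $I_1(u) \cap I_1(v) \ne \emptyset$ and $I_2(u) \cap I_2(v) \ne \emptyset$. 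After this reduction, each of the three parts is a short interval argument.

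For (iii): from $(u,v), (v,w) \in \scrH$ we get $I_1(w) \sset I_1(v) \sset I_1(u)$ and $I_2(u) \sset I_2(v) \sset I_2(w)$, so $(u,w) \in \scrH$; equivalently this is transitivity of $\searrow$ applied to $u^- \searrow v^- \searrow w^-$ and $w^+ \searrow v^+ \searrow u^+$. For the first half of (ii): the projection $I_1(v)$ of the smallest box $v$ containing $\sqcup W$ is the smallest interval containing $\bigcup_{w\in W} I_1(w)$, and this is contained in $I_1(u)$ because every $I_1(w) \sset I_1(u)$ and $I_1(u)$ is an interval; meanwhile $I_2(u) \sset I_2(w) \sset I_2(v)$ for any $w \in W$. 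So $(u,v)\in\scrH$ for every $u \in U$. The second half of (ii) is even more direct: $u \cap v = u \cap \sqcup W = \bigcup_{w\in W}(u \cap w) = \emptyset$ whenever $U \X W \sset \scrN$, so $U \X \{v\} \sset \scrN$.

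Part (i) is the only one requiring a genuine (if tiny) idea — choosing witness points — and I would argue it as follows. Since $(w,w') \notin \scrN$, pick $t \in I_1(w) \cap I_1(w')$; because $(u',w) \in \scrH$ forces $I_1(w) \sset I_1(u')$, the point $t$ lies in $I_1(u') \cap I_1(w')$, so the first-coordinate projections of $u'$ and $w'$ meet. Since $(u,u') \notin \scrN$, pick $t' \in I_2(u) \cap I_2(u')$; because $(u,w') \in \scrH$ forces $I_2(u) \sset I_2(w')$, the point $t'$ lies in $I_2(u') \cap I_2(w')$, so the second-coordinate projections of $u'$ and $w'$ meet as well. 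Both projections of the pair $(u', w')$ intersect, hence $u' \cap w' \ne \emptyset$, i.e.\ $(u', w') \notin \scrN$.

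I do not anticipate any real obstacle: once the $\scrH$/$\scrN$ conditions are rephrased in terms of the intervals $I_1, I_2$, the lemma is routine. The only thing to be careful about in (i) is the bookkeeping — taking the first-coordinate witness from the pair $(w,w')$ and the second-coordinate witness from $(u,u')$, and invoking $(u',w)\in\scrH$ for the first coordinate and $(u,w')\in\scrH$ for the second (the hypothesis $(u,w)\in\scrH$ is not actually used). I would present the argument essentially verbatim as above.
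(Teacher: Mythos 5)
Your proof is correct and is exactly the direct verification from the definitions that the paper has in mind (the paper simply asserts that all three parts "follow immediately from definition" and gives no details). The interval reformulation of $\scrH$ and $\scrN$, the witness-point argument for (i), and your observation that the hypothesis $(u,w)\in\scrH$ is not needed there are all accurate.
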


All parts of Lemma \ref{L:easy-observations} follow immediately from definition. We now prove a technical lemma that will aid in the construction of the connecting sets $V, V'$. For this lemma and throughout this section, we introduce a graph on $\Zd$ (with induced graphs on its subsets) whereby a pair $u, v \in \Zd$ are connected by an edge whenever $(u, v) \notin \scrN$. For the remainder of the paper, the notion of a connected subset of $\Zd$ is with respect to this graph structure.

\begin{lemma}
	\label{L:shift}
	Let $U, W \sset \Zd$. Suppose that we can partition $U = U_1 \cup U_2$ and $W = W_1 \cup W_2$ such that $U_1 \X W_1, W_2 \X U_2 \sset \scrH \cup \scrN$ and $U_1 \X W_2, U_2 \X W_1 \sset \scrN$. Then there exists a set $V \sset \Zd$ such that $(U, V, W)$ is a Markov triple. The set $V$ can be constructed as follow.
	For every $w \in W$, let 
	$$
	G(w) = \{u \in U : (u, w) \text{ cross}\} \qquad \mathand \qquad  \bar G(u) = \{w \in W : (u, w) \text{ cross}\}
	$$
	For $(u, w) \in U \X W \cap (\scrV \cup \scrH)$, let $G(w)_u$ be the connected component of $G(w)$ containing $u$, and let $\bar G(u)_w$ be the connected component of $\bar G(u)$ containing $w$. Then
	$$
	\sqcup G(w)_u \cap \sqcup \bar G(u)_w
	$$
	is a box for every $(u, w) \in U \X W \cap (\scrV \cup \scrH)$. Letting $v(u, w) \in \Zd$ be this box, we can take
	$$
	V  = \{v(u, w) : (u, w) \in U \X W \cap (\scrV \cup \scrH) \}.
	$$
\end{lemma}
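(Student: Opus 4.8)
The plan is to verify the three conditions of Definition \ref{D:crossing-triple} for the triple $(U,V,W)$ with the partition $V = V_1 \cup V_2$ defined by $V_1 = \{v(u,w) : (u,w) \in U_1 \X W_1 \cap \scrH\}$ and $V_2 = \{v(u,w) : (u,w) \in U_2 \X W_2 \cap \scrV\}$. (Note $U \X W \cap (\scrV \cup \scrH) = (U_1 \X W_1 \cap \scrH) \cup (U_2 \X W_2 \cap \scrV)$ by the hypothesis $U_1 \X W_2, U_2 \X W_1 \sset \scrN$, so this does partition $V$.) Before that, I first have to establish the claim \emph{inside} the lemma statement, namely that $\sqcup G(w)_u \cap \sqcup \bar G(u)_w$ is genuinely a box in $\Zd$ for each crossing pair $(u,w)$. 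I would do this by working inside $\scrH$ (the $\scrV$ case being symmetric via $R_1$): by Lemma \ref{L:easy-observations}(i), the connected components $G(w)_u$ and $\bar G(u)_w$ are such that every pair of boxes in $G(w)_u$ has non-empty intersection (they all cross $w$ horizontally and are pairwise non-disjoint along a connected chain, so by transitivity of the crossing order their projections onto each axis overlap), hence $\sqcup G(w)_u$ is itself a box, and likewise $\sqcup \bar G(u)_w$. Their intersection is then a box provided it is non-empty, which holds because $u$ crosses every element of $\bar G(u)_w$ and $w \in \bar G(u)_w$, giving $u \cap \sqcup\bar G(u)_w \ne \emptyset$ and similarly; a short argument with the coordinate inequalities defining $\scrH$ finishes this. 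This is the step I expect to be the main obstacle: one must be careful that ``connected'' (via the non-disjointness graph) interacts correctly with the crossing order, and that the two suprema $\sqcup G(w)_u$ and $\sqcup \bar G(u)_w$ actually overlap rather than merely being close.

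Granting the box claim, I would check Definition \ref{D:crossing-triple} as follows. For (iii): if $v = v(u,w) \in V_1$ with $(u,w) \in U_1 \X W_1 \cap \scrH$, then $v \sset \sqcup G(w)_u$, and since $U_1 \X W_1 \sset \scrH \cup \scrN$ with $G(w)_u$ a crossing component, every $u' \in G(w)_u$ satisfies $(u', v) \in \scrH$ by Lemma \ref{L:easy-observations}(ii) (the smallest box containing a set of boxes all crossed horizontally by $u'$ is again crossed horizontally by $u'$); for $u' \in U_1 \smin G(w)_u$ one gets $(u', v) \in \scrN$ because $v \sset w$ in the relevant coordinates and $u'$ is disjoint from $w$ along the chain — again via \ref{L:easy-observations}(i). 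The same reasoning with $w' \in W_1$ shows $V_1 \X W_1 \sset \scrH \cup \scrN$, and the $\scrV$-analogue handles $U_2, V_2, W_2$. For (ii): the cross terms $U_1 \X V_2$, $V_2 \X W_1$, etc. are forced into $\scrN$ because an element of $V_2$ sits inside some $u_2 \in U_2$ with $u_2$ disjoint from $U_1$ and $W_1$ by hypothesis, and a box contained in a box disjoint from $X$ is disjoint from $X$; I also need that distinct boxes $v(u,w), v(u',w')$ within $V_1$ (or within $V_2$) are disjoint, which follows since distinct crossing components $G(w)_u$ are mutually disjoint as subsets and a similar argument applies. For (i): given $x \in \sqcup U \cap \sqcup W$, pick boxes $u \in U$, $w \in W$ with $x \in u \cap w$; then $(u,w) \notin \scrN$, so (by the partition hypothesis) either $(u,w) \in U_1\X W_1 \cap \scrH$ or $(u,w) \in U_2 \X W_2 \cap \scrV$, and in either case $x \in u \cap w \sset \sqcup G(w)_u \cap \sqcup \bar G(u)_w = v(u,w) \sset \sqcup V$, giving $\sqcup U \cap \sqcup W \sset \sqcup V$.

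Finally, one should record that $V$ is well-defined in the sense that the formula $v(u,w)$ does not depend on the representative pair beyond the component data — if $(u,w)$ and $(u',w')$ lie in the same pair of components then they define the same box — which is immediate from the definition since $G(w)_u = G(w')_{u'}$ and $\bar G(u)_w = \bar G(u')_{w'}$ exactly when $u,u'$ are in the same component of a common $G(\cdot)$ and $w, w'$ in the same component of a common $\bar G(\cdot)$. With all of this, $(U,V,W)$ satisfies every clause of Definition \ref{D:crossing-triple}, so it is a Markov triple, completing the proof.
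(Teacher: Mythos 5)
Your overall outline matches the paper's: the same partition of $V$ into the $\scrH$-pairs and $\scrV$-pairs, the same verification of Definition \ref{D:crossing-triple}(i) via $u \cap w \sset v(u,w)$, and the same well-definedness/disjointness considerations for the boxes $v(u,w)$. However, your argument for the central claim --- that $\sqcup G(w)_u \cap \sqcup \bar G(u)_w$ is a box --- contains a genuine error. You assert that $\sqcup G(w)_u$ is itself a box because the boxes in a connected component pairwise intersect. Neither half of this is right. Connectivity in the non-disjointness graph only gives chains, not pairwise intersection (take $u_1, u_3$ disjoint but both meeting $u_2$). More importantly, even a family of pairwise-intersecting boxes that all cross $w$ horizontally generally has a non-rectangular union: e.g.\ $u_1 = (0,0;10,1)$ and $u_2 = (-5,1;10,2)$ both cross $w = (2,0;3,5)$ horizontally and intersect each other, but $u_1 \cup u_2$ is an L-shaped region. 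So the step "hence $\sqcup G(w)_u$ is itself a box" fails, and with it your route to the box claim via intersecting two boxes.

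The correct argument (the one the paper gives) does not need either union to be a box. One first shows, by iterating Lemma \ref{L:easy-observations}(i) along the edges of the components, that \emph{every} pair in $G(w)_u \X \bar G(u)_w$ crosses horizontally. Consequently each individual intersection has the product form $u' \cap w' = [w'^-_1, w'^+_1] \X [u'^-_2, u'^+_2]$, and the total intersection $\sqcup G(w)_u \cap \sqcup \bar G(u)_w$ equals $I_{w,u} \X J_{w,u}$ with $I_{w,u} = \bigcup_{w'} [w'^-_1, w'^+_1]$ and $J_{w,u} = \bigcup_{u'} [u'^-_2, u'^+_2]$; these are intervals precisely because the components are connected. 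A secondary weakness: your justification that distinct boxes of $V$ are disjoint ("distinct crossing components $G(w)_u$ are mutually disjoint as subsets") only applies to components of $G(w)$ for a \emph{fixed} $w$; for $v(u,w)$ and $v(\bar u,\bar w)$ with $w \ne \bar w$ you need the paper's argument that a common point forces $G(w)_u = G(\bar w)_{\bar u}$ and $\bar G(u)_w = \bar G(\bar u)_{\bar w}$, hence equality of the two boxes.
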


\begin{proof}
	Fix a pair $(u, w) \in U \X W \cap (\scrV \cup \scrH)$. We first show that $\sqcup G(w)_u \cap \sqcup \bar G(u)_w$ is a box. Without loss of generality, we may assume $(u, w) \in \scrH$.
	First observe that $G(w)_u \sset U_1, G(u)_w \sset W_1$. Secondly, by appealing to Lemma \ref{L:easy-observations}(i), for any $u' \in G(w)_u, w' \in \bar G(u)_w $ that are connected by edges to $u, w$, respectively, $(u', w') \notin \scrN$, and so since $u' \in U_1, w' \in W_1$, we must have $(u', w') \in \scrH$. Iterating this along the edges of $G(w)_u, \bar G(u)_w$ gives that $G(w)_u \X \bar G(u)_w \sset \scrH$.
	Now define
	$$
	I_{w, u} = \bigcup_{w' \in \bar G(u)_w} [w'^-_1, w'^+_1] \qquad \mathand \qquad J_{w, u} = \bigcup_{u' \in G(w)_u} [u_2'^-, u_2'^+]
	$$
	The connectivity of $\bar G(u)_w$ and $G(w)_u$ implies that $I_{w, u}$ and $J_{w, u}$ are intervals. Moreover, $u' \cap w' = [w'^-_1, w'^+_1] \X [u_2'^-, u_2'^+]$ for all $u', w' \in G(w)_u \X \bar G(u)_w$ since $G(w)_u \X \bar G(u)_w \sset \scrH$. Therefore $\sqcup \bar G(u)_w \cap \sqcup  G(w)_u = I_{w, u} \X J_{w, u}$, and hence is a box.
	
	Now, with $V$ as in the statement of the lemma, we check that $(U, V, W)$ is a Markov triple. Definition \ref{D:crossing-triple}(i) follows since $u \cap w \sset v(u, w)$ for all $u, w$. Next, we show that 
	\begin{equation}
	\label{E:Gempty}
	\text{ for any boxes $v_1 \ne v_2 \in V$, we have $v_1 \cap v_2 = \emptyset$.}
	\end{equation}
	For this, we just check that $v(u, w) = v(\bar u, \bar w)$ whenever $v(u, w) \cap v(\bar u, \bar w) \ne \emptyset$. First observe that since $G(w)_u \X \bar G(u)_w \sset \scrH$ for all $(u, w) \in \scrH$ and $G(w)_u \X \bar G(u)_w \sset \scrV$ for all $(u, w) \in \scrV$ that 
	\begin{equation}
	\label{E:Gwu}
	G(w)_u = G(w')_{u'} \quad \text{ for any } \quad w' \in G(w)_u, u' \in \bar G(u)_w. 
	\end{equation}
	Next, if $v(u, w) \cap v(u', w') \ne \emptyset$, then for some $w' \in G(w)_u, u' \in \bar G(u)_w, \bar w' \in G(\bar w)_u, \bar u' \in \bar G(\bar u)_w$ we have
	$$
	\bar u' \cap u' \cap w' \cap \bar w' \ne \emptyset.
	$$
	In particular, \eqref{E:Gwu} then implies that
	\begin{equation}
	\label{E:Gwuu}
	G(w)_u = G(w')_{u'} = G(\bar w')_{\bar u'} = G(\bar w)_{\bar u}.
	\end{equation}
	Similarly, $\bar G(u)_w = \bar G(\bar u)_{\bar w}$. Combining this with \eqref{E:Gwuu} proves \eqref{E:Gempty}.
	This argument also implies that we can partition $G = G_1 \cup G_2$, where $g(u, w) \in G_1$ if and only if $(u, w) \in \scrH$ and $g(u, w) \in G_2$ if and only if $(u, w) \in \scrV$ and $u \ne w$. 
	
	This partition, along with $U = U_1 \cup U_2, W = W_1 \cup W_2$, satisfies conditions (ii) and (iii) of Definition \ref{D:crossing-triple}. For example, suppose that $u \in U_1$ and $g(u', w') \in G_1$ and $(u, g(u', w')) \notin \scrN$. Then  $u \cap w \cap g(u', w') \ne \emptyset$ for some $w \in G(u')_{w'}$, so $g(u, w) \cap g(u', w') \ne \emptyset$ and hence $g(u, w) = g(u', w')$. Therefore $(u, g(u, w)) \in \scrH$. Checking the other parts of Definition \ref{D:crossing-triple}(ii), (iii) is similar
\end{proof}

We now turn our attention to proving that more complicated functions lie in $\scrF$. The first proposition is a generalization of Lemma \ref{L:G-moves}.

\begin{prop}
	\label{P:basic-perm} (Column Transposition)
	Let
	$U = U_h \cup U_n, W = W_a \cup W_b$ be subsets of $\Zd$. Suppose that $W_a \X W_b, U_n \X W \sset \scrN,$ and $U_h \X W \sset \scrH$. Suppose also that the sets $W_a$ and $W_b$ are connected, and that there is some box $b \in \Zd$ such that $\sqcup W \sset b$ and $\sqcup U_n \sset b^c$.
	
	Let $f$ be a map with domain $U \cup W$ such that $f|_U = \id, f|_{W_a} = T_{(k, 0)},$ and $f|_{W_b} = T_{(\ell, 0)}$ for some $k, \ell \in \Z$. Suppose that $f$ preserves $\scrH$ and $\scrN$, and that $\sqcup f(W) \sset b$. Then $f \in \scrF$.
\end{prop}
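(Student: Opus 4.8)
The strategy is the one outlined just before the statement: realize $f$ as a map on a Markov quadruple whose three two-step restrictions are already known to lie in $\scrF$ via Lemma \ref{L:in-F-triv} and Lemma \ref{L:G-moves}. The point of Proposition \ref{P:basic-perm} over Lemma \ref{L:G-moves} is that the two components $W_a, W_b$ are now only assumed \emph{connected} (not single boxes), and they are being translated by \emph{different} amounts $k,\ell$, so we cannot hope to move them in one step while keeping all crossings/disjointness intact. We will insert an intermediate configuration $W'$ that is a common refinement.

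First I would handle reductions. As in the proof of Lemma \ref{L:G-moves}, since $\scrF$ is closed under composition with translations and reflections (Definition \ref{D:scrF}(i),(ii),(iii)), it suffices to treat the case where the relative horizontal order of the ``$W_a$-block'' and the ``$W_b$-block'' is unchanged by $f$; any other case is obtained by composing with a product of $R_1,R_2$ and a translation. It also suffices, by a further composition, to assume one of $k,\ell$ is $0$ — say reduce to the case $f|_{W_b}=\id$, $f|_{W_a}=T_{(k,0)}$ — since a general $f$ of the stated form is a composition of two such maps (first slide $W_a$ while fixing $W_b\cup U$, then slide $W_b$ while fixing $W_a\cup U$), and at each stage the hypotheses ($f$ preserves $\scrH,\scrN$ and $\sqcup f(W)\sset b$, which controls that no new overlaps with $U_n$ are created) are preserved. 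So assume $W=W_a\cup W_b$, $f|_{U\cup W_b}=\id$, $f|_{W_a}=T_{(k,0)}$.

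Next I would build the connecting sets. Let $b_a$ be the smallest box containing $\sqcup W_a\cup\sqcup f(W_a)$ (the ``swept region'' of the translated block). Because $U_h\X W\sset\scrH$ and $f$ preserves $\scrH$ with $\sqcup f(W)\sset b$, Lemma \ref{L:easy-observations}(ii) gives $U_h\X\{b_a\}\sset\scrH$, and since $\sqcup U_n\sset b^c$ with $\sqcup W\sset b$ and $\sqcup f(W)\sset b$ we still have $U_n\X\{b_a\}\sset\scrN$; likewise $\{b_a\}\X W_b\sset\scrN$ and $\{b_a\}\X f(W_a)\sset\scrN$ need to be checked from the geometry — this is where I would use connectivity of $W_a$ together with Lemma \ref{L:easy-observations}(i) and the fact that $W_a\X W_b\sset\scrN$, $\sqcup f(W)\sset b$, to see that the swept box $b_a$ meets neither $W_b$ nor the image of the other component. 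Now set
$$
V = \{b_a\}\cup W_b,\qquad V' = f(W_a)\cup W_b = f(W).
$$
Extend $f$ by $f(b_a)=b_a$ (and $f=\id$ on $W_b$, $f=\id$ on $U$). Then I would verify directly from Definition \ref{D:crossing-triple} and Corollary \ref{C:2-step-Markov} that $(U, V, V', W)$ and $(U, V, V', f(W))$ are Markov quadruples: $U_h\X V_1'\X$ etc. all lie in $\scrH\cup\scrN$ since $b_a$ absorbs both $W_a$ and $f(W_a)$; the various disjointness conditions among $\{b_a\}, W_b, f(W_a), W_a$ hold by the previous paragraph; and the nesting conditions $\sqcup V\cap\sqcup W\sset\sqcup$(anything), $\sqcup U\cap\cdots$ are immediate because $\sqcup U_n\sset b^c$ separates $U_n$ from everything in $b$, while $U_h$ crosses all of $b_a$. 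Finally, I would check the three two-step restrictions: $f|_{U\cup V}=\id$ is in $\scrF$; $f|_{V\cup V'}$ fixes $b_a$ and $W_b$ and translates $W_a\mapsto f(W_a)$ inside $b_a$ — this is of the form covered by Lemma \ref{L:G-moves} (or rather Lemma \ref{L:in-F-triv}(2) applied componentwise after noting $\{W_a, f(W_a), b_a\}\X W_b\sset\scrN$), so it is in $\scrF$; and $f|_{V'\cup W}$ is the identity. Corollary \ref{C:2-step-markov'} then gives $f\in\scrF$ after restricting away $V,V'$ (using closure under restriction).

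**Main obstacle.** The routine parts (reductions, closure gymnastics) are straightforward; the delicate point is verifying that the ``swept box'' $b_a$ genuinely satisfies all the crossing and disjointness relations needed for $(U,V,V',W)$ to be a Markov quadruple — in particular that $b_a$ stays disjoint from $W_b$, from $U_n$, and from the final position $f(W_a)$. This is exactly the role of the hypotheses that $W_a, W_b$ are connected, that $\sqcup W\cup\sqcup f(W)\sset b$ with $\sqcup U_n\sset b^c$, and that $f$ preserves $\scrN$; the proof that connectivity of $W_a$ forces $\sqcup W_a$ (hence $b_a$) to sit ``behind'' all of $W_b$ in the horizontal-crossing sense is the heart of the argument and uses Lemma \ref{L:easy-observations}(i) iterated along edges of $W_a$, much as in the proof of Lemma \ref{L:shift}. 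I would also need to double-check that the intermediate $W'=f(W)$ is legitimately an allowed configuration (its two components disjoint, correct crossing with $U_h$), but this follows from the hypothesis that $f$ preserves $\scrH$ and $\scrN$ applied to $W$ itself.
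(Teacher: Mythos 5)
Your high-level strategy (sandwich $f$ inside a Markov quadruple, apply Corollary \ref{C:2-step-markov'}, and reduce the middle restriction to Lemmas \ref{L:in-F-triv} and \ref{L:G-moves}) is the right one, but the connecting set you build does not work; you have correctly located the crux without resolving it. Concretely: the swept box $b_a$ containing $\sqcup W_a \cup \sqcup f(W_a)$ need \emph{not} be disjoint from $W_b$. Take $\sqcup W_a$ with horizontal range $[0,1]$, $\sqcup W_b$ with range $[2,3]$, and $k=\ell=4$: all hypotheses of the proposition can be met, the relative order of the two components is preserved, yet $b_a$ has horizontal range $[0,5]$, and since every box of $W$ contains the vertical range of every box of $U_h$, the box $b_a$ genuinely meets $\sqcup W_b$. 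So $V=\{b_a\}\cup W_b$ violates the pairwise disjointness required of the middle set of a Markov triple. A second, independent failure: even where $b_a$ is disjoint from everything else, an individual box $w\in W_a$ need not span the full vertical extent of $b_a$ (the vertical extents of the boxes of $W_a$ all contain that of each $u\in U_h$ but may overshoot by different amounts), so $(b_a,w)\notin\scrH$ in general and Lemma \ref{L:G-moves} does not apply with $b_a$ as the fixed box. The bookkeeping is also inconsistent: with $V'=f(W)$, the restriction $f|_{V'\cup W}$ cannot be the identity (it must still move $W_a$), $f|_{V\cup V'}$ cannot ``translate $W_a$'' since $W_a\notin V\cup V'$, and $f$ fails to be injective on $U\cup V\cup V'\cup W$ (both $W_a$ and $f(W_a)$ would be sent to $f(W_a)$). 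Finally, the two-stage reduction (slide $W_a$ first, then $W_b$) needs an argument that the intermediate configuration is admissible; one of the two orders always works, as in Case 2 of Lemma \ref{L:G-moves}, but this must be argued rather than asserted.

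The paper avoids all of this by taking the connecting boxes from Lemma \ref{L:shift} rather than from bounding boxes of the $W$-components. One forms the intersection boxes $v_a(u), v_b(u)$ for the pair $(U,W)$ and $\bar v_a(u),\bar v_b(u)$ for $(U,f(W))$, and lets $\tilde V=\{v(u)\}$ where $v(u)$ is the smallest box containing all four. The key geometric fact is that the projections of $v(u)$ and of the four small boxes onto the second coordinate all equal the vertical extent of the connected component of $U$ containing $u$, so $v(u)$ crosses each of $v_a(u),v_b(u),\bar v_a(u),\bar v_b(u)$ \emph{horizontally}, which is exactly what Lemma \ref{L:G-moves} needs for the restriction $f|_{\tilde V\cup V}$. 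The quadruples are then $(U,\tilde V, V, W)$ and $(U,\tilde V,\bar V, f(W))$, with $f$ equal to the identity on $U\cup\tilde V$ and to componentwise translation on $V$ and on $W$ (no two-stage reduction is needed, since both components are moved at once). If you want to salvage your argument, replace $b_a$ by these intersection-based boxes; the swept region is the wrong object.
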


Proposition \ref{P:basic-perm} can be thought of in the following way. The set $U$ consists of `horizontal' boxes $U_h$ and possibly some boxes $U_n$ that do not interact with $W$. The set $W$ consists of `vertical' boxes that cross all boxes in $U_h$. Moreover, it can be partitioned into two disjoint components $W_a$ and $W_b$. Proposition \ref{P:basic-perm} then says that we can translate the components $W_a$ and $W_b$ in any way, as long as we do not break the crossing structure of $W$ and $U_h$, and the fact that $W$ does not interact with $U_n$.

\begin{proof}
	Observe that $U, W$ satisfy the assumptions of Lemma \ref{L:shift} with $U_2, W_2 = \emptyset$. Let $V$ be the set constructed from $U$ and $W$ as in that lemma. We can write $V = V_a \cup V_b$, where 
	$$
	V_a = \{v(u, w) : u \in U_h, w \in W_a\} \qquad \mathand \qquad V_b = \{v(u, w) : u \in U_h, w \in W_b\}.
	$$
	Since $W_a \X W_b \sset \scrN$, $V_a$ and $V_b$ are disjoint. Moreover, by the connectedness of $W_a$ and $W_b$, for each $u \in U_h$ there is exactly one set $v_a(u) \in V_a$ that equals $v(u, w)$ for all $w \in W_a$ and exactly one set $v_b(u) \in V_b$ that equals $v(u, w)$ for all $w \in W_b$.
	
	Similarly define $\bar V = \bar V_a \cup \bar V_b$ from $U, f(W)$ via Lemma \ref{L:shift} with $U_2, f(W)_2 = \emptyset.$ Again we can define sets $\bar v_a(u), \bar v_b(u)$ for $u \in U_h$. Note that $\bar v_a(u) = T_{(k,0)}v_a(u)$ and $\bar v_b(u) = T_{(\ell,0)}v_b(u)$.
	
	Now for $u \in U_h$, let $v(u)$ be the smallest box containing $v_a(u), v_b(u), \bar v_a(u), \bar v_b(u)$, and set $\tilde V = \{v(u) : u \in U_h\}$. Observe that the projections of $v(u), v_a(u), v_b(u), \bar v_a(u), \bar v_b(u)$ onto the second coordinate are all equal to 
	$$
	\bigcup_{r \in K(u, U)} [r_2^-, r_2^+],
	$$
	where $K(u, U)$ is the connected component of $U$ containing $u$. Therefore $v(u)$  crosses $v_a(u), v_b(u), \bar v_a(u),$ and $\bar v_b(u)$ horizontally and $\tilde V$ consists of disjoint boxes. Note also that $\sqcup \tilde V \sset b$, so $\tilde V \X U_n \sset \scrN$. By Lemma \ref{L:easy-observations}(ii) and (iii), $U_h \X \tilde V \sset \scrH$ and $W \X \tilde V \sset \scrV$. Finally, $\sqcup (\bar V \cup V) \sset \sqcup \tilde V$, and since $\tilde V \sset U$, we have that $\sqcup \tilde V \cap \sqcup W \sset \sqcup V$, and $\sqcup \tilde V \cap \sqcup f(W) \sset \sqcup \bar V$. Putting all these observations together implies that $(U, \tilde V, V,W)$ and $(U, \tilde V, \bar V, f(W))$ are Markov quadruples.
	
	Extend $f$ to $\tilde V \cup V$ so that $f|_{\tilde V} = \id, f|_{V_a} = T_{(k, 0)}$ and $f|_{V_b} = T_{(\ell, 0)}$, and observe that $f|_{U \cup \tilde V} \in \scrF$ since it is the identity, $f|_{\tilde V \cup V} \in \scrF$ by Lemma \ref{L:G-moves} and Lemma \ref{L:in-F-triv}(2), and $f|_{V \cup W} \in \scrF$ by Lemma \ref{L:in-F-triv} (2). Therefore $f \in \scrF$ by Corollary \ref{C:2-step-markov'}.
\end{proof}

Next, we prove that the class of functions in Theorem \ref{T:puzzle-pieces} lies in $\scrF$. See Figure \ref{fig:shift} for an example.

\begin{prop}
	\label{P:slides} (Slides)
	Let $U, W \sset \Zd$. Suppose that we can partition $U = U_1 \cup U_2$ and $W = W_1 \cup W_2$ such that $U_1 \X W_1, W_2 \X U_2 \sset \scrH \cup \scrN$ and $U_1 \X W_2, U_2 \X W_1 \sset \scrN$. Now let $c \in \{(0, \pm 1), (\pm 1, 0\}$, and define a map $\sig$ with domain $U \cup W$ by  $\sig|_U = T_c, \sig|_W = \id$. Suppose that $\sig$ preserves $\scrH$ and $\scrN$. Then $\sig \in \scrF$.
\end{prop}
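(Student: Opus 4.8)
The plan is to apply the same Markov-quadruple strategy used in Proposition \ref{P:basic-perm}, invoking Corollary \ref{C:2-step-markov'}. First I would use Lemma \ref{L:shift} applied to the pair $(U, W)$ (whose hypotheses are exactly those of the present proposition) to produce a set $V$ such that $(U, V, W)$ is a Markov triple, with $V$ built out of the boxes $v(u,w) = \sqcup G(w)_u \cap \sqcup \bar G(u)_w$ for crossing pairs $(u,w)$. Since $\sig$ preserves $\scrH$ and $\scrN$, the images $\sig(U) = T_c U$ and $\sig(W) = W$ again satisfy the hypotheses of Lemma \ref{L:shift}, yielding a set $\bar V$ with $(T_cU, \bar V, W)$ a Markov triple. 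Because $c$ is a unit lattice vector, I expect $\bar V$ to be closely related to $V$: each box $v(u,w)$ either moves by $c$ (when it is pinned to $U$'s side of the crossing) or stays fixed, so the combinatorial structure of $\bar V$ mirrors that of $V$ componentwise.

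Next, as in the proof of Proposition \ref{P:basic-perm}, I would form an intermediate connecting set $\tilde V$ by taking, for each relevant $u \in U_1$ (or component of $U$), the smallest box containing the corresponding boxes of both $V$ and $\bar V$; using Lemma \ref{L:easy-observations}(ii),(iii) this $\tilde V$ consists of disjoint boxes crossing the boxes of $U$ horizontally and the boxes of $W$ vertically, and one checks $\sqcup(\bar V \cup V) \sset \sqcup \tilde V$ together with $\sqcup \tilde V \cap \sqcup W \sset \sqcup V$ and $\sqcup \tilde V \cap \sqcup W \sset \sqcup \bar V$ (using $\tilde V \sset U$-side geometry). This makes $(U, \tilde V, V, W)$ and $(T_c U, \tilde V, \bar V, W)$ — or rather the appropriate single quadruple interpolating the two — Markov quadruples in the sense of Corollary \ref{C:2-step-Markov}. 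I would then extend $\sig$ over $\tilde V$ and $V$ by setting $\sig|_{\tilde V}$ and $\sig|_V$ to be $T_c$ on the $U$-linked boxes and the identity on the $W$-linked boxes (matching how $\sig$ acts on $U$ and $W$), so that $\sig|_{\tilde V \cup V}$ is of the form handled by Lemma \ref{L:in-F-triv}(2) (disjoint unit translations of disjoint components), while $\sig|_{U \cup \tilde V}$ is a common translation $T_c$ (hence in $\scrF$ by (i)) and $\sig|_{V \cup W}$ is again of the form of Lemma \ref{L:in-F-triv}(2). Corollary \ref{C:2-step-markov'} then gives $\sig \in \scrF$.

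The main obstacle I anticipate is the bookkeeping needed to split $\tilde V$ and $V$ into the part that must translate by $c$ and the part that must stay fixed so that, simultaneously, $\sig|_{U\cup\tilde V}$ is a pure translation, $\sig|_{V \cup W}$ is the identity on $W$ together with disjoint translations on $V$, and $\sig|_{\tilde V \cup V}$ factors through Lemma \ref{L:in-F-triv}(2) — all while verifying that $\sig$ continues to preserve $\scrH$ and $\scrN$ on each enlarged domain (which is needed both to reapply Lemma \ref{L:shift} and to invoke Lemma \ref{L:in-F-triv}(2)). The restriction $c \in \{(0,\pm1),(\pm1,0)\}$ should be exactly what guarantees that shifting the $U$-side by $c$ does not create or destroy any crossing or disjointness relation internally among the connecting boxes, so that the two Markov-triple constructions from Lemma \ref{L:shift} line up into a genuine Markov quadruple; pinning down that compatibility is the crux. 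Everything else is a routine composition of the building blocks already established.
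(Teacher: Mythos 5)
Your high-level plan --- interpolate between the two configurations with a Markov quadruple and invoke Corollary \ref{C:2-step-markov'} --- is the right one, but the construction you propose breaks down at the middle link, which is exactly where the content of the proposition lies. First, your prescription for $\sig$ on the connecting sets is internally inconsistent: you need $\sig|_{U\cup\tilde V}=T_c$ for that restriction to be a translation, yet you also set $\sig$ to be the identity on the ``$W$-linked'' boxes of $\tilde V$; and since every box produced by Lemma \ref{L:shift} is by construction linked to both $U$ and $W$, the dichotomy has no content. Second, $\sig|_{\tilde V\cup V}$ cannot be handled by Lemma \ref{L:in-F-triv}(2): each $\tilde v$ contains the corresponding $v$, so the components of $\tilde V\cup V$ are not pairwise in $\scrN$, and on such a component the map ($T_c$ on $\tilde v$, identity on $v$) is not a translation --- it is another instance of the very slide you are trying to prove. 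Indeed no choice of connecting sets can make the middle map a disjoint union of translations: if $V\X V'\sset\scrN$ then $\sqcup V\cap\sqcup V'=\emptyset$, which together with the quadruple conditions would force $\sqcup V\cap\sqcup W=\emptyset$. Third, the containment $\sqcup\tilde V\cap\sqcup W\sset\sqcup V$ required by Corollary \ref{C:2-step-Markov} fails for your $\tilde V$: for a horizontally crossing pair and $c=(0,1)$ the shifted connecting box is $T_cv$, so $\tilde v=v\cup T_cv$ contains an extra row that still lies inside the relevant $w$ but not inside $v$.

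The paper resolves this with Lemma \ref{L:shift-part-ii}, whose two key ideas are missing from your sketch. One: the connecting sets are not obtained by translating or enveloping the output of Lemma \ref{L:shift}; rather, Lemma \ref{L:shift} is applied to thickened versions of $U$ and $W$ (e.g.\ $U\cup[U_1+(0,c)]$), and the second connecting set is $V'=(V_1+(0,c))\cup(V_2+(c,0))$, i.e.\ each box of $V$ has a \emph{single corner} moved --- with the horizontal-crossing part $V_1$ and the vertical-crossing part $V_2$ extended in different directions, an asymmetry your construction never addresses. Two: with this choice the middle restriction $V\mapsto T_cV$, $V'\mapsto V'$ is a disjoint union of \emph{basic reflections} in the sense of Lemma \ref{L:in-F-triv}(1): relative to the fixed box $v+(0,c)$, the boxes $v$ and $T_cv$ are obtained by lowering the top, respectively raising the bottom, by one unit. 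Recognizing the basic reflection (not a translation) as the elementary move that absorbs the unit shift is the crux of the proof, and it is the step your proposal does not supply.
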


To prove Proposition \ref{P:slides}, we need a technical lemma.

\begin{lemma}
	\label{L:shift-part-ii}
	Let $U, W$, and $\sig$ be as in the statement of Proposition \ref{P:slides} with $c = (0, 1)$.
	Then there exists a set $V = V_1 \cup V_2 \sset \Zd$ such that 
	\begin{align*}
	(U, V, (V_1 + (0, c)) \cup (V_2 + (c, 0)), W), \quad \mathand \quad (\sig U, T_c V, (V_1 + (0, c)) \cup (V_2 + (c, 0)), \sig W)
	\end{align*}
	are Markov quadruples.
\end{lemma}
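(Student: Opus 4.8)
\emph{Overview.} The plan is to produce $V$ from Lemma~\ref{L:shift}, define $V''$ by the one-step modification indicated in the statement, and then deduce both assertions from Corollary~\ref{C:2-step-Markov}. The real content is a geometric verification, and the hypothesis that $\sig$ preserves $\scrH$ and $\scrN$ is exactly what supplies the ``slack'' making the one-step modification of the connecting boxes legitimate.

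\emph{Constructing $V$.} As in the statement we take $c=(0,1)$; the remaining three values of $c$ reduce to this one by conjugating with the reflections $R_1,R_2$, under which $\scrH$, $\scrN$, the notion of a Markov triple and the maps $T_c$ all transform equivariantly. The hypotheses imposed on $U=U_1\cup U_2$ and $W=W_1\cup W_2$ in Proposition~\ref{P:slides} are exactly those of Lemma~\ref{L:shift}, so that lemma produces a set $V$ with $(U,V,W)$ a Markov triple, and moreover $V=V_1\cup V_2$, where $V_1$ is built from the horizontally-crossing pairs in $U_1\X W_1$ and $V_2$ from the vertically-crossing pairs in $U_2\X W_2$. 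It is convenient to take each box of $V_2$ one lattice step larger along its bottom edge than the box output by Lemma~\ref{L:shift}, so that pushing that edge back up in the next step recovers a box still containing the relevant path intersections; the slack produced in the last step below each vertical crossing is what lets this enlargement coexist with the Markov triple conditions.

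\emph{Defining $V''$ and closing up.} Set $V''=(V_1+(0,c))\cup(V_2+(c,0))$, i.e.\ extend each box of $V_1$ one step along its top edge and raise the bottom edge of each box of $V_2$ by one step. The reason for these particular moves is that for $v\in V_1$ the box $v+(0,c)$ is the smallest box horizontally crossed by both $v$ and $T_cv$, and symmetrically $v+(c,0)$ for $v\in V_2$ is vertically crossed by both $v$ and $T_cv$; thus $V''$ is a common refinement that can sit simultaneously between $V$ and $W$ and between $T_cV$ and $\sig W=W$. Granting the verification below, $(U,V,W)$, $(U,V'',W)$, $(\sig U,T_cV,\sig W)$ and $(\sig U,V'',\sig W)$ are Markov triples, the pairs $(V,V'')$ and $(T_cV,V'')$ satisfy the compatibility and nesting hypotheses of Corollary~\ref{C:2-step-Markov} --- namely $V_1\X(V_1+(0,c))$ and $T_cV_1\X(V_1+(0,c))$ lie in $\scrH\cup\scrN$, $V_2\X(V_2+(c,0))$ and $T_cV_2\X(V_2+(c,0))$ lie in $\scrV\cup\scrN$, the cross-terms lie in $\scrN$, and $\sqcup V\cap\sqcup W\sset\sqcup V''$, $\sqcup U\cap\sqcup V''\sset\sqcup V$ together with their images after applying $\sig$ to $U$ and $W$ --- and Corollary~\ref{C:2-step-Markov} then gives that $(U,V,V'',W)$ and $(\sig U,T_cV,V'',\sig W)$ are Markov quadruples, as claimed.

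\emph{The slack, and the main obstacle.} Since $\sig|_U=T_c$, $\sig|_W=\id$, and $\sig$ preserves $\scrH$ and $\scrN$, every crossing of a box of $U$ with a box of $W$ survives the replacement of the $U$-box by its $T_c$-image, and every disjointness survives; unwinding these facts yields one lattice step of slack at the top of each horizontal crossing box (for example $(u,w)\in(U_1\X W_1)\cap\scrH$ forces $w^+_2>u^+_2$), the analogous slack at the bottom of each vertical crossing box, and the non-collision statements needed above. This is precisely where the hypothesis on $\sig$ enters, and it is the crux of the argument: the one genuinely delicate thing to check is that enlarging the $V_1$-boxes along their tops and raising the bottoms of the $V_2$-boxes manufactures no pair that is neither disjoint nor crossing --- e.g.\ $(V_1+(0,c))\X U_2$, $(V_1+(0,c))\X W_2$ and $(V_1+(0,c))\X(V_2+(c,0))$ must stay inside $\scrN$, and each surviving crossing must keep its original type. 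This is a finite case analysis over the families $U_1,U_2,W_1,W_2$ and the connected components of $V$, carried out with the elementary containments of Lemma~\ref{L:easy-observations} and the slack inequalities; the remaining items (the nesting relations, and the crossing types away from the modified edges) are inherited directly from the Markov triple $(U,V,W)$ already in hand.
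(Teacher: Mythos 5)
Your skeleton is the right one --- build a connecting set via Lemma \ref{L:shift}, exploit the one step of slack forced by the hypothesis that $\sig$ preserves $\scrH$ and $\scrN$, and close with Corollary \ref{C:2-step-Markov} --- and you correctly identify where the hypothesis on $\sig$ enters. But the proposal does not actually prove the lemma: the step you yourself flag as ``the crux'' (that enlarging the tops of the $V_1$-boxes and the bottoms of the $V_2$-boxes creates no pair outside $\scrN\cup\scrH\cup\scrV$, keeps the middle boxes pairwise disjoint as Definition \ref{D:crossing-triple}(ii) demands, preserves each crossing type, and yields the nesting relations $\sqcup V\cap\sqcup W\sset\sqcup V''$ and $\sqcup U\cap\sqcup V''\sset\sqcup V$ together with their images under $\sig$) is exactly the content of the lemma, and deferring it to an unexecuted ``finite case analysis'' leaves the proof incomplete. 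It is not obvious that this case analysis goes through for the boxes output by Lemma \ref{L:shift} applied to $(U,W)$: for instance, a box $v(u,w)+(0,c)$ is built from a connected component of the crossing graph of the \emph{unshifted} configuration, and you must rule out that the enlarged box collides with a box from a different component, or escapes $\sqcup W_1$, or meets a box of $V_2+(c,0)$; none of these is addressed.

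The paper organizes the construction differently precisely to make these checks short. It does not enlarge the output of Lemma \ref{L:shift}; instead it enlarges the \emph{input}: it applies Lemma \ref{L:shift} to $U^*=U\cup[U_1+(0,c)]$ paired with $W$ to get $G=G_1\cup G_2$, and (symmetrically) to $U$ paired with $W^*=W_1\cup W_2\cup[W_2-(c,0)]$ to get $H=H_1\cup H_2$. Because the enlarged boxes $u\cup T_cu$ are fed into the lemma, the components of the crossing graph already account for both $U$ and $\sig U$, so $(U^*,G,W)$ is a Markov triple by the lemma itself, the disjointness and containment statements (e.g.\ $\sqcup G_1\sset\sqcup W_1$, $\sqcup H_2\sset\sqcup U_2$, whence $G_1\X H_2\sset\scrN$) come for free, and the only things left to verify by hand are that shrinking $G_1$ by $(0,c)$ (resp.\ shifting $H_2$ by $(c,0)$) still gives Markov triples with $(U,\cdot,W)$ --- each a two-line argument from the slack inequality $w^+_2>u^+_2$ you identified. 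I would encourage you to rewrite the proof along these lines, or else actually carry out the case analysis you invoke; as it stands the argument is an announcement of a strategy rather than a proof.
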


\begin{proof}
	Let $U^* = U \cup [U_1 + (0, c)]$. We claim that the pair $U^*, W$ satisfies the assumptions of Lemma \ref{L:shift}, where $W_1, W_2$ are as in the statement of Proposition \ref{P:slides}, $U^*_1 = U_1 \cup [U_1 + (0, c)]$ and $U^*_2 = U_2$. Now, for $u^* \in U_1 + (0, c)$, we can write $u^* = u \cup T_c u$ for some $u \in U_1$. Since $\sig$ preserves $\scrH$ and $\scrN$, for all $w \in W_1$, either $(u, w)$ and $(T_c u, w)$ are both in $\scrH$ or they are both in $\scrN$. Also, $\{u, T_c u\} \X W_2 \sset \scrN$. Therefore Lemma \ref{L:easy-observations} (ii) implies that $\{u^*\} \X W_1 \sset \scrH \cup \scrN$ and $\{u^*\} \X W_2 \sset \scrN$, so $U^*, W$ satisfy the assumptions of Lemma \ref{L:shift}.
	
	Let $G = (G_1, G_2)$ be the set constructed via Lemma \ref{L:shift} for the pair $(U^*, W)$ that makes $(U^*, G, W)$ a Markov triple. 
	Setting $G^- = (G_1 - (0, c)) \cup G_2$, we claim that $(U, G^-, W)$ is also a Markov triple. For this, observe that if $u \in U_1$, then for all $w \in W_1$ with $(u, w) \in \scrH$, there exists $g \in G_1$ such that $(u + (0, c)) \cap w \sset g$. Since $(u, w) \in \scrH$, this implies that $u \cap w \sset g - (0, c)$. Combining this with the fact that that $\sqcup U_2 \cap \sqcup W_2 \sset G_2$ implies that $\sqcup U \cap \sqcup W \sset \sqcup G^-$.
	
	Definition \ref{D:crossing-triple}(ii) is also satisfied since we have only shrunk or removed boxes from $(U^*, G, W)$. For Definition \ref{D:crossing-triple}(iii), all statements are immediately inherited from $(U^*, W, G)$ except for the fact that $U_1 \X (G_1 - (0, c)) \sset \scrN \X \scrH$. For this, again simply observe that for any $u \in U_1$, that $\{u + (0, c)\} \X G_1 \sset \scrH \cup \scrN$, so therefore $\{u\} \X (G_1 - (0, c)) \sset \scrH \cup \scrN$ as well.
	
	Now let $W^* = W_1 \cup W_2^*$, where $W_2^* = W_2 \cup W_2 - (c, 0)$. By symmetric reasoning, we get a Markov triple $(U, H = H_1 \cup H_2, W)$ constructed as in Lemma \ref{L:shift}, with the property that $(U, H_1 \cup (H_2 + (c, 0)), W)$ is another Markov triple.
	
	We now show that $(U, G_1 \cup (H_2 + (c, 0)), W)$ and  $(U, (G_1 - (0, c)) \cup H_2, W)$ are both Markov triples. All properties of follow straight from the corresponding properties of the Markov triples constructed above except for the fact that
	\begin{equation}
	\label{E:GGG}
	(G_1 - (0, c))\X H_2, G_1 \X (H_2 + (c, 0)) \sset \scrN.
	\end{equation}
	Using that $G$ and $H$ were constructed as in Lemma \ref{L:shift}, we have $\sqcup G_1 \sset \sqcup W_1$ and $\sqcup H_2 \sset \sqcup U_2$. Therefore $G_1 \X H_2 \sset \scrN$ since $W_1 \X U_2 \sset \scrN$, and \eqref{E:GGG} follows. 
	
	Setting $V_1 = G_1, V_2 = (H_2 + (c, 0))$, it is then straightforward to check that $(U, V, (V_1 + (0, c)) \cup (V_2 + (c, 0)), W)$ is a Markov quadruple. The claims for $\sig U, \sig W$ follow from symmetric reasoning.
\end{proof}

\begin{proof}[Proof of Proposition \ref{P:slides}]
	We only prove the proposition when $c = (0, 1)$; the other cases follow by symmetry. Let $V$ be the set constructed from $U$ and $W$ by Lemma \ref{L:shift-part-ii} and set $V' = (V_1 + (0, c)) \cup (V_2 + (c, 0))$. Extend $\sig$ by letting $\sig|_V = T_c$ and $\sig|_{V'} = \id$. Now, $\sig|_{U \cup V} \in \scrF$ since it is a translation, $\sig|_{V' \cup W} \in \scrF$ since it is the identity, and $\sig|_{V \cup V'} \in \scrF$ by Lemma \ref{L:in-F-triv} (it is a collection of disjoint basic reflections). Therefore $\sig \in \scrF$ by Corollary \ref{C:2-step-markov'} since $(U, V, V', W)$ is a Markov quadruple.
\end{proof}

The last two types of functions that we will show are in $\scrF$ are built up from column transpositions and slides.

\begin{prop}
	\label{P:lie-in-F}
	\begin{enumerate}
		\item (Towers) Let $U_1, \dots, U_k, U_1', \dots, U_k' \sset \Zd$ be such that $U_i \X U_{i+1}, U'_i \X U'_{i+1} \sset \scrH$ for all $i$, and such that for all $i$, $U_i' = T_{c_i} U_i$ for some $c_i \in \Z^2$. Let $f:\bigcup U_i \to \bigcup U_i'$ be the function whose restriction to $U_i$ is $T_{c_i}$. Then $f \in \scrF$. 
		\item (Box permutations) Let $U_1, \dots, U_k, V_1, \dots, V_m, U_1', \dots, U_k', V_1', \dots, V_m' \sset \Zd$ be such that $U_i \X V_j, U_i' \X V_j' \sset \scrH$ for any $i, j$, and $U_i \X U_j, V_i \X V_j, U_i' \X U_j', V_i' \X V_j' \sset \scrN$ for any $i \ne j$. Suppose also that for all $i, j$, there exists $c_i, d_j \in \Z^2$ such that $U_i' = T_{c_i} U_i$ and $V_j' = T_{d_j} V_j$. Let $f:\bigcup U_i \cup \bigcup V_i \to \bigcup U_i' \cup \bigcup V_i'$ be the function with $f|_{U_i} = T_{c_i}$ and $f|_{V_j} = T_{d_j}$. Then $f \in \scrF$.
	\end{enumerate}
\end{prop}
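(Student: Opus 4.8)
The plan is to deduce both parts from the factoring rule of $\scrF$, applied repeatedly via Corollary \ref{C:2-step-markov'}, using Lemma \ref{L:shift} to manufacture the middle sets of the required Markov quadruples and then fattening them into disjoint ``bridge'' boxes exactly as the family $\tilde V$ is built in the proof of Proposition \ref{P:basic-perm}. Before that, two observations. First, in both parts every pair of the listed boxes lies in $\scrN \cup \scrH$ --- in part (1) this holds for non-consecutive indices by chaining the consecutive horizontal crossings through Lemma \ref{L:easy-observations}(iii) (empty $U_i$ only weaken the hypotheses and are dealt with by splitting the tower), and in part (2) it is immediate --- and the same is true of the image configuration, so by Lemma \ref{L:N-preservation} the hypotheses force $f$ to preserve $\scrN$ and $\scrH \cup \scrV$, while $f$ restricted to any single component is a translation and hence already in $\scrF$. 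Second, I want to flag a pitfall: one cannot hope to reach $f$ by moving one component at a time along unit-vector slides, because translating a single $U_j$ (or $V_j$) while its neighbours are held fixed can destroy the crossing structure even when the initial and final configurations are both admissible; the components have to be moved in coordinated groups, and that is exactly what the Markov-quadruple construction accomplishes.

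For part (1) I would induct on $k$, the case $k=1$ being a translation, and for the step factor through a Markov quadruple with endpoints $U_1$ (the widest level) and $W := U_2 \cup \dots \cup U_k$. Here $U_1 \X W \sset \scrH$, the restriction $f|_W$ is the length-$(k-1)$ tower map and so lies in $\scrF$ by induction, and $f|_{U_1}$ is a translation; applying Lemma \ref{L:shift} to $(U_1, W)$ and to its image and inserting fattened bridge boxes should produce a Markov quadruple $(U_1, \tilde V, V, W)$ (and its image) along which the three consecutive restrictions of $f$ are a translation, a family of disjoint basic reflections and translations (Lemmas \ref{L:in-F-triv}, \ref{L:G-moves}), and a map that inherits membership in $\scrF$ from the inductive hypothesis; Corollary \ref{C:2-step-markov'} then gives $f \in \scrF$. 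For part (2) I would induct on the number of vertical components: to move a single component $V_m$ with all others held fixed, take $U := \bigcup_i U_i$ (so $U \X \bigl(\bigcup_j V_j\bigr) \sset \scrH$), apply Lemma \ref{L:shift} to $\bigl(U, \bigcup_j V_j\bigr)$ and its image, and use that the $V_j$ remain pairwise disjoint to split the bridge boxes into one block per $V_j$; the resulting Markov quadruples should witness, via a column transposition moving only the $V_m$-block (Proposition \ref{P:basic-perm}) together with disjoint basic moves (Lemma \ref{L:in-F-triv}), that this single-component move lies in $\scrF$. Composing such moves (and, by the row--column symmetry, the analogous moves of the $U_i$) yields the general box permutation; slides (Proposition \ref{P:slides}) enter whenever a shift has a component orthogonal to the crossing direction.

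The step I expect to be the real obstacle is the construction of the fattened bridge sets: one must produce $\tilde V$ (and its image) concretely enough --- following the bookkeeping in the proof of Proposition \ref{P:basic-perm} and leaning on Lemma \ref{L:easy-observations} --- that \emph{simultaneously} the ``before'' and ``after'' tuples satisfy Definition \ref{D:crossing-triple} together with the compatibility hypotheses of Corollary \ref{C:2-step-Markov}, and each of the three induced restrictions of $f$ collapses onto one of the elementary maps (translations, reflections, basic transpositions, disjoint movements, column transpositions, slides) already known to lie in $\scrF$. This is elementary but lengthy, and is where essentially all of the work of the proof is concentrated.
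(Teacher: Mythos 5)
There is a genuine gap, and it sits exactly where you place ``essentially all of the work'': the construction, for a \emph{general} relative displacement, of bridge sets making both the source and the target tuples Markov quadruples. The paper never attempts that construction. Its proof of part (1) is precisely the route you reject as a pitfall: for $k\ge 3$ it composes a map with $c_1=c_2$ (a $(k-1)$-level tower after regrouping $U_1\cup U_2$ into one level, handled by induction) with a map with $c_2=\dots=c_k$ (a two-level tower), and for $k=2$ it notes that the four inequalities defining $(T_xU_1)\X U_2\sset\scrH$ are each monotone in the coordinates of $x$, so if they hold at $x=0$ and at $x=c_1$ they hold for every $x$ in the rectangle spanned by $0$ and $c_1$. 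Hence $f$ is a composition of unit-vector slides (Proposition \ref{P:slides}), and the Markov-quadruple machinery is only ever invoked for unit displacements, where the middle map is a collection of disjoint basic reflections. Your blanket claim that one cannot reach $f$ by unit slides is therefore false for towers; the intermediate configurations are automatically admissible by this convexity.

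Your alternative route for part (1) has a concrete obstruction you do not address. Applying Lemma \ref{L:shift} to the pair $(U_1, W)$ with $W=U_2\cup\dots\cup U_k$ produces boxes $v(u,w)$ built from connected components of $\bar G(u)\sset W$; a connected component may contain boxes from several levels $U_j$ (distinct levels cross horizontally and may overlap), so there is no consistent translation to assign to such a bridge box, and $f|_{V\cup W}$ is then not a tower map covered by your induction. One also cannot confine the bridges to the first gap, since $\sqcup U_1\cap\sqcup U_j$ for $j\ge 3$ need not lie in $\sqcup U_2$, yet Definition \ref{D:crossing-triple}(i) demands $\sqcup U_1\cap\sqcup W\sset\sqcup V$. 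Even for $k=2$, you would need a fattened frame $\tilde v$ containing both $v$ and its diagonal translate $T_{c_1-c_2}v$, crossed horizontally by $U_1$ and crossing $W$ vertically, with $(U_1,\tilde V,V,W)$ and its image simultaneously Markov quadruples; nothing in your sketch shows this is achievable for a general diagonal displacement. For part (2) your outline is closer to the paper's (a ``special case'' done by coordinated unit slides over the parameter box $S_1\X S_2$, plus interval reorderings done by Proposition \ref{P:basic-perm}), but note that Proposition \ref{P:basic-perm} requires both moving blocks $W_a,W_b$ to be connected, so ``moving a single component $V_m$ with all others held fixed'' does not fit its hypotheses; the paper instead transposes adjacent pairs $V_i,V_{i+1}$ and places the remaining components into the non-interacting set $U_n$.
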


\begin{proof}
	\textbf{Proof of 1.} (Towers):  \qquad We proceed inductively starting with $k = 2$ (the $k=1$ case is trivial). First, by translating $U_1', U_2'$ by a common amount, we may assume that $c_2 = 0$. Second, when $c_1 \in \{(\pm 1, 0), (0, \pm 1)\}$, then the fact that $f \in \scrF$ follows from Proposition \ref{P:slides}. For the case of general $c_1$, observe that since $U_1 \X U_2, T_{c_1} U_1 \X U_2 \sset \scrH$, that $T_x U_1 \X U_2 \sset \scrH$ for any $x$ in the box with two diagonally opposite corners given by $c_1$ and $(0, 0)$. Therefore we can compose $f$ of slide maps $f_i:T_{x_i} U_1 \X U_2 \to T_{x_{i+1}} U_1 \X U_2$ where $x_0, \dots, x_k$ is a path from $(0, 0)$ to $c_1$ in that box.
	
	For the inductive step when $k \ge 3$, we can create the map $f:\bigcup U_i \to \bigcup U_i'$ by composing two maps of the form in (1), where the first has $c_1 = c_2$, and the second has $c_2 = c_3 = \dots = c_k$. Each of these maps lies in $\scrF$ by the inductive hypothesis.
	
	\textbf{Proof of 2.} (Box permutations): \qquad  We may assume that each of the sets $U_i, V_i$ is connected. If not, we can simply further break them up into their connected components and prove that the corresponding larger set of functions lies in $\scrF$. 
	
	Fix a box $v = (v^-, v^+) \in \bigcup V_j$. For every $x \in [v^-_2, v^+_2]$, there is at most one value of $i$ for which $[v^-_1, v^+_1] \X \{x\}$ intersects $\sqcup U_i$. Moreover, for each $i$, the connectivity of each of the sets $U_i$ and the fact that $U_i \X \{v\} \sset \scrH$ for all $i$ implies that the set of such $x$ for which  $[v^-_1, v^+_1] \X \{x\}$ intersects $\sqcup U_i$ is an interval $I_i = [I_i^-, I_i^+]$ given by the projection of $\sqcup U_i$ onto the second coordinate. In particular, $I_i$ is independent of the choice of $v$.
	
	We similarly define intervals $J_i = [J_i^-, J_i^+]$ given by projecting $\sqcup V_i$ onto the first coordinate, and intervals $I_i'$ and $J_i'$ corresponding to $U_i'$ and $V_i'$. By possibly relabelling, we may assume that $I_i^+ < I^-_{i+1}$ and $J_i^+ < J^-_{i+1}$ for all $i$.
	
	\textbf{A special case:} $I_i = I_i', J_i = J_j'$ for all $i, j$. \qquad Let $T_i$ be the largest interval such that every $u \in U_i$ crosses $T_i \X I_i$ horizontally and let $S_i$ be the largest interval such that every $v \in V_i$ crosses $J_i \X S_i$ vertically. Similarly define $T_i', S_i'$. Observe that the intervals $T_i, I_i$ determine the location of $U_i$ among the set of all possible translations. Note that $T_i' = \al_i + T_i, S_i' = \be_i + S_i$ for integers $\al_i, \be_i$, and since $I_i = I_i', J_i = J_i'$, we have
	$$
	U_i' = T_{(\al_i, 0)} U_i \qquad \mathand \qquad  V_i' = T_{(0, \be_i)} V_i.
	$$
	Now let $S_1 = \prod_{i=1}^k [0, \al_i], S_2 =  \prod_{i=1}^m [0, \be_i],$ where the order of the endpoints are switched whenever $\be_i, \al_i < 0$.
	For any $(\bx, \by) \in S_1 \X S_2$, observe that $T_{(x_i, 0)} U_i \X T_{(0, y_j)} V_j \sset \scrH$ for any $i, j$, and that 
	$$
	T_{(x_i, 0)} U_i \X T_{(x_j, 0)} U_j, \quad T_{(0, y_i)} V_i \X T_{(0, y_j)} V_j \sset \scrN \quad \mathfor i \ne j.
	$$ 
	Therefore 
	for any $(\bx, \by), (\bx', \by') \in S_1 \X S_2$ with $|(\bx, \by) - (\bx', \by')| = 1$, the slide map
	$$
	g:\bigcup T_{(x_i, 0)} U_i \cup \bigcup T_{(0, y_i)} V_i \to \bigcup T_{(x_i', 0)} U_i \cup \bigcup T_{(0, y_i')} V_i
	$$
	which is the identity everywhere except for at the single coordinate where $(\bx, \by)_i \ne (\bx', \by')_i$, lies in $\scrF$ by Proposition \ref{P:slides}. We can compose the map $f:\bigcup U_i \cup \bigcup V_i \to \bigcup U_i' \cup \bigcup V_i'$ from such maps.
	
	\textbf{The general case:} \qquad By the first case, we can first apply a transformation to get that $T_i = [J_1^-, t_i]$ and $S_j = [I_1^-, s_i]$ for all $i, j$. By translating, we may also assume that 
	$$
	I_1^- = \min \lf\{x \in \Z: x \in \bigcup I_i' \rg\}, \quad \mathand \quad J_1^- = \min \lf\{x \in \Z: x \in \bigcup J_i' \rg\}.
	$$
	By again applying the first case we may then assume that $T'_i = T_i, S_i = S_i'$.
	
	Now, given $S_i, T_i$ as above, letting $\hat s = \min_i s_i$ and $\hat t = \min_i t_i$, the only constraints on the intervals $I_i'$ are that they are disjoint and contained in $[I_1^-, \hat s]$ and the only constraints on the $J_i'$ are that they are disjoint and contained in $[J_1^-, \hat t]$. We can generate all such collections of intervals from $I_i, J_i$ if we can change the order of any two adjacent intervals and translate any interval without changing the order of the intervals. 
	
	We check that such moves are in $\scrF$. By symmetry, it suffices to check that we can move the intervals $J_i$. Fix $i \in \{1, \dots m-1\}$ and let $J \sset [J_1^-, \hat t]$ be the largest interval containing $J_i \cup J_{i+1}$ that is disjoint from all the other $J_j$.
	Then any map $f$ with domain $\bigcup U_i \cup \bigcup V_i$ such that:
	\begin{itemize}[nosep]
		\item $f|_{V_i} = T_{(\ell, 0)}, f|_{V_{i+1}} = T_{(k, 0)}$, and $f$ is the identity everywhere else,
		\item $\sqcup (f(V_1) \cup f(V_2)) \sset J \X \Z$,
		\item $f$ preserves $\scrH$ and $\scrN$,
	\end{itemize}
	is in $\scrF$ by Proposition \ref{P:basic-perm}. In the application of the proposition, we take $\bigcup U_i = U_h, \bigcup_{j \ne i, i+1} V_j = U_n, V_i = W_a,$ and $V_{i+1} = W_b$. The box $b$ in the proposition can be any large enough box of the form $J \X [-r, r]$. Such maps $f$ allow us to change the order of any two intervals and translate any interval without changing the interval order, as desired.
\end{proof}

The main homogeneous theorems from Section \ref{SS:symmetries-lpp} immediately follow from the main results above. Theorem \ref{T:conj-bgw} and Theorem \ref{T:example} follow from Proposition \ref{P:lie-in-F}, and Theorem \ref{T:puzzle-pieces} follows from Proposition \ref{P:slides}. The corresponding parts of Theorem \ref{T:polymers} also follow from these results.

\subsection{Inhomogeneous models}
\label{SS:inhomog}

In this section, we only work with environments and models that fit into the framework of Theorem \ref{T:explicit} for nonconstant sequences $\al, \be$. We write $M_{\al, \be}$ for such environments. Throughout this section, we assume that different environments $M_{\al, \be}, M_{\al', \be'}$ that we establish relationships between are always of the same type. That is, they are both geometric environments, both exponential environments, or both inverse gamma environments.

The type of statements that we can obtain will be of the form
\begin{equation}
\label{E:M-al-be}
Z_{M_{\al, \be}}|_{\scrE_D(S)} \eqd Z_{M_{\phi_f(\al, \be)}}|_{\scrE_D(T)} \circ \bar f,
\end{equation}
where $f:S \to T \in \scrF$, and $\phi_f(\al, \be)$ is another pair of biinfinite sequences obtained in an $f$-dependent way. Note that there may be many choices of $\phi_f(\al, \be)$. Also, it is easy to construct examples of  $f \in \scrF$ and biinifinite sequences $\al, \be$ for which no $\phi_f(\al, \be)$ satisfying \eqref{E:M-al-be} exists. Here we do not attempt to classify when a function $\phi_f(\al, \be)$ exists and exactly what the restrictions on it are. 
Rather we will just illustrate the sort of statements that can be proven by showing the tower case of Proposition \ref{P:lie-in-F}. This will prove Theorem \ref{T:inhom-ex} and the corresponding statement in Theorem \ref{T:polymers}. Similar statements may be obtained for the other explicit classes of functions constructed in Section \ref{SS:homog}.

We start with a few simple lemmas. The first is an analogue of Lemma \ref{L:in-F-triv}(i). For this, recall the notation $\al_u, \hat \be_u$ introduced prior to Theorem \ref{T:decoupled-family}.

\begin{lemma}
	\label{L:no-dependence}
	Let $u \in \Zd$ and suppose $w = u + (1, 0; 0,0), w' = u + (0, 0, -1, 0) \in \Zd$. Let $\al, \al', \be$ be biinfinite sequences such that $\al_u$ is a permutation of $\al'_u$, and $\al_w$ is a permutation of $\al'_{w'}$. Now define a function $f:\{u, w\} \to \{u, w'\}$ by $f(u) = u, f(w) = w'$ and let $\bar f$ be its extension to $\scrE_D(u,w) := \scrE_D(\{u, w\})$. Then
	$$
	Z_{M_{\al, \be}}|_{\scrE_D(u, w)} \eqd Z_{M_{\al', \be}}|_{\scrE_D(u, w')} \circ \bar f.
	$$ 
\end{lemma}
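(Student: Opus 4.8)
The plan is to reduce Lemma \ref{L:no-dependence} to the single-box decoupling statement of Theorem \ref{T:decoupled-family} combined with the explicit reconstruction map $\Phi$ of Theorem \ref{T:encoding-lpp}. The key geometric observation is that both $u$ and $w = u + (1,0;0,0)$ share the same top-right corner $u^+$ and differ only in that $w$ starts one column to the right; similarly $w' = u + (0,0;-1,0)$ shares the lower-left corner $u^-$ and ends one column to the left. Consequently, if $b$ denotes the smallest box containing $u \cup w$ (so $b = (u^-;w^+)$ with $w^+ = u^+$ — more precisely $b^- = u^-$, $b^+ = u^+$, and $b$ is just $u$ widened by... let me restate), the point is that every endpoint in $\scrE_D(u,w)$ can be realized as an endpoint whose paths cross a common box $B$ horizontally, where $B$ is the smallest box containing $u$ and $w$. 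Since $(u,B)$ and $(w,B)$ both cross horizontally (they are sub-boxes of $B$ spanning its full vertical extent), Proposition \ref{P:poly-comp} shows that $Z_M|_{\scrE_D(u,w)}$ is a deterministic function — call it $\Theta$ — of $Z_M|_{\bar H(B)}$, equivalently of $Z_M|_{H(B)}$ by Proposition \ref{P:measurable-fn}, and this function $\Theta$ depends only on the \emph{positions} of $u,w$ inside $B$, not on the environment.

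First I would make this reduction precise: choose $B$ to be the smallest box containing $u$ and $w$ (so $B^-_1 = u^-_1$, $B^+_1 = w^+_1 = u^+_1$... here I need $B = (u^-; u^+)$ since $u^+ = w^+$; actually $u$ and $w$ have the same corners' bounding box only in the $x_1$ range $[u^-_1, u^+_1]$ and $[w^-_1, w^+_1] = [u^-_1+1, u^+_1]$, with identical $x_2$-range, so $B = u$ itself as a set after unioning — so in fact $u \cup w = u$ and $w \subset u$). Given $w \subset u$ with $(w,u) \in \scrH$, Proposition \ref{P:poly-comp} applied with $v = u$ gives that $Z_M(\bu)$ for any $\bu \in \scrE_D(u,w)$ is a fixed function of $\{M_x : x \in \emptyset\} = \emptyset$ and $Z_M|_{\bar H(u)}$, hence of $Z_M|_{H(u)}$. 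Likewise $Z_M|_{\scrE_D(u,w')}$ is a fixed function of $Z_M|_{H(u)}$ (for the environment indexed by $\al',\be$), via the same combinatorial map $\Theta'$ determined by the position of $w'$ in $u$. The reflection symmetry $R_1$ — or rather the identity \eqref{E:d-hat-d'} relating $u^k$ and $\hat u^k$ — together with the fact that $w$ sits at the left edge of $u$ while $w'$ sits at the right edge, shows that $\Theta \circ (\text{identification via } \bar f) = \Theta'$: transposing the role of "first $j$ columns" and "last $j$ columns" is exactly the content of the map $\bar f$.

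Then I would invoke Theorem \ref{T:decoupled-family}: since $\hat \be_u = \hat \be_u$ (the row parameters are unchanged) and $\al_u$ is a permutation of $\al'_u$, we have $Z_{M_{\al,\be}}|_{H(u)} \eqd Z_{M_{\al',\be}}|_{H(u)}$. Applying the respective deterministic maps to both sides of this distributional identity yields
$$
Z_{M_{\al,\be}}|_{\scrE_D(u,w)} = \Theta\big(Z_{M_{\al,\be}}|_{H(u)}\big) \eqd \Theta\big(Z_{M_{\al',\be}}|_{H(u)}\big) = \Theta'\big(Z_{M_{\al',\be}}|_{H(u)}\big) \circ \bar f^{-1}\!,
$$
which rearranges to the claimed identity $Z_{M_{\al,\be}}|_{\scrE_D(u,w)} \eqd Z_{M_{\al',\be}}|_{\scrE_D(u,w')} \circ \bar f$. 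The one subtlety I need to handle carefully is the hypothesis "$\al_w$ is a permutation of $\al'_{w'}$": this must be checked to be automatically consistent with "$\al_u$ is a permutation of $\al'_u$" once one tracks which columns of $u$ map to which columns of $u$ under $\bar f$ — the map $\bar f$ reflects the internal column structure, so the multiset of $\al$-values over the columns of $w$ (the left $|w|$ columns of $u$) must match the multiset of $\al'$-values over the columns of $w'$ (the right $|w'|$ columns of $u$).

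The main obstacle I anticipate is bookkeeping the exact correspondence between columns of $u$ under the extension $\bar f$ and verifying that the deterministic map $\Theta$ genuinely ignores the environment and depends only on box positions — in other words, making rigorous the claim that $\Theta' = \Theta \circ \bar f$ as functions on $F_R(u)$-valued data rather than merely in distribution. This is really a statement about the map $\Phi$ of Theorem \ref{T:encoding-lpp} being equivariant under the reflection that swaps left and right edges, which should follow from the explicit formula for $\Phi$ together with \eqref{E:d-hat-d'}, but writing it down cleanly requires care with indices.
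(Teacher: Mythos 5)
Your reduction rests on the claim that $Z_M|_{\scrE_D(u,w)}$ is an environment-independent function of $Z_M|_{H(u)}$ (via Proposition \ref{P:poly-comp} with $v=u$ and Proposition \ref{P:measurable-fn}), and this step fails. Note first that the geometry is the opposite of what you describe: $w=u+(1,0;0,0)$ is $u$ with its \emph{leftmost} column removed, so $w$ does not span the full column range of $u$; hence $(w,u)\in\scrV$ but $(w,u)\notin\scrH\cup\scrN$, and the same is true of the components of the tuples $w^j$ (or $\hat w^j$). So the hypothesis of Proposition \ref{P:poly-comp} with $v=u$ in the horizontal form is not met, and the conclusion you want is simply false: for $u=(1,1;2,1)$, $w=(2,1;2,1)$ one has $Z_M|_{H(u)}=\{Z_M(u)\}=\{M_{(1,1)}\otimes M_{(2,1)}\}$, which does not determine $Z_M(w)=M_{(2,1)}$. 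The correct measurability statement is the vertical one: $\scrE_D(u,w)\sset\bar V(u)$, so $Z_M|_{\scrE_D(u,w)}$ is a function of $Z_M|_{V(u)}$. But that does not rescue your argument, because the $V(u)$ part of Theorem \ref{T:decoupled-family} allows permuting the row parameters $\be$ with $\al_u$ held \emph{equal}, whereas your hypotheses only permute $\al$.

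Two further symptoms show the route is wrong as stated. First, your argument never actually uses the hypothesis that $\al_w$ is a permutation of $\al'_{w'}$; you hope it is implied by the first hypothesis, but it is not (take $\al'=\al$ nonconstant: then $\al'_u$ is trivially a permutation of $\al_u$, yet $Z_{M_{\al,\be}}(w)\neqd Z_{M_{\al',\be}}(w')$ in general, e.g.\ for a $2\times1$ box with exponential weights), so a proof using only the first hypothesis would prove a false statement. Second, the pathwise identity $\Theta'=\Theta\circ\bar f$ you flag as a technicality is in fact false: the map $\Phi$ of Theorem \ref{T:encoding-lpp} is not equivariant under the left/right reflection realization-by-realization; that symmetry holds only in distribution and only under the parameter hypotheses, i.e.\ it is essentially the content of the lemma. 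The paper's proof proceeds differently: it first applies the $180$-degree rotation of $u$ (reflections plus a translation), which maps $(u,w)$ to $(u,w')$ and the environment to $M_{\bar\al,\bar\be}$ with reversed parameter sequences; it then invokes Theorem \ref{T:decoupled-family} twice, once through $V(u)$ (all endpoints in $\scrE_D(u,w')$ cross $u$ vertically) to trade $\bar\be$ back for $\be$, and once, after decomposing $Z|_{\scrE_D(u,w')}$ via Propositions \ref{P:poly-comp} and \ref{P:measurable-fn} into the last column of $u$ together with $Z|_{H(w')}$, to trade $\bar\al$ for $\al'$ --- and it is exactly in this last step that both permutation hypotheses (which force $\al'_{u^+_1}=\al_{u^-_1}$ and matching column multisets over $w'$) are used.
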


\begin{proof}
	The map $f$ is composed of two reflections and a translation. Applying these reflections and translations to $\al, \be$ to give $\bar \al, \bar \be$ gives that 
	\begin{equation}
	\label{E:albe}
	Z_{M_{\al, \be}}|_{\scrE_D(u, w)} \eqd Z_{M_{\bar \al, \bar \be}}|_{\scrE_D(u, w')} \circ \bar f.
	\end{equation}
	On the other hand, by Theorem \ref{T:decoupled-family}, $Z_{M_{\bar \al, \bar \be}}|_{\scrE_D(u, w')}$ only depends on the ordering of $\be_u$. In particular, in \eqref{E:albe} we can exchange $\bar \be$ for $\be$ without affecting the distributional equality. Next, observe that by Propositions \ref{P:poly-comp} and \ref{P:measurable-fn}, that $Z_{M_{\bar \al, \bar \be}}|_{\scrE_D(u, w')}$ is a function of $M_{\bar \al, \bar \be}|_{\{u^+_1\} \X \Z}$ and $Z_{M_{\bar \al, \bar \be}}|_{H(w')}$. By Theorem \ref{T:decoupled-family}, these two objects are determined by only the sets of numbers in $\al_u, \al_w$. In particular, this allows us to exchange $\bar \al$ for $\al'$ in \eqref{E:albe}.
\end{proof}

The next two lemmas are consequences of Lemma \ref{L:no-dependence} combined with Corollary \ref{C:2-step-Markov}.

\begin{lemma}
	\label{L:one-shift}
	Let $U, W \sset \Zd$ with $U \X W, U \X T_{c}W \sset \scrH$ with $c = (1,0)$. Define $f:U \cup W \to U \cup T_c W$ by $f|_U = \id$ and $f|_W = T_{c}$, and let $\bar f$ be its extension to $\scrE_D(U \cup W)$.
	
	Let $[a, b]$ be the smallest interval containing the projection of $\sqcup W$ onto the first coordinate and suppose that $\al, \al', \be$ are biinfinite sequences with $\al'_a = \al_{b+1}, \al'_{i+1} = \al_i$ for $i \in [a, b]$, and $\al'_j = \al_j$ for all $j \notin [a, b + 1]$. Then
	$$
	Z_{M_{\al, \be}}|_{\scrE_D(U \cup W)} \eqd Z_{M_{\al', \be}}|_{\scrE_D(U \cup T_c W)} \circ \bar f.
	$$ 
\end{lemma}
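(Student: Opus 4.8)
The plan is to run the template of the homogeneous arguments of Propositions~\ref{P:basic-perm} and~\ref{P:slides}, with Lemma~\ref{L:no-dependence} playing the part of Lemma~\ref{L:in-F-triv}(1) and with the column constants tracked at every step. The first point is to read off what the hypotheses on $\al'$ actually say. If $w\in W$ has column range $[p_w,q_w]\sset[a,b]$ and $u\in U$ has column range $[u^-_1,u^+_1]$, then $(u,w),(u,T_cw)\in\scrH$ force $[a,b+1]\sset[u^-_1,u^+_1]$; the conditions on $\al'$ then translate into
\[
\al'_{T_cw}=\al_w\ \text{ for every }w\in W, \qquad \al'_u\ \text{is a permutation of }\al_u\ \text{ for every }u\in U,
\]
with $\be$ unchanged. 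So $\al'$ is forced to be exactly the sequence for which, on the columns touched by the shifted set $T_cW$, the environment coincides with the $c$-translate of the $\al$-environment, while on the columns of the fixed set $U$ it only permutes the column constants.

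Apply Lemma~\ref{L:shift} to $(U,W)$ (with $U_2=W_2=\emptyset$, since $U\X W\sset\scrH$) to get a connecting set $V$ with $(U,V,W)$ a Markov triple; each box of $V$ has column range equal to that of a connected component of $W$, hence inside $[a,b]$. Then $(U,T_cV,T_cW)$ is also a Markov triple and $T_cV$ is the connecting set for $(U,T_cW)$. Next introduce a set $\tilde V$ containing one box per connected component $K$ of $U$, namely the box with column range exactly $[a,b+1]$ and row range that of $K$. Because different components of $U$ have disjoint row ranges whenever $W\ne\emptyset$, these boxes are pairwise disjoint, and using Lemma~\ref{L:easy-observations} one checks that $(U,\tilde V,V,W)$ and $(U,\tilde V,T_cV,T_cW)$ are Markov quadruples in the sense of Corollary~\ref{C:2-step-Markov}. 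Extend $f$ to $\hat f$ on $U\cup\tilde V\cup V\cup W$ by $\hat f|_U=\hat f|_{\tilde V}=\id$, $\hat f|_V=\hat f|_W=T_c$. Since $\hat f|_{U\cup W}=f$ and last passage laws are preserved under restriction of the index set, it suffices to prove the identity for $\hat f$, and by the inhomogeneous version of the factoring argument in the proof of Theorem~\ref{T:iid-trans} — matching the transition kernels of the two length-four Markov chains supplied by the quadruples — this reduces to three ``edge'' identities, for $\hat f$ restricted to $U\cup\tilde V$, to $\tilde V\cup V$, and to $V\cup W$.

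The $V\cup W$ edge is the pure translation $T_c$, and every box of $T_cV\cup T_cW$ has columns inside $[a+1,b+1]$, where $\al'$ is precisely $\al$ shifted by $c$; so this edge is nothing but translation invariance of the model. The $\tilde V\cup V$ edge is, component by component, the problem of shifting a family of vertical strips one column to the right inside a single box, and is handled by a further Markov-triple construction with interpolating boxes of column range $[p_K,q_K+1]$; each elementary step is then an instance of Lemma~\ref{L:no-dependence}, whose two matching requirements are met because $\al_v=\al'_{T_cv}$ and $\al_{\tilde v}$ is a cyclic rotation of $\al'_{\tilde v}$. The $U\cup\tilde V$ edge is the identity map together with the cyclic rotation of the block $[a,b+1]$, a block contained in the column range of every box of $U\cup\tilde V$; applying Lemma~\ref{L:shift} to $U\cup\tilde V$ itself and iterating the single-box permutation invariance of Theorem~\ref{T:decoupled-family} through the resulting chain of Markov triples dispatches it, after first reducing (by disjointness of row ranges) to the case of connected $U$.

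The real work is in the last two edges. Lemma~\ref{L:no-dependence} demands a matching of column constants on two intervals simultaneously — the large box and the narrow strip — and this cannot be arranged one strip at a time, so the $W$-components must be translated in concert; hence the $\tilde V\cup V$ edge is itself proved by an induction (on $b-a$, or on the number of $W$-components) in which the cyclic-rotation form of $\al'$ is exactly what keeps both matchings valid. Likewise the $U\cup\tilde V$ edge has no homogeneous analogue, since there the identity map carries no content; it is precisely the step that forces $\al'$ to act as a block rotation rather than as an arbitrary permutation.
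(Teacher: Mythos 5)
Your overall strategy is the paper's: build two Markov quadruples interpolating between $(U,W)$ and $(U,T_cW)$, reduce the claim to matching the three transition kernels of the resulting four-step Markov chains, handle the first transition by permutation invariance of the column parameters (Theorem \ref{T:decoupled-family} fed through Propositions \ref{P:poly-comp} and \ref{P:measurable-fn}), the middle one by Lemma \ref{L:no-dependence}, and the last one by translation invariance. Your reading of what the hypotheses on $\al'$ say is also correct.

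The gap is in your middle edge. You take the connecting set $V$ from Lemma \ref{L:shift}, whose boxes are the narrow strips $J_C\X I_K$ with $J_C=[p_C,q_C]$ the column range of a component $C$ of $W$. To pass from $v_{K,C}$ to $T_cv_{K,C}$ you insert the interpolating box $u=[p_C,q_C+1]\X I_K$, but Lemma \ref{L:no-dependence} then requires $\al_u$ to be a permutation of $\al'_u$. Here $\al_u=\{\al_{p_C},\dots,\al_{q_C}\}\cup\{\al_{q_C+1}\}$ while $\al'_u=\{\al'_{p_C}\}\cup\{\al_{p_C},\dots,\al_{q_C}\}$, so the hypothesis holds only if $\al'_{p_C}=\al_{q_C+1}$, i.e.\ $\al_{p_C-1}=\al_{q_C+1}$ (or $\al_{b+1}=\al_{q_C+1}$ when $p_C=a$) --- false in general. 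You notice that the strips cannot be moved one at a time, but translating them ``in concert'' does not help: the simultaneous application of Lemma \ref{L:no-dependence} fails for each strip separately for the reason above, and the first transition onto the interpolating set then becomes a statement of the same type you are trying to prove, so the proposed induction does not close. The repair is to make the connecting boxes full-width: take $v=[a,b]\X I$ and $v'=[a,b+1]\X I$ with $I$ the row range of $\sqcup U$ (this is exactly what the paper does). Then $(U,\{v'\},\{v\},W)$ and $(U,\{v'\},\{T_cv\},T_cW)$ are Markov quadruples; the transition $v'\to v$ versus $v'\to T_cv$ is a single legitimate application of Lemma \ref{L:no-dependence}, since $\al_{v'}$ and $\al'_{v'}$ are both the multiset of $\al$ on $[a,b+1]$ up to the cyclic rotation, and $\al_v=\al'_{T_cv}$ exactly; and the transition to $W$ versus $T_cW$ is pure translation because $\al'$ on $[a+1,b+1]$ is the shift of $\al$ on $[a,b]$. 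With that substitution your argument becomes the paper's proof.
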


\begin{proof}
	Let $I$ be smallest interval containing the projection of $\sqcup U$ onto the second coordinate. Set $v = [a, b] \X I, v' = [a, b + 1] \X I \in \Zd$. Then $(U, v', v, W)$ and $(U, v', T_c v, T_c W)$ both satisfy the assumptions of Corollary \ref{C:2-step-Markov}, so
	\begin{align*}
	&(Z_{M_{\al, \be}}|_{\scrE_D(U)}, Z_{M_{\al, \be}}|_{\scrE_D(v')}, Z_{M_{\al, \be}}|_{\scrE_D(v)}, Z_{M_{\al, \be}}|_{\scrE_D(W)}) \qquad \qquad \qquad \mathand \\
	&(Z_{M_{\al', \be}}|_{\scrE_D(U)}, Z_{M_{\al', \be}}|_{\scrE_D(v')}, Z_{M_{\al', \be}}|_{\scrE_D(T_c v)} \circ \bar T_c , Z_{M_{\al', \be}}|_{\scrE_D(T_c W)} \circ \bar T_c ) 
	\end{align*}
	are both Markov chains, where $\bar T_c$ is the extension of $T_c$ to $\scrE$. Moreover, the transition probabilities are the same for these two chains. The first transition probability is the same since $Z_{M_{\al, \be}}|_{\scrE_D(U) \cup \scrE_D(v')}$ does not depend on the order of $\al$ in $[a, b+1]$. The second transition probabilities are the same by Lemma \ref{L:no-dependence}, and the third transition probabilities are the evidently the same.
\end{proof}

\begin{lemma}
	\label{L:permute-ab}
	Let $I = \{i_1 < \dots < i_k\} \sset \Z$. Suppose that $W \sset \Zd$ can be partitioned into two sets $W_1$ and $W_2$ such that
	\begin{itemize}[nosep]
		\item For some $r \in \Z$, every $u \in W_1$ crosses $[i_1, i_k] \X [-r, r]$ horizontally.
		\item $\sqcup W_2 \cap (I \X \Z) = \emptyset$.
		\item $W_1 \X W_2 \sset \scrH$.
	\end{itemize}
	Suppose that $\al, \al', \be$ are bi-infinite sequences such that $\al|_{I^c} = \al'|_{I^c}$ and $\al|_I$ is a permutation of $\al'|_{I}$. Then 
	$$
	Z_{M_{\al, \be}}|_{\scrE_D(W)} \eqd Z_{M_{\al', \be}}|_{\scrE_D(W)}.
	$$
\end{lemma}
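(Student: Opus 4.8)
The plan is to reduce to a single elementary swap and then realise that swap by a Markov-chain argument of the same flavour as the proofs of Lemma \ref{L:one-shift} and Lemma \ref{L:no-dependence}, arranging matters so that every order-sensitive occurrence of $\al_p,\al_q$ is confined to last-passage data attached to a box spanning the columns $[p,q]$. \textbf{Reduction.} The symmetric group acting on the positions $i_1<\dots<i_k$ is generated by transpositions of consecutive positions, and $\eqd$ is transitive, so, since every intermediate sequence still agrees with $\al$ off $I$ and restricts on $I$ to a permutation of $\al|_I$, it suffices to treat the case where $\al'$ is obtained from $\al$ by interchanging its entries at two columns $p:=i_m$ and $q:=i_{m+1}$ that are consecutive in $I$; one may also pass to a finite subset of endpoints, so that $W$ is finite. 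Note $p,q\in I$ while every column strictly between $p$ and $q$ lies in $I^c$.

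\textbf{Setting up the chain.} Partition $W_2=W_2^\ell\cup W_2^m\cup W_2^r$ according as a box has its column interval entirely left of $p$, strictly between $p$ and $q$, or entirely right of $q$; this is a genuine partition because $W_2$ avoids columns $p$ and $q$. Applying Lemma \ref{L:shift} to the pairs $(W_1,W_2^\ell)$ and $(W_1,W_2^r)$ (which meet its hypotheses with the trivial partition, as $W_1\X W_2\sset\scrH$) produces connecting sets $V^\ell,V^r$; the connectivity argument there forces each of their boxes to have column interval contained in a single gap of $I$, hence disjoint from $\{p,q\}$. Let $b$ be the box with column interval $[p,q]$ and row interval the smallest one containing all row intervals of boxes in $W_1$. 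One checks directly that $W_1\X\{b\}\sset\scrH$, that $\{b\}\X W_2^m\sset\scrH$ (the tall boxes of $W_2^m$ cross the wide box $b$ from bottom to top), that $\{b\}\X(W_2^\ell\cup W_2^r)\sset\scrN$, and that $F:=\{b\}\cup V^\ell\cup V^r$ makes $(W_1,F,W_2)$ a Markov triple with all three sets in the horizontal halves $E_1=W_1$, $F_1=F$, $G_1=W_2$; the one point needing attention is Definition \ref{D:crossing-triple}(i), $\sqcup W_1\cap\sqcup W_2\sset\sqcup F$, which holds since $u\cap w\sset b$ for $u\in W_1,w\in W_2^m$ and $u\cap w\sset\sqcup V^\ell\cup\sqcup V^r$ otherwise. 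The crucial feature is that $b$ traps columns $p$ and $q$: every vertex of $\sqcup W_1\smin\sqcup F$ and of $\sqcup W_2\smin\sqcup F$ lies in a column different from $p$ and $q$. By Proposition \ref{P:1-step-markov}, $(Z_M|_{\scrE_D(W_1)},Z_M|_{D(F)},Z_M|_{\scrE_D(W_2)})$ is then a Markov chain, and $Z_M|_{\scrE_D(W)}$ is a fixed deterministic function of its three coordinates (as in the factoring step of the proof of Theorem \ref{T:iid-trans}).

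\textbf{Matching the two chains.} It remains to show this chain has the same law for $M_{\al,\be}$ and $M_{\al',\be}$. Tracing the proof of Proposition \ref{P:1-step-markov}, and using that the boxes of $F$ are pairwise disjoint and $M$ has independent entries, the law of the chain is built from: the laws of $Z_M|_{D(b)},Z_M|_{D(V^\ell)},Z_M|_{D(V^r)}$; the laws of independent blocks of vertex weights supported in columns off $\{p,q\}$; the conditional law of $Z_M|_{H(b)}$ given $Z_M|_{D(b)}$ (this feeds the $W_1$ coordinate, as $W_1$ crosses $b$ horizontally from the left); the conditional law of $Z_M|_{V(b)}$ given $Z_M|_{D(b)}$ (this feeds the $W_2$ coordinate through the boxes of $W_2^m$); and the $V^\ell$- and $V^r$-pieces, which involve only columns off $\{p,q\}$ and so are unaffected by the swap. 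For the $b$-pieces involving $H$: since $D(b)\sset H(b)$, $Z_M|_{D(b)}$ is a fixed function of $Z_M|_{H(b)}$, whose law is symmetric under permutations of the column parameters on $[p,q]$ — in particular under interchanging $\al_p$ and $\al_q$ — by Theorem \ref{T:decoupled-family}; hence the law of $Z_M|_{D(b)}$ and the conditional law of $Z_M|_{H(b)}$ given it are both unchanged. The last $b$-piece, the conditional law of $Z_M|_{V(b)}$ given $Z_M|_{D(b)}$, is the one that is \emph{not} order-insensitive on its own; here I would note that what actually enters the $W_2^m$ coordinate is this data only through the map $\Phi$ of Theorem \ref{T:encoding-lpp} and only along paths through columns strictly between $p$ and $q$, and that the joint object (those restricted values together with $Z_M|_{D(b)}$) is insensitive to permuting the column parameters of $b$ by the RSK / geometric-RSK symmetry underlying Theorems \ref{T:explicit} and \ref{T:decoupled-family} — this is the same cancellation that powers Lemma \ref{L:no-dependence}. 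Assembling these observations gives that the two chains have the same law, whence $Z_{M_{\al,\be}}|_{\scrE_D(W)}\eqd Z_{M_{\al',\be}}|_{\scrE_D(W)}$.

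\textbf{Main obstacle.} I expect the hard part to be exactly that last point: boxes of $W_2$ that sit strictly between the two swapped columns interact with $b$ through the ``column'' tableau $Z_M|_{V(b)}$, whose conditional law given the shape $Z_M|_{D(b)}$ is genuinely order-sensitive, and one must show that no order-sensitivity survives in the portion of that data that is actually used — the same delicate cancellation, driven by Theorem \ref{T:decoupled-family} and the explicit form of $\Phi$ in Theorem \ref{T:encoding-lpp}, that makes Lemma \ref{L:no-dependence} nontrivial. The secondary, purely geometric, task is the verification that $(W_1,F,W_2)$ is a Markov triple with $b$ swallowing every occurrence of columns $p$ and $q$, which is where both hypotheses on $W_2$ — its avoidance of the columns of $I$ and the relation $W_1\X W_2\sset\scrH$ — are used.
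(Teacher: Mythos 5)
Your reduction to a single transposition, your decomposition of $W_2$ according to position relative to the two swapped columns $p,q$, and your Markov-triple setup through a box $b$ spanning columns $[p,q]$ all parallel the paper's proof (which reduces to $k=2$ and writes $Z|_{\scrE_D(W_1)}$ as a function of the weights off the column strip and $Z|_{H(S)}$ for the slabs $S=\{w\cap([i_1,i_2]\X[-r,r])\}$). The divergence, and the gap, is in how the boxes of $W_2$ lying \emph{strictly between} the swapped columns are handled. You correctly identify that the second transition kernel of your chain passes through the conditional law of $Z_M|_{V(b)}$ given $Z_M|_{D(b)}$, which is genuinely sensitive to the order of the column parameters, and you then assert that the part actually used --- the vertical crossing data $Z_M|_{\bar V(b)}$ restricted to boxes with columns in $(p,q)$, jointly with $Z_M|_{D(b)}$ --- is insensitive to swapping $\al_p\leftrightarrow\al_q$ ``by the RSK symmetry underlying Theorems \ref{T:explicit} and \ref{T:decoupled-family}.'' That assertion is exactly the hard content of the lemma and is not delivered by those theorems: Theorem \ref{T:decoupled-family} gives column-exchangeability of the \emph{marginal} law of $Z_M|_{H(b)}$ (hence of $Z_M|_{D(b)}$) and row-exchangeability of $Z_M|_{V(b)}$, but says nothing about the joint law of $Z_M|_{D(b)}$ with data determined by the interior columns of $b$; and the cancellation in Lemma \ref{L:no-dependence} concerns a different configuration (two horizontally nested boxes differing by one column, where the discrepancy is confined to a single column of weights carrying a single $\al$-parameter), so it does not transfer. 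As written, the proof restates the problem at its critical juncture.

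The paper closes this gap with a maneuver you do not use: it applies Lemma \ref{L:one-shift} to translate the middle boxes $W_{2,b}$ by one column, which cyclically permutes $\al$ on the gap so that the two entries to be exchanged end up in \emph{adjacent} columns with no $W_2$-boxes between them; in that base case the middle set is empty and the invariance of $Z_M|_{H(S)}$ under Theorem \ref{T:decoupled-family} suffices. To repair your argument you would either need to supply a proof of your claimed joint invariance of $\bigl(Z_M|_{D(b)},\,Z_M|_{\bar V(b)}\ \text{restricted to interior columns}\bigr)$ --- which I do not see how to extract from the cited results --- or import the shifting step from Lemma \ref{L:one-shift} as the paper does.
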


\begin{proof}
	First, we may reduce to the case $k = 2$. To see this, observe that any pair of points in $I$ satisfy the assumptions of the lemma for $k = 2$. Therefore the $k=2$ case implies that $\al'|_I$ can be related to $\al|_I$ by any transposition. Since transpositions generate the symmmetric group, this gives the general case.
	
	Now, since $W_1 \X W_2 \sset \scrH$, we have $\scrE_D(W) = \scrE_D(W_1) \cup \scrE_D(W_2)$. Moreover, by Propositions \ref{P:poly-comp} and \ref{P:measurable-fn}, $Z_{M_{\al, \be}}|_{\scrE_D(W_1)}$ is a function of $M_{\al, \be}|_{\Z \X [i_1, i_2]^c}$ and $Z_{M_{\al, \be}}|_{H(S)}$, where 
	$$
	S = \{w \cap ([i_1, i_2] \X [-r, r]) : w \in W_1\}.
	$$
	Since $\sqcup W_2 \cap (\{i_1, i_2\} \X\Z) = \emptyset$, we may also decompose $W_2 = W_{2, a} \cup W_{2, b}$, where $\sqcup W_{2, a} \sset [i_1, i_2]^c \X\Z$ and $\sqcup W_{2, b} \sset (i_1, i_2) \X \Z$. Since $M$ consists of independent entries, it therefore suffices to show that the joint distribution of
	\begin{equation}
	\label{E:ZZZ}
	Z_{M_{\al, \be}}|_{\scrE_D(W_{2, b})}, Z_{M_{\al, \be}}|_{H(S)}
	\end{equation}
	is independent of the order of $i_1, i_2$. In the case $i_2 = i_1 + 1$, $W_{2, b} = \emptyset$ and $Z_{M_{\al, \be}}|_{H(S)}$ is independent of the order of $i_1, i_2$ by Theorem \ref{T:decoupled-family}, so the claim follows. In the case when $i_2 > i_1 + 1$, by Lemma \ref{L:one-shift} the joint distribution in \eqref{E:ZZZ} is the same as the joint distribution of 
	\begin{equation}
	\label{E:ZZY}
	Z_{M_{\al^*, \be}}|_{\scrE_D(T_{(0, 1)} W_{2, b})} \circ \bar T_{(0, 1)}, Z_{M_{\al^*, \be}}|_{H(S)}
	\end{equation}
	where $\al^*_{i_1 + 1} = \al_{i_2}, \al^*_{j+1} = \al_j$ for $j \in [i_1 + 1, i_2-1]$ and $\al^*_j = \al_j$ for all other $j$. By the $i_2 = i_1 + 1$ case, the joint distribution in \eqref{E:ZZY} does not depend on the order of $\al_{i_1}$ and $\al_{i_2}$. Hence nor does the joint distribution in \eqref{E:ZZZ}.
\end{proof}

We can now state an invariance theorem.

\begin{theorem}
	\label{T:towers}
	Let $U_1, \dots, U_k, U_1', \dots, U_k' \sset \Zd$ be such that $U_i \X U_{i+1}, U'_i \X U'_{i+1} \sset \scrH$ and $U_i' = T_{c_i} U_i$ for some $c_i \in \Z^2$. Let $f:\bigcup U_i \to \bigcup U_i'$ be the function whose restriction to $U_i$ is $T_{c_i}$.
	Now let $\al, \be, \al', \be'$ be bi-infinite sequences satisfying the following condition.
	\begin{itemize}
		\item For each $i$, let $I(i)^-_1$ be the smallest interval
		containing $\{u^-_1 : u \in U_i\}$. Similarly define $I(i)^-_2, I(i)^+_1,$ and $I(i)^+_2$. Define the completion of each set $U_i$ by
		$$
		\bar U_i = \{u \in \Zd : u_1^- \in I(i)^-_1, u_2^-  \in I(i)^-_2, u_1^+ \in I(i)^+_1, u_2^+ \in I(i)^+_2\}.
		$$ 
		Then for all $u \in \bigcup \bar U_i$, there exist permutations $\sig_u$ and $\tau_u$ such that
		\begin{equation}
		\label{E:fff}
		\al_u = \sig_u(\al'_{f(u)}) \qquad \mathand \qquad \hat \be_u = \tau_u(\hat \be'_{f(u)})
		\end{equation}
	\end{itemize}
	Then $Z_{M_{\al, \be}}|_{\scrE_D(\bigcup U_i)} \eqd Z_{M_{\al', \be'}}|_{\scrE_D(\bigcup U_i')} \circ \bar f$.
	
\end{theorem}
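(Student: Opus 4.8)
The plan is to mimic the proof of the homogeneous tower case (Proposition \ref{P:lie-in-F}(1)), replacing the invariance inputs with their inhomogeneous analogues (Lemmas \ref{L:no-dependence}, \ref{L:one-shift}, \ref{L:permute-ab}) and carefully tracking how the sequences $\al,\be$ must be permuted at each step. First I would reduce to the case $k=2$ exactly as in Proposition \ref{P:lie-in-F}(1): for $k\ge 3$, write $f$ as a composition of two tower maps, one with $c_1=c_2$ and one with $c_2=c_3=\dots=c_k$, and apply the $k=2$ case (together with induction) to each. One must check that the intermediate sequences can be chosen consistently, but this is routine bookkeeping since condition \eqref{E:fff} for the composed map splits into the corresponding conditions for the two factors on the appropriate completions $\bar U_i$.

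For the $k=2$ case, I would further reduce to elementary moves. By translating $U_1',U_2'$ by a common vector we may assume $c_2=0$. Then I would factor $f$ as a composition of: (a) translations of the whole configuration (handled by the homogeneity-with-relabelling statement — translating the index set of $\al,\be$), (b) single-unit horizontal shifts of $U_1$ relative to $U_2$ of the form in Lemma \ref{L:one-shift} (when $c_1\in\{(\pm1,0)\}$, moving through a lattice path from $(0,0)$ to $c_1$ inside the box where $T_xU_1\X U_2\subset\scrH$, just as in Proposition \ref{P:lie-in-F}(1)), and (c) single-unit vertical shifts, which by the $R_1$-reflection symmetry reduce to horizontal ones applied to $\be$ instead of $\al$. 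Each horizontal unit shift produces, via Lemma \ref{L:one-shift}, a cyclic shift of $\al$ on the interval $[a,b]$ occupied by $\sqcup U_1$ (appended with one new index). The key point is that, after all these moves, the net effect on $\al$ restricted to each "block" of indices corresponding to a fixed $U_i$ is an arbitrary permutation, which is exactly what \eqref{E:fff} allows; Lemma \ref{L:permute-ab} is what certifies that such block-permutations do not change the joint law of $Z_M|_{\scrE_D(\bigcup U_i)}$, so that at the end we can pass from whatever sequence the composition of unit shifts produces to the prescribed $\al'$.

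The main obstacle I expect is the second step: organizing the composition so that the sequences match up at every intermediate stage. A single unit shift of $U_1$ via Lemma \ref{L:one-shift} forces a very specific cyclic rearrangement of $\al$ on the interval swept by $\sqcup U_1$, and these intervals change as we move $U_1$ around; meanwhile the indices "owned" by $U_2$ and by the ambient environment must be left in a state from which Lemma \ref{L:permute-ab} (applied with $W_1$ the relevant boxes and $W_2$ the others) can restore the target order without disturbing the already-fixed joint distribution. Concretely, the argument is: perform the geometric moves to realize $f$ as a composition of maps each covered by Lemma \ref{L:one-shift} or a reflection/translation, obtaining $Z_{M_{\al,\be}}|_{\scrE_D(\bigcup U_i)}\eqd Z_{M_{\al'',\be''}}|_{\scrE_D(\bigcup U_i')}\circ\bar f$ for some explicit $(\al'',\be'')$; then observe that $\al''$ and the target $\al'$ agree outside $\bigcup_i I(i)^\pm$-type index blocks and differ only by a permutation within each block (this is where one uses that the completions $\bar U_i$ in \eqref{E:fff} are large enough to absorb all the intermediate intervals that arose); finally invoke Lemma \ref{L:permute-ab} on $\bigcup U_i'$ for each such block in turn (with the other blocks and the complement playing the role of $W_2$, which is legitimate since $U_i'\X U_j'\subset\scrH$ for $|i-j|=1$ and one arranges the remaining crossing/disjointness hypotheses by the tower structure) to replace $\al''$ by $\al'$, and symmetrically $\be''$ by $\be'$. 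This completes the proof.
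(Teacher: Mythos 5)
Your proposal is correct and follows essentially the same route as the paper: first realize $f$ geometrically as a composition of unit shifts via Lemma \ref{L:one-shift} (mirroring the induction of Proposition \ref{P:lie-in-F}(1)) to get the identity for one admissible pair of sequences, then pass to the prescribed $(\al',\be')$ by block permutations certified by Lemma \ref{L:permute-ab}. The only place the paper does more work than you sketch is in pinning down exactly which index blocks the permutation must respect (the nested sets $[y_{i,1}^-, x_{i+1,1}^-)\cup(y_{i+1,1}^+, x_{i,1}^+]$ arising from the completions $\bar U_i$), but your outline correctly identifies that this is where the completions are used.
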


We remark that the set of permutations $\sig, \tau$ that satisfy \eqref{E:fff} is always nonempty. This is a byproduct of the proof below.

\begin{proof}
	We extend the map $f$ to $\bigcup \bar U_i$. by letting $f = T_{c_i}$ on each of the sets $\bar U_i$. The image of each set $\bar U_i$ under this map is the completion $\bar U_i'$ of $U_i'$, defined analogously to the completion of $U_i$. Moreover, from the definitions we still have $\bar U_i \X \bar U_{i+1}, \bar U_i' \X \bar U_{i+1}' \sset \scrH$ for all $i$.
	
	We first show that for any fixed $\al, \be$, there exists some $\al', \be'$ satisfying the assumptions of the theorem for which 
	$$
	Z_{M_{\al, \be}}|_{\scrE_D(\bigcup \bar U_i)} \eqd Z_{M_{\al', \be'}}|_{\scrE_D(\bigcup \bar U_i')} \circ \bar f.
	$$
	This part is analogous to the inductive proof of Proposition \ref{P:lie-in-F} (1). The $k = 2$ case where $c_2 = 0$ and $c_1 = (0, 1)$ follows from Lemma \ref{L:one-shift}. The cases where $c_1 \in \{(0, \pm 1), (\pm 1, 0)\}$ follow by symmetry, and all other cases can be obtained by composing a sequence of maps with $c_1 \in \{(0, \pm 1), (\pm 1, 0)\}$, as in the proof of Proposition \ref{P:lie-in-F} (1). The result can be extended to the $k > 2$ case by the same inductive argument as in Proposition \ref{P:lie-in-F}(1).
	
	We now have one pair $\al', \be'$ for which the theorem holds. This pair satisfies the assumptions of the theorem since those properties were preserved by the map in Lemma \ref{L:one-shift}. Suppose that $\al'', \be''$ is another pair satisfying those assumptions. We show that we can move from the theorem statement for $(\al', \be')$ to $(\al'', \be')$. A symmetric argument then shows that if the theorem holds for $(\al'', \be')$ then it will also hold for $(\al'', \be'')$.
	
	Letting $v \in \Zd$ be the smallest box containing $U'_i$ for all $i$, we necessarily have that $\al'_v = \sig(\al''_v)$ and $\hat \be'_v = \tau(\hat \be''_v)$ for permutations $\sig$ and $\tau$. Now, let $[x_{i,1}^-, y_{i,1}^-]$ be the smallest interval containing $\{u_1^- : u \in U_i'\}$, and similarly define 
	 $[x_{i,2}^-, y_{i,2}^-], [x_{i,1}^+, y_{i,1}^+], [x_{i,2}^+, y_{i,2}^+]$.
	 Then $v = [x_{1,1}^-, y_{k,1}^+]\X [x_{1,2}^-, y_{k,2}^+]$ is the smallest box containing all boxes in $\bigcup \bar U_i$, and there is a permutation $\sig$ of $[x_{i,1}^-, y_{k,1}^+]$ such that $\al'_v = \sig(\al''_v)$. The function $\sig$ can be any permutation such that $\sig|_{[a, b]}$ is also a permutation whenever $a \in [x_{i,1}^-, y_{i,1}^-]$ and $b \in [x_{i,1}^+, y_{i,1}^+]$ for the same value of $i$.
	 
	 Using that $y_{i, 1}^- \le x_{i+1, 1}^-$ and $y_{i + 1, 1}^+ \le x_{i, 1}^+$ for all $i$, we can conclude that $\sig$ can be any permutation of
	 $[x_{i,1}^-, y_{k,1}^+]$ such that
	 \begin{itemize}
	 	\item For all $i \in [i, k]$, $\sig$ fixes every point in $[x_{i,1}^-, y_{i,1}^-) \cup (x_{i,1}^+, y_{i,1}^+]$. Here we use the convention that $[a, a) = (a, a] = \emptyset$.
	 	\item For all $i \in [1, k - 1]$, $\sig|_{[y_{i, 1}^-, x_{i+1, 1}^-) \cup (y_{i + 1, 1}^+, x_{i, 1}^+]}$ is a permutation of $[y_{i, 1}^-, x_{i+1, 1}^-) \cup (y_{i + 1, 1}^+, x_{i, 1}^+]$.
	 		\item If $y_{k, 1}^- \le x_{k, 1}^+$, then $\sig|_{[y_{k, 1}^-, x_{k, 1}^+]}$ is a permutation of $[y_{k, 1}^-, x_{k, 1}^+]$.
	 \end{itemize}
 In particular, the above decomposition implies that $\sig = \tau_1 \cdots \tau_k$, where for $i \le k-1$, each $\tau_i:\Z \to \Z$ is equal to the identity permutation outside of the set $[y_{i, 1}^-, x_{i+1, 1}^-) \cup (y_{i + 1, 1}^+, x_{i, 1}^+]$, and $\tau_k$ is equal to the identity outside of the set $[y_{k, 1}^-, x_{k, 1}^+]$. To complete the proof, we just need to show that the distribution of $Z_{M_{\al', \be'}}|_{\scrE_D(\bigcup \bar U_i')}$ is unchanged if we permute the coordinates of $\al'$ by any one the functions $\tau_i$. 
 
 For this we appeal to Lemma \ref{L:permute-ab}. Fix $i \in [1, k]$. In that lemma, we set $I = [y_{i, 1}^-, x_{i+1, 1}^-) \cup (y_{i + 1, 1}^+, x_{i, 1}^+]$ (for $i \le k -1$) or $I = [y_{k, 1}^-, x_{k, 1}^+]$ (for $i = k$). We set $W_1 = \bigcup_{j=1}^i \bar U_j$, and $W_2 = \bigcup_{j > i} \bar U_i$. With these choices, the conditions of Lemma \ref{L:permute-ab} are satisfied, so the distribution of $Z_{M_{\al', \be'}}|_{\scrE_D(\bigcup \bar U_i')}$ is unchanged if we permute the coordinates of $\al'$ by $\tau_i$.
%
%
%
%
%
\end{proof}

\section{Consequences}
\label{SS:consequences}

In this section, we prove all of the corollaries in Sections \ref{SS:symmetries}, \ref{SS:combinatorial}, \ref{SS:directed-limit}, and \ref{SS:limits}. We also prove a few other corollaries about the symmetries of disjointness probabilities and last passage values for paths restricted to certain regions.

\subsection{Proofs for Section \ref{SS:symmetries}, restricted paths, and disjointness events}
\label{SS:sym-proof} 

We first prove Corollary \ref{C:conj-bgw-init} and the analogue of Corollary \ref{C:conj-bgw-init} for the log-gamma-polymer (the final part of Theorem \ref{T:polymers}).
\begin{proof}
	Let $V' = V^- \X V^+ \cap \Zd$.
	Observe that since $U \X V, V \X W \sset \scrH$, that 
	$
	U \X V', V' \X W \sset \scrH.
	$
	Identical statements holds with $T_c V'$ in place of $V'$. Therefore the map 
	$$
	f:U \cup V' \cup W \to U \cup T_cV' \cup W
	$$
	given by translating $V'$ lies in $\scrF$ by Proposition \ref{P:lie-in-F}(1). Moreover, for some measurable function $h$, $Z_M(f, g) = h(Z_M|_{V'})$ and $Z_M(f \circ T_{-c}, g \circ T_{-c}) = h(Z_M|_{T_c V'})$. Putting this together with Theorem \ref{T:iid-trans} gives the result.
\end{proof}

Next, we prove Corollary \ref{C:geometry-cor} and Corollary \ref{C:quenched-cor} together, along with the following corollary about restricted paths, as these three results essentially have the same proof.

 To state the restricted path corollary, for a polymer model $Z_M$, a box $u$, and a set $R_u \sset u$ define the restricted partition function
$$
Z_M(u \; | \; R_u) = \bigoplus_\pi M(\pi),
$$
where the sum is taken over all $u$-paths $\pi$ that are contained in $R_u$. For this corollary we say that a path $\pi$ crosses a box $w$ horizontally if $\pi$ intersects both the left and right sides of $w$ (i.e. $\{w^-_1\} \X [w^-_2, w^+_2]$ and $\{w^+_1\} \X [w^-_2, w^+_2]$).  Similarly, a path $\pi$ crosses a box $w$ vertically if it intersects both the top and bottom sides of $w$.

\begin{corollary}[Restricted paths]
	\label{C:restricted-polymer}
	Let $Z_M$ be either i.i.d.\ exponential or geometric last passage percolation, or the i.i.d.\ log-gamma polymer. 
	Consider subsets $R_u \sset u$ and $R_v \sset v$. Suppose that these sets have the following properties for some $c \in \Z^2$:
	\begin{itemize}[nosep]
		\item The set of $u$-paths that lie in $R_u$ is nonempty, as is the set of $v$-paths that lie in $R_v$.
		\item There exist boxes $w \sset R_u, x \sset R_v$ such that every $u$-path in $R_u$ crosses $w$ horizontally and every $v$-path in $R_v$ crosses $x$ vertically.
		\item $R_u \cap R_v, R_u \cap T_c R_v \sset w$.
		\item The pairs $(w, x), (w, T_c x)$ cross horizontally.
	\end{itemize} Then
	$$
	(Z_M(u \;|\; R_u), Z_M(v \;|\; R_v)) \eqd (Z_M(u \;|\; R_u), Z_M(T_cv \;|\; T_c R_{v})).
	$$
\end{corollary}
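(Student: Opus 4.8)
\textbf{Proof proposal for Corollary \ref{C:restricted-polymer}.}

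The plan is to reduce this to an instance of Theorem \ref{T:iid-trans} by observing that each restricted partition function $Z_M(u \mid R_u)$ is a measurable function of last passage values in $\scrE_D$ of carefully chosen boxes, in such a way that the relevant boxes are mapped correctly by a function $f \in \scrF$. First I would observe that, by the hypothesis that every $u$-path in $R_u$ crosses the box $w$ horizontally, the restricted partition function $Z_M(u \mid R_u)$ is a deterministic function of $\{M_x : x \in R_u \smin w\}$ together with $Z_M|_{\bar H(w)}$: indeed, decomposing any $u$-path $\pi \subset R_u$ into its portion inside $w$ and its portion outside $w$, exactly as in the proof of Proposition \ref{P:poly-comp}, the portion inside $w$ is a union of disjoint boxes whose horizontal-crossing endpoint is determined by the outside portion, and $Z_M$ of that endpoint is a function of $Z_M|_{\bar H(w)}$, hence (by the argument following Theorem \ref{T:encoding-lpp}, or directly) a function of $Z_M|_{\scrE_D(w)}$. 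Symmetrically, $Z_M(v \mid R_v)$ is a function of $\{M_x : x \in R_v \smin x\}$ and $Z_M|_{\scrE_D(x)}$, where here $x$ denotes the box from the statement. Since $R_u \cap R_v \subset w$ and $R_u \cap T_c R_v \subset w$, the environment pieces $\{M_x : x \in R_u \smin w\}$ and $\{M_x : x \in R_v \smin x\}$ are independent of each other, and remain so after translating $R_v$ by $c$.

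Next I would set up the bijection. Take $U = \{w\}$ and $V = \{x\}$, and let $f : \{w\} \cup \{x\} \to \{w\} \cup \{T_c x\}$ be the map fixing $w$ and sending $x$ to $T_c x$; this is a tower-type map with two levels, and by hypothesis $(w, x)$ and $(w, T_c x)$ both cross horizontally, so $f$ preserves horizontal crossings and lies in $\scrF$ by Proposition \ref{P:lie-in-F}(1) (the $k = 2$ tower case). By Theorem \ref{T:iid-trans} we get
$$
Z_M|_{\scrE_D(\{w, x\})} \eqd Z_M|_{\scrE_D(\{w, T_c x\})} \circ \bar f.
$$
Now I would combine this with the two measurability facts above: write $Z_M(u \mid R_u) = \Psi_1\big(\{M_z : z \in R_u \smin w\}, Z_M|_{\scrE_D(w)}\big)$ and $Z_M(v \mid R_v) = \Psi_2\big(\{M_z : z \in R_v \smin x\}, Z_M|_{\scrE_D(x)}\big)$ for deterministic functions $\Psi_1, \Psi_2$. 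Then $Z_M(T_c v \mid T_c R_v) = \Psi_2\big(\{M_{z} : z \in R_v \smin x\} \circ T_c^{-1}, Z_M|_{\scrE_D(T_c x)} \circ \bar T_c\big)$, where I mean that the environment values are read off the translated set. Since $M$ is i.i.d., the triple
$$
\big(\{M_z : z \in R_u \smin w\},\ \{M_z : z \in R_v \smin x\},\ Z_M|_{\scrE_D(\{w, x\})}\big)
$$
has the same joint law as
$$
\big(\{M_z : z \in R_u \smin w\},\ \{M_{z+c} : z \in R_v \smin x\},\ Z_M|_{\scrE_D(\{w, T_c x\})} \circ \bar f\big),
$$
using independence of the first two coordinates from each other, the shift invariance of i.i.d.\ weights for the second coordinate, and the distributional identity from Theorem \ref{T:iid-trans} for the third (noting that the first two coordinates are independent of the last because $R_u \smin w$, $R_v \smin x$, and the boxes $w, x, T_c x$ can be handled via the independence of disjoint sets of entries — here the hypotheses $R_u \cap R_v, R_u \cap T_c R_v \subset w$ are exactly what guarantees the needed disjointness). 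Applying $\Psi_1$ and $\Psi_2$ to matching coordinates gives the claimed equality in distribution.

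The main obstacle I expect is bookkeeping around the independence/measurability splitting: one must check carefully that $Z_M(u \mid R_u)$ really depends on the environment only through $\{M_z : z \in R_u \smin w\}$ and through boxes whose last passage values sit inside $Z_M|_{\scrE_D(w)}$ (not, say, through values requiring boxes strictly larger than $w$), and likewise that the three ``data blocks'' fed into $\Psi_1$, $\Psi_2$, and Theorem \ref{T:iid-trans} are jointly independent in the right configuration both before and after the shift. The hypotheses $R_u \cap R_v \subset w$ and $R_u \cap T_c R_v \subset w$, together with the crossing conditions on $(w,x)$ and $(w, T_c x)$, are precisely tailored so that this goes through; making the reduction to Proposition \ref{P:poly-comp} and Proposition \ref{P:measurable-fn} airtight is the only delicate point, and it is essentially the same argument already used to prove Proposition \ref{P:1-step-markov}. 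Corollaries \ref{C:geometry-cor} and \ref{C:quenched-cor} follow by the identical scheme, replacing $Z_M(u \mid R_u)$ by the restriction of the leftmost geodesic to $u \smin w$ (resp.\ the pushforward of the quenched measure), which is again a measurable function of $\{M_z : z \in u \smin w\}$ and $Z_M|_{\scrE_D(w)}$ by the same path-decomposition argument.
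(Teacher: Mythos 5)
Your overall strategy --- decompose each restricted partition function via Proposition \ref{P:poly-comp} into the environment outside a box plus crossing data for that box, then apply Theorem \ref{T:iid-trans} to a tower-type map --- is the right one and is the route the paper takes. But there is a genuine gap in the measurability step: you claim that $Z_M(u \mid R_u)$ is a function of $\{M_z : z \in R_u \smin w\}$ and $Z_M|_{\scrE_D(w)}$, asserting along the way that $Z_M|_{\bar H(w)}$ is a function of $Z_M|_{\scrE_D(\{w\})}$. This is false. The set $\scrE_D(\{w\})$ is essentially just $D(w)$, the diagonal multi-point values of the single box $w$ (the ``shape'' in RSK terms), whereas Proposition \ref{P:measurable-fn} only gives $Z_M|_{\bar H(w)}$ as a function of $Z_M|_{H(w)}$, and $H(w)=\bigcup_i D(w^-, w^+-(0,i))$ involves the diagonal data of \emph{many} sub-boxes of $w$, not just $w$ itself. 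Indeed, by Theorem \ref{T:explicit}, conditionally on $Z_M|_{D(w)}$ the tableau $Z_M|_{H(w)}$ is still random --- this residual randomness is the entire engine of the decoupling argument --- so the functions $\Psi_1,\Psi_2$ you posit do not exist, and applying Theorem \ref{T:iid-trans} to the two-element sets $\{w,x\}$ and $\{w,T_cx\}$ conditions on strictly too little.

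The fix is to enlarge the box families before invoking Theorem \ref{T:iid-trans}, which is what the paper does: take the domain of the map to be $G(R_u \smin w) \cup L_H(w) \cup G(R_v \smin x) \cup L_V(x)$, where $L_H(w)$ is the set of \emph{all} sub-boxes of $w$ crossing $w$ horizontally, $L_V(x)$ the set of all sub-boxes of $x$ crossing $x$ vertically, and $G(S)$ the set of single-point boxes over $S$ (so the raw weights $M_z = Z_M(z,z)$ are themselves encoded as last passage values, eliminating your separate i.i.d.-shift bookkeeping). One checks $L_H(w) \X L_V(x) \sset \scrH$ and likewise after the shift, so the map fixing the first two blocks and translating the last two by $c$ lies in $\scrF$ by Proposition \ref{P:lie-in-F}(1) combined with Lemma \ref{L:in-F-triv}(2), and a single application of Theorem \ref{T:iid-trans} then yields the claim once $Z_M(u\mid R_u)$ and $Z_M(v\mid R_v)$ are expressed as functions of $Z_M$ restricted to the respective blocks via Propositions \ref{P:poly-comp} and \ref{P:measurable-fn}.
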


For this proof, for $w \in \Zd$ and $S \sset \Z^2$ we introduce the notation
$$
L_H(w) = \{w' \sset w : (w', w) \in \scrH\}, \;\; L_V(w) = \{w' \sset w : (w', w) \in \scrV\}.
$$
Also, for a set $S \sset \Z^2$, we let
$
 G(S) = \{(s, s) : s \in S\} \sset \Zd
 $
be the set of single-point boxes formed from points in $S$.
\begin{proof}[Proof of Corollaries \ref{C:geometry-cor}, \ref{C:quenched-cor}, and Corollary \ref{C:restricted-polymer}]
	We start with Corollaries \ref{C:geometry-cor} and \ref{C:quenched-cor}. Define a function
	\begin{equation*}
	\begin{split}
	f:&G(\sqcup U \smin w) \cup L_H(w) \cup G(\sqcup V \smin x) \cup L_V(x) \to \\
	&G(\sqcup U \smin w) \cup L_H(w) \cup G(\sqcup T_c V \smin T_c x) \cup L_V(T_c x), 
	\end{split}
	\end{equation*}
	where $f = \id$ on $G(\sqcup U \smin w) \cup L_H(w)$ and $f = T_{(c, 0)}$ on $G(\sqcup V \smin w) \cup L_V(x)$. The function $f \in \scrF$ by combining Proposition \ref{P:lie-in-F}(1) and Lemma \ref{L:in-F-triv} (ii). This uses that the sets $\sqcup U \smin w, w \cup x,$ and $\sqcup V \smin x$ are disjoint from each other, as are $\sqcup U \smin w, w \cup T_c x,$ and $\sqcup V \smin T_c x$, and that $(w, x), (w, T_c x) \in \scrH$. Now by Proposition \ref{P:poly-comp}, in the context of Corollary \ref{C:geometry-cor} there is a function $g$ such that
	\begin{equation}
	\label{E:piM}
	\begin{split}
	&(\pi_M(u) \smin w : u \in U; \pi_M(v) \smin w : v \in V) = g(Z_M|_{S}) \quad \mathand \\
	&(\pi_M(u) \smin w : u \in U; \pi_M(v) \smin w : v \in V') = g(Z_M|_{f(S)} \circ f),
	\end{split}
	\end{equation}
	where $S$ is the domain of $f$. Together with Theorem \ref{T:iid-trans}, this implies Corollary \ref{C:geometry-cor}. The analogue of \eqref{E:piM} also holds in the context of Corollary \ref{C:quenched-cor} (e.g.\ with $(P_{u \smin w})_*Q_M$ in place of $\pi_M(u) \smin w$ etc.) proving that corollary. For Corollary \ref{C:restricted-polymer}, construct a function $f' \in \scrF$ analogously to $f$ with $\sqcup U$ replaced by $R_u$ and $\sqcup V$ replaced with $R_v$. Again using Proposition \ref{P:poly-comp}, an analogue of \eqref{E:piM} holds expressing $Z_M(u \; | \; R_u), Z_M(v \; | \; R_v)$ as a function of $Z_M$ restriction to the domain of $f'$.
\end{proof}

We end this section by showing how conditional probabilities of geodesics being disjoint are preserved for exponential and geometric last passage percolation.

\begin{corollary} [Disjointness probabilities]
	\label{C:disjointness}
	Let $M$ be an environment of i.i.d. exponential or geometric random variables. Let $U, V, W \sset \Zd$ and $c \in \Z^2$ and suppose that 
	\begin{itemize}
		\item Every pair of boxes $(u, v)$ in either $U \X V$ or $V \X W$ crosses horizontally.
		\item The same is true after $V$ is shifted by $c$. That is, every pair of boxes $(u, v)$ in either $U \X T_cV$ or $T_cV \X W$ crosses horizontally.
	\end{itemize}
	Then for any $v_1, \dots, v_n \in V$, we have
	\begin{align*}
	&(Z_M|_{U \cup V \cup W}, \mathds{1}(\exists \text{ disjoint $v_i$-geodesics } \mathfor i = 1, \dots, n)) \eqd \\
	&(Z_M|_{U \cup T_c V \cup W} \circ f, \mathds{1}(\exists \text{ disjoint $T_c v_i$-geodesics } \mathfor i = 1, \dots, n)).
	\end{align*}
	
\end{corollary}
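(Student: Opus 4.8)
The plan is to follow the scheme used to prove Corollary~\ref{C:conj-bgw-init}, the only genuinely new point being to express the disjoint-geodesic event as a deterministic function of finitely many last passage values. Writing $\bv=(v_1,\dots,v_n)$, I would first record the elementary fact that, when $\scrD(\bv)\ne\emptyset$,
$$
\mathds{1}\big(\exists\text{ disjoint }v_i\text{-geodesics for }i=1,\dots,n\big)=\mathds{1}\Big(Z_M(\bv)=\sum_{i=1}^n Z_M(v_i)\Big):
$$
every disjoint family $\pi=(\pi_1,\dots,\pi_n)\in\scrD(\bv)$ satisfies $M(\pi_i)\le Z_M(v_i)$, so $Z_M(\bv)\le\sum_i Z_M(v_i)$, and equality forces the optimal family to consist of $v_i$-geodesics, and conversely. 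When $\scrD(\bv)=\emptyset$ no disjoint $v_i$-geodesics exist, and since $T_c$ is a rigid motion of $\Z^2$ we also have $\scrD(T_c\bv)=\emptyset$, so both indicators vanish identically and the statement reduces to the plain last passage identity of Theorem~\ref{T:conj-bgw}.

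Next I would produce the transformation. Set $S=U\cup V\cup W$ and $T=U\cup T_cV\cup W$, and let $f:S\to T$ restrict to the identity on $U$ and on $W$ and to $T_c$ on $V$. The two hypotheses state exactly that $U\X V,\,V\X W,\,U\X T_cV,\,T_cV\X W\sset\scrH$, so applying Proposition~\ref{P:lie-in-F}(1) to the tower $U_1=U$, $U_2=V$, $U_3=W$ with shifts $c_1=c_3=0$ and $c_2=c$ shows $f\in\scrF$; write $\bar f:\scrE_D(S)\to\scrE_D(T)$ for its extension. (As in the proof of Corollary~\ref{C:conj-bgw-init}, one may assume $U,V,W$ are pairwise disjoint families of boxes, discarding overlaps otherwise.)

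Then I would observe that everything in sight is read off from $Z_M|_{\scrE_D(S)}$: each box $x\in S$ is the endpoint $x^1\in\scrE_D(S)$, each $v_i$ is $v_i^1\in\scrE_D(V)$, and in the nontrivial case $\bv=(v_1^1,\dots,v_n^1)\in\scrE_D(V)$; moreover $\bar f(x^1)=f(x)^1=f(x)$, $\bar f(v_i^1)=(T_cv_i)^1=T_cv_i$, and $\bar f(\bv)=T_c\bv$. Hence, using the reformulation above (for the boxes $v_i$ on one side and $T_cv_i$ on the other), both pairs appearing in the statement arise by applying the single measurable map
$$
\Psi(g)=\Big(\big(g(x^1)\big)_{x\in S},\ \mathds{1}\big(g(\bv)=\textstyle\sum_{i}g(v_i^1)\big)\Big)
$$
to $g=Z_M|_{\scrE_D(S)}$ and to $g=Z_M|_{\scrE_D(T)}\circ\bar f$ respectively. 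Theorem~\ref{T:iid-trans} gives $Z_M|_{\scrE_D(S)}\eqd Z_M|_{\scrE_D(T)}\circ\bar f$ (this also covers the degenerate case), and pushing this equality through $\Psi$ yields the corollary.

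The main obstacle is really just the first step: recognizing that ``disjoint geodesics exist'' is the same event as $\{Z_M(\bv)=\sum_iZ_M(v_i)\}$, and hence a function of last passage values to which the hidden-invariance machinery applies. After that the argument is pure bookkeeping, identical in structure to Corollaries~\ref{C:conj-bgw-init}--\ref{C:quenched-cor}. It is worth noting that this reformulation is genuinely $(\max,+)$-specific, which is why the corollary is stated only for exponential and geometric last passage percolation and has no positive-temperature counterpart; the only other mild care needed is the degenerate case $\scrD(\bv)=\emptyset$, handled as above by invariance of $\scrD=\emptyset$ under the rigid motion $T_c$.
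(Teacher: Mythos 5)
Your proposal is correct and follows essentially the same route as the paper: rewrite the disjoint-geodesic event as $\{Z_M(v_1,\dots,v_n)=\sum_i Z_M(v_i)\}$, note that this is determined by $Z_M|_{\scrE_D(V)}$, verify $f\in\scrF$ via Proposition \ref{P:lie-in-F}(1), and conclude with Theorem \ref{T:iid-trans}. Your extra remarks (the justification of the equivalence and the degenerate case $\scrD(\bv)=\emptyset$) are harmless elaborations of the same argument.
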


\begin{proof}[Proof of Corollary \ref{C:disjointness}]
	The event that there exist disjoint geodesics $\pi_i \in \scrP_M(v_i)$ for $i = 1, \dots, n$ is the same as the event that
	$$
	Z_M(v_1, \dots, v_n) = \sum_{i=1}^n Z_M(v_i).
	$$
	This event is a determined by $Z_M|_{\scrE_D(V)}$. Moreover, the  map $f:U \cup V \cup W \to U \cup T_c V \cup W$ lies in $\scrF$ by Proposition \ref{P:lie-in-F} (1). Putting these two facts together with Theorem \ref{T:iid-trans} applied to the last passage case completes the proof.
\end{proof}

\subsection{Proofs from Section \ref{SS:combinatorial}}
\label{SS:comb-proof}

We first prove Theorem \ref{T:rsk-moon-poly}. For this, we first show that any moon polyomino has a box exhaustion (see Section \ref{SS:combinatorial} for relevant definitions).

\begin{lemma}
	\label{L:moon}
	Let $S$ be a moon polyomino. Then $S$ admits a box exhaustion $(u_0, u_1, \dots, u_{k+1}).$
\end{lemma}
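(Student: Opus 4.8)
The plan is to produce an explicit box exhaustion from the canonical ``staircase'' decomposition of a moon polyomino. The empty polyomino is trivial (take $(u_0,u_1)=(\emptyset,\emptyset)$), so assume $S\neq\emptyset$. First I would record the structural facts I need. By convexity each nonempty row $S^i$ is an interval, and by the intersection-free axiom any two rows are comparable under inclusion; since $S$ is finite there are finitely many distinct nonempty rows, which I list as a chain $W_1\supsetneq W_2\supsetneq\cdots\supsetneq W_r$ of nested intervals. Next I would introduce the level sets $T_t:=\{i\in\Z: S^i\supseteq W_t\}$ and check that each $T_t$ is a (nonempty) interval: if $i<m<j$ with $i,j\in T_t$, then for every $x\in W_t$ the column $S_x$ contains $i$ and $j$, hence $m$ by column convexity, so $x\in S^m$; thus $W_t\subseteq S^m$. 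Since $W_1\supseteq\cdots\supseteq W_r$, these intervals are nested, $T_1\subseteq T_2\subseteq\cdots\subseteq T_r$. The decomposition I want is $S=\bigcup_{t=1}^r W_t\times T_t$: the inclusion $\supseteq$ is immediate because $i\in T_t$ forces $W_t\subseteq S^i$, and for $\subseteq$, if $(x,i)\in S$ then $S^i=W_t$ for some $t$, so $x\in W_t$ and $i\in T_t$.

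With this decomposition in hand I would write the exhaustion explicitly. Writing $T_1=[c_1,d_1]$, start from $\emptyset$, add the full row $W_1\times\{c_1\}$, then add rows one at a time (alternately at the bottom and top) until the box is $W_1\times T_1$; then subtract columns one at a time from the two ends until the box is $W_2\times T_1$; then add rows to reach $W_2\times T_2$; then subtract columns to reach $W_3\times T_2$; and so on, alternating these two kinds of moves, through $W_r\times T_{r-1}\to W_r\times T_r$; finally subtract the remaining columns of $W_r$ one at a time down to $\emptyset$. Every box appearing along the way sits inside some $W_t\times T_t$ — this is exactly where $T_{t-1}\subseteq T_t$ and $W_{t+1}\subseteq W_t$ are used — hence inside $S$; every step is a single ``add a row'' or ``subtract a column'' move; and since each rectangle $W_t\times T_t$ is visited, the union of all boxes in the sequence equals $\bigcup_t W_t\times T_t=S$. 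This yields the required box exhaustion.

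The substantive points are the two structural claims of the first paragraph — that the level sets $T_t$ are genuine intervals (the only place the column-convexity of a moon polyomino is needed) and that $S$ decomposes as the nested union $\bigcup_t W_t\times T_t$ — together with the mild convention that ``adding a row'' to the empty box may create any single-row box $[p,q]\times\{s\}$, which is forced anyway since widths only decrease along an exhaustion and is consistent with how exhaustions of rectangles arise in the scrambled-RSK picture. I do not expect a real obstacle beyond these verifications. (If one is willing to allow $\emptyset$ at intermediate positions of an exhaustion, there is an even shorter route: write $S$ as the union of its finitely many maximal rectangles and concatenate, for each such rectangle $B=[a,b]\times[c,d]$, the short exhaustion $\emptyset\to[a,b]\times\{c\}\to\cdots\to[a,b]\times[c,d]\to[a+1,b]\times[c,d]\to\cdots\to\emptyset$; but the construction above has the advantage of never leaving the nonempty boxes.)
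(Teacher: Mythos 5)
Your proof is correct and follows essentially the same route as the paper's: both cover $S$ by a chain of rectangles that cross horizontally in sequence (you identify them via the nested row intervals $W_t$ and their level sets $T_t$, the paper via maximal boxes and the intersection-free property) and then interpolate between consecutive rectangles by adding rows and subtracting columns. Your version is somewhat more explicit about why the intermediate boxes stay inside $S$, but the underlying construction is the same.
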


\begin{proof}
	Let
	$$
	U = \{u \in \Zd : u \sset S \text{ and if } u \sset v \sset S, \text{ then } v = u \}
	$$ 
	be the set of maximal boxes in $S$. Note that $\sqcup U = S$. We claim that any pair $u, v \in U$ cross. By maximality of each element of $u$ and the intersection-free property of moon polyominoes, there exist $i_v, i_u, j_v, j_v$ such that 
	$$
	S^{i_u} = [u^-_1, u^+_1], S^{i_v} = [v^-_1, v^+_1], S_{j_u} = [u^-_2, u^+_2], S_{j_v} = [v^-_2, v^+_2].
	$$
	Now, we have containment between $S^{i_u},S^{i_v}$ and $S_{j_u}, S_{j_v}$ by the intersection-free property. Since both $u, v$ were maximal, this implies that either $S^{i_u} \sset S^{i_v}$ and $S_{j_u} \supset S_{j_v}$, or vice versa. Either way, $u$ and $v$ cross.
	
	Since all pairs in $U$ cross, we can order $U = \{\hat u_1, \dots, \hat u_\ell\}$ so that $(\hat u_i,\hat u_{i+1}) \in \scrH$ for all $i$. For every $i \in 1, \dots, \ell -1$, we can create a sequence $\hat u_i = v_{i, 1}, \dots, \hat u_{i+1} = v_{i, k(i)}$ such that $v_{i, j}$ is always obtained from $v_{i, j-1}$ by adding a row or subtracting a column. We can similarly create sequences $v_{0, 1}, \dots, v_{0, k(0)} = \hat u_1$ and $\hat u_\ell = v_{\ell, 1}, \dots, v_{\ell, k(\ell)}$ where $v_{0, 1}, v_{\ell, k(\ell)} = \emptyset$. Listing all the $v_{i, j}$ lexicographically gives a box exhaustion.
\end{proof}

Next, we prove a simple lemma allowing us to extraction bijections from probabilistic statements and previously established bijections.

\begin{lemma}
	\label{L:prob-bij}
	Let $M,N, P$ be three finite sets with $|M|=|N|$, and let $\Phi:M \to P, \Psi:N \to P$. Suppose that $\Phi$ is a bijection and that there exist two measures $\mu$ on $M$ and $\nu$ on $N$ with $\supp(\mu) = M$, whose pushforwards under $\Phi$ and $\Psi$ are equal: $\Phi_* \mu = \Psi_* \nu$. Then $\Psi$ is also a bijection.
\end{lemma}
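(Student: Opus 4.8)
The claim is a purely combinatorial/measure-theoretic fact: if $\Phi:M\to P$ is a bijection between finite sets of equal size, $\Psi:N\to P$ is any map, and there are measures $\mu$ on $M$ of full support and $\nu$ on $N$ with $\Phi_*\mu=\Psi_*\nu$, then $\Psi$ is a bijection. The plan is to prove injectivity and surjectivity of $\Psi$ separately, exploiting that $|M|=|N|$ so that it suffices to prove \emph{surjectivity} alone (an injective map between equal-size finite sets is bijective, but in fact surjectivity will be the cleaner thing to nail down, and then injectivity follows from counting).

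First I would establish surjectivity of $\Psi$. Fix any $p$ in the image of $\Phi$; since $\Phi$ is a bijection, its image is all of $P$, so every $p\in P$ is of this form. Let $m=\Phi^{-1}(p)$. Because $\supp(\mu)=M$, we have $\mu(\{m\})>0$, hence $(\Phi_*\mu)(\{p\})=\mu(\Phi^{-1}(p))=\mu(\{m\})>0$. By hypothesis $(\Psi_*\nu)(\{p\})=(\Phi_*\mu)(\{p\})>0$, so $\nu(\Psi^{-1}(p))>0$; in particular $\Psi^{-1}(p)\neq\emptyset$. Since $p\in P$ was arbitrary, $\Psi$ is surjective onto $P$. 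Now the counting step: $\Psi$ is a surjection from the finite set $N$ onto $P$, so $|N|\ge|P|$. On the other hand $\Phi$ is a bijection $M\to P$, so $|P|=|M|=|N|$. A surjection between finite sets of equal cardinality is automatically a bijection (the fibers $\Psi^{-1}(p)$ partition $N$ into $|P|=|N|$ nonempty pieces, forcing each to be a singleton). Hence $\Psi$ is a bijection.

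There is essentially no obstacle here — the only point requiring the full-support hypothesis is the step $(\Phi_*\mu)(\{p\})>0$, which is exactly what guarantees that the matching pushforward mass on $\Psi$'s side is positive and hence that the fiber over every point is nonempty; without full support one could have $\Psi$ miss some point of $P$ on which $\mu$ (and hence $\nu$) place no mass. I would make sure to state explicitly that we only need $\mu,\nu$ to be, say, finite positive measures (not necessarily probability measures), since the argument uses only positivity of individual atoms and equality of pushforwards. The whole proof is a few lines and I would write it directly without further machinery.
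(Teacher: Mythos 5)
Your proof is correct and follows essentially the same route as the paper's: full support of $\mu$ plus bijectivity of $\Phi$ gives full support of $\Phi_*\mu=\Psi_*\nu$, hence surjectivity of $\Psi$, and then the counting $|N|=|M|=|P|$ upgrades the surjection to a bijection. You have merely written out the atom-by-atom details that the paper leaves implicit.
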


\begin{proof}
	Since $\mu$ has full support and $\Phi$ is a bijection, $\Phi_* \mu$ and hence $\Psi_* \nu$ has full support as well. This implies that $\Psi$ is a surjection, and since $|M| = |N| = |P|$, $\Psi$ is a bijection. 
\end{proof}

\begin{proof}[Proof of Theorem \ref{T:rsk-moon-poly}]
	Letting $U = (u_1, \dots, u_k)$ be any box exhaustion of $S$ with the empty sets $u_0, u_{k+1}$ omitted, and let $U' = (u'_1, \dots, u'_k)$ be the unique set of boxes in $\Zd$ such that $u_i'$ is a translate of $u_i$ and $u_i'^- = (1, 1)$ for all $i$. The sets $U_i = \{u_i\}, U_i' = \{u_i'\}$ satisfy the conditions of Proposition \ref{P:lie-in-F}(1), so the tower map $f:U \to U'$ translating each $u_i$ to $u_i'$ lies in $\scrF$. In particular, by Theorem \ref{T:iid-trans} and Theorem \ref{T:explicit}(i), letting $M$ be an array of i.i.d.\ geometric random variables with $Z_M$ denoting last passage values, we have that
	$
	Z_M|_{D(U)} \eqd Z_M|_{D(U')} \circ \bar f.
	$
	This implies that $\Phi_U(M|_{S}) \eqd \Phi_{U'}(M|_{\sqcup U'})$, which in turns implies that for any $c \in \N$,
	\begin{equation}
	\p \lf( \Phi_U(M|_{S}) \in \cdot \; \bigg| \sum_{s \in S} M_s = c \rg) = \p \lf( \Phi_{U'}(M|_{\sqcup U'}) \in \cdot \; \bigg| \sum_{s \in \cup U'} M_s = c \rg).
	\end{equation}
	Now, since the entries of $M$ are geometric random variables, the law of $M$ restricted to any set $S$ has full support on the space of functions from $S \to \{0, 1, \dots \}.$ Moreover, since $\sqcup U'$ is a Young diagram, the RSK map $\Phi_{U'}$ is a bijection from the set 
	$$
	\scrX_{\sqcup U'} (c) = \lf\{M \in \scrX_{\sqcup U'} : \sum_{s \in \cup U'} M_s \le c\rg\}
	$$
	to the space of partition sequences in the statement of Theorem \ref{T:rsk-moon-poly} with $|\la_i| \le c$ for all $i$ (see Theorem 7 from \citep{krattenthaler2005growth}). Call this target space $P(c)$. Finally, $|\scrX_{\cup U'}(c)| = |\scrX_S(c)|$ since $|S| = |\sqcup U'|$. Applying Lemma \ref{L:prob-bij} then implies that $\Phi_U$ is also a bijection from $\scrX_{S}(c)$ to $P(c)$ for all $c$, and is hence a bijection. The `Moreover' statement follows from the definition of $\Phi_U$.
\end{proof}

\begin{proof}[Proof of Theorem \ref{T:rsk-scramble}]
	The scrambled RSK bijections are special cases of Theorem \ref{T:rsk-moon-poly}. To see this, let $\tilde \Phi_I(M)$ be the vector of partition functions given by reversing the order of entries in $\Phi_I(M)$ and removing the entry $Z_M^\Delta(\bar I_n)$. Then for any $I, J$, the map $M \mapsto (\Psi_J(M), \tilde \Phi_I(M))$ is a particular $\Phi_U$ map from Theorem \ref{T:rsk-moon-poly} for the box exhaustion $(\emptyset, \bar J_1, \bar J_2, \dots, \bar J_m, \bar I_{n-1}, \dots \bar I_1, \emptyset)$.
\end{proof}

\subsection{Proofs for Section \ref{SS:limits}}
\label{SS:lim-proof}

Note that some of the proofs of the polymer statements in this section have been noted previously (Theorem \ref{T:polymers} and Corollary \ref{C:quenched-cor}). The remaining statements are Corollaries \ref{C:airy-sheet} and \ref{C:airy-processes}. Before proving these statements, we carefully introduce the limiting objects.

First, we introduce last passage percolation on $\R \X \Z$. Let $\{F_i: \R\to \R: i \in \Z\}$ be a sequence of continuous functions. We can define a finitely additive signed measure $dF$ on finite unions of intervals in $\R \X \Z$ by setting
$$
dF([a, b] \X \{i\}) = F_i(b) - F_i(a).
$$
Now define 
$$
\scrQ_\uparrow = \{u = (x, n; y, m) \in (\R \X \Z)^2 : x \le y, n \le m\}.
$$
Again, we can think of $\scrQ_\uparrow$ as boxes in $\R \X \Z$. We say that a set $\pi \sset \R \X \Z$ is a $u$-path with $u = (x, n; y, m)$, if there are points $t_n = x \le t_{n+1} \le \dots \le t_{m+1} = y$ such that 
$$
\pi = \bigcup_{i=n}^m [t_i, t_{i+1}] \X \{i\}.
$$
The analogue of path length in this setting is the measure $dF(\pi)$. We say that two paths are \textbf{essentially disjoint} if their intersection is finite. 

\begin{definition}[Brownian last passage percolation]
	Let $B = (B_i: i \in \Z)$ be a sequence of independent two-sided Brownian motions. For $u \in \scrQ_\uparrow$, define the \textbf{Brownian last passage value}
	$$
	B[u^k] = \sup dB(\pi_1) + \dots + dB(\pi_k),
	$$
	where the supremum is over all essentially disjoint $u$-paths $\pi_1, \dots, \pi_k$. This is defined as long as $k$ is small enough so that disjoint paths exist.
\end{definition}

Brownian last passage percolation arises as a distributional limit of geometric last passage percolation on long thin boxes. In particular, Brownian last passage percolation inherits all invariance statements in Theorem \ref{T:iid-trans} from geometric last passage percolation. We state a few select statments here. 

\begin{theorem}
	\label{T:Brown-symmetries}
	Fix $n \in \N$, and set $\tilde B(x, y, \ell) = B[(x, 1; y, n)^\ell]$, where $B$ is a sequence of two-sided independent Brownian motions. Then
	\begin{enumerate}
		\item Let $I_1 = [a_1, b_1], \dots, I_k = [a_k, b_k]$, $J_1 = [c_1, d_1], \dots, J_k = [c_k, d_k]$ be intervals in $\R$ such that $b_i \le a_{i+1}, d_i \le c_{i-1}$ for all $i$, and such that $b_k \le c_k$. Let $I_1', \dots, I_k', J_1', \dots, J_k'$ be intervals satisfying the same ordering properties such that $I_i' \X J_i'$ is a translate of $I_i \X J_i$ for all $i$. Let $A = \bigcup I_i \X J_i \X [1, n]$ and $A' = \bigcup I_i' \X J_i' \X [1, n]$, and let $f: A \to A'$ be the map given by translating each $I_i \X J_i \X [1, n] \mapsto I_i' \X J_i' \X [1, n]$. Then
		\begin{equation}
		\label{E:Ba}
		\tilde B|_{A} \eqd \tilde B|_{A'} \circ f.
		\end{equation}
		\item Let $g:\R \to \R $ be any nonincreasing function and define $\Ga(g), m$ as in Corollary \ref{C:airy-processes}. For $y \ge 0, i \in [1, n]$, define $m^*(0, t, i) =  (m(-t), i)$ and let $\Ga^*(g) = \Ga(g) \cap \{(x, y) : x \le y\} = m((-\infty, 0])$. Then
		\begin{equation}
		\label{E:Bb}
		\tilde B|_{\{0\} \X [0, \infty) \X [1, n]} \eqd \tilde B|_{\Ga^*(g) \X [1, n]} \circ m^*.
		\end{equation}
	\end{enumerate}
\end{theorem}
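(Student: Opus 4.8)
The strategy is to deduce Theorem~\ref{T:Brown-symmetries} from Theorem~\ref{T:iid-trans} (applied to geometric last passage percolation) by passing to the scaling limit in which geometric LPP on long thin rectangles converges to Brownian last passage percolation. First I would recall the precise convergence statement: fixing $n\in\N$, if $M^{(N)}$ is an array of i.i.d.\ geometric random variables of a suitably chosen parameter $q_N\to 1$, then for an appropriate centering and scaling (by $\sqrt{N}$ in the horizontal direction and $N$ in the vertical location), the rescaled last passage values
$$
\big(c_1(N)^{-1}\big(Z_{M^{(N)}}\big((\floor{c_2(N)x},1;\floor{c_2(N)y},n)^\ell\big)-c_3(N,x,y,\ell)\big): x\le y\in\R,\ \ell\in[1,n]\big)
$$
converge jointly in distribution, uniformly on compact sets, to $\tilde B(x,y,\ell)=B[(x,1;y,n)^\ell]$. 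This is exactly the convergence recorded in \citep{dauvergne2019uniform}, Section~6, and I would cite it rather than reprove it. The key point is that this convergence holds jointly over \emph{all} endpoints of the form $(\floor{c_2(N)x},1;\floor{c_2(N)y},n)^\ell$, so it applies to finite collections of the multi-point last passage values appearing in $\tilde B|_A$ and $\tilde B|_{A'}$ simultaneously.

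\textbf{Part (1).} Here I would fix the finite data: a finite set of points $(x,y,\ell)\in A$ at which to test the equality in \eqref{E:Ba}. For each such point, $\tilde B(x,y,\ell)$ is the limit of $Z_{M^{(N)}}$ evaluated at a corresponding discrete endpoint $\bu^{(N)}$ of the form $(\floor{c_2(N)x},1;\floor{c_2(N)y},n)^\ell$. The map $f$ translates the blocks $I_i\times J_i\times[1,n]$; at the discrete level, the corresponding blocks are $[\floor{c_2(N)a_i},\floor{c_2(N)b_i}]\times[\floor{c_2(N)c_i},\floor{c_2(N)d_i}]\times[1,n]$, and the translation by a fixed vector $(p_i,p_i)\in\R^2$ corresponds (up to a rounding error of size $O(1)$, which is negligible after rescaling) to a translation by $(\floor{c_2(N)p_i},0)$ or similar of the block. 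The ordering conditions $b_i\le a_{i+1}$, $d_i\le c_{i+1}$, $b_k\le c_k$ guarantee that, for $N$ large, the corresponding discrete boxes $U_i^{(N)}=[\ldots]\times[1,n]$ satisfy $U_i^{(N)}\times U_{i+1}^{(N)}\subset\scrH$ and the analogous conditions for the translated family, so the discrete translation map lies in $\scrF$ by Proposition~\ref{P:lie-in-F}(1) (Towers). Theorem~\ref{T:iid-trans} then gives $Z_{M^{(N)}}|_{\scrE_D(S^{(N)})}\eqd Z_{M^{(N)}}|_{\scrE_D(T^{(N)})}\circ\bar f^{(N)}$ for each $N$; restricting to our finite collection of endpoints and passing to the limit $N\to\infty$ (using that centering constants $c_3(N,x,y,\ell)$ depend only on the box dimensions, hence match on the two sides) yields \eqref{E:Ba} on that finite collection. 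Since $A,A'$ are unions of finitely many rectangles and $\tilde B$ is continuous, equality on a countable dense set of test points extends to equality of the random continuous functions $\tilde B|_A$ and $\tilde B|_{A'}\circ f$.

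\textbf{Part (2).} This follows the same limiting scheme but reduces to the second statement of geometric/exponential LPP invariance that produces a parabolically shifted Airy-type process. The curve $\Ga^*(g)$ is parametrized by $m$ at $L^1$-speed $1$; at the discrete level one approximates $m((-\infty,0])$ by a staircase of horizontal and vertical segments, which corresponds to a tower of thin discrete boxes whose start/end corners trace out a nonincreasing lattice path. The hypothesis that $g$ is nonincreasing is exactly what is needed for consecutive boxes in this tower to cross horizontally, so Proposition~\ref{P:lie-in-F}(1) applies again and Theorem~\ref{T:iid-trans} gives the discrete identity between $Z_{M^{(N)}}$ along the staircase and $Z_{M^{(N)}}$ along the straight segment $\{0\}\times[0,\infty)$. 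The $L^1$-isometry property of $m$ ensures the centering constants agree in the limit. Passing to the scaling limit and using continuity of $\tilde B$ converts this into \eqref{E:Bb}. (A subtlety: one must approximate the possibly-infinite curve by finite pieces, apply the identity on each compact piece, and take a second limit; the uniform-on-compacts convergence of \citep{dauvergne2019uniform} makes this routine.)

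\textbf{Main obstacle.} The genuinely delicate point is not the combinatorics but the interchange of limits: one needs the joint convergence of \emph{all relevant multi-point} last passage values (not just single-point ones) under the geometric-to-Brownian scaling, uniformly on compact sets, together with the fact that the deterministic centering/scaling constants on the two sides of each discrete identity agree in the limit. The first is available from \citep{dauvergne2019uniform}, Section~6; the second requires checking that the centering constant attached to a box $(\floor{c_2(N)x},1;\floor{c_2(N)y},n)^\ell$ depends (to the relevant order) only on $y-x$ and $\ell$ and is therefore unchanged by the translations and rearrangements in $f$. Both are standard but should be stated carefully. The discrete combinatorial input — that the translation maps and staircase-to-segment maps lie in $\scrF$ — is immediate from Proposition~\ref{P:lie-in-F}(1) once the ordering hypotheses are translated to the crossing conditions defining $\scrH$.
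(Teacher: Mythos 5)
Your proposal is correct and follows essentially the same route as the paper: apply Theorem \ref{T:iid-trans} (via the tower maps of Proposition \ref{P:lie-in-F}(1)) to geometric last passage percolation on long thin boxes of fixed height $n$, pass to the Brownian limit on finite collections of endpoints, and extend by continuity of $B$. The only cosmetic difference is that the paper uses Donsker's theorem with a fixed geometric parameter (centering $(y-x)t\mu$, scaling $\sqrt{t}\,\sigma$) rather than a parameter tending to $1$, and it states the matching of centering constants and the staircase approximation for part (2) more tersely than you do.
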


\begin{proof}
	Let $M$ be an environment of i.i.d. geometric random variables with mean $\mu$ and standard deviation $\sig$. Geometric last passage values $Z_M$ on a box of fixed height and increasing length converge to Brownian last passage values by Donsker's theorem:
	$$
	\tilde B (x, y, \ell)\eqd \lim_{t \to \infty} \frac{Z_M[(\floor{x t}, 1 ; \floor{ y t}, n)^\ell] - (x - y)t \mu}{\sqrt{t} \sig}. 
	$$
	This convergence in distribution is uniform over any choices of $x, y, \ell$ in a compact set. The equalities \eqref{E:Ba} and \eqref{E:Bb} are therefore inherited on finite subsets of $A, \{0\} \X [0, \infty) \X [1, n]$ from corresponding statements for geometric last passage percolation that hold by Theorem \ref{T:iid-trans} (in particular, these are limits of statements in the form of Proposition \ref{P:lie-in-F}(1)). Continuity of $B$ then extends \eqref{E:Ba} and \eqref{E:Bb} from finite subsets to all of $A, \{0\} \X [0, \infty) \X [1, n]$.
\end{proof} 

Theorem \ref{T:Brown-symmetries} allows us to prove invariance statements for universal objects. The following limit theorems are from \citep{DOV} (for the Airy sheet) and \citep{CH} (for the Airy line ensemble).

\begin{theorem}
	\label{T:airy-limit}
	Let $B$ be a sequence of independent two-sided Brownian motions, and define 
	$$
	\scrS_k^n(x, y) = n^{1/6}\lf(B[(2xn^{-1/3}, 1; 1 + 2yn^{-1/3}, n)^k] - 2\sqrt{n} - 2(y-x)n^{1/6}\rg).
	$$
	Then $S^n_1$ converges in distribution in the uniform-on-compact topology on functions from $\R^2 \to \R$ to a limiting object $\scrS:\R^2 \to \R$ known as the \textbf{Airy sheet}. Moreover, the sequence $(\scrS_1(0, \cdot), \scrS_2(0, \cdot), \dots)$,  converges in distribution in the product of uniform-on-compact topologies on functions from $\R \to \R$ to a limiting object $\scrL = (\scrL_1, \scrL_2, \dots)$ known as the \textbf{parabolic Airy line ensemble}.
\end{theorem}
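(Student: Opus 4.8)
The plan is to reduce the two assertions to known convergence results and reconcile conventions, so at our level the proof is short. The more substantial input is the second assertion, the convergence of the line ensemble. I would begin by recording the non-intersecting line ensemble underlying Brownian last passage percolation: by the RSK-type encoding for independent Brownian motions (the continuous-time analogue of Theorem \ref{T:encoding-lpp}, established in \citep{DOV}), the family of Brownian last passage values $B[(x,1;y,n)^k]$, $k=1,\dots,n$, is encoded by a system of $n$ non-intersecting curves in the variable $y$, and after the affine reparametrization and centering built into the definition of $\scrS^n_k$, the processes $\scrS^n_k(0,\cdot)$, $k\ge 1$, are precisely the edge-rescaled curves of this ensemble. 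Corwin and Hammond \citep{CH} prove, via the Brownian Gibbs resampling property, that this line ensemble converges in the product of uniform-on-compact topologies to the parabolic Airy line ensemble $\scrL=(\scrL_1,\scrL_2,\dots)$. This yields the second assertion; projecting onto the top curve gives in particular $\scrS^n_1(0,\cdot)\Rightarrow\scrL_1$.

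For the first assertion I would follow \citep{DOV}. The key structural point is that the two-parameter field $\scrS^n_1$ is a deterministic, continuous functional of the Brownian environment on the first $n$ lines: $\scrS^n_1(x,y)$ can be recovered as a last passage value through the $n$ curves of the Brownian LPP line ensemble, with the start location of the path determined by $x$, and this ``last passage across lines'' operation is continuous in the uniform-on-compact topology uniformly in $n$. Feeding the line ensemble convergence from the first step into this continuous map produces a limiting random field, which \citep{DOV} identify with the Airy sheet $\scrS$ by matching its finite-dimensional marginals against the extended Airy kernel and by its characterization as the unique field built from $\scrL$ through this construction. The required tightness of $\{\scrS^n_1\}$ in the uniform-on-compact topology on functions $\R^2\to\R$ is part of the same package and reduces to Brownian comparison estimates for $B$.

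The genuinely hard analysis --- the Brownian Gibbs estimates behind the tightness and the line-ensemble limit, and the continuity of last passage percolation across lines --- lives entirely in \citep{CH} and \citep{DOV}, and I would invoke both as black boxes. The only work remaining for the present paper is bookkeeping: checking that the normalization used here, $\scrS^n_k(x,y)=n^{1/6}(B[(2xn^{-1/3},1;1+2yn^{-1/3},n)^k]-2\sqrt n-2(y-x)n^{1/6})$, agrees after the obvious affine change of variables with the normalizations of \citep{DOV,CH}, and that the modes of convergence asserted (uniform on compacts for the sheet $\scrS$, the product of uniform-on-compact topologies for the ensemble $\scrL$) are exactly those established there. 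Thus the main obstacle is not a new argument but the translation of conventions; once that is in place the theorem is immediate.
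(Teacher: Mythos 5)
Your proposal is correct and matches the paper's treatment: Theorem \ref{T:airy-limit} is not proved in the paper at all but simply quoted from \citep{DOV} (for the Airy sheet) and \citep{CH} (for the parabolic Airy line ensemble), exactly the two references you invoke as black boxes. Your additional commentary on the Brownian Gibbs property, last passage across lines, and the normalization bookkeeping is accurate background but goes beyond what the paper records.
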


Note that each $\scrS^n_k$ has a domain which is not equal to all of $\R^2$. However, any compact set in $\R^2$ is contained in the domain of $\scrS^n_k$ for large enough $n$, allowing us to make sense of uniform-on-compact convergence. Note that $\scrA(x) = \scrL(x) + x^2$ is the Airy line ensemble, as defined in \cite{prahofer2002scale} and \cite{CH}. It is stationary in $x$, and the function $\scrA_1$ is the Airy$_2$ process. We work with the non-stationary version $\scrL$ here since it is more closely connected to the Airy sheet $\scrS$.

Uniform convergence to the Airy line ensemble has now been established for many models, including geometric and exponential last passage percolation, see \citep{dauvergne2019uniform}. 
The reason we introduced Brownian last passage percolation as a prelimiting model here is to have access to the Airy sheet. We can now prove Corollaries \ref{C:airy-sheet} and \ref{C:airy-processes}. In addition, we have the following new convergence result for the Airy line ensemble.

\begin{theorem}
	\label{T:airy-symm}
	Let $\scrS^n, \scrL$ be as in Theorem \ref{T:airy-limit}, and $g, \Ga(g), m$ be as in Corollary \ref{C:airy-processes}. Then 
	$$
	\scrS^n \circ m = (\scrS^n_1 \circ m, \scrS^n_2 \circ m, \dots) \cvgd \scrL.
	$$
\end{theorem}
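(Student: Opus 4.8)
The plan is to transfer the exact prelimit identity of Theorem~\ref{T:Brown-symmetries}(2) through the deterministic scaling that defines $\scrS^n$, and then pass to the limit using Theorem~\ref{T:airy-limit}. Fix $n$ and recall that $\scrS^n_\ell(x,y)$ is an explicit affine (in $B$) rescaling of $\tilde B(2xn^{-1/3}, 1+2yn^{-1/3}, \ell)$, where $\tilde B(a,b,\ell) = B[(a,1;b,n)^\ell]$. Since the map $(x,y)\mapsto(2xn^{-1/3}, 1+2yn^{-1/3})$ is an increasing affine bijection in each coordinate, it carries the graph $\Ga(g)$ of the given nonincreasing function $g$ to the graph $\Ga(g_n)$ of another nonincreasing function $g_n$, and, after a linear reparametrization of the parameter, it carries the $L^1$-isometric parametrization $m$ of $\Ga(g)$ to an $L^1$-isometric parametrization $m_n$ of $\Ga(g_n)$. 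A key elementary fact, immediate from the isometry property together with $m_1(0)=m_2(0)$ and the monotonicity of the two coordinates, is that $m_1(s)-m_2(s)=s$ for all $s$; this pins down the diagonal crossing of $m_n$ (the parameter at which its two coordinates coincide) at a computable point, so that the canonical parametrization $\mu_n$ of $\Ga(g_n)$ required by Theorem~\ref{T:Brown-symmetries}(2) (the one with diagonal crossing at $0$) differs from $m_n$ only by a fixed shift of the parameter.

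I would first establish the exact prelimit identity
\[
\scrS^n \circ m \;\eqd\; \scrS^n(0,-\cdot),
\]
interpreted as an equality of processes valued in the product space, jointly over all lines $\ell$ and over the common domain of definition (which grows to all of $\R$ as $n\to\infty$). To prove it, apply Theorem~\ref{T:Brown-symmetries}(2) with $g$ replaced by $g_n$ and $m$ by $\mu_n$: this identifies the law of $\tilde B$ restricted to the vertical axis $\{0\}\X[0,\infty)\X[1,n]$ with the law of $\tilde B$ restricted to $\Ga^*(g_n)\X[1,n]$ reparametrized by $L^1$-distance. Composing both sides with the deterministic affine rescaling that produces $\scrS^n$ and tracking the three families of constants --- the $2\sqrt n$ centering, the parabolic correction $2(y-x)n^{1/6}$, and the fixed parameter shift relating $m_n$ to $\mu_n$ --- one checks directly that the axis side becomes $\scrS^n_\ell(0,-s)$ while the curve side becomes $\scrS^n_\ell(m(s))$; the parabolic term in the definition of $\scrS^n$ is exactly what absorbs the $L^1$-defect $m_1(s)-m_2(s)=s$ between the curve and the axis. (This constant-matching is the same in spirit as the single-line computation underlying Corollary~\ref{C:airy-processes} and the scaling arguments of \citep{DOV}.) For $s$ in a fixed compact set and $n$ large, the scaled point $(2m_1(s)n^{-1/3}, 1+2m_2(s)n^{-1/3})$ has first coordinate near $0$ and second near $1$, hence lies in $\{x<y\}$ and in the domain of $\scrS^n$, so all objects above are well defined and Theorem~\ref{T:Brown-symmetries}(2) genuinely applies.

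Next I would pass to the limit. By Theorem~\ref{T:airy-limit}, $\scrS^n(0,\cdot)\cvgd\scrL$ in the product of uniform-on-compact topologies; since the reflection $h\mapsto h(-\cdot)$ is continuous in this topology, $\scrS^n(0,-\cdot)\cvgd\scrL(-\cdot)$, and therefore $\scrS^n\circ m\cvgd\scrL(-\cdot)$ by the prelimit identity. Finally $\scrL(-\cdot)\eqd\scrL$: the process $\scrA=\scrL+x^2$ is the Airy line ensemble, which is invariant under $x\mapsto -x$, and $x^2$ is even, so $\scrL$ is reversible. Combining these gives $\scrS^n\circ m\cvgd\scrL$, as claimed.

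The main obstacle is the constant-bookkeeping in the first step: one must set up $g_n$, $m_n$, $\mu_n$ and the reparametrization so that the affine rescaling carries the identity of Theorem~\ref{T:Brown-symmetries}(2) \emph{exactly} onto the relation between $\scrS^n\circ m$ and $\scrS^n(0,\cdot)$, with the parabolic correction term of $\scrS^n$ accounting precisely for the $L^1$-defect $m_1(s)-m_2(s)$. It is also essential that Theorem~\ref{T:Brown-symmetries}(2) holds simultaneously over all lines $\ell\in[1,n]$, so that the prelimit identity is a genuine equality of line-ensemble-valued processes rather than merely of individual Airy processes; once this is in place, the passage to the limit and the symmetrization of $\scrL$ are routine.
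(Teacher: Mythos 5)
Your proposal is correct and follows essentially the same route as the paper: apply the prelimit identity of Theorem \ref{T:Brown-symmetries}(2) to the curve transported through the Airy scaling, check that the centering and parabolic terms match exactly, and then pass to the limit via Theorem \ref{T:airy-limit}. The one place you diverge is in the bookkeeping of the parametrization: you correctly observe that, with the orientation of $m$ fixed in Corollary \ref{C:airy-processes} (so that $m_1(s)-m_2(s)=s$), the width-matching forced by the translation-based symmetry gives $\scrS^n\circ m \eqd \scrS^n(0,-\cdot)$, whereas the paper states the prelimit identity as $\scrS^n\circ m|_K \eqd \scrS^n|_{\{0\}\X K}$ without addressing the reversal; your extra step, invoking the reflection invariance of the Airy line ensemble so that $\scrL(-\cdot)\eqd\scrL$, is a standard fact and closes this gap cleanly, so if anything your argument is the more careful of the two.
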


\begin{proof}
	Since $\scrS^n_1 \cvgd \scrS$ by Theorem \ref{T:airy-limit}, $\scrS$ inherits the invariances of Theorem \ref{T:Brown-symmetries}. This immediately proves Corollaries \ref{C:airy-sheet} and \ref{C:airy-processes}, noting that the condition $b_k \le c_k$ in Theorem \ref{T:Brown-symmetries}(1) becomes uneccessary in the limiting scaling, and the graph $\Ga^*(g)$ in Theorem \ref{T:Brown-symmetries}(2) becomes the full graph of a nonincreasing function in the limit (i.e. the intersection with $\{(x, y) : x \le y\}$ is unecessary). Similarly, for any compact set $K \sset \R$, Theorem \ref{T:Brown-symmetries}(2) implies that $\scrS^n \circ m|_K \eqd \scrS^n|_{\{0\} \X K}$ as long as $n$ is large enough so that both sides of this equality are well-defined. Applying Theorem \ref{T:airy-limit} then implies Theorem \ref{T:airy-symm}. 
\end{proof}

\section{Concluding remarks}
\label{S:conclude}

We end with a few questions that arise from this work.

\subsection*{Classification of invariances}

A natural problem raised by this paper is the following.

\begin{problem}
	Let $Z_M$ be the last passage function of either i.i.d.\ exponential or geometric last passage percolation, or the partition function of the i.i.d.\ log-gamma polymer. Find the set $\scrG$ of all bijections $g:S \to T$ between subsets of $\Zd$ such that
	$$
	Z_M|_{S} \eqd Z_M|_{T} \circ g.
	$$
\end{problem}

When $|S| = |T| = 1$, it is relatively straightforward to see that the only functions $g$ that work are translations and reflections.  When $|S| = |T| = 2$, a case-by-case analysis based on comparing probabilities of tail events reveals that the only invariances are those in $\scrF$. The situation gets more complicated for sets with more than $2$ elements, where enumerating cases quickly becomes unwieldy. In particular, it is unclear to me whether or not $\scrG = \scrF$.

\subsection*{Other models and other symmetries}

It would be interesting to see if a version of our techniques can work for generalizations of the models studied in this paper, or for other related models. The key tools needed to carry the proofs through in a more general setting would seem to be an analogue of the RSK correspondence that allows for decoupling, together with a version of Theorem \ref{T:encoding-lpp}. Generalizations of the geometric RSK correspondence do exist further up the hierarchy of Macdonald processes (see \citep{borodin2016nearest, matveev2015q, pei2016q, bufetov2018hall}). However, these generalizations involve randomization and are no longer bijective. Moreover, even some basic translation and reflection symmetries become highly nontrivial results in the context of vertex models (see the discussion on page 6 in \citep{borodin2019shift} and the relevant references \citep{borodin2018coloured, borodin2019color}).

Two models where the situation is more promising and the key tools are available are the Sepp\"al\"ainen-Johansson model \citep{seppalainen1998exact, johansson2001discrete} of last passage percolation (where dual RSK, rather than RSK, is the relevant correspondence), and the strict-weak polymer model (where a dual version of geometric RSK is used, see \citep{o2015tracy, corwin2015strict}).

There are also other geometric symmetries of exponential last passage percolation based on the Burke property of this model \citep{cator2012busemann, pimentel2016duality}. The most striking of these is perhaps an equality in law between the infinite geodesic tree in a given direction and its dual \citep{pimentel2016duality}. How are these symmetries related to our results?

\subsection*{Combinatorics}

The bijections found in Section \ref{SS:combinatorial} arose incidentally from the main theorems in the paper and we have not attempted to study them here. It would be interesting explore these bijections further and to see if the methods used to obtain them can be extended to other settings.

For example, are there nice local descriptions of these bijections, analogous to the local descriptions of the usual RSK bijection? More generally, what features of the usual RSK bijection extend to these new bijections? How are the output tableaux coming from different scrambled RSK bijections related to each other? Are there scrambled versions of other RSK-like bijections?

\section{Appendix}
\label{S:appendix}

Here we provide a few basic examples mentioned in the previous sections regarding the limitations of invariance. The first shows how certain invariance statements might not hold for nonintegrable models. 

\begin{example}
	\label{E:nonintegrable}
	Let $M$ be an i.i.d.\ environment of random variables such that
	$$
	\p(M_{i, j} = 1) = 1 - \ep, \quad \p(M_{i, j} = 0) = \ep
	$$
	for some $\ep > 0$. Then, unlike in the case of decoupled polymer models, for small enough $\ep > 0$ the joint distribution of the last passage values
	$$
	Z_M(1, 1; 2, 3) \quad \mathand \quad Z_M(1, 1; 2, 1)
	$$
	is different from the joint distribution of the last passage values
	$$
	Z_M(1, 1; 2, 3) \quad \mathand \quad Z_M(1, 2; 2, 2).
	$$
\end{example}

\begin{proof}
	Observe that
	\begin{equation}
	\label{E:Zm}
	\p( Z_M(1, 1; 2, 3)  = 4 \; | \;Z_M(1, 1; 2, 1) = 1) \le 1/2,
	\end{equation}
	since conditionally on $Z_M(1, 1; 2, 1) = 1$, the random variable $M(1, 1)$ is equal to $0$ with probability $1/2$. On the other hand,
	\begin{equation}
	\label{E:Zm-2}
	\p( Z_M(1, 1; 2, 3)  = 4 \; | \;Z_M(1, 2; 2, 2) = 1) = (1- \ep)^3,
	\end{equation}
	since conditionally on $Z_M(1, 2; 2, 2) = 1$, exactly one of $M(1, 2)$ and $M(2, 2)$ is $1$, and there is one path from $(1, 1)$ to $(2, 3)$ that picks up this $1$, along with three random variables that are independent of $Z_M(1, 2; 2,2)$. When $\ep < 1 - 2^{1/3}$, \eqref{E:Zm} and \eqref{E:Zm-2} imply that the two joint distributions are different.
\end{proof}

A similar construction to Example \ref{E:nonintegrable} is possible in the polymer setting.

Our next example gives a function $f:S \to T$ satisfying the conditions of Lemma \ref{L:F-restrictions} but $Z_M|_{S} \ne Z_M|_{T} \circ f$ when $M$ is geometric or exponential last passage percolation.

\begin{example}
	\label{E:not-in-F}
	Let $u = (1, 1; 2, 3), v = (2, 2; 2, 2), w = (1, 3;2, 3),$ and $w' =(1, 1; 2, 1)$ and let $S = \{u, v, w\}, T = \{u, v, w'\}$. Define $f:S \to T$ by $f(u) = u, f(v) = v,$ and $f(w) = w'$. Then $f$ satisfies (i)-(iv) of Lemma \ref{L:F-restrictions}, but $Z_M|_{S} \ne Z_M|_{T} \circ f$ when $M$ consists of independent exponential or geometric random variables and $Z_M$ is last passage percolation.
\end{example}

\begin{proof}
	First, for any point $x \in \Z^2$, $M_x$ either has support $\{0, 1, 2, \dots\}$ (for geometric last passage percolation) or support $(0, \infty)$ (for exponential last passage percolation). Therefore the events $(Z_M(v) \ge 1, Z_M(w) \ge 1)$ and $(Z_M(v) \ge 1, Z_M(w') \ge 1)$ have positive probability.
	
	Now, since there is a $u$-path $\pi$ containing $w'$ and $v$, we have that
	$$
	\p(Z_M(u) \ge 2 \;|\; Z_M(v) \ge 1, Z_M(w') \ge 1) = 1.
	$$
	On the other hand,
	$$
	\p(Z_M(u) \ge 2 \;|\; Z_M(v) \ge 1, Z_M(w) \ge 1) < 1,
	$$
	since there is no $u$-path $\pi$ containing $v$ and $w$, and there is a positive probability that $ M_x \in [0, \ep)$ for all $x \in u \smin (v \cup w)$, for any $\ep > 0$.
\end{proof}

Example \ref{E:not-in-F} also works for the log-gamma polymer with a variant of the above proof. 

\subsection*{Acknowledgements} I would like to thank the organizers of the Banff International Research Station workshop on `Dimers, Ising Model, and Interactions', where the striking results of \citep{borodin2019shift} were first brought to my attention. I would like to thank Ivan Corwin, Pavel Galashin, Vadim Gorin, B\'alint Vir\'ag, and Nikolaos Zygouras for helpful comments, discussions, and for pointing out connections with other work. I would like to thank Sam Hopkins for suggesting that there might be a connection between this work and \cite{garver}, and Hugh Thomas for helping me to understand this connection precisely. 

\bibliographystyle{plain}
\bibliography{MasterBib}

\end{document}